\documentclass[12pt,twoside,a4paper]{book}
\usepackage[utf8]{inputenc}
\usepackage[english]{babel}

\usepackage{amsmath,amsfonts,amsthm,amssymb,bbm}
\usepackage{color}
\usepackage{dsfont}
\usepackage{enumerate, enumitem}
\usepackage{float} 
\usepackage[bottom]{footmisc}
\usepackage[margin=.8in]{geometry}
\usepackage{graphicx}
\usepackage{hyperref}
	\hypersetup{linkcolor=blue, colorlinks=true, citecolor = blue}
	\usepackage[capitalise]{cleveref}
\usepackage{setspace}
	\setstretch{1.1}
\usepackage{tkz-euclide}
	\usetikzlibrary{arrows.meta}
\usepackage{url}

\usepackage[LGR, T1]{fontenc} 
\newcommand{\koppa}{%
  \mathord{\text{\usefont{LGR}{STEP-TLF}{m}{n}\symbol{"15}}}%
}


\usepackage{fancyhdr}
	\pagestyle{fancy}
	\fancyhf{}
	
	\pagenumbering{arabic}
	\fancyhead[ER]{\textit{\leftmark}}
	\fancyhead[OL]{\textit{\rightmark}}
	\fancyfoot[C]{\thepage}

\numberwithin{equation}{chapter}

\allowdisplaybreaks
\newtheorem{Theorem}{Theorem}[chapter]
\newtheorem{Lemma}[Theorem]{Lemma}
\newtheorem{Proposition}[Theorem]{Proposition}
\newtheorem{Corollary}[Theorem]{Corollary}
\newtheorem{Definition}[Theorem]{Definition}
\newtheorem{Remark}[Theorem]{Remark}


\let\div\relax
\DeclareMathOperator{\div}{\mathrm{div}}
\DeclareMathOperator{\dist}{dist}
\DeclareMathOperator{\diam}{diam}
\DeclareMathOperator{\supp}{supp}

\newcommand{\Id}{\mathbb{I}}
\newcommand{\rd}{{\rm d}}
\newcommand{\dd}{\ \mathrm{d}}
\newcommand{\e}{\varepsilon}
\newcommand{\eps}{\varepsilon}
\newcommand{\del}{\partial}
\newcommand{\R}{\mathbb{R}}
\newcommand{\mf}{\mathcal{F}}
\newcommand{\ms}{\mathcal{S}}
\newcommand{\vc}{\mathbf}
\newcommand{\vu}{\vc u}
\newcommand{\bS}{\mathbb S}
\newcommand{\bD}{\mathbb D}


\title{To collide, or not to collide, that is the question -\\ a survey} 


\author{Florian Oschmann}
 
\date{}



\begin{document}
\pagenumbering{gobble}
\maketitle



\chapter*{Introduction}
\pagenumbering{roman}
\addcontentsline{toc}{chapter}{Introduction}

A simple question is not always easy to answer. In this work, the leading question we want to investigate is the following:
\begin{center}
\it Imagine you put a small ball in a glass of water, wait a little, does the ball eventually touch the ground?
\end{center}
Even a child would probably answer this with ``yes, of course'', but to solve this problem is much harder for a mathematician. Our aim in here is to give a detailed overview over known results for such collision problems, and spin them further from incompressible fluids such as water over compressible ones such as air to the case of non-Newtonian fluids, an example of which is given by blood, paint, or ketchup. A complete solution of whether or not the ball touches the bottom of the glass is, unfortunately, out of reach at least for compressible fluids. We will rather show that \emph{if} both the compressible fluid and the solid inside possess some specific properties, \emph{then} collision happens. Additionally, for the very specific case of two-dimensional Stokes flow, we will also show some converse of this.\\

Before going into deeper details, let us specify the equations under consideration. Two of the main models in fluid mechanics are the incompressible and compressible Navier-Stokes\footnote{after Claude-Louis Navier (1785--1836) and George Gabriel Stokes (1819--1903)} equations governing the motion of a fluid in some domain $\Omega \subset \R^d$, $d \in \{2, 3\}$, the ``skeleton'' of those is given by the system of equations
\begin{align*}
\begin{cases}
\del_t \rho + \div(\rho \vu) = 0 & \text{(mass conservation)},\\
\del_t(\rho \vu) + \div(\rho \vu \otimes \vu) - \div \bS + \nabla p = \rho \vc f & \text{(momentum conservation)},
\end{cases}
\end{align*}
supplemented with suitable initial and boundary conditions. Here, $\rho$ and $\vu$ denote the fluid's density and velocity, respectively, $p$ is the fluid's pressure, $\bS$ the (viscous) stress tensor, and $\vc f$ a given external force density. For the compressible case $\rho \neq \text{constant}$, the pressure commonly depends on the density, whereas for the incompressible case $\rho \equiv \text{constant}$, it can be seen as a Lagrange multiplier to the divergence-free condition $\div \vu=0$. Inserting in the fluid some solid obstacle $\ms$, the fluid's and solid's velocity are clearly not independent, and the above system of equations needs to be extended accordingly:
\begin{itemize}
\item The solid has its own translational and rotational velocity. Therefore, the fluid shall follow this motion on the common boundary $\del \ms$.
\item By Newton's laws of motion, the movement of the solid is governed by the force the fluid imposes to it, thus leading to additional equations for the conservation of linear and angular momentum of the solid.
\end{itemize}
The above issues are covered by the additional equations for the fluid-solid-interaction, namely
\begin{align*}
\begin{cases}
\vu |_{\del \ms} = \dot{\vc G}(t) + \omega(t) \times (x - \vc G(t)) & \text{(fluid's = solid's velocity)},\\
m \ddot{\vc G}(t) = -\int_{\del \ms} (\bS - p\Id) \vc n \dd \sigma + \int_\ms \rho_\ms \vc f \dd x & \text{(linear momentum)},\\
\frac{\rd}{\rd t}(\mathbb J \omega) = -\int_{\del \ms} (x-\vc G) \times (\bS - p\Id)\vc n \dd \sigma + \int_\ms (x-\vc G) \times \rho_\ms \vc f \dd x & \text{(angular momentum)}.
\end{cases}
\end{align*}
Here, the notation ``$\times$'' shall be read as $\omega(t) (x - \vc G(t))^\perp$ if $d=2$, where $(x_1, x_2)^\perp = (-x_2, x_1)$, and similarly for the equation of angular momentum. In the above, we denoted by $\rho_\ms>0$ the solid's density, $\vc G$ is the center of mass of the body $\ms$, $\omega$ is its rotational velocity, $m>0$ is the object's mass given by
\begin{align*}
m=\int_\ms \rho_\ms \dd x,
\end{align*}
and $\mathbb{J}$ is the inertial tensor (moment of inertia) given by
\begin{align*}
\mathbb{J}=\int_\ms \rho_\ms \big( |x-\vc G|^2 \Id - (x-\vc G)\otimes (x-\vc G) \big) \dd x.
\end{align*}
In order to understand what is behind these equations and where they come from, let us make three remarks on them:
\begin{enumerate}
\item Strictly speaking, the solid $\ms$ shall be seen as a time-dependent set-valued map 
\begin{align*}
\ms: (0,T)\times \ms_0 \to 2^{\R^d}
\end{align*}
for some fixed reference particle $\ms_0 \subset \R^d$. The assumption of a \emph{rigid} solid particle then implies
\begin{align}\label{sol1}
\begin{split}
\ms &= \ms(t) = \vc G(t) + \mathbb{O}(t) (\ms_0 - \vc G(0)) \\
&= \{x \in \R^d: x = \vc G(t) + \mathbb{O}(t)(y-\vc G(0)), \ y \in \ms_0 \}
\end{split} \tag{S}
\end{align}
for some rotation (orthogonal matrix) $\mathbb{O}(t) \in \R^{d \times d}$ with $\mathbb{O}(0)=\mathbb{O}(t)\mathbb{O}(t)^T=\Id$. In turn, differentiating \eqref{sol1} with respect to time, the velocity of the particle $\ms$ is given by
\begin{align*}
\vu_s(t,x) = \dot{x}(t,x) = \dot{\vc G}(t) + \mathbb{Q}(t)(x-\vc G(t)) \ \text{for all} \ x \in \ms,
\end{align*}
where the matrix $\mathbb{Q}$ is skew-symmetric and related to $\mathbb{O}$ via
\begin{align*}
\dot{\mathbb{O}}\mathbb{O}^T = \mathbb{Q}.
\end{align*}
The skew-symmetry follows from
\begin{align*}
0=\frac{\dd}{\dd t} \Id = \frac{\dd}{\dd t}(\mathbb{O}\mathbb{O}^T) = \dot{\mathbb{O}}\mathbb{O}^T + \mathbb{O}\dot{\mathbb{O}}^T = \dot{\mathbb{O}}\mathbb{O}^T + (\dot{\mathbb{O}}\mathbb{O}^T)^T = \mathbb{Q}+\mathbb{Q}^T.
\end{align*}
Note especially that this implies the existence of some function $\omega(t) \in \R^{2d-3}$ such that for any $x \in \ms$
\begin{align*}
\vu_\ms(t,x) &= \dot{\vc G}(t) + \omega(t) (x-\vc G(t))^\perp \ \text{if} \ d=2,\\
\vu_\ms(t,x) &= \dot{\vc G}(t) + \omega(t) \times (x-\vc G(t)) \ \text{if} \ d=3,
\end{align*}
which represents precisely the compatibility condition of the fluid's velocity on $\del \ms$.

\item Newton's second law states that acceleration of a body is due to the forces exerted to it. The solid's momentum is simply given by
\begin{align*}
m \dot{\vc G} = \dot{\vc G} \int_\ms \rho_\ms \dd x = \int_\ms \rho_\ms \vu \dd x
\end{align*}
as $\int_\ms x - \vc G \dd x = 0$ by the definition of $\vc G$. Hence, the force on $\ms$ is
\begin{align*}
m \ddot{\vc G} &= \frac{{\rm d}}{{\rm d} t} (m \dot{\vc G}) = \frac{\rm d}{{\rm d} t} \int_\ms \rho_\ms \vu \dd x = \int_\ms \del_t (\rho_\ms \vu) + \div(\rho_\ms \vu \otimes \vu) \dd x \\
&= \int_\ms \rho_\ms \vc f \dd x - \int_\ms - \div (\bS - p \Id) \dd x = \int_\ms \rho_\ms \vc f \dd x - \int_{\del \ms} (\bS - p \Id) \vc n \dd \sigma,
\end{align*}
which is conservation of linear momentum. Similar arguments yield the formula for the angular momentum.

\item The moment of inertia $\mathbb{J}$ is a symmetric positive definite matrix (a positive scalar if $d=2$) in the sense that for each $\omega \in \R^{2d-3}$,
\begin{align*}
\mathbb{J}\omega \cdot \omega = \int_\ms \rho_\ms \big( |x-\vc G|^2 |\omega|^2 - |(x-\vc G)\cdot \omega|^2 \big) \dd x = \int_\ms \rho_\ms |\omega \times (x-\vc G)|^2 \dd x \geq 0.
\end{align*}
In the literature, this is sometimes used as the definition of $\mathbb J$ in the sense that for any $a, b \in \R^{2d-3}$, we have
\begin{align*}
\mathbb{J}a \cdot b = \int_\ms \rho_\ms [ a \times (x-\vc G)]\cdot[b \times (x-\vc G)] \dd x .
\end{align*}
In three spatial dimensions, the eigenvalues of this tensor are called \emph{principal moments of inertia}, the corresponding eigenvectors are the \emph{principal axes}. Moreover, as the solid $\ms(t)$ as well as its center of mass $\vc G(t)$ are both time-dependent, one shall think that the moment of inertia $\mathbb{J}$ is as well. Using the change of variables $z=\mathbb{O}^T(x-\vc G)$, we get
\begin{align*}
\mathbb{J} &= \int_{\ms_0 - \vc G(0)} \rho_\ms \big( |z|^2 \Id - (\mathbb{O}z)\otimes (\mathbb{O}z) \big) \dd z \\
&= \int_{\ms_0 - \vc G(0)} \rho_\ms \big( |z|^2 \Id - z \otimes z \big) \dd z + \int_{\ms_0 - \vc G(0)} \rho_\ms \big( z \otimes z - (\mathbb{O}z)\otimes (\mathbb{O}z) \big) \dd z.
\end{align*}
Indeed the last integral, being the only time-dependent term, tells that $\mathbb{J}$ is almost time-independent in the sense that the principal axes of the body point in different directions due to the underlying rotation. If the body does not rotate, meaning $\mathbb{O}(t)=\Id$ for all $t$, the matrix $\mathbb{J}$ is indeed constant in time. The same holds true if $\ms_0$ is symmetric to the underlying rotation, meaning $\ms_0 - \vc G(0) = \mathbb O (\ms_0 - \vc G(0))$: indeed,
\begin{align*}
&\int_{\ms_0 - \vc G(0)} \big( z \otimes z - (\mathbb O z) \otimes (\mathbb O z) \big) \dd z \\
&= \int_{\ms_0 - \vc G(0)} z \otimes z \dd z - \int_{\mathbb O^{-1} (\ms_0 - \vc G(0))} (\mathbb O z) \otimes (\mathbb O z) \dd (\mathbb O z) = 0.
\end{align*}
In particular, for $\ms$ being a ball, $\mathbb{J}$ does not change for \emph{any} rotation\footnote{In this special case, $\mathbb J$ is a constant scalar multiple of the identity: $\mathbb{J} = \frac25 m r^2 \Id$, where $r>0$ is the radius of the ball $\ms$.}.
\end{enumerate}

Assuming the solid $\ms$ is in free fall under the force of gravity over a horizontal plate, one shall expect that the solid touches the plate in finite time, as it is observed in physics.
We will show, however, that \emph{this is not always the case}. In fact, it strongly depends on the shape of the body $\ms$ near the contact zone, as well as the boundary conditions imposed on $\del \ms$ and $\del \Omega$. Roughly speaking, at least for Newtonian fluids with a linear stress tensor, the main outcomes of the following chapters are:
\begin{itemize}
\item If the body $\ms$ is a ball, and if we impose no-slip boundary conditions on the fluid's velocity, then the solid stays away from the boundary of its container for all finite times.
\item If both the wall and the solid are ``suitably rough'' (in a sense to be specified), then collision occurs in finite time.
\end{itemize}
The term ``suitably rough'' here means different shapes of $\ms$ besides a ball, as well as different boundary conditions on the fluid's velocity. We will be precise on this in the corresponding chapters.

The physical intuitions behind the seemingly paradoxical no-collision for no-slip is the following: a ball has a critical shape in the sense that it is smooth and its lower tip is ``too flat'', and the no-slip boundary conditions on the fluid velocity makes the fluid to stick on $\ms$. The fluid has now to squeeze solely through a long channel between the ball and the plate, while at the same time resting on all boundaries, hence creating a huge drag force on the solid preventing it from touching. On the other hand, if the solid is shaped like a parabola, it ``cuts'' through the fluid and thus can collide (in other words, the channel is not as long as for a ball). For slip boundary conditions, the fluid does not completely has to squeeze through the channel but can partially escape ``to the side'', thus the drag force is lowered and collision can occur.

\tableofcontents

\chapter{Basic function spaces, notations, auxiliary results}
\pagenumbering{arabic}

To begin, we recall briefly the function spaces used in the sequel. Let $d \in \{2,3\}$, $\Omega \subset \R^d$ be a domain, and $1 \leq p,q \leq \infty$. We refer to \cite{EvansPDE2010} for a detailed overview.
\begin{itemize}
\item \textbf{Lebesgue and Sobolev spaces} will be denoted in the usual way as $L^p(\Omega)$ and $W^{k,p}(\Omega)$, respectively. We will also denote them for vector- and matrix-valued functions as in the scalar case, that is, $L^p(\Omega)$ instead of $L^p(\Omega; \R^d)$. The Sobolev space of trace-zero functions will be denoted by $W_0^{1,p}(\Omega)$.

\item \textbf{Lebesgue-Bochner spaces}: For $p, q$ finite, the Lebesgue-Bochner spaces are defined as
\begin{align*}
L^p(0,T;L^q(\Omega)) = \Big\{ f:(0,T) \times \Omega \to \R^d: f(t,\cdot) \in L^q(\Omega), \ \|f\|_{L^p(0,T;L^q(\Omega))} < \infty \Big\}
\end{align*}
with corresponding norm
\begin{align*}
\|f\|_{L^p(0,T;L^q(\Omega))} = \left( \int_0^T \|f(t, \cdot)\|_{L^p(\Omega)}^q \dd t \right)^\frac1q = \left( \int_0^T \left( \int_\Omega |f(t,x)|^p \dd x \right)^\frac{q}{p} \dd t\right)^\frac1q.
\end{align*}
Similarly, we define $L^p(0,T;W^{k,q}(\Omega))$ as
\begin{align*}
L^p(0,T;W^{k,q}(\Omega)) = \Big\{ f:(0,T) \times \Omega \to \R^d: f(t,\cdot) \in W^{k,q}(\Omega), \ \|f\|_{L^p(0,T;W^{k,q}(\Omega))} < \infty \Big\}
\end{align*}
with corresponding norm
\begin{align*}
\|f\|_{L^p(0,T;W^{k,q}(\Omega))} = \sum_{l=0}^k \|\nabla^l f\|_{L^p(0,T;L^q(\Omega))}.
\end{align*}
The definition of the spaces for $p=\infty$ or $q=\infty$ is then as usual with the help of the essential supremum.

\item \textbf{Frobenius inner product:} For each $\mathbb{A},\mathbb{B} \in \R^{d \times d}$, we set $\mathbb{A}: \mathbb{B} = \sum_{i,j=1}^d A_{ij} B_{ij}$. Further, we define the Frobenius norm by $|\mathbb{A}|^2 = \mathbb{A}:\mathbb{A} = \sum_{i,j=1}^d |A_{ij}|^2$.

\item \textbf{Korn's inequality:} There exists a constant $C>0$ such that for each $\vu \in W_0^{1,p}(\Omega)$,
\begin{align}\label{Korn}
\|\nabla \vu\|_{L^p(\Omega)} \leq C \|\bD(\vu)\|_{L^p(\Omega)},
\end{align}
where $\bD(\vu) = \frac12 (\nabla \vu + \nabla^T \vu)$ is the symmetrized gradient.

\item \textbf{Poincar\'e's inequality:} For a bounded domain $\Omega \subset \R^d$, there exists a constant $C>0$ depending just on $\Omega$ and $p$ such that for any $f \in W_0^{1,p}(\Omega)$,
\begin{align}\label{Poinc}
\|f\|_{L^p(\Omega)} \leq C \|\nabla f\|_{L^p(\Omega)}.
\end{align}
Moreover, if $0 \in \Omega$, the constant $C$ scales like $C(r \Omega) = r C(\Omega)$ for any $r>0$.

\item \textbf{Hardy's inequality:} Let $\Omega \subsetneq \R^d$ be a convex open set. Then, there exists a constant $C=C(p)>0$ such that for any $\vc u \in W_0^{1,p}(\Omega)$,
\begin{align}\label{Hardy}
\int_\Omega \bigg| \frac{\vu(x)}{\dist(x, \del \Omega)} \bigg|^p \dd x \leq C \int_\Omega |\nabla \vu|^p \dd x.
\end{align}

\item \textbf{Sobolev embedding:} Let $d, k \geq 1$ and $p \in [1,\infty]$. For $k<d/p$, denote
\begin{align*}
\frac{1}{p^\ast} = \frac1p - \frac{k}{d}.
\end{align*}
Then the embedding $W^{k,p} \hookrightarrow L^q$ is compact for any $1 \leq q < p^\ast$ and continuous for $q=p^\ast$, and there exists a constant $C>0$ depending only on $\Omega$, $d$, $k$, $q$, and $p$ such that for any $f \in W^{k,p}(\Omega)$, we have
\begin{align}\label{SobEmb}
\|f\|_{L^q(\Omega)} \leq C \|f\|_{W^{k,p}(\Omega)}.
\end{align}
Furthermore, if $k=d/p$, then the embedding $W^{k,p} \hookrightarrow L^q$ is continuous for any $1 \leq q < \infty$, and \eqref{SobEmb} holds. If $k>d/p$, then \eqref{SobEmb} holds with $L^q(\Omega)$ replaced by $C^{k-1-[d/p]}(\Omega)$, where $[d/p]$ denotes the entire part of $d/p$.

\item \textbf{Gr\"onwall's inequality:} Let $f:[0,T] \to [0, \infty)$ be integrable and assume there are constants $C_1, C_2 > 0$ with
\begin{align*}
f(t) \leq C_1 \int_0^t f(s) \dd s + C_2.
\end{align*}
Then, it holds
\begin{align}\label{Gronwall}
f(t) \leq C_2 \left( 1 + C_1 t e^{C_1 t} \right)
\end{align}
for almost every $t \in [0,T]$. In particular, there exists a constant $C=C(C_1, C_2, T)>0$ such that $f(t) \leq C$ for a.e.~$t \in [0,T]$.

\item \textbf{Young's inequality:} For any $a,b \geq 0$, and $\delta>0$, and any $1<p,q<\infty$ with $\frac1p + \frac1q = 1$, we have
\begin{align}\label{Young}
ab \leq \delta a^p + \frac{(p \delta)^{1-q}}{q} b^q.
\end{align}
Mostly, we will use the short form of this: for each $\delta>0$, all $a,b \geq 0$, and all $1<p,q<\infty$ with $\frac1p + \frac1q = 1$ there exists a constant $C=C(\delta,p)>0$ such that $ab \leq \delta a^p + C b^q$.

\item \textbf{H\"older's inequality:} Let $N \geq 1$, $i \in \{1,\dots, N \}$, $p_i, p \in [1, \infty]$, and $f_i \in L^{p_i}(\Omega)$ with $\sum_{i=1}^N p_i^{-1} = p^{-1}$. Then $f = \prod_{i=1}^N f_i \in L^p(\Omega)$ with
\begin{align}\label{Holder}
\|f\|_{L^p(\Omega)} \leq \prod_{i=1}^N \|f_i\|_{L^{p_i}(\Omega)}.
\end{align}

\item \textbf{Riesz transform (see \cite{Duoandikoetxea2001}):} Let $j \in \{1,\dots,d\}$. There exists a bounded linear operator $R_j : L^2(\R^d) \to L^2(\R^d)$ and a constant $C=C(d)>0$ such that for any $f \in L^2(\R^d)$, we have
\begin{align*}
R_j f = \del_j (-\Delta)^{-\frac12} f, && \|R_j f\|_{L^2(\R^d)} \leq C \|f\|_{L^2(\R^d)},
\end{align*}
where $(- \Delta)^{-\frac12}$ is the Fourier multiplier with symbol $-1/|\xi|$. In particular, for any $i,j \in \{1,\dots,d\}$ and any $f \in L^2(\R^d)$ such that $\Delta f \in L^2(\R^d)$, we have
\begin{align}\label{Riesz}
R_i R_j \Delta f = -\frac{\del^2 f}{\del x_i \del x_j}, && \bigg\| \frac{\del^2 f}{\del x_i \del x_j} \bigg\|_{L^2(\R^d)} \leq C \|\Delta f\|_{L^2(\R^d)}.
\end{align}
\end{itemize}

To lean the notation, we will write $a \lesssim b$ if there is a generic constant $C>0$ which is independent of $a,b$, and the parameters of interest such that $a \leq C b$. The constant might change its value from line to line. The solid $\ms(t) \subset \R^d$ is assumed to be a simply connected compact set, the motion of which is continuous in time. The domain occupied by the fluid is denoted by $\mf(t)=\Omega \setminus \ms(t)$.



\chapter{General setting: collision for parabolic shapes}
In this chapter, we will focus on the three-dimensional case $d=3$, and just later on go to the two-dimensional setting, since it contains several issues that do not occur here. We will first focus on no-slip boundary conditions and introduce the ``roughness'' by a parameter $\alpha>0$ modelling the shape of the solid. All the outcomes given in this and the next chapter are taken from \cite{JNOR2022} and \cite{NecasovaOschmann2024}.

Let $\Omega\subset \mathbb{R}^3$ be a bounded domain and $\ms(t)$ be a rigid body with center of mass at $\mathbf{G}(t)$ moving inside a viscous fluid, where the fluid domain is $\mf (t)=\Omega\setminus \ms (t)$. The equations of motion are given by model-precise variants of the basic Navier-Stokes system
\begin{equation}\label{B-NSF}
\begin{cases}
\del_t \rho + \div(\rho \vu)=0 & \mbox{ in }\mf(t),\\
\del_t(\rho \vu) + \div(\rho \vu \otimes \vu) - \div \bS + \nabla p = \rho \mathbf f & \mbox{ in }\mf(t),\\
\vu =\dot{\mathbf G}(t)+ \omega(t)\times { (x-\mathbf G(t))} & \mbox{ on } {\del \ms(t)},\\
\vu=0 & \mbox{ on } {\del \Omega},
\end{cases}
\end{equation}
together with some initial conditions and the necessary compatibility conditions between the time derivatives of linear and angular momentum. Here, $\vu$ and $\rho$ are the fluid's velocity and density, and $\dot{\mathbf G}(t)$ and $\omega(t)$ are the translational and rotational velocities of the rigid body, respectively. Moreover, we assume the force $\mathbf{f} \in L^\infty((0,T) \times \R^3)$ to be given.\\

We remark that, depending on the model at hand, one needs to add equations (inequalities) for heat, energy, and entropy, respectively, and accordingly make assumptions on the form of the pressure $p$ and the stress tensor $\bS$. Moreover, we will not state explicitly the dependence of the pressure $p$ on the density and the temperature; we will just require that the pressure behaves ``nicely'' in order to get reasonable bounds on the density and the temperature. As we will focus on collision, which just needs the momentum equation, we will state some precise systems containing all the necessary (in)equalities in Chapter~\ref{ch:Ex}.\\
 
The stress tensor $\bS$ will depend on the symmetrized velocity gradient $\bD(\vu)=\frac12 ( \nabla \vu + \nabla^T \vu)$. The precise assumptions on $\bS$ are stated in \ref{S1}--\ref{S3} below. Further, we assume that the solid is homogeneous with constant mass density $\rho_\ms>0$. The mass and centre of mass of the rigid body are given by
\begin{equation*}
    m=\rho_\ms|\ms(0)|, \quad  \mathbf{G}(t)=\frac{1}{m}\int_{\ms(t)}\rho_\ms x \dd x.
\end{equation*}
We will also assume that the solid's mass is independent of time, that is, $m=\rho_\ms |\ms(t)|$ for any $t\geq 0$, leading to the density-independent expression $\vc G(t) = |\ms(t)|^{-1} \int_{\ms(t)} x \dd x$.

\section{Heuristics ensuring collision}
The collision result, for bodies of class $C^{1,\alpha}$ and in its easiest form, relies essentially on three main points:
\begin{itemize}
\item uniform bounds: The velocity and, if under consideration, density and temperature shall obey bounds that are independent of the solid's distance $h$ to the boundary of its container. Such estimates usually follow easily from the energy (respectively entropy) and Gr\"onwall's inequality \eqref{Gronwall}, and thus can essentially be considered as ``given''.

\item appropriate test function for the momentum equation: The question whether collision occurs or not is answered by testing the momentum equation against a ``well constructed'' test function $\mathbf w_h$ and estimating all occurring terms. This function being the same for both compressible and incompressible systems, and being additionally divergence-free, the only term that changes its form in the momentum equation is the diffusive one containing the stress tensor $\bS$.

\item body orientation: This is indeed one of the main points when considering solids of $C^{1,\alpha}$ regularity, since one needs to ensure that the equation describing the lower tip of the body keeps its form during the free fall; in particular, rotations of the body (except rotations around the $x_3$-axis) must be excluded.
\end{itemize}

The rest of the proof is easily explained: once the test function $\mathbf w_h$ is constructed, one estimates its norm and the norm of its derivatives in different $L^q$-spaces such that these norms are uniformly bounded independent of $h$. This requirement will give conditions on $q$ in terms of $\alpha$. Testing the momentum equation with $\mathbf w_h$, using the regularity of the velocity, density, and temperature, and estimating all occurring terms will then lead to restrictions on $\alpha$. Provided these restrictions hold, the final inequality takes the form
\begin{align*}
T \leq C (1+T),
\end{align*}
where $T>0$ is the maximal existence time before collision. Under some energy and mass assumptions, one then ensures $C<1$ such that $T<\infty$, meaning collision occurs in finite time.\\

Let us remark that collision may also occur if $\mathbf w_h$ is \emph{not} uniformly bounded with respect to $h$. This behavior shows up since, at least in the incompressible case and for Newtonian fluids, the distance $h$ is related to the drag force $\mathcal{D}_h$ via an ODE of the form $\ddot{h} + \dot{h} \mathcal{D}_h=f$. For bodies of $C^{1,\alpha}$ shape, one can show that $\mathcal{D}_h \sim h^{-\beta}$ for some $\beta=\beta(\alpha)<1$, thus collision can occur in finite time. In contrast, for a sphere $\mathcal{D}_h \sim h^{-1}$, hence collision is forbidden. The question of the drag forces for \emph{compressible} fluids is rather different, and it is not clear whether one can derive a similar ODE for $h$, and even how $\mathcal{D}_h$ depends on the fluid's density. The author thinks that also the construction of the test function $\mathbf w_h$ has to be changed according to the non-constant fluid's density. Moreover, it requires a much more detailed analysis of the involved terms. For these reasons, when talking about \emph{compressible} fluids, we will just consider cases where $\mathbf w_h$ is uniformly bounded with respect to $h$. A wider class of obstacles with unbounded $\vc w_h$ in the \emph{incompressible} setting will be given in and after Chapter~\ref{ch:6}.



\section{The stress tensor and uniform bounds}
Like mentioned in the introduction, the crucial part in analyzing collisions is to investigate the form of the stress tensor $\bS$. We will make the following assumptions:
\begin{enumerate}[label=(S\arabic*)]
\item Continuity: $\bS$ is a continuous mapping from $(0,\infty)\times \R_{\rm sym}^{3\times 3}$ to $\R_{\rm sym}^{3\times 3}$, and depends continuously on the temperature $\vartheta \in (0,\infty)$ and the symmetric gradient $\bD(\vu) = \frac12 (\nabla\vu + \nabla^T \vu) \in \R_{\rm sym}^{3 \times 3}$. \label{S1}
\item Monotonicity: For any $\mathbb M, \mathbb N \in \R_{\rm sym}^{3 \times 3}$, we have $[\bS(\cdot,\mathbb{M}) - \bS(\cdot,\mathbb{N})] : (\mathbb{M} - \mathbb{N}) \geq 0$.\label{S2}
\item Growth: There are absolute constants $\delta \geq 0$ and $0<c_0\leq c_1<\infty$ such that for some $p>1$, for all $\vartheta>0$, and all $\mathbb M \in \R_{\rm sym}^{3\times 3}$, we have $c_0 |\mathbb{M}|^p - \delta \leq \bS(\vartheta,\mathbb{M}):\mathbb{M} \leq c_1 |\mathbb{M}|^p$.\label{S3}
\end{enumerate}
We note that two main models fall into this setting: classical power-law fluids like $\bS = |\bD(\vu)|^{p-2} \bD(\vu)$, and so-called activated Euler fluids with $\bS=\max\{|\bD(\vu)|-\delta_0, 0\}|\bD(\vu)|^{-1} \bD(\vu)$ for some $\delta_0>0$. Moreover, we emphasize that in view of \ref{S3}, the temperature plays essentially no role for our discussion. One can think of temperature dependent viscosity coefficients that are uniformly bounded. Another example of viscosities \emph{growing} with the temperature will be given in Chapter~\ref{ch:4}. Note moreover that condition \ref{S3} implies by duality $\bS \in L^{p'}((0,T)\times \Omega)$ since
\begin{align}\label{Sp}
\begin{split}
\|\bS\|_{L^{p'}((0,T)\times \Omega)} &= \sup_{\|\mathbb{M}\|_{L^p((0,T) \times \Omega)} \leq 1} \int_0^T \int_\Omega \bS : \mathbb{M} \dd x \dd t \\
&\leq c_1 \sup_{\|\mathbb{M}\|_{L^p((0,T) \times \Omega)} \leq 1} \int_0^T \int_\Omega |\mathbb{M}|^p \dd x \dd t \leq c_1.
\end{split}
\end{align}

\begin{Remark}
As the proof of Theorem~\ref{theo1} below will show, we are able to catch stress tensors of different growth for small and large values of $|\mathbb{M}|$; in fact, we might also consider
\begin{align}\label{stress2}
c_0 |\mathbb{M}|^p + c_2 |\mathbb{M}|^q - \delta \leq \bS(\vartheta, \mathbb{M}):\mathbb{M} \leq c_1 |\mathbb{M}|^p + c_3 |\mathbb{M}|^q.
\end{align}
The conditions required in Theorem~\ref{theo1} below then have to be modified in an obvious way. Note that the advantage in allowing for different growth is that one may also take into account other fluid models such as the so-called Carreau-Yasuda law, where the stress tensor is given by
\begin{align*}
\bS(\mathbb{M}) = \mu (1 + |\mathbb{M}|^2)^{\frac{p}{2}-1} \mathbb{M} + \lambda \, {\rm trace}(\mathbb{M}) \Id, \quad \mu>0, \, \lambda \geq 0,
\end{align*}
giving rise to a growth with $q=2$ in \eqref{stress2}.
\end{Remark}

To start analyzing the collision behavior, one first needs uniform bounds on the velocity, density, and temperature. With a slight abuse of notation, we extend $\rho$ and $\vu$ by
\begin{align}\label{extended}
\rho = \begin{cases}
\rho & \text{in } \mf(t),\\
\rho_\ms & \text{in } \ms(t),
\end{cases} &&
\vu = \begin{cases}
\vu & \text{in } \mf(t),\\
\dot{\vc G}(t) + \omega(t) \times (x - \vc G(t)) & \text{in } \ms(t).
\end{cases}
\end{align}
For the time being, we will \emph{assume that the following bounds hold}, and give for some specific models the proofs in Chapter~\ref{ch:Ex}:

\begin{align}\label{UnifBds}
\gamma>\frac32, \quad \|\rho\|_{L^\infty(0,T;L^\gamma(\mf(\cdot)))}^\gamma + \|\vu\|_{L^p(0,T;W_0^{1,p}(\Omega))}^p + \|\rho |\vu|^2\|_{L^\infty(0,T;L^1(\Omega))} \lesssim E_0 + 1.
\end{align}
Here, $E_0$ is the initial energy of the system given by
\begin{align}\label{eq:E0}
E_0 = \int_{\mf(0)} \frac{|\mathbf m_0|^2}{2\rho_0} + P(\rho_0, \vartheta_0) \dd x + \frac{m}{2} |\mathbf V_0|^2 + \frac12 \mathbb{J}(0)\omega_0 \cdot \omega_0,
\end{align}
where $\vc m_0 = (\rho \vu)(0)$ is the fluid's initial momentum, $\vc V_0 = \dot{\vc G}(0)$ and $\omega_0 = \omega(0)$ are the initial translational and rotational speed of $\ms$, respectively, and $P(\rho,\vartheta)$ is a pressure potential associated to the original pressure $p(\rho,\vartheta)$. For instance, if the temperature is constant such that $p(\rho, \vartheta) = p(\rho)$, the potential $P(\rho)$ satisfies
\begin{align*}
\rho P'(\rho) - P(\rho) = p(\rho).
\end{align*}
Further, in sense of density, we assume that
\begin{align}\label{p-law}
p(\rho,\cdot) \sim \rho^\gamma \ \text{ for } \ \rho \geq \underline{\rho} >0, \quad \gamma>\frac32.
\end{align}
We will be more precise on this in the models stated in Chapter~\ref{ch:Ex}, together with available existence results of weak solutions to the problem under consideration.\\

In the sequel, we consider a solid of class $C^{1,\alpha}$ moving vertically over a flat horizontal surface under the influence of gravity. More precisely, we make the following assumptions (see Figure~\ref{fig1} for the main notations):
\begin{enumerate}[label=(A\arabic*)]
\item The source term is provided by the gravitational force ${\mathbf f}=-g{\mathbf e}_3$ and $g>0$. \label{a1}
\item The solid moves along and is symmetric to the vertical axis $\{x_1=x_2=0\}$. \label{a2}
\item The only possible collision point is at $x=0 \in \del \Omega$, and the solid's motion is a vertical translation.\label{a3}
\item Near $r=0$, $\partial \Omega$ is flat and horizontal, where $r=\sqrt{x_1^2+x_2^2}$. \label{a4}
\item Near $r=0$, the lower part of $\partial \ms(t)$ is given by \label{a5}
\begin{align*}
    x_3={ h(t)+r^{1+\alpha}},\ r\leq 2r_0\mbox{ for some small enough } r_0>0.
\end{align*}
\item The collision just happens near the flat boundary of $\Omega$: \label{a6}
\[
\inf_{t>0} \dist \left(\ms(t),\partial\Omega \setminus [-2r_0, 2r_0]^2\times\{0\}\right)\geq d_0>0.
\]

\end{enumerate} 

Let us also assume that the position of the solid is characterized by its height $h(t)$, in the sense that ${\mathbf G}(t)={\mathbf G}(0)+(h(t)-h(0)){\mathbf e}_3$, and $\ms(t)=\ms(0)+(h(t)-h(0)){\mathbf e}_3$. Note especially that this means that the solid rotates at most around the $x_3$-axis. In turn, if $\ms(0) - \vc G(0)= \mathbb{O}_3 (\ms(0) - \vc G(0))$, then $\ms(t) - \vc G(t)= \mathbb{O}_3 (\ms(t) - \vc G(t))$ for all $t \geq 0$ and all rotations $\mathbb{O}_3 \in SO(3)$ around the $x_3$-axis, that is\footnote{As for the moment of inertia $\mathbb{J}$, we saw before that this symmetry forces $\mathbb J$ to be constant in time.},
\begin{align*}
\mathbb{O}_3 = \begin{pmatrix}
\cos \phi & - \sin \phi & 0\\
\sin \phi & \cos \phi & 0\\
0 & 0 & 1
\end{pmatrix}, \quad \phi \in [0, 2\pi).
\end{align*}
By rotational invariance of the Navier-Stokes equations, as long as the solution is unique (as, for instance, in 2D), this assumption can be verified rigorously, see \cite{GerardVaretHillairet2010}.

\begin{figure}[H]
\centering
\begin{tikzpicture}[scale=.7]
\draw[->] (-6,0) -- (6,0);
\draw[->] (0,0) -- (0,7);
\node at (6,0) [anchor=north] {$r$};
\node at (0,7) [anchor=east] {$x_3$};
\draw (-5,0) rectangle (5,6);
\node at (-6.1,4) [anchor=west] {$\del \Omega$};
\draw[->] (6,4.5)--(6,3.5);
\node at (6,4) [anchor=west] {$-g\vc e_3$};
\draw[black, thick] plot [smooth cycle] coordinates {(0,1) (1,1.5) (2,3) (3,5) (0,5.5) (-3,5) (-2,3) (-1,1.5)};
\draw[<->] (0,0) -- (0,1);
\node at (-1,3) {$\ms$};
\node at (-3,1) {$\mf$};
\node at (-.15,.5) [anchor=west] {$h$};
\node[fill=white] at (2,2) [anchor=west] {$x_3=h+r^{1+\alpha}$};
\draw[->] (2,2) -- (.75,2) -- (.75,1.4);
\draw[dashed] (-1,0) -- (-1,1.5);
\draw[dashed] (1,0) -- (1,1.5);
\node at (1,0) [anchor=north] {$2r_0$};
\draw[dotted] (-.5,0) -- (-.5,1.1);
\draw[dotted] (.5,0) -- (.5,1.1);
\node at (-.5,0) [anchor=north] {$-r_0$};
\end{tikzpicture}
\caption{The body $\ms$ and fluid $\mf$ in the container $\Omega$.}
\label{fig1}
\end{figure}

Our main result regarding collision now reads as follows:

\begin{Theorem} \label{theo1}

Let $\gamma>\frac{3}{2}$, $p\geq 2$, $0<\alpha\leq 1$, and $\Omega,\ \ms\subset\mathbb R^3$ be bounded domains of class $C^{1,\alpha}$.
Let $(\rho, \vartheta, \vu, {\mathbf G})$ be a weak solution to a model-precise version of \eqref{B-NSF} enjoying the bounds \eqref{UnifBds}, let $\bS$ comply with \ref{S1}--\ref{S3}, and assume that \ref{a1}--\ref{a6} are fulfilled. If the solid's mass is large enough, and its initial vertical and rotational velocities are small enough, then the solid touches $\partial \Omega$ in finite time provided
\begin{align}\label{jedna}
\begin{split}
&\alpha<\min \bigg\{ \frac{3-p}{2p-1}, \frac{3(4p\gamma - 3p - 6\gamma)}{p\gamma + 3p + 6\gamma} \bigg\} \quad \text{with}\\
    &\frac32< \gamma \leq 3, \ \frac{6\gamma}{4\gamma-3} < p < 3, \quad \text{or} \quad \gamma>3, \ 2 \leq p < 3.
    \end{split}
\end{align}
\end{Theorem}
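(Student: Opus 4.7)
The plan is to follow the heuristic outlined above: construct a divergence-free test function $\vc w_h$ that equals $\vc e_3$ on $\ms(t)$ and vanishes on $\del\Omega$, test the momentum equation in \eqref{B-NSF} against it, and convert the resulting identity into an inequality forcing the maximal collision-free time $T$ to be finite. Assumptions \ref{a1}--\ref{a6} together with the body-orientation discussion make the geometry axisymmetric around the $x_3$-axis throughout the free fall, so the construction and all estimates can be carried out in cylindrical coordinates $(r,\theta,x_3)$.

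For the construction I would pick, in the gap $\{0\leq x_3\leq h(t)+r^{1+\alpha},\ r\leq 2r_0\}$, an axisymmetric no-swirl field generated by a stream function
\begin{align*}
\psi(r,x_3) = \frac{r^2}{2}\,\eta\!\left( \frac{x_3}{h(t)+r^{1+\alpha}} \right),
\end{align*}
with $\eta$ a smooth cutoff equal to $0$ for $s\leq 0$ and to $1$ for $s\geq 1$. Setting $\vc w_h = r^{-1}(\del_r\psi)\vc e_{x_3} - r^{-1}(\del_{x_3}\psi)\vc e_r$ automatically enforces $\div\vc w_h = 0$, equals $\vc e_3$ above the gap, and vanishes on the wall; a cutoff and divergence-correction argument then extends $\vc w_h$ to agree with $\vc e_3$ throughout $\ms$ and to vanish outside a neighborhood of $\ms$, with extension norms independent of $h$. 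A direct computation in the gap gives $|\vc w_h|\lesssim 1$ and $|\nabla\vc w_h|\lesssim r(h+r^{1+\alpha})^{-2}$, and splitting the $r$-integration at $r=h^{1/(1+\alpha)}$ shows that $\|\nabla \vc w_h\|_{L^q(\Omega)}$ is uniformly bounded in $h$ precisely when $\alpha\leq (3-q)/(2q-1)$. Taking $q=p$ and using \eqref{Sp} controls the stress-tensor term and gives the first restriction in \eqref{jedna}. For the convective term $\int \rho\,\vu\otimes\vu:\nabla\vc w_h$, I would interpolate $\rho\in L^\infty_tL^\gamma_x$ with $\sqrt{\rho}\vu\in L^\infty_tL^2_x$ and $\vu\in L^p_tL^{p^\ast}_x$ (the Sobolev embedding \eqref{SobEmb} gives $p^\ast=3p/(3-p)$ for $p<3$), and then estimate via \eqref{Holder} against $\|\nabla\vc w_h\|_{L^q}$ with $q>p$; the resulting admissible range of $q$ produces the more restrictive bound $\alpha<3(4p\gamma-3p-6\gamma)/(p\gamma+3p+6\gamma)$ and the compatibility conditions between $(p,\gamma)$ recorded in \eqref{jedna}.

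Testing the weak formulation of \eqref{B-NSF} with $\vc w_h$ and integrating over $(0,T)$ eliminates the pressure term via $\div\vc w_h = 0$, and by the extension \eqref{extended} together with the axial symmetry of the rotation, the solid contribution to $\int_\Omega\rho\vu\cdot\vc w_h\dd x$ reduces to $m\dot h(t)$. The gravitational source contributes $-mgT+O(T)$, the diffusion and convective terms are $O(1+T)$ by the estimates above combined with \eqref{UnifBds}, and $\int_0^T\int\rho\vu\cdot\del_t\vc w_h\dd x\dd t$ is bounded by $C\int_0^T|\dot h|\dd t\leq CT$ after writing $\del_t\vc w_h=\dot h\,(\del\vc w_h/\del h)$ and using the kinetic-energy bound on $|\dot h|$ from \eqref{UnifBds}. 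Collecting terms gives an inequality of the shape
\begin{align*}
m g T \leq C_0(1+T) + C_1 m|\dot h(0)| + C_2|\omega(0)|,
\end{align*}
so taking $m$ large and $|\dot h(0)|$, $|\omega(0)|$ small forces $T<\infty$, i.e.\ collision in finite time. The main obstacle will be the norm optimisation: balancing the $p$-growth stress estimate against the H\"older--Sobolev estimate of the convective term in the anisotropic gap couples the two restrictions on $\alpha$ in \eqref{jedna}, forces the compatibility conditions between $p$ and $\gamma$, and demands a careful extension of $\vc w_h$ outside the contact zone so that the auxiliary bulk estimates do not contaminate any of the sharp gap norms.
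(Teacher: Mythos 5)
Your plan follows the paper's route almost step for step: the test function you describe is exactly the one in Section~\ref{sec:31} (your stream function $\psi = \tfrac{r^2}{2}\eta(x_3/(h+r^{1+\alpha}))$ equals $r\phi_h$ with the paper's $\Phi$ in place of your cutoff $\eta$), the $L^q$-bounds on $\nabla\vc w_h$ you claim are precisely those of Lemma~\ref{BdsWh}, the diffusion estimate with $q=p$ gives the first branch in \eqref{jedna}, and the final mechanism --- testing the momentum equation, reducing the solid part to $m\dot h$, and comparing growth in $T$ --- is the paper's. One pleasant technical remark: you will not need a ``divergence-correction'' extension at all, because extending at the level of the stream function via the blend \eqref{phih} and then taking the curl produces a globally divergence-free field for free.

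However, there is a real gap in the convective estimate that you must fix. You propose to split $\rho\,\vu\otimes\vu = \sqrt{\rho}\,(\sqrt{\rho}\,\vu)\otimes\vu$ and apply H\"older using $\sqrt{\rho}\in L^{2\gamma}$, $\sqrt{\rho}\vu\in L^{2}$, $\vu\in L^{p^\ast}$, $\nabla\vc w_h\in L^{q}$; this forces
$\tfrac1q = \tfrac12 - \tfrac{1}{2\gamma} - \tfrac{1}{p^\ast}$, hence $q = \tfrac{6p\gamma}{5p\gamma - 3p - 6\gamma}$, which is \emph{twice} the exponent used in the paper's $I_1$. Since a larger $q$ makes the condition $q<\tfrac{3+\alpha}{1+2\alpha}$ harder to satisfy, your split yields the constraint $\alpha < \tfrac{9(p\gamma - p - 2\gamma)}{7p\gamma + 3p + 6\gamma}$, which is strictly worse than the stated $\tfrac{3(4p\gamma - 3p - 6\gamma)}{p\gamma + 3p + 6\gamma}$ (and can even be negative in the admissible range, e.g.\ $\gamma=2$, $p=5/2$). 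To recover the claimed bound you must instead use the split $\rho\in L^{\gamma}$, $\vu\otimes\vu\in L^{p^\ast/2}$, $\nabla\vc w_h\in L^{q}$ with $\tfrac1\gamma + \tfrac{2}{p^\ast} + \tfrac1q = 1$; the $L^p_t$-square integrability of $\nabla\vu$ needed to control $\|\vu\|_{L^{p^\ast}}^2$ in time is exactly where $p \geq 2$ enters. In addition, you have absorbed the $\int_0^T \zeta\,\dot h\int_\Omega\rho\vu\cdot\del_h\vc w_h$ term into a vague $O(T)$ without noting that it requires its own $L^q$-bound on $\del_h\vc w_h$ and hence imposes the constraint $\alpha < \tfrac{9(p\gamma - p - \gamma)}{2p\gamma + 3p + 3\gamma}$; this happens to be subsumed by the $I_1$ condition (as the paper records), but that must be verified, not assumed. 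Finally, your closing inequality $mgT\le C_0(1+T)+C_1 m|\dot h(0)|+C_2|\omega(0)|$ misses that the constants carry the factor $\max\{m^{-1/2}, m^{-3/2}\}$ and powers of $E_0$, which is what makes the ``mass large, initial speeds small'' hypotheses effective.
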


\begin{Remark}
The terms ``large enough'' and ``small enough'' should be interpreted in such a way that inequality \eqref{finalIneq} below is satisfied. Specifically, there is a constant $C_0>0$ which is independent of $m$ and $T$ such that collision occurs provided
\begin{align*}
    C_0 \max\{m^{-1/2}, m^{-3/2} \} \bigg(1 + E_0^{\frac12 + \frac{1}{\gamma} + \frac1p} \bigg) < 1.
\end{align*}
One shall also compare these assumptions with the ones given in Section~\ref{sec:Tresca}.
\end{Remark}

\begin{Remark}\label{rem1}
Let us mention a few facts about the constraints in \eqref{jedna}:
\begin{enumerate}[label = (\roman*)]
\item The two expressions inside the minimum stem, as one shall expect, from estimating the diffusive and convective part, respectively.

\item The restriction $p<3$ is due to the diffusive part, see the estimate of $I_4$ in Section~\ref{sec:PfThm}. Moreover, the requirement $p\geq 2$ stems from the convective term, since we need to estimate the square of the velocity in time. Thus, our result as stated above is just valid for shear-thickening fluids. Omitting convection, Theorem~\ref{theo1} still holds provided
\begin{align}\label{aa1}
\gamma>\frac32, \ \frac{\gamma}{\gamma-1}<p<3, \ \alpha<\min \bigg\{ \frac{3-p}{2p-1}, \frac{9(p\gamma - p - \gamma)}{2p\gamma + 3p + 3\gamma} \bigg\},
\end{align}
hence also allowing for shear-thinning fluids if $\gamma>2$.

\item The first condition on $p$ and $\gamma$ in \eqref{jedna} can be equivalently stated as $\frac{3p}{4p-6} < \gamma \leq 3$, $2<p<3$.

\item The first fraction inside the minimum in \eqref{jedna} wins precisely if $\gamma \geq \frac{3p}{5p-9}$, and in \eqref{aa1} if $\gamma \geq \frac{3p}{4p-6}$. This seems to be optimal in the sense that for $p=2$, $\alpha=\frac13$ is a ``borderline value'' for the incompressible case, which would (loosely speaking) correspond to $\gamma=\infty$ (see \cite[Section~3.1]{GerardVaretHillairet2012} for details).
\end{enumerate}
\end{Remark}

The proof of Theorem~\ref{theo1} will be carried out in the next chapter. Specifically, we will construct a special function associated to the solid. Testing the momentum equation against this function and estimating carefully all occurring terms will finally yield the result.

\begin{Remark}\label{rem2}
Another way how to interpret collision is to use so-called streamlines. These are the solutions to the ODE
\begin{align*}
\frac{\rd}{\rd t} \vc X(t, x) = \vu(t, \vc X(t, x)), \quad t>0, \quad \vc X(0, x)=x.
\end{align*}
If no collision occurs, the solutions to this ODE are well-defined, in particular, streamlines cannot concentrate. For instance, this happens if $\vu$ is Lipschitz continuous in the second variable such that the solutions are even unique. If, however, the solid collides with the boundary of its container, then all the streamlines have the same value at the collision point and thus are ill-defined, and the Lipschitz-norm (or some even weaker $W^{1,q}$-norm) of the velocity $\vu$ blows up as $t$ reaches the collision time. We will see how one can prove no-collision results based on this observation in Chapters~\ref{ch:7} and \ref{ch:8}.
\end{Remark}



\chapter{Proof of Theorem~\ref{theo1}}\label{ch3}
The aim of this chapter is to define an appropriate test function for the momentum equation that will ensure collision in finite time. Let  $(\rho, \vartheta, \vu, {\mathbf G})$ be a weak solution of \eqref{B-NSF} satisfying the assumptions \ref{a1}--\ref{a6} in the time interval $(0,T_*)$ before collision. From now on we denote $\ms_h=\ms_h(t)=\ms(0)+(h(t)-h(0)){\mathbf e}_3$ and $\mf_{h}=\mf_h(t)=\Omega\setminus \ms_h(t)$. As mentioned before, the assumption \ref{a2} on $\ms(t)$ especially means that the body rotates at most around the $x_3$-axis.\\

Collision can occur if and only if $\lim_{t\to T_*}h(t)=0$. Note further that $\dist(\ms_h(t),\partial \Omega)= \min\{h(t),d_0\}$ by assumptions \ref{a2} and \ref{a6}.



\section{Test function}\label{sec:31}
To construct our desired function, we will make use of cylindrical coordinates $(r,\theta,x_3)$ with the standard basis $(\mathbf e_r, \mathbf e_\theta, \mathbf e_3)$.
We use the same function as in \cite{GerardVaretHillairet2012} (see also \cite{GerardVaretHillairet2010, GVHW2015}), which is constructed as a function ${\mathbf w}_h$ associated with the solid particle $\ms_h$ frozen at distance $h$. This function will be defined for $h\in (0,\sup_{t\in [0,T_*)}h(t))$. Note that when $h\to 0$, a cusp arises in $\mf_h$, which is contained in a cylindrical domain beneath $\ms$ given by
\begin{align}\label{omegah}
    \Omega_{h,r_0}=\{ x \in \mf_h: 0\leq r<r_0,\ 0\leq  x_3\leq  h+r^{1+\alpha},\ r^2=x_1^2+x_2^2\}.
\end{align}
For the sequel, we fix $h$ as a positive constant and define $\psi(r):= h+r^{1+\alpha}$. Note that the common boundary $\del \Omega_{h,r_0}\cap \del \ms_h$ is precisely given by the set $\{0\leq r\leq r_0,\ x_3=\psi(r)\}$.\\

Let us derive how an appropriate test function inside $\Omega_{h, r_0}$ might look like. In order to get rid of the pressure term, we seek for a function $\mathbf w_h$ which is divergence-free. Additionally, it shall be rigid on $\ms_h$, and comply with its motion. Thus, our test function shall satisfy
\begin{align*}
\mathbf w_h |_{\ms_h} = \mathbf e_3, \quad \mathbf w_h |_{\del \Omega} = 0, \quad \div \mathbf w_h = 0.
\end{align*}
An easy function satisfying all this is given by $\mathbf w_h = \nabla \times (\phi_h \mathbf e_\theta)$ for some function $\phi_h(r,x_3)$ to be determined. The solenoidality of $\mathbf w_h$ is thus obvious. In cylindrical coordinates, we write $\mathbf w_h$ as
\begin{align}\label{def:wh}
\mathbf w_h=-\del_3\phi_h \mathbf e_r+\frac1r \del_r(r\phi_h)\mathbf e_3.
\end{align}
The boundary conditions on $\mathbf w_h$ translate for $\phi_h$ into
\begin{align*}
\del_3 \phi_h(r, 0) = 0, && \frac1r \del_r (r \phi_h)(r, 0)=0,\\
\del_3 \phi_h(r, \psi(r)) = 0, && \frac1r \del_r (r \phi_h)(r, \psi(r)) = 1.
\end{align*}
Further, considering the energy
\begin{align*}
\mathcal{E}=\int_{\mf_h} |\nabla \mathbf w_h|^2 \dd x
\end{align*}
and anticipating that most of it stems from the vertical motion, that is, from the derivative in $x_3$-direction, we get
\begin{align*}
\mathcal{E}\sim \int_{\mf_h} |\del_3^2 \phi_h|^2 \dd x.
\end{align*}
The Euler-Lagrange equation for the functional $\mathcal{E}$ thus reads $\del_3^4 \phi_h(r, x_3)=0$, meaning $\phi_h(r, x_3) = a(r)x_3^3 + b(r)x_3^2 + c(r)x_3 + d(r)$ for some functions $a,b,c,d$ to find. A simple calculation now leads to the general form
\begin{align*}
\phi_h(r, x_3) = -\frac32 \bigg( \frac{\kappa_1}{r} - r \bigg) \bigg(\frac{x_3}{\psi(r)} \bigg)^2 + \bigg( \frac{\kappa_1}{r} - r \bigg) \bigg(\frac{x_3}{\psi(r)}\bigg)^3 + \frac{\kappa_2}{r}, \quad \kappa_1, \kappa_2 \in \R.
\end{align*}
In order to get a smooth function $\phi_h$ for all values of $r$ and $x_3$, we choose $\kappa_1=\kappa_2=0$ to infer
\begin{align*}
\phi_h(r, x_3) = \frac{r}{2} \Phi \bigg( \frac{x_3}{\psi(r)} \bigg), \quad \Phi(t) = t^2(3-2t).
\end{align*}
Thus, inside $\Omega_{h, r_0}$, the so constructed function will take advantage of the precise form of the solid. Extending $\phi_h$ in a proper way to the whole of $\Omega$, we thus can define a proper test function $\mathbf w_h$.\\

To achieve this, we use a similar extension as in \cite{GerardVaretHillairet2010}: define smooth functions $\chi, \eta$ satisfying
\begin{align}
&\chi=1 \text{ on } (-r_0,r_0)^2 \times (0,r_0), && \chi=0 \text{ on } \Omega\setminus \big( (-2r_0, 2r_0)^2 \times (0, 2r_0) \big)\label{chi}\\
&\eta=1 \text{ on } \mathcal{N}_{d_0/2}, && \eta=0 \text{ on } \Omega \setminus \mathcal{N}_{d_0}, 
\end{align}
where $d_0>0$ is as in \ref{a6}, and $\mathcal{N}_\delta$ is a $\delta$-neighborhood of $\ms(0)$. With a slight abuse of the notations above, set
\begin{align}\label{phih}
\phi_h(r,x_3)= \frac{r}{2} \begin{cases}
    1 & \text{on } \ms_h,\\
    (1-\chi(r,x_3))\eta(r,x_3-h+h(0)) + \chi(r,x_3)\Phi\left(\frac{x_3}{\psi(r)}\right) & \text{on } \Omega \setminus \ms_h,
\end{cases}
\end{align}
and $\mathbf w_h = \nabla \times (\phi_h \mathbf e_\theta)$.
Observe that the function $\mathbf w_h$ satisfies
\begin{align*}
\mathbf w_h|_{\partial \ms_h}=\mathbf e_3,\quad \mathbf w_h|_{\del \Omega}=0,\quad \div\mathbf w_h=0.
\end{align*}
Indeed, the divergence-free condition is obvious from the definition of $\mathbf w_h$. Further, since $\phi_h = r/2$ on $\ms_h$, we have $\mathbf w_h = \mathbf e_3$ there. Moreover, by definition of $\chi$ and $\eta$, we have $\phi_h=0$ on $\del \Omega \setminus \big( (-2 r_0, 2 r_0)^2 \times \{ 0 \} \big)$ as long as $r_0$ and $h$ are so small that $h+r_0^{1+\alpha} \leq d_0 < r_0$. Lastly, $\phi_h=0$ on $\del \Omega \cap \big( (-r_0, r_0)^2 \times \{0\} \big)$ by definition of $\chi$ and $\Phi(0)=0$, and in the annulus $\big( (-2r_0, 2r_0)^2 \setminus (-r_0, r_0)^2 \big) \times \{ 0 \}$ we use also $\eta(r,h(0))=0$ for $r>\mathfrak{r}_0$ for some $\mathfrak{r}_0\in (d_0, r_0)$ to finally conclude $\mathbf w_h|_{\del\Omega}=0$, provided $h$ is sufficiently close to zero.

We summarize further properties in the following Lemma:
\begin{Lemma}\label{BdsWh}
It holds $\mathbf w_h\in C_c^\infty(\Omega)$ and
\begin{align}\label{est1:wh}
\|\del_h\mathbf w_h\|_{L^\infty(\Omega\setminus \Omega_{h,r_0})} + \|\mathbf w_h\|_{W^{1,\infty}(\Omega\setminus \Omega_{h,r_0})}\lesssim 1.
\end{align}
Moreover,
\begin{align*}
\|\mathbf w_h\|_{L^q(\Omega_{h,r_0})} &\lesssim 1 \text{ for any } q<1+\frac3\alpha,\\
\|\del_h \mathbf w_h\|_{L^q(\Omega_{h,r_0})} + \|\nabla \mathbf w_h\|_{L^q(\Omega_{h,r_0})} &\lesssim 1 \text{ for any } q<\frac{3+\alpha}{1+2\alpha}.
\end{align*}
\end{Lemma}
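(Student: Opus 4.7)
My plan is to reduce all estimates to explicit cylindrical-coordinate calculations inside the cusp region $\Omega_{h,r_0}$, since outside this region the function is uniformly smooth. First I would verify the matching across $\del \ms_h$: both $\phi_h$ and $\del_3 \phi_h$ from inside $\ms_h$ equal $r/2$ and $0$, while from the fluid side, where $\chi \equiv 1$ in the cusp and $s := x_3/\psi(r) = 1$ on $\del \ms_h$, they equal $\tfrac{r}{2}\Phi(1) = r/2$ and $\tfrac{r}{2\psi(r)}\Phi'(1) = 0$ (which is precisely why the cubic $\Phi(t) = t^2(3-2t)$ was chosen). The resulting $C^1$ matching, combined with smoothness on each side and the vanishing on $\del\Omega$ already justified in the text preceding the lemma, gives $\mathbf w_h \in C_c^\infty(\Omega)$ in the sense required for the subsequent use as a test function.

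On the complement $\Omega \setminus \Omega_{h,r_0}$, either $r \geq r_0$ or one is away from the cusp entirely, so $\psi(r) \geq r_0^{1+\alpha}$ and both cut-offs together with their derivatives are bounded uniformly in $h$; this yields \eqref{est1:wh} directly. Note that $\del_h$ enters only through $\psi$ and through the translation in $\eta$, both smooth with $h$-independent bounds away from the cusp.

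The main calculation takes place inside $\Omega_{h,r_0}$, where $\chi \equiv 1$ and hence $\phi_h = \tfrac{r}{2}\Phi(x_3/\psi(r))$. Using the representation $\mathbf w_h = -\del_3 \phi_h\,\mathbf e_r + \tfrac{1}{r}\del_r(r\phi_h)\,\mathbf e_3$, together with $|\Phi^{(k)}| \lesssim 1$ on $[0,1]$, $\psi'(r) = (1+\alpha)r^\alpha$, $x_3 \leq \psi(r)$, and $r^{1+\alpha} \leq \psi(r)$, a direct differentiation yields the pointwise bounds
\[
|\mathbf w_h(r,x_3)| \lesssim 1 + \frac{r}{\psi(r)}, \qquad |\nabla \mathbf w_h(r,x_3)| + |\del_h \mathbf w_h(r,x_3)| \lesssim \frac{1}{\psi(r)} + \frac{r}{\psi(r)^2}.
\]
The bound on $\del_h \mathbf w_h$ uses $\del_h \psi = 1$, so $\del_h$ acts on $\phi_h$ with the same singular scaling as $\del_3$, up to a harmless factor $x_3/\psi \leq 1$. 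Switching to cylindrical coordinates (the Jacobian contributes an $r$, and integrating $x_3$ over $[0,\psi(r)]$ contributes a $\psi(r)$), the $L^q$-norms reduce to $\int_0^{r_0} r^{q+1}\psi(r)^{1-q}\,\rd r$ for $\mathbf w_h$ and $\int_0^{r_0} r^{q+1}\psi(r)^{1-2q}\,\rd r$ for its derivatives. Dominating $\psi(r) \geq r^{1+\alpha}$ uniformly in $h$, these are bounded by $\int_0^{r_0} r^{2+\alpha-\alpha q}\,\rd r$ and $\int_0^{r_0} r^{2+\alpha-q(1+2\alpha)}\,\rd r$, which are finite precisely when $q < 1 + 3/\alpha$ and $q < (3+\alpha)/(1+2\alpha)$.

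\noindent\textbf{Main obstacle.} The essential subtlety is identifying the dominant singular scaling: although differentiating $\phi_h$ produces several terms with distinct powers of $\psi(r)$, the worst behaviour always comes from differentiating twice in $x_3$ (or once in $h$ and once in $x_3$), producing the factor $\psi(r)^{-2}$ and hence the tighter exponent $(3+\alpha)/(1+2\alpha)$ for the derivative bound rather than the naive $(3+\alpha)/(1+\alpha)$ one might expect from $\psi(r)^{-1}$ alone. Beyond that, the proof is careful bookkeeping: replace each $\psi(r)^{-\beta}$ by $r^{-(1+\alpha)\beta}$ and check integrability at $r=0$.
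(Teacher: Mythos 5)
Your proof is correct and follows the same route as the paper: establish uniform bounds away from the cusp, then inside $\Omega_{h,r_0}$ derive the pointwise bounds $|\mathbf w_h|\lesssim 1+r/\psi$ and $|\nabla\mathbf w_h|+|\del_h\mathbf w_h|\lesssim \psi^{-1}+r\psi^{-2}$ (after absorbing the $\del_r\psi$-terms via $r\del_r\psi\lesssim\psi$ and $r\del_r^2\psi\lesssim\del_r\psi$, exactly as the paper does), and integrate in cylindrical coordinates using $\psi\geq r^{1+\alpha}$. Your note that the matching of $\phi_h$ across $\del\ms_h$ is only $C^1$ (hence $\mathbf w_h$ is Lipschitz but not smooth there, since $\Phi''(1)=-6\neq 0$) is in fact slightly more careful than the paper, which simply asserts $\mathbf w_h\in C_c^\infty(\Omega)$.
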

\begin{proof}
We know from the definition of $\mathbf w_h$ in \eqref{def:wh} that $\mathbf w_h\in C_c^\infty(\Omega)$. Moreover, $\mathbf w_h$ is bounded outside the bounded region $\Omega_{h, r_0}$, so the first inequality \eqref{est1:wh} is obvious.

Due to the property \eqref{chi} of $\chi$, the function $\phi_h$ (see \eqref{phih}) in $\Omega_{h,r_0}$ becomes
\begin{equation*}
\phi_h(r,x_3)=\frac{r}{2}\Phi\bigg(\frac{x_3}{\psi(r)}\bigg)\quad \mbox{in}\quad \Omega_{h,r_0}.
\end{equation*}
By definition \eqref{def:wh} of $\mathbf w_h$, we have
\begin{align*}
\mathbf w_h = -\frac{r}{2}\Phi' \bigg(\frac{x_3}{\psi}\bigg)\frac1\psi \mathbf e_r + \Phi\bigg(\frac{x_3}{\psi}\bigg)\mathbf e_3 - \frac{r}{2}\Phi' \bigg(\frac{x_3}{\psi}\bigg)\frac{x_3 \del_r\psi}{\psi^2}\mathbf e_3 \quad \mbox{in}\quad \Omega_{h,r_0}.
\end{align*}
Further, $x_3\leq \psi$ in $\Omega_{h,r_0}$. Hence,
\begin{align}\label{bdsPhi}
    |\Phi| + |\Phi'| + |\Phi''|\lesssim 1,
\end{align}
leading to
\begin{align*}
|\mathbf w_h|\lesssim 1+\frac{r}{\psi}(1+\del_r \psi).
\end{align*}
Due to legibility, we will not write the argument of $\Phi$ in the sequel. Then, we obtain successively
\begin{align*}
|\del_r \mathbf w_h| &\lesssim \Phi' \frac1\psi + \frac{r}{2\psi}\Phi''\frac{x_3\del_r \psi}{\psi^2} + \frac{r}{2}\Phi'\frac{\del_r\psi}{\psi^2}\\
&\quad + \Phi'\frac{x_3\del_r\psi}{\psi^2} + \frac{r}{2}\Phi''\cdot \bigg(\frac{x_3\del_r\psi}{\psi^2}\bigg)^2+\frac{r}{2}\Phi'\frac{x_3\del_r^2\psi}{\psi^2} + r\Phi'\frac{x_3(\del_r\psi)^2}{\psi^3},\\
|\del_3\mathbf w_h| &\lesssim \frac{r}{2}\Phi'' \frac{1}{\psi^2} + \Phi'\frac1\psi + \frac{r}{2}\Phi''\frac{x_3\del_r\psi}{\psi^3}+\frac{r}{2}\Phi'\frac{\del_r\psi}{\psi^2},\\
|\del_h \mathbf w_h| &\lesssim \frac{r}{2}\Phi''\frac{x_3}{\psi^3}+\frac{r}{2}\Phi'\frac{1}{\psi^2}+\Phi'\frac{x_3}{\psi^2}+\frac{r \del_r\psi}{2}\Phi''\cdot \bigg(\frac{x_3}{\psi^2}\bigg)^2+\Phi'\frac{x_3 r\del_r\psi}{\psi^3}.
\end{align*}
Using again $x_3\leq \psi$ and the bounds \eqref{bdsPhi}, we have
\begin{align*}
    |\nabla\mathbf w_h| &\lesssim |\del_r\mathbf w_h|+|\del_3\mathbf w_h|+\bigg|\frac{\mathbf w_h\cdot\mathbf e_r}{r}\bigg| \lesssim \frac1\psi + \frac{r}{\psi^2} +\frac{r\del_r\psi}{\psi^2} + \frac{\del_r\psi}{\psi} + \frac{r(\del_r\psi)^2}{\psi^2} + \frac{r\del_r^2\psi}{\psi},\\
    |\del_h \mathbf w_h| &\lesssim \frac1\psi+\frac{r}{\psi^2}+\frac{r\del_r\psi}{\psi^2}.
\end{align*}
Note that these bounds hold independently of the specific form of $\psi$. In our setting, $\psi(r)=h+r^{1+\alpha}$. Thus, the proof of the remaining estimates on $\mathbf w_h$, $\nabla\mathbf w_h$ and $\del_h\mathbf w_h$ are based on the following result: we have
\begin{align}\label{Lem:est}
    \int_0^{r_0} \frac{r^q}{(h+r^{1+\alpha})^s} \dd r \leq \int_0^{r_0} r^{q-s(1+\alpha)} \dd r \lesssim 1 \ \forall \ (\alpha,q,s)\in (0,\infty)^3 \ \mbox{  satisfying  } \ q+1>s(1+\alpha);
\end{align}
see also Lemma~\ref{lem:i1} for a generalization to other values of $\alpha, q, s$. Since we don't need this refinement here, we give the details later on.\\

Using the estimate \eqref{Lem:est}, we get
\begin{align*}
\int_{\Omega_{h,r_0}} |\mathbf w_h|^q \dd x &\lesssim 1 + \int_0^{r_0} \int_0^\psi \frac{r^{q+1}}{\psi^q}+\frac{r^{(1+\alpha)q+1}}{\psi^q} \dd x_3 \dd r \\
&\lesssim 1+ \int_0^{r_0} \frac{r^{q+1}}{\psi^{q-1}}+\frac{r^{(1+\alpha)q+1}}{\psi^{q-1}} \dd r \lesssim 1\\
\Leftrightarrow \ q+2 &> (q-1)(1+\alpha) \ \mbox{ and } \ q(1+\alpha)+2>(q-1)(1+\alpha)\\
\Leftrightarrow \ \alpha (q-1) &< 3.
\end{align*}
Using now $r\del_r\psi\lesssim \psi$ and $r\del_r^2\psi\lesssim \del_r\psi$, we further have
\begin{align*}
|\nabla \mathbf w_h| \lesssim |\del_r\mathbf w_h|+|\del_3\mathbf w_h|+\Big|\frac{\mathbf w_h \cdot \mathbf e_r}{r}\Big|\lesssim \frac1\psi+\frac{r}{\psi^2}+\frac{\del_r \psi}{\psi}, \qquad |\del_h \mathbf w_h| \lesssim \frac1\psi+\frac{r}{\psi^2}.
\end{align*}
In particular, it is enough to estimate $\nabla\mathbf w_h$, since the most restrictive term is $r/\psi^2$. Hence, we obtain
\begin{align*}
&\int_{\Omega_{h,r_0}} |\nabla\mathbf w_h|^q \dd x \lesssim \int_0^{r_0} \int_0^\psi \frac{r}{\psi^q} + \frac{r^{q+1}}{\psi^{2q}} + \frac{r (\del_r\psi)^q}{\psi^q} \dd x_3 \dd r \lesssim \int_0^{r_0} \frac{r}{\psi^{q-1}} + \frac{r^{q+1}}{\psi^{2q-1}} + \frac{r^{\alpha q+1}}{\psi^{q-1}} \dd r \lesssim 1\\
&\Leftrightarrow \ 2 > (q-1)(1+\alpha) \ \mbox{ and } \ q+2>(2q-1)(1+\alpha) \ \mbox{ and } \ \alpha q+2>(q-1)(1+\alpha)\\
&\Leftrightarrow \ q < \frac{3+\alpha}{1+2\alpha}.
\end{align*}
\end{proof}

\begin{Remark}\label{rem:fits}
The condition $\alpha (q-1)<3$ coming from $\mathbf w_h$ is consistent with the results of \cite[Theorem~3.2]{Starovoitov2003}, where the author showed that collision is forbidden as long as $\alpha(q-1) \geq 3$. Especially, for shapes of class $C^{1,1}$ like balls, this states that no collision can occur as long as $q \geq 4$, which fits the assumptions made in \cite{FeireislHillairetNecasova2008} and \cite{Necasova2009}. Moreover, the difference $q - \frac{2+\alpha}{1+2\alpha}$ occurs in the incompressible two-dimensional setting in \cite[Theorem~3.2]{FilippasTersenov2021} as an optimal value for the solid to move vertically. Our fraction $\frac{3+\alpha}{1+2\alpha}$ thus seems to be a three-dimensional counterpart to that; see also the work \cite{FilippasTersenov2024} for the 3D case, where precisely this value occurs. Furthermore, we shall compare this with the value of $\beta$ occurring in Theorem~\ref{thm:Starov} later on.
\end{Remark}



\section{Estimates near the collision -- Proof of Theorem \ref{theo1}}\label{sec:PfThm}
Let $0<T<T_*$ and let $\zeta\in C^1([0,T))$ with $0\leq \zeta\leq 1$, $\zeta'\leq 0$, $\zeta(T)=0$, and $\zeta=1$ near $t=0$. For instance, setting $\tilde{\zeta}(t) = \exp[T^{-2} - (T^2-t^2)^{-1}]$, the function
\begin{align}\label{zeta}
\zeta(t) = \begin{cases}
1 & \text{if } 0 \leq t < (1-\frac1k)T,\\
\tilde{\zeta}(kt-(k-1)T) & \text{else}
\end{cases}
\end{align}
for some $k\geq 2$ will do the job. We take $\zeta(t){\mathbf w}_{h(t)}$ as test function in the weak formulation of the momentum equation \eqref{B-NSF}$_2$ with right-hand side ${\mathbf f}=-g{\mathbf e}_3$, $g>0$. Recalling $\div\mathbf w_h=0$ and $\del_t \mathbf w_{h(t)}=\dot{h}(t) \del_h\mathbf w_{h(t)}$, we get
\begin{align} \label{e1}
\begin{split}
&\int_0^T\zeta\int_\Omega \rho \vu\otimes \vu:\bD({\mathbf w}_h) \dd x \dd t + \int^T_0\zeta'\int_\Omega \rho \vu\cdot {\mathbf w}_h \dd x \dd t\\
&\quad + \int^T_0\zeta \dot{h}\int_\Omega \rho \vu\cdot {\del_h \mathbf w}_h \dd x \dd t - \int^T_0\zeta\int_\Omega {\bS} :\bD({ \mathbf w}_h) \dd x \dd t\\
= &\int_0^T\zeta \int_\Omega \rho g \mathbf e_3\cdot\mathbf w_h \dd x \dd t - \int_\Omega \mathbf m_0\cdot \mathbf w_h \dd x\\
= &\int_0^T\zeta \int_{\ms_h} \rho g \mathbf e_3\cdot\mathbf w_h \dd x \dd t + \int_0^T\zeta \int_{\mf_h} \rho g \mathbf e_3\cdot\mathbf w_h \dd x \dd t - \int_\Omega \mathbf m_0\cdot \mathbf w_h \dd x.
\end{split}
\end{align}
Observe that we have $\mathbf w_h=\mathbf e_3$ on $\ms_h$, so for a sequence $\zeta_k\to 1$ in $L^1([0,T))$,
\begin{align*}
    &\int^T_0\zeta_k \int_{\ms_h} \rho g{\mathbf e}_3\cdot {\mathbf w}_h \dd x \dd t = \int_0^T\zeta_k \int_{\ms_h} \rho_\ms g \to mgT.
\end{align*}
An example of such a sequence is precisely given by \eqref{zeta}. In particular, for a proper choice of $\zeta$, it follows that
\begin{multline}\label{mom}
\frac12 mgT \leq \int_0^T\zeta\int_\Omega \rho \vu\otimes \vu:\bD({\mathbf w}_h) \dd x \dd t + \int^T_0\zeta'\int_\Omega \rho \vu\cdot {\mathbf w}_h \dd x \dd t + \int^T_0\zeta \dot{h}\int_\Omega \rho \vu\cdot {\del_h \mathbf w}_h \dd x \dd t\\
\quad - \int^T_0\zeta\int_\Omega {\bS} :\bD({ \mathbf w}_h) \dd x \dd t
- \int_0^T\zeta \int_{\mf_h} \rho g \mathbf e_3\cdot\mathbf w_h \dd x \dd t + \int_\Omega \mathbf m_0\cdot \mathbf w_h \dd x
 = \sum_{j=1}^6 I_j.
\end{multline}

We will estimate each $I_j$ separately, and set our focus on the explicit dependence on $T$ and $m$. For the latter purpose, we split each density dependent integral into its fluid and solid part $I_j^f$ and $I_j^s$, respectively.\\

$\bullet$ For $I_2^f$, we have by $|\zeta'| = - \zeta' \geq 0$, $\zeta(T)=0$, and $\zeta(0)= 1$
\begin{align*}
    |I_2^f| &\leq -\int_0^T\zeta'\int_{\mf_h} \rho |\vu| |\mathbf w_h| \dd x \dd t = -\int_0^T\zeta'\int_{\mf_h} \sqrt{\rho} \sqrt{\rho} |\vu| |\mathbf w_h| \dd x \dd t\\
    &\leq -\int_0^T\zeta' \|\sqrt{\rho}\|_{L^{2\gamma}({\mf_h})} \|\sqrt{\rho}\vu\|_{L^2({\mf_h})} \|\mathbf w_h\|_{L^\frac{2\gamma}{\gamma-1}({\mf_h})} \dd t\\
    &\leq \|\rho\|_{L^\infty(0,T;L^\gamma({\mf(\cdot)}))}^\frac12 \|\rho |\vu|^2\|_{L^\infty(0,T;L^1(\Omega))}^\frac12 \|\mathbf w_h\|_{L^\infty(0,T;L^\frac{2\gamma}{\gamma-1}(\mf(\cdot)))} \zeta(0)\lesssim (E_0+1)^{\frac{1}{2\gamma}+\frac12},
\end{align*}
where we have used the estimate \eqref{UnifBds} and Lemma \ref{BdsWh} under the condition 
\begin{align*}
    \frac{2\gamma}{\gamma-1}<1+\frac3\alpha \Leftrightarrow \alpha < \frac{3(\gamma-1)}{\gamma+1}.
\end{align*}

$\bullet$ For $I_2^s$, recall that the solid rotates at most around the $x_3$-axis, hence $\omega(t) = \pm |\omega(t)| \vc e_3$. Further, due to $\vu |_{\ms_h} = \dot{\vc G}(t) + \omega \times (x - \vc G(t))$, $\vc G(t) = \vc G(0) + (h(t)-h(0))\vc e_3$, $\rho |_{\ms_h} = \rho_\ms>0$, and $\vc w_h |_{\ms_h} = \vc e_3$, we have
\begin{align*}
\int_{\ms_h} \rho \vu \cdot \vc w_h \dd x = \rho_\ms \int_{\ms_h} \big[ \dot{h} \vc e_3 \pm |\omega|\vc e_3 \times (x - \vc G(0) - (h-h(0))\vc e_3) \big] \cdot \vc e_3 \dd x = m \dot{h}.
\end{align*}
Moreover, from the bounds \eqref{UnifBds}, we infer
\begin{align}\label{est:h}
\sup_{t\in (0,T)} |\dot h|^2 = \sup_{t \in (0,T)} \frac2m \int_{\ms_h} \rho_\ms |\dot h|^2 \dd x \leq \sup_{t \in (0,T)} \frac2m \int_{\ms_h} \rho_\ms |\vc u|^2 \dd x \lesssim \frac2m (E_0+1).
\end{align}
Hence, by the choice of $\zeta$ such that $|\zeta'|=-\zeta'$ and $\zeta(0)=1+\zeta(T)=1$, we get
\begin{align*}
    |I_2^s| \lesssim -\int_0^T \zeta' m |\dot h| \dd t \lesssim \sqrt{m}(E_0 + 1)^\frac12.
\end{align*}

$\bullet$ For $I_3$, observe that $I_3^s=0$ due to $\del_h \mathbf w_h|_{\ms_h} = \del_h \mathbf e_3=0$. Next, by Sobolev embedding \eqref{SobEmb} and \eqref{UnifBds},
\begin{align*}
    \|\vu\|_{L^p(0,T;L^{p^\ast}(\Omega))} \lesssim \|\vu\|_{L^p(0,T;W_0^{1,p}(\Omega))} \lesssim (E_0+1)^\frac1p,
\end{align*}
where we set $p^\ast = 3p/(3-p)$. Thus,
\begin{align*}
    |I_3|&=|I_3^f| \leq \int_0^T\zeta |\dot{h}(t)|\, \|\rho\|_{L^\infty(0,T;L^\gamma(\mf(\cdot)))} \|\vu\|_{L^p(0,T;L^{p^\ast}(\Omega))} \|\del_h \mathbf w_h\|_{L^\frac{p^\ast \gamma}{p^\ast (\gamma-1)-\gamma}(\mf(\cdot))} \dd t\\
    &\lesssim (E_0+1)^{\frac{1}{\gamma}+\frac1p} \|\dot{h}\|_{L^\infty(0,T)} \|\zeta\|_{L^1(0,T)}\lesssim \sqrt{\frac1m} (E_0+1)^{\frac1\gamma+ \frac1p + \frac12} T,
\end{align*}
where we have used the estimates \eqref{UnifBds}, \eqref{est:h}, and Lemma \ref{BdsWh} under the condition
\begin{align*}
    \frac{p^\ast \gamma}{p^\ast (\gamma-1) - \gamma} < \frac{3+\alpha}{1+2\alpha} \Leftrightarrow \alpha < \frac{2p^\ast \gamma - 3p^\ast - 3\gamma}{p^\ast \gamma + p^\ast + \gamma} = \frac{9(p\gamma - p - \gamma)}{2p\gamma + 3p + 3\gamma}.
\end{align*}

$\bullet$ Regarding $I_4$, using that $\bS \in L^{p'}((0,T) \times \Omega)$ is bounded by $c_1>0$ (see \eqref{Sp}), we calculate
\begin{align*}
    |I_4| &\lesssim \int_0^T \zeta \|\bS\|_{L^{p'}(\Omega)} \|\nabla \mathbf w_h\|_{L^p(\Omega)} \dd t \leq \|\zeta\|_{L^\frac{p}{p-1}(0,T)} \|\bS\|_{L^{p'}((0,T)\times \Omega)} \|\nabla\mathbf w_h\|_{L^\infty(0,T;L^p(\Omega))}\\
    &\lesssim (E_0+1)^\frac{1}{p'} T^\frac{1}{p'},
\end{align*}
where we have used Lemma \ref{BdsWh} under the condition
\begin{align*}
    p<\frac{3+\alpha}{1+2\alpha} \Leftrightarrow \alpha< \frac{3-p}{2p-1}.
\end{align*}

$\bullet$ For $I_5=I_5^f$,
\begin{align*}
    |I_5| &\leq g \int_0^T\zeta \|\rho\|_{L^\gamma(\mf_h)}\|\mathbf w_h\|_{L^\frac{\gamma}{\gamma-1}(\Omega)} \leq g \|\zeta\|_{L^1(0,T)} \|\rho\|_{L^\infty(0,T;L^\gamma(\mf(\cdot)))} \|\mathbf w_h\|_{L^\infty(0,T;L^\frac{\gamma}{\gamma-1}(\Omega))}\\
    &\leq g (E_0+1)^\frac1\gamma T,
\end{align*}
by using Lemma \ref{BdsWh} under the condition
\begin{align*}
    \frac{\gamma}{\gamma-1}<1+\frac3\alpha \Leftrightarrow \alpha< 3(\gamma-1).
\end{align*}

$\bullet$ Similar to $I_2^f$, we have for $I_6^f$ the estimate
\begin{align*}
    |I_6^f| \leq \|\mathbf m_0\|_{L^\frac{2\gamma}{\gamma+1}(\mf(0))}\|\mathbf w_h\|_{L^\infty(0,T;L^\frac{2\gamma}{\gamma-1}(\Omega))} \lesssim \bigg\|\frac{|\mathbf m_0|^2}{\rho_0}\bigg\|_{L^1(\mf(0))}^\frac12 \|\rho_0\|_{L^\gamma(\mf(0))}^\frac12 \lesssim (E_0+1)^{\frac12+\frac{1}{2\gamma}}.
\end{align*}

$\bullet$ For $I_6^s$, where $\mathbf w_h=\mathbf e_3$, $\mathbf m_0=(\rho\vu)(0)=\rho_\ms (\dot{h} \mathbf e_3+\omega\times (x-h\mathbf e_3))$, and $\omega = \pm |\omega| \vc e_3$, we have similarly to $I_2^s$ that
\begin{align*}
    |I_6^s| = \bigg|\int_{\ms(0)} \mathbf m_0\cdot \mathbf e_3 \dd x\bigg| = \bigg|\int_{\ms(0)} \rho_\ms \dot{h} \dd x\bigg| \leq m \|\dot{h}\|_{L^\infty(0,T)} \lesssim \sqrt{m}(E_0+1)^\frac12.
\end{align*}

$\bullet$ Let us turn to $I_1$. Due to $\mathbf w_h|_{\ms_h}=\mathbf e_3$, we see that $I_1^s=0$ since $\bD(\mathbf w_h)=0$ there. Hence, we calculate
\begin{align*}
    |I_1|&= |I_1^f| \lesssim \int_0^T \zeta \|\rho\|_{L^\gamma(\mf_h))} \|\vu\|_{L^{p^\ast}(\Omega)}^2 \|\nabla\mathbf w_h\|_{L^\frac{p^\ast \gamma}{p^\ast (\gamma-1) - 2\gamma}(\Omega)} \dd t\\
    &\lesssim \|\rho\|_{L^\infty(0,T;L^\gamma(\mf_h))} \|\nabla\mathbf w_h\|_{L^\infty(0,T;L^\frac{p^\ast \gamma}{p^\ast (\gamma-1) - 2\gamma}(\Omega))} \int_0^T\zeta \|\nabla\vu \|_{L^p(\Omega)}^2 \dd t\\
    &\lesssim (E_0+1)^\frac1\gamma \|\zeta \|_{L^\frac{p}{p-2}(0,T)} \|\nabla\vu\|_{L^p((0,T)\times\Omega)}^2 \lesssim (E_0+1)^{\frac1\gamma+\frac2p} T^{1-\frac2p},
\end{align*}
by using the estimate \eqref{UnifBds} and Lemma \ref{BdsWh} under the condition
\begin{align*}
    \frac{p^\ast \gamma}{p^\ast (\gamma-1) - 2\gamma}<\frac{3+\alpha}{1+2\alpha} \Leftrightarrow \alpha<\frac{2p^\ast \gamma - 3p^\ast - 6\gamma}{p^\ast \gamma + p^\ast + 2\gamma} =
\frac{3(4p \gamma - 3p - 6\gamma)}{p\gamma + 3p + 6\gamma}. 
\end{align*}
Let us emphasize that this term is the only place where the assumption $p\geq 2$ is needed.\\

Collecting all the requirements made above, we infer
\begin{align*}
\gamma>\frac32, \ \ 2 \leq p < 3, \ \ p\gamma>p+\gamma, \ \ 4p\gamma>3p+6\gamma,
\end{align*}
which translates into
\begin{align*}
\frac32 < \gamma \leq 3, \ \frac{6\gamma}{4\gamma-3} < p < 3, \quad \text{or} \quad \gamma>3, \ 2 \leq p < 3.
\end{align*}

Note further that for any $\gamma\geq \frac32$ and any $\frac{\gamma}{\gamma-1}<p<3$,
\begin{align*}
    \frac{3(4p \gamma - 3p - 6\gamma)}{p\gamma + 3p + 6\gamma}\leq \frac{9(p\gamma - p - \gamma)}{2p\gamma + 3p + 3\gamma} \leq \frac{3(\gamma-1)}{\gamma+1} \leq 3(\gamma-1),
\end{align*}
and that all estimates are independent of the choice of $\zeta$. Hence, we can take a sequence $\zeta_k\to 1$ in $L^r([0,T))$ for some suitable $r>1$ without changing the bounds obtained (again, \eqref{zeta} is a suitable choice). In turn, collecting all the estimates above, we finally arrive at
\begin{align*}
    \frac12 mgT \leq C_0 (1+\sqrt{m}+\sqrt{m}^{-1}) \bigg(1 + (E_0+1)^{\frac12+\frac{1}{2\gamma}} + (E_0+1)^\frac12 + (E_0+1)^\frac1p\\
    \quad + (E_0+1)^{\frac12 + \frac1\gamma + \frac1p} + g(E_0+1)^\frac1\gamma + (E_0+1)^{\frac1\gamma+\frac2p} \bigg) (1 + T^\frac{1}{p'} + T^\frac{1}{p} + T^{1-\frac2p} + T),
\end{align*}
which, after dividing by $\frac12 mg$, noticing that due to $p \geq 2$ the largest exponent is $\frac12 + \frac{1}{\gamma} + \frac1p$, and using Young's inequality \eqref{Young} on several terms, leads to
\begin{align}\label{infinalIneq}
    T \leq C_0 \max\{m^{-1/2}, m^{-3/2} \} \bigg(1 + E_0^{\frac12 + \frac{1}{\gamma} + \frac1p} \bigg) (1 + T),
\end{align}
where $C_0$ only depends on $p, \gamma, g, \alpha$, the bounds on $\mathbf w_h$ obtained in Lemma \ref{BdsWh}, and the implicit constant appearing in \eqref{UnifBds}, provided
\begin{align*}
	&\alpha<\min \bigg\{ \frac{3-p}{2p-1}, \frac{3(4p\gamma - 3p - 6\gamma)}{p\gamma + 3p + 6\gamma} \bigg\} \quad \text{with}\\
    &\frac32< \gamma \leq 3, \ \frac{6\gamma}{4\gamma-3} < p < 3, \quad \text{or} \quad \gamma>3, \ 2 \leq p < 3.
\end{align*}

Recalling the definition of $E_0$ from \eqref{eq:E0} as
\begin{align*}
    E_0 &= \int_{\mf(0)}\bigg(\frac{|\mathbf m_0|^2}{2\rho_0} +  P(\rho_0, \vartheta_0) \bigg) \dd x + \frac{m}{2}|\mathbf V_0|^2 + \frac12 \mathbb J(0) \omega_0\cdot\omega_0,\\
    \mathbb J(0) &= \int_{\ms_0 - \vc G_0}\rho_\ms\Big(|x|^2{\mathbb I}-x\otimes x\Big) \dd x,
\end{align*}

we see that collision can occur only if the solid's mass in \eqref{infinalIneq} is large enough, meaning in fact its density is very high. Heuristically, this shall be clear: a light object would swim rather than sink. Since $E_0$ depends on the solid's mass, we require the solid initially to have low vertical and rotational speed. More precisely, choosing $\mathbf V_0$ and $\omega_0$ such that $|\mathbf V_0|, |\omega_0|=\mathcal{O}(m^{-\frac12})$, and choosing $m$ high enough such that
\begin{align}\label{finalIneq}
    C_0 \max\{m^{-1/2}, m^{-3/2} \} \bigg(1 + E_0^{\frac12 + \frac{1}{\gamma} + \frac1p} \bigg) < 1,
\end{align}
the solid touches the boundary of $\Omega$ in finite time, ending the proof of Theorem \ref{theo1}.

\begin{Remark}
We see that if, by change, the constant $C_0<1$ small enough, then we can get rid of the assumption on the smallness of $\vc V_0$ and $\omega_0$ by also choosing $m<1$. Indeed, in this case $\max\{m^{-1/2}, m^{-3/2} \} = m^{-3/2}$ and $E_0 \lesssim 1$. Hence, for appropriate values $m<1$ and $C_0 m^{-3/2} < 1$, inequality \eqref{finalIneq} can still be valid.
\end{Remark}

\begin{Remark}\label{rem3}
For the case $d=2$, although the construction of $\vc w_h$ is slightly different (see Section~\ref{sec:62}), the same proof of Lemma~\ref{BdsWh} shows
\begin{align*}
\|\vc w_h\|_{L^q(\Omega_{h, r_0})} &\lesssim 1 \text{ for any } q < 1 + \frac{2}{\alpha}, \\
\|\del_h \vc w_h\|_{L^q(\Omega_{h, r_0})} + \|\nabla \vc w_h\|_{L^q(\Omega_{h, r_0})} &\lesssim 1 \text{ for any } q < \frac{2+\alpha}{1+2\alpha},
\end{align*}
the range of $q$ we shall compare with the one from Remark~\ref{rem:fits} and Theorem~\ref{thm:Starov}. Regarding the estimate of $I_4$, we thus find again $|I_4| \lesssim (E_0 + 1)^\frac{1}{p'} T^\frac{1}{p'}$ provided
\begin{align*}
p < \frac{2+\alpha}{1+2\alpha} \Leftrightarrow \alpha < \frac{2-p}{2p-1}, \quad 1<p<2.
\end{align*}
The estimate for the convective term $I_1$, however, still needs $p \geq 2$ to comply with the square integrability of $\nabla \vu$ in time. Hence, for the two-dimensional Navier-Stokes equations, we cannot conclude that collision happens. On the other hand, if we drop the convective term, the estimates for the fluid parts of the integrals remain the same provided
\begin{align*}
&\text{for } I_2^f: \quad \frac{2\gamma}{\gamma-1} < 1 + \frac{2}{\alpha} \Leftrightarrow \alpha < \frac{2(\gamma-1)}{\gamma+1},\\
&\text{for } I_5: \quad \frac{\gamma}{\gamma-1} < 1 + \frac{2}{\alpha} \Leftrightarrow \alpha < 2 (\gamma-1),\\
&\text{for } I_3: \quad \frac{p^\ast}{p^\ast (\gamma-1) - \gamma} < \frac{2+\alpha}{1+2\alpha},
\end{align*}
where now $p^\ast = 2p/(2-p)$. This last condition is equivalent to
\begin{align*}
\alpha < \frac{p^\ast \gamma - 2 p^\ast - 2 \gamma}{p^\ast \gamma + p^\ast + \gamma} = \frac{4(p \gamma - p - \gamma)}{p\gamma+2p+2\gamma}, \quad p > \frac{\gamma}{\gamma-1}.
\end{align*}
Hence, we find that collision for the two-dimensional compressible Stokes equations happens provided
\begin{align*}
\frac{\gamma}{\gamma-1}<p<2, \quad \gamma>2, \quad \alpha < \min\left\{ \frac{2-p}{2p-1}, \frac{4(p \gamma - p - \gamma)}{p\gamma+2p+2\gamma} \right\}.
\end{align*}
Note that this corresponds purely to the case of \emph{shear-thinning} fluids, in contrast to the three-dimensional case.
\end{Remark}



\chapter{Specific model examples}\label{ch:Ex}
This chapter is devoted to investigate some precise examples of (Navier-)Stokes and Navier-Stokes-Fourier equations as well as stress tensors $\bS$ fulfilling the requirements \ref{S1}--\ref{S3}. We start with the easiest case of incompressible Stokes equations, and will end with a model of compressible, heat conducting, non-Newtonian fluids. Again, we strongly remark that the question of \emph{existence} of weak and/or strong solutions is just known in some special cases, and sometimes even just for fluids without immersed bodies. However, as the system of fluid-structure can be written as a single system of pure-fluid type, for which existence results where obtained, it is not unreasonable to assume that such a solution with the desired properties exists. We will give some references to available existence results at appropriate places. Moreover, the fact that collision might occur shows that the solutions, if available, have just \emph{finite time of existence}; in particular, they cannot be prolonged after the time $T_\ast>0$ where collision happens, which mathematically results in a blow-up of some $L^q$-norms of the velocity's gradient (see also Remark~\ref{rem2}).

\section{Incompressible Stokes}
Probably the easiest model of fluid flow around a rigid object travelling through the fluid is given by the following set of linear, incompressible Stokes equations:
\begin{align}\label{Stokes}
\begin{cases}
\div \vu = 0 & \text{in } \mf(t),\\
\del_t\vu - \div \bS + \nabla p = -g \mathbf e_3 & \text{in } \mf(t),\\
\vu = 0 & \text{on } \del \Omega,\\
\vu = \dot{\mathbf G}(t) + \omega(t)\times (x-\mathbf G(t)) & \text{on } \del \ms(t),\\
m \ddot{\mathbf G} = -\int_{\del \ms} (\bS - p \Id)\mathbf n \dd \sigma - \int_{\ms} \rho_\ms g \mathbf e_3 \dd x,\\
\frac{\mathrm{d}}{\mathrm{d} t}(\mathbb{J} \omega) = - \int_{\del \ms} (x-\mathbf G) \times (\bS - p\Id)\mathbf n \dd \sigma - \int_{\ms} (x-\mathbf G) \times \rho_\ms g \mathbf e_3 \dd x,\\
\vu(0)=\vu_0, \ \mathbf G(0)=\mathbf G_0, \ \dot{\mathbf G}(0) = \mathbf V_0, \ \omega(0)=\omega_0 & \text{in } \mf(0),
\end{cases}
\end{align}
where $(\rho_\ms, m, g) \in (0,\infty)^3$, $\mathbb{J} \in \R^{3\times 3}$, and $\vu_0, \vc G_0, \vc V_0, \omega_0$ are as before. The stress tensor is given by Newton's rheological law
\begin{align*}
\bS = 2\mu \bD(\vu) = \mu \big( \nabla \vu + \nabla^T \vu \big), \quad \mu>0,
\end{align*}
thus the term $\div \bS$ can also be written in the more common form $\mu \Delta \vu$, and we set the (constant) fluid's density to be equal to $1$. As easily seen, $\bS$ fulfils all the requirements stated in \ref{S1}--\ref{S3}. Indeed, continuity \ref{S1} is obvious from the definition of $\bS$. Regarding monotonicity \ref{S2}, for all $\mathbb{M}, \mathbb{N} \in \R_{\rm sym}^{3 \times 3}$ we have
\begin{align*}
[\bS(\mathbb{M}) - \bS(\mathbb{N})] : (\mathbb{M} - \mathbb{N}) = 2\mu |\mathbb{M}-\mathbb{N}|^2 \geq 0.
\end{align*}
Setting $\mathbb{N}=0$ in the above, we conclude the growth condition \ref{S3} by choosing $c_0=c_1=2\mu >0$, $\delta=0$, and $p=2$.\\

The definition of weak solutions for system \eqref{Stokes} is classical. For completeness, we state it here.
\begin{Definition}\label{def:Stokes}
Let $\vu_0 \in L^2(\Omega)$ such that $\div \vu_0=0$ and $\bD(\vu_0) = 0$ on $\ms$. We say that $\vu \in L^\infty(0,T;L^2(\Omega)) \cap L^2(0,T;W_0^{1,2}(\Omega))$ with $\div \vu = 0$ is a finite energy weak solution to \eqref{Stokes} if:
\begin{itemize}
\item The weak formulation of the momentum equation holds:
\begin{align*}
&\int_\Omega \vu(\tau) \cdot \phi(\tau) \dd x - \int_0^\tau \int_\Omega \vu \cdot \del_t \phi \dd x \dd t + \int_0^\tau \int_\Omega \bS : \nabla \phi \dd x \dd t \\
&= \int_\Omega \vu_0 \cdot \phi(0) \dd x + \int_0^\tau \int_\Omega \mathbf f \cdot \phi \dd x \dd t 
\end{align*}
with $\mathbf f = -g \mathbf e_3$, for any $\phi \in C_c^\infty([0,T) \times \Omega)$ such that $\div \phi = 0$ and $\bD(\phi)=0$ in a neighborhood of $\ms$;
\item The energy inequality
\begin{align}\label{EISt}
\int_\Omega \frac12 |\vu|^2(\tau) \dd x + \int_0^\tau \int_\Omega \bS:\nabla \vu \dd x \dd t \leq \int_\Omega \frac12 |\vu_0|^2 \dd x + \int_0^\tau \int_\Omega \mathbf f \cdot \vu \dd x \dd t
\end{align}
holds for almost any $\tau \in [0, T]$, where $\mathbf f = -g \mathbf e_3$.
\end{itemize}
\end{Definition}
We note that the energy \emph{equality} formally follows from multiplying the momentum equation by the solution $\vu$, and indeed holds for smooth (classical) solutions. That it is replaced in \eqref{EISt} by an \emph{inequality} is due to the fact that merely \emph{weak} solutions can dissipate more energy than expected. Another point of view is that norms are just weakly lower semi-continuous rather than continuous, thus the former equality changes into inequality.\\

By the classical theory for Stokes and Navier-Stokes equations (see \cite{Feireisl2003, Ladyzhenskaya1969, Ladyzhenskaya1975}), we can state the following
\begin{Theorem}
Let the initial datum $\vu_0 \in L^2(\Omega)$ with $\div \vu_0=0$ and $\bD(\vu_0)=0$ on $\ms$. Then there exists a weak solution to system \eqref{Stokes} in the sense of Definition~\ref{def:Stokes}.
\end{Theorem}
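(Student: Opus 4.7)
The plan is to construct the weak solution by a penalization scheme in the spirit of San~Mart\'in--Starovoitov--Tucsnak: for each $\e > 0$, extend the problem to all of $\Omega$ by treating the rigid body as a highly viscous fluid, with viscosity $\mu_\e = \mu$ on $\mf_\e(t)$ and $\mu_\e = \mu/\e$ on $\ms_\e(t)$, and density $\rho_\e = 1$ on $\mf_\e(t)$, $\rho_\e = \rho_\ms$ on $\ms_\e(t)$. The approximate solid domain $\ms_\e(t)$ is defined by transporting $\ms(0)$ along the flow generated by the projection of $\vu_\e$ onto the rigid motions of $\ms_\e(0)$. At fixed $\e$ this amounts to a linear Stokes system with bounded, discontinuous coefficients, coupled to an ODE describing the solid motion; existence of $\vu_\e$ follows from a Schauder fixed-point argument (on the map sending a given Lipschitz rigid motion to the one reconstructed from the corresponding Stokes solution), combined with classical theory (cf.~\cite{Ladyzhenskaya1969}).

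The next step is to extract $\e$-uniform estimates from the energy inequality \eqref{EISt} applied to the penalized system:
\begin{align*}
\sup_{t \in [0,T]} \|\vu_\e(t)\|_{L^2(\Omega)}^2 + \mu \int_0^T \!\! \int_{\mf_\e(t)} |\bD(\vu_\e)|^2 \dd x \dd t + \frac{\mu}{\e} \int_0^T \!\! \int_{\ms_\e(t)} |\bD(\vu_\e)|^2 \dd x \dd t \leq C.
\end{align*}
The third term forces any weak $L^2$-limit $\vu$ to satisfy $\bD(\vu) = 0$ on $\ms(t)$, i.e.~to be rigid on the solid. Combined with a bound on $\del_t \vu_\e$ in a negative Sobolev space (obtained by duality from the momentum equation, testing against divergence-free smooth functions), the Aubin--Lions lemma yields strong $L^2((0,T) \times \Omega)$ compactness of $\vu_\e$. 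The rigid-motion parameters $(\vc G_\e, \dot{\vc G}_\e, \omega_\e)$ inherit uniform Lipschitz and $L^\infty$ bounds from the energy estimate, so subsequences converge uniformly on $[0,T]$, and consequently $\ms_\e(t) \to \ms(t)$ in the Hausdorff metric.

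The delicate step, and the main obstacle, is to pass to the limit while preserving the rigidity constraint on a domain that depends on the unknown. Given a test function $\phi \in C_c^\infty([0,T) \times \Omega)$ that is divergence-free and satisfies $\bD(\phi) = 0$ in a neighborhood of $\ms(t)$, one must construct admissible approximate test functions $\phi_\e$ that are divergence-free and rigid on $\ms_\e(t)$; this is done by composing $\phi$ with the near-identity map sending $\ms(t)$ onto $\ms_\e(t)$ and correcting the divergence via a Bogovski\u{\i} operator supported away from the obstacles, using that $\ms_\e(t)$ is eventually contained in a small Hausdorff neighborhood of $\ms(t)$. Plugging $\phi_\e$ into the penalized weak formulation and passing to the limit yields the weak momentum equation for $\vu$, while weak lower semi-continuity of the $L^2$-norm transfers \eqref{EISt} to the limit. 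Because the system is linear, no additional compactness on $\nabla \vu_\e$ beyond the weak one is required, which keeps the passage to the limit substantially simpler than for its Navier--Stokes counterpart.
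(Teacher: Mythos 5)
The paper does not actually give a proof of this theorem: it simply states the result and cites classical references (Feireisl~2003, Ladyzhenskaya 1969/1975) for the existence theory. You have supplied a self-contained construction where the paper supplies none, so there is nothing to compare against at the level of detail — but your chosen route (the highly-viscous-fluid penalization in the spirit of San~Mart\'in--Starovoitov--Tucsnak) is indeed the standard one in exactly those references, and the broad outline you give is sound: penalize, extract energy bounds uniform in $\e$, use the $O(1/\e)$ coercivity to force rigidity in the limit, get time compactness from a negative-Sobolev bound on $\del_t\vu_\e$ plus Aubin--Lions, and identify the limit in the weak formulation while transferring the energy inequality by weak lower semi-continuity.

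One remark worth flagging: the passage-to-the-limit step you label as ``the delicate one'' is actually simpler than you make it. An admissible test function $\phi$ satisfies $\bD(\phi)=0$ not just on $\ms(t)$ but in an open \emph{neighborhood} of it; since $\ms_\e(t)\to\ms(t)$ in Hausdorff distance (uniformly in $t$, by your Lipschitz bound on the rigid motion), for $\e$ small enough the approximate solid $\ms_\e(t)$ sits inside that neighborhood, and $\phi$ is then \emph{already} rigid on $\ms_\e(t)$ and divergence-free — no near-identity map, no Bogovski\u{\i} correction. The composition/correction machinery is what you would need if $\phi$ were rigid only on $\ms(t)$ itself, which is precisely why the weak formulation is phrased with a neighborhood. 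Using $\phi$ directly as the test function at the $\e$-level also sidesteps the question of whether the corrected functions converge strongly enough to pass to the limit in the time-derivative pairing. With that simplification the argument is clean; otherwise the proposal is correct and strictly more than the paper itself provides.
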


From inequality \eqref{EISt}, we immediately get the desired bounds on the velocity. Indeed, using that for gravity $g\mathbf e_3 = g \nabla [x\mapsto x_3]$, we integrate by parts and use the zero boundary conditions of $\vu$ on $\del \Omega$ to obtain
\begin{align}\label{consEner}
\int_0^\tau \int_\Omega \mathbf f \cdot \vu \dd x \dd t = -\int_0^\tau \int_\Omega g \mathbf e_3 \cdot \vu \dd x \dd t = \int_0^\tau \int_\Omega g x_3 \div \vu \dd x \dd t = 0
\end{align}
by the solenoidality of $\vu$. Hence, using further Korn's and Poincar\'e's inequality \eqref{Korn}--\eqref{Poinc}, we end up with
\begin{align*}
\|\vu\|_{L^\infty(0,T;L^2(\Omega))}^2 + \|\vu\|_{L^2(0,T;W_0^{1,2}(\Omega))}^2 \leq C(\Omega, \mu) E_0, \quad E_0 = \int_\Omega \frac12 |\vu_0|^2 \dd x.
\end{align*}

In view of the above, Theorem~\ref{theo1} is applicable and we ensure collision as long as $\alpha < \frac13$.

\begin{Remark}
As already stated, collision can be proven even for the case $\alpha \geq \frac13$ by means of analyzing the drag force and the corresponding ODE for the distance $h(t)$, see Chapter~\ref{ch:6} and \cite[Section~3.1]{GerardVaretHillairet2012}.
\end{Remark}

\begin{Remark}\label{rem:simply}
To get a lean notation in the following sections, we emphasize that the equations \eqref{Stokes}$_3$--\eqref{Stokes}$_6$ will not change in any model under consideration, so we skip their occurrence later on. Moreover, in the case of compressible and/or heat conducting fluids, the initial conditions \eqref{Stokes}$_7$ are completed with initial conditions for the new unknowns, so we also omit them in the sequel.
\end{Remark}

\section{Incompressible non-Newtonian Navier-Stokes}
Another well-known model is the non-linear Navier-Stokes system as investigated in \cite{BGMSG2012, DieningRuzickaWolf2010, Ladyzhenskaya1969, Wolf2007} (for the pure fluid system). With regard to Remark~\ref{rem:simply}, we modify the momentum equation \eqref{Stokes}$_2$ in the following way:
\begin{align}\label{INSNN}
\del_t \vu + \div(\vu \otimes \vu) - \div \bS + \nabla p = -g\mathbf e_3 \quad \text{in } \mf(t),
\end{align}
where the stress tensor $\bS=\bS(\bD(\vu))$ now is just assumed to satisfy the assumptions \ref{S1}--\ref{S3} with $\vartheta \equiv \text{constant}$. All other equations in \eqref{Stokes} remain unchanged. Also, the notion of weak solutions as given in Definition~\ref{def:Stokes} is the same with obvious changes in the weak momentum formulation due to the presence of the convective term, and the energy inequality \eqref{EISt} still remains valid due to
\begin{align*}
\int_\Omega \div(\vu \otimes \vu) \cdot \vu \dd x = \int_\Omega \frac12 \div(|\vu|^2 \vu) \dd x = \int_{\del \Omega} \frac12 |\vu|^2 \vu \cdot \vc n \dd x = 0.
\end{align*}
Using the growth condition \ref{S3} on $\bS$, we obtain in the same way as before the uniform bounds
\begin{align*}
\|\vu\|_{L^\infty(0,T;L^2(\Omega))}^2 + \|\vu\|_{L^p(0,T;W_0^{1,p}(\Omega))}^p \leq C(\Omega, \delta, c_0, c_1, p) (E_0 + 1).
\end{align*}
Hence, Theorem~\ref{theo1} yields collision as long as $2\leq p < 3$ and $\alpha < \frac{3-p}{2p-1}$. We recall that it seems reasonable that $\alpha \to 0$ as $p \to 3$, since the lower tip of the solid shall be ``sharper'' for a shear-thickening fluid to ``cut'' through it. Moreover, neglecting the convective part $\div(\vu \otimes \vu)$ to get a non-Newtonian incompressible Stokes system, we can handle all $p \in (1,3)$ and all $\alpha<\frac{3-p}{2p-1}$, thus also allowing for shear-thinning fluids.

\section{Incompressible non-Newtonian Navier-Stokes-Fourier}
Regarding heat conducting fluids, we have to insert in the model equations for the temperature. Moreover, the energy \emph{inequality} satisfied for weak solutions is now replaced by an energy \emph{equality}, together with an additional \emph{entropy inequality}. The new fluid's system reads
\begin{align*}
\begin{cases}
\div \vu = 0 & \text{in } \mf(t),\\
\del_t \vu + \div(\vu \otimes \vu) - \div\bS + \nabla p = -g\mathbf e_3 & \text{in } \mf(t),\\
\del_t \vartheta + \vu \cdot \nabla \vartheta + \div \mathbf q = \frac{1}{\vartheta} \Big( \bS : \bD(\vu) - \frac{\vc q(\vartheta, \nabla \vartheta) \cdot \nabla \vartheta}{\vartheta} \Big) & \text{in } \mf(t),\\
\big[ \mathbf q(\vartheta, \nabla \vartheta) - \mathbf q(\vartheta_s, \nabla \vartheta_s) \big] \cdot \mathbf n = 0 & \text{on } \del \ms(t),\\
\mathbf q \cdot \mathbf n = 0 & \text{on } \del \Omega,
\end{cases}
\end{align*}
where the heat flow vector is given by Fourier's law
\begin{align*}
\mathbf q(\vartheta, \nabla \vartheta) = -\kappa(\vartheta) \nabla \vartheta,
\end{align*}
and the heat conductivity $\kappa$ is assumed to be a continuous function of the temperature satisfying $\kappa(\vartheta) \sim 1+\vartheta^\beta$ for some $\beta>1$. Such models where investigated in \cite{Necasova2009} for the case of so-called Boussinesq approximation, where $g \vc e_3$ is replaced by $g \vartheta \vc e_3$, and collision was ruled out as the growth parameter $p \geq 4$, as well as in \cite{Roubivcek2009} and \cite{CHSWW2018}. In the latter reference, the constructed solutions satisfy the energy inequality
\begin{align*}
\frac{\rd}{\rd t} \int_\Omega \frac12 |\vu|^2 + \vartheta \dd x \leq 0,
\end{align*}
from which we infer
\begin{align*}
\|\vu\|_{L^\infty(0,T;L^2(\Omega))}^2 + \|\vartheta\|_{L^\infty(0,T;L^1(\Omega))} \leq E_0.
\end{align*}
Note that this does not provide us with any information on the gradient of the velocity. However, the entropy inequality given by
\begin{align*}
\frac{\rd}{\rd t} \int_\Omega \frac1\vartheta \bigg(\bS:\bD(\vu) - \frac{\mathbf q\cdot\nabla\vartheta}{\vartheta} \bigg) \dd x \dd t \leq 0
\end{align*}
forces
\begin{align*}
\|\vu\|_{L^p(0,T;W_0^{1,p}(\Omega))}^p + \|\nabla \log \vartheta\|_{L^2((0,T)\times \Omega)}^2 + \|\nabla \vartheta^\frac{\beta}{2}\|_{L^2((0,T)\times \Omega)}^2 \lesssim E_0.
\end{align*}
Recalling that the temperature estimates are not important for our analysis, we again arrive at the required estimate \eqref{UnifBds}.

\section{Compressible Navier-Stokes}
Taking into account that the density of the fluid might change over time and also from one position in space to another (as is the case for the prime example of gases\footnote{Note however carefully that even liquids are compressible: water has a compressibility of about $5\cdot 10^{-10} \, {\rm Pa}^{-1}$, which is clearly ``almost zero'' compared to air with  compressibility approximately $1 \, {\rm Pa}^{-1}$.}), we have to modify the fluid's system accordingly. More precisely, it now reads
\begin{align}\label{CNSE}
\begin{cases}
\del_t \rho + \div(\rho \vu) = 0 & \text{in } \mf(t),\\
\del_t(\rho \vu) + \div(\rho \vu \otimes \vu) - \div \bS + \nabla p(\rho) = -\rho g \mathbf e_3 & \text{in } \mf(t),\\
\rho(0)=\rho_0, \ (\rho \vu)(0)=\mathbf m_0 & \text{in } \mf(0),
\end{cases}
\end{align}
where in contrast to the incompressible models in the preceding sections, the pressure is now a function of the unknown density $\rho$. To specify the pressure growth given in \eqref{p-law}, we will assume a barotropic pressure law of the form
\begin{align*}
p\in C([0,\infty)) \cap C^2((0,\infty)),\quad p(0)=0, \quad p'(\rho)>0 \ (\rho>0), \quad \lim_{\rho \to \infty} \frac{p'(\rho)}{\rho^{\gamma-1}} = p_\infty>0,\quad \gamma>\frac{3}{2}.
\end{align*}
The easiest example of such a pressure is given by the usually used barotropic law $p(\rho)=\rho^\gamma$, although other examples are possible. The stress tensor is given by Newton's law
\begin{align*}
\bS = 2\mu \bigg( \bD(\vu) - \frac13 \div \vu \Id \bigg) + \eta \div \vu \Id, \quad \mu>0, \, \eta \geq 0,
\end{align*}
and fulfils requirements \ref{S1}--\ref{S3} the same way as in the incompressible case \eqref{Stokes}. Since the former part of $\bS$ is trace-free, the parameter $\mu$ is called \emph{shear viscosity}, whereas $\eta$ is commonly known as the \emph{bulk viscosity}. The definition of weak solutions is also similar to the one for incompressible fluids, and existence of such was shown in \cite{Feireisl2003}.
\begin{Definition}\label{def:wkSol}
Let $\gamma>\frac{3}{2}$, $\rho_0 \in L^\gamma(\Omega)$, and $\mathbf m_0 \in L^\frac{2\gamma}{\gamma+1}(\Omega)$, together with the compatibility conditions
\begin{align*}
\rho_0\geq 0, \quad \mathbf m_0 = 0 \text{ whenenver } \rho_0 = 0, \quad \frac{|\mathbf m_0|^2}{\rho_0} \in L^1(\Omega).
\end{align*}
We call a couple $(\rho, \vu)$ a finite energy weak solution of system \eqref{CNSE} if:
\begin{itemize}
\item The solution belongs to the regularity class
\begin{gather*}
\vu \in L^2(0,T;W_0^{1,2}(\Omega)), \quad \rho \in L^\infty(0,T;L^\gamma(\Omega)), \quad \rho |\vu|^2 \in L^\infty(0,T;L^1(\Omega)),\\
\bD(\vu) = 0 \text{ on } \ms, \quad \rho \geq 0 \text{ a.e.~in } (0,T) \times \Omega, \quad \rho = \rho_\ms \text{ on } \ms;
\end{gather*}
\item The weak formulation of the momentum equation holds:
\begin{align*}
&\int_0^\tau \int_\Omega \rho \vu \cdot \del_t \phi + \rho \vu \otimes \vu : \nabla \phi + p(\rho) \div \phi - \bS:\nabla \phi + \rho \mathbf f \cdot \phi \dd x \dd t\\
&= \int_\Omega (\rho \vu)(\tau) \cdot \phi(\tau) \dd x - \int_\Omega \mathbf m_0 \cdot \phi(0) \dd x
\end{align*}
with $\mathbf f = -g \mathbf e_3$, for any $\phi \in C_c^\infty([0,T) \times \Omega)$ such that $\bD(\phi)=0$ in a neighborhood of $\ms$;
\item The energy inequality
\begin{align}\label{CEISt}
\left[ \int_\Omega \frac12 \rho |\vu|^2 + P(\rho) \dd x \right]_{t=0}^{t=\tau} + \int_0^\tau \int_\Omega \bS:\nabla \vu \dd x \dd t \leq \int_0^\tau \int_\Omega \rho \mathbf f \cdot \vu \dd x \dd t
\end{align}
holds for almost every $\tau \in [0,T]$, where $\mathbf f = -g\mathbf e_3$, $(\rho |\vu|^2)(0) := |\vc m_0|^2/\rho_0$, and the pressure potential $P$ is determined by
\begin{align}\label{PressPot}
\rho P'(\rho) - P(\rho) = p(\rho), \quad P''(\rho)=p'(\rho)/\rho.
\end{align}
\end{itemize}
\end{Definition}

From the definition of $P$ in \eqref{PressPot}, it follows immediately that $P$ essentially behaves as $p$; indeed, in the case where $p(\rho)=\rho^\gamma$, we have $P(\rho)=\rho^\gamma/(\gamma-1)$. Moreover, from the definition of $p(\rho)$, we see that there is an absolute constant $C_P(\gamma)>0$ such that
\begin{align*}
C_P^{-1} P(\rho) \leq \rho^\gamma \leq C_P P(\rho) \quad \forall \rho \geq 0.
\end{align*}
Regarding the force term, we use the same trick as before to rewrite $\mathbf e_3 = \nabla [x\mapsto x_3]$ and use the continuity equation to infer
\begin{align*}
\int_0^\tau \int_\Omega g\mathbf e_3 \cdot \rho \vu \dd x \dd t &= -\int_0^\tau \int_\Omega g x_3 \div(\rho \vu) \dd x \dd t = \int_0^\tau \int_\Omega g x_3 \del_t \rho \dd x \dd t\\
&= \int_\Omega g x_3 (\rho(\tau) - \rho(0)) \dd x \leq 2\|\rho\|_{L^\infty(0,T;L^\gamma(\Omega))} \|g x_3\|_{L^1(0,T;L^{\gamma'}(\Omega))}\\
&\leq \frac{1}{2C_P}\|\rho\|_{L^\infty(0,T;L^\gamma(\Omega))}^\gamma + C(\Omega, \gamma, g) \leq \frac12 \|P(\rho)\|_{L^\infty(0,T;L^1(\Omega))} +  C(\Omega, \gamma, g).
\end{align*}
Thus, using Gr\"onwall's inequality \eqref{Gronwall}, the energy inequality \eqref{CEISt} implies
\begin{align*}
\|\rho\|_{L^\infty(0,T;L^\gamma(\Omega))}^\gamma + \|\vu\|_{L^2(0,T;W_0^{1,2}(\Omega))}^2 + \|\rho |\vu|^2\|_{L^\infty(0,T;L^1(\Omega))} \leq C(\Omega, \mu, \eta, \gamma, g) (E_0 + 1),
\end{align*}
which is precisely inequality \eqref{UnifBds} for constant temperature and $p=2$.

\section{Compressible non-Newtonian Navier-Stokes-Fourier}
We close this chapter in putting together all the systems above to get a model for heat conducting, compressible, non-Newtonian fluids. The equations read
\begin{align*}
\begin{cases}
\del_t \rho + \div(\rho \vu) = 0 & \text{in } \mf(t),\\
\del_t(\rho \vu) + \div(\rho \vu \otimes \vu) - \div \bS + \nabla p(\rho, \vartheta) = -\rho g \mathbf e_3 & \text{in } \mf(t),\\
\del_t(\rho s) + \div(\rho s \vu) + \div \frac{\vc q}{\vartheta} = \sigma & \text{in } \mf(t).
\end{cases}
\end{align*}
Here, $s=s(\rho, \vartheta)$ is the specific entropy, which is related to the internal energy $e=e(\rho, \vartheta)$, the pressure $p=p(\rho, \vartheta)$, the density $\rho$, and the temperature $\vartheta$ through Gibbs' relation
\begin{align}\label{Gibbs}
\vartheta Ds = De + p D \bigg( \frac{1}{\rho} \bigg).
\end{align}
Further, the entropy production rate $\sigma$ fulfils
\begin{align*}
\sigma \geq \frac{1}{\vartheta} \bigg( \bS:\nabla\vu - \frac{\mathbf q \cdot \nabla \vartheta}{\vartheta} \bigg)
\end{align*}
in the sense of measures. 
Moreover, we will impose some constitutive relations, the precise motivation behind can be found in \cite[Section~1.4]{FeireislNovotny2009singlim} (for $\gamma = \frac{5}{3}$):
\begin{align}\label{constRel}
\begin{aligned}
p(\rho,\vartheta) &= p_{\rm m}(\rho, \vartheta) + p_{\rm rad}(\vartheta), & p_{\rm m}(\rho,\vartheta) &= \vartheta^\frac{\gamma}{\gamma-1} \mathfrak{P}\bigg( \frac{\rho}{\vartheta^\frac{1}{\gamma-1}} \bigg), & p_{\rm rad}(\vartheta) &= \frac{a}{3}\vartheta^4,\\
e(\rho,\vartheta) &= e_{\rm m}(\rho, \vartheta) + e_{\rm rad}(\vartheta), & e_{\rm m}(\rho,\vartheta) &= \frac{1}{\gamma-1} \frac{\vartheta^\frac{\gamma}{\gamma-1}}{\rho} \mathfrak{P}\bigg( \frac{\rho}{\vartheta^\frac{1}{\gamma-1}} \bigg), & e_{\rm rad}(\vartheta) &= \frac{a}{\rho}\vartheta^4,\\
s(\rho,\vartheta) &= s_{\rm m}(\rho, \vartheta) + s_{\rm rad}(\vartheta), & s_{\rm m}(\rho,\vartheta) &= \mathfrak{S}\bigg( \frac{\rho}{\vartheta^\frac{1}{\gamma-1}} \bigg), & s_{\rm rad}(\vartheta) &= \frac{4a}{3} \frac{\vartheta^3}{\rho},
\end{aligned}
\end{align}
where $a>0$ is the Stefan-Boltzmann constant, $\mathfrak{P}\in C([0,\infty)) \cap C^2((0,\infty))$ satisfies
\begin{align*}
\mathfrak{P}(0)=0, \quad \mathfrak{P}'(Z)>0 \ (Z>0), \quad 0<\frac{\gamma \mathfrak{P}(Z) - \mathfrak{P}'(Z)Z}{Z} \leq c \ (Z\geq 0),
\end{align*}
and
\begin{align*}
\mathfrak{S}'(Z)=-\frac{1}{\gamma-1} \frac{\gamma \mathfrak{P}(Z) - \mathfrak{P}'(Z)Z}{Z^2} < 0.
\end{align*}
It follows that the function $Z \mapsto \mathfrak{P}(Z)/Z^\gamma$ is decreasing, and we assume
\begin{align*}
\lim_{Z\to \infty} \frac{\mathfrak{P}(Z)}{Z^\gamma} = p_\infty>0.
\end{align*}
Note in particular that $p(\rho, \vartheta)$ complies with the growth assumption \eqref{p-law}.\\

Similar to the incompressible heat-conducting case, we have inequalities for the energy and entropy, finally resulting in uniform bounds
\begin{align*}
\|\vu\|_{L^p(0,T;W_0^{1,p}(\Omega))}^p + \|\rho\|_{L^\infty(0,T;L^\gamma(\Omega))}^\gamma + \|\rho |\vu|^2\|_{L^\infty(0,T;L^1(\Omega))} \lesssim E_0 + 1,
\end{align*}
where the implicit constant depends on the data. Again, these bounds enable us to conclude.



\chapter{Newtonian flow with temperature-growing viscosities}\label{ch:4}
In this short chapter, we investigate a different model for viscosity that does not fit into the assumptions \ref{S1}--\ref{S3}: viscosities that can grow to infinity as the temperature does. The version of what we present here was earlier given in \cite{NecasovaOschmann2024}. To fix the setting, let
\begin{align*}
\bS=2\mu(\vartheta) \bigg( \bD(\vu) - \frac13 \div \vu \Id \bigg) +\eta(\vartheta) \div \vu \Id,
\end{align*}
where the viscosity coefficients $\mu,\eta$ are assumed to be continuous functions on $(0,\infty)$, $\mu$ is moreover Lipschitz continuous, and they satisfy
\begin{align*}
1+\vartheta &\lesssim \mu(\vartheta), \quad |\mu'|\lesssim 1, && 0 \leq \eta(\vartheta) \lesssim 1+\vartheta.
\end{align*}
Note that this means we consider a Newtonian fluid with growing viscosities that are \emph{not} uniformly bounded in the temperature variable, thus not fulfilling \ref{S3}.

The equations governing the fluid's motion are now given by
\begin{align}\label{NSF}
\begin{cases}
\del_t \rho + \div(\rho \vu) = 0 & \text{in } \mf,\\
\del_t(\rho \vu) + \div(\rho \vu \otimes \vu) - \div \bS + \nabla p(\rho, \vartheta) = \rho \vc f & \text{in } \mf,\\
m \ddot{\vc G}(t) = -\int_{\del \ms} (\bS - p\Id) \vc n \dd \sigma + \int_\ms \rho_\ms \vc f \dd x & \text{in } \mf,\\
\frac{\rd}{\rd t}(\mathbb J \omega) = -\int_{\del \ms} (x-\vc G) \times (\bS - p\Id)\vc n \dd \sigma + \int_\ms (x-\vc G) \times \rho_\ms \vc f \dd x & \text{in } \mf,\\
\del_t(\rho s) + \div(\rho s \vu) + \div \frac{\vc q}{\vartheta} = \frac{1}{\vartheta} \big( \bS : \nabla \vu - \frac{\vc q \cdot \nabla \vartheta}{\vartheta} \big) & \text{in } \mf,\\
\vu = \dot{\vc G}(t) + \omega(t) \times (x - \vc G(t)) & \text{on } \del \ms,\\
\vu = 0 & \text{on } \del \Omega,\\
\vc q \cdot \vc n = 0 & \text{on } \del \Omega.
\end{cases}
\end{align}
Here, the pressure is given by a combination of adiabatic pressure law, Boyle-Mariott law, and radiation pressure arising from the Stefan-Boltzmann law as \[ p(\rho, \vartheta) = p_\infty \rho^\gamma + c_v (\gamma-1) \rho \vartheta + \frac{a}{3} \vartheta^4,\] the heat flow vector $\vc q = \vc q(\vartheta, \nabla \vartheta)$ is given by Fourier's law
\begin{align*}
\vc q(\vartheta, \nabla \vartheta) = - \kappa(\vartheta) \nabla \vartheta
\end{align*}
with the heat capacity coefficient satisfying
\begin{align*}
\kappa(\vartheta) \sim 1+\vartheta^\beta \ \text{for some} \ \beta>1,
\end{align*}
and the specific entropy $s=s(\rho, \vartheta)$ is connected to the pressure $p(\rho, \vartheta)$ and the internal energy $e(\rho, \vartheta)$ of the fluid through Gibbs' relation \eqref{Gibbs}. Note that this relation determines the internal energy and specific entropy as
\begin{align*}
e(\rho, \vartheta) = \frac{p_\infty}{\gamma-1} \rho^{\gamma-1} + c_v \vartheta + \frac{a}{\rho} \vartheta^4, &&
s(\rho, \vartheta) = \log \bigg( \frac{\vartheta}{\rho^{\gamma-1}} \bigg)^{c_v} + \frac{4a}{3} \frac{\vartheta^3}{\rho},
\end{align*}
where $c_v>0$ is the specific heat capacity at constant volume. In the language of \eqref{constRel}, this is equivalent to the choice
\begin{align*}
\mathfrak{P}(Z) = p_\infty Z^\gamma + c_v(\gamma-1) Z, && \mathfrak{S}(Z) = -c_v(\gamma-1) \log Z.
\end{align*}
Note that both $\mathfrak{P}$ and $\mathfrak{S}$ satisfy all the assumptions of the previous chapter.

Denoting now $\vartheta_s$ the solid's temperature, we extend the temperature similarly to the velocity and density as
\begin{align*}
\vartheta = \begin{cases}
\vartheta & \text{in } \mf,\\
\vartheta_s & \text{in } \ms,
\end{cases}
\end{align*}
and we consider the continuity of the heat flux $\vc q(\vartheta, \nabla\vartheta)\cdot \vc n = \vc q(\vartheta_s, \nabla \vartheta_s) \cdot \vc n$ on $\del \ms$. Moreover, for simplicity we assume that the heat capacity coefficient of the solid is the same as the fluid's one (this can be generalized, see \cite[Equation~(4.23)]{Brezina2008}).\\

Noticing that the existence proof presented in \cite{Brezina2008} also works for any $\beta>2$ instead of the considered $\beta=3$, in such case we have the uniform bound
\begin{align*}
\|\vartheta^\frac{\beta}{2}\|_{L^2(0,T;W^{1,2}(\Omega))}^2 \lesssim E_0+1,
\end{align*}
where here
\begin{align*}
E_0 = \int_{\mf(0)} \frac{|\vc m_0|^2}{2 \rho_0} + \rho_0 e(\rho_0, \vartheta_0) \dd x + \frac{m}{2} |\vc V_0|^2 + \mathbb{J}(0)\omega_0 \cdot \omega_0.
\end{align*}
Thanks to Sobolev embedding, this yields
\begin{align*}
\vartheta^\frac{\beta}{2} \in L^2(0,T;L^6(\Omega)), \quad \text{that is,} \quad \vartheta \in L^{\beta}(0,T;L^{3\beta}(\Omega)),
\end{align*}
in turn,
\begin{align*}
\|\vartheta\|_{L^\beta(0,T;L^{3\beta}(\Omega))}^\beta \lesssim E_0+1.
\end{align*}
Accordingly, the estimate for the stress tensor changes into
\begin{align*}
&\left| \int_0^T \zeta \int_\Omega \bS:\nabla \mathbf w_h \dd x \dd t \right| \lesssim \int_0^T \zeta \|\vartheta\|_{L^{3\beta}(\Omega)} \|\nabla \vu\|_{L^2(\Omega)} \|\nabla \mathbf w_h\|_{L^\frac{6\beta}{3\beta-2}(\Omega)} \dd t \\
&\leq \|\zeta\|_{L^\frac{2\beta}{\beta-2}(0,T)} \|\vartheta\|_{L^\beta(0,T;L^{3\beta}(\Omega))} \|\nabla \vu\|_{L^2((0,T)\times \Omega)} \|\nabla\mathbf w_h\|_{L^\infty(0,T;L^\frac{\beta}{3\beta-2}(\Omega))}\\
    &\lesssim (E_0+1)^{\frac{1}{\beta} + \frac12} T^{\frac12-\frac{1}{\beta}},
\end{align*}
provided
\begin{align*}
\frac{6\beta}{3\beta-2} < \frac{3+\alpha}{1+2\alpha} \Leftrightarrow \alpha < \frac{3(\beta-2)}{9\beta+2},
\end{align*}
while all the other estimates stay the same. Hence, repeating the arguments from Section~\ref{sec:PfThm}, we find that collision occurs provided
\begin{align*}
\gamma>3, \ \beta>2, \ \alpha < \bigg\{ \frac{3(\gamma-3)}{4\gamma+3}, \frac{3(\beta-2)}{9\beta+2} \bigg\}.
\end{align*}

As can be easily seen, the same arguments can be used for temperature-dependent non-Newtonian fluids, provided the stress tensor decomposes like
\begin{align*}
\bS(\vartheta, \mathbb{M}) = \mu(\vartheta) \tilde{\bS}(\mathbb{M}) + \eta(\vartheta) |\div \vu|^{p-2} \div \vu \Id
\end{align*}
for some tensor $\tilde{\bS}$ satisfying \ref{S1}--\ref{S3}, and $\mu, \eta$ are as above.

\begin{Remark}
As a matter of fact, all the analyses in this chapter also hold for the incompressible case, which (roughly speaking) corresponds to $\gamma=\infty$. Thus, collision for this type of heat conducting compressible fluids occurs if $\beta>2$ and $\alpha<\frac{3(\beta-2)}{9\beta+2}$. Also here, for constant temperature corresponding to a perfectly heat conducting fluid, we recover the borderline value $\alpha<\frac13$ in the limit $\beta\to\infty$, see Remark~\ref{rem1}.
\end{Remark}



\chapter{Incompressible fluids: A review} \label{ch:6}
\providecommand{\vv}{\mathbf v}
In contrast to the foregoing chapters dealing with a rather general class of fluids in three spatial dimensions, in this and the following chapters we want to recall the known collision results for incompressible Newtonian fluids, for a certain range of the shape value of $\alpha$, and also comparing to results in two space dimensions. To begin with, let us recall the incompressible Navier-Stokes equations for Newtonian fluids as
\begin{align}\label{inc1}
\begin{cases}
\div \vu = 0 & \text{in } \mf(t),\\
\rho_\mf (\del_t \vu + \div(\vu \otimes \vu) ) - \div \bS + \nabla p = \rho_\mf \vc f & \text{in } \mf(t),\\
\vu = 0 & \text{on } \del \Omega,\\
\vu(0)=\vu_0 & \text{in } \mf(0),
\end{cases}
\end{align}
complemented with the fluid-structure interaction terms on $\del \ms(t)$
\begin{align}\label{inc2}
\begin{cases}
m \ddot{\vc G} = -\int_{\del \ms} (\bS - p \Id) \vc n \dd \sigma + \int_\ms \rho_\ms \vc f \dd x,\\
\vu = \dot{\vc G}(t) + \omega(t) (x-\vc G(t))^\perp & \text{ if } d=2,\\
\frac{\rd}{\rd t} (J \omega) = -\int_{\del \ms} (x-\vc G)^\perp \cdot (\bS - p \Id)\vc n \dd \sigma + \int_\ms (x- \vc G)^\perp \cdot \rho_\ms \vc f \dd x & \text{ if } d=2,\\
\vu = \dot{\vc G}(t) + \omega(t) \times (x-\vc G(t)) & \text{ if } d=3,\\
\frac{\rd}{\rd t} (\mathbb{J} \omega) = -\int_{\del \ms} (x-\vc G) \times (\bS - p \Id)\vc n \dd \sigma + \int_\ms (x- \vc G) \times \rho_\ms \vc f \dd x & \text{ if } d=3,\\
\end{cases}
\end{align}
where
\begin{align*}
\bS = 2\mu \bD(\vu) = \mu (\nabla \vu + \nabla^T \vu), \quad \mu>0,
\end{align*}
$\rho_\mf, \rho_\ms>0$ are the (constant) fluid's and solid's density, respectively, and, for $d=2$, the moment of inertia $J=J(t)>0$ is a scalar function. Note that from the form of $\bS$, according to $\div(\nabla^T \vu) = \nabla \div \vu = 0$, we also see that we may write the term $\div \bS$ in a more common way as $\mu \Delta \vu$. For simplicity in notation, without loss of generality we can and will set $\mu=1$ in the sequel. Moreover, we assume that the driving force $\vc f$ is gravity, meaning $\vc f = -g \vc e_d = -g \nabla [x \mapsto x_d]$ for the gravitational constant $g>0$.

\section{Starovoitov's result}
In \cite{Starovoitov2003}, the author considered the incompressible Navier-Stokes equations with a solid of class $C^{1,\alpha}$. The aim of this section is to prove his main result on (no-)collision:
\begin{Theorem}\label{thm:Starov}
Let $\ms \subset \R^d$, $d \in \{2,3\}$, be a compact domain of class $C^{1,\alpha}$, $\alpha \in [0,1]$. If, for some $p \in [1,\infty]$, we have $\vu \in L^\infty(0,T;L^2(\Omega)) \cap L^2(0,T;W^{1,p}(\Omega))$, then the distance function $h(t)=\dist(\del \ms(t), \del \Omega)$ satisfies
\begin{align}\label{ineq1}
|\dot{h}(t)| \leq C h^\beta(t) \|\vu(t)\|_{W^{1,p}(\Omega)}
\end{align}
with $\beta = 2 - \frac{1}{1+\alpha} \big( 1 + \frac{d-1}{p} \big) - \frac1p = \frac{1+2\alpha}{p(1+\alpha)} (p-\frac{d+\alpha}{1+2\alpha})$.\\

Further, if additionally $\vu \in L^q(0,T;W^{1,p}(\Omega))$ for some $q \in [1,\infty]$, then:
\begin{enumerate}
\item[a)] If $h(T_\ast)=0$ for some $T_\ast \in [0,T]$ and $\beta < 1$, that is, $\alpha(p-1) < d$, then collision occurs with rate $\lim_{t \to T_\ast} h(t)|t-T_\ast|^{-\eta} = 0$ for $\eta = \frac{q-1}{q}\frac{1}{1-\beta} = \frac{q-1}{q} \frac{d-\alpha (p-1)}{(1+\alpha)p}$.
\item[b)] If $h(T_\ast)>0$ for some $T_\ast \in [0,T]$ and $\beta \geq 1$, that is, $\alpha(p-1) \geq d$, then $h(t)>0$ for all $t \in [0,T]$.
\end{enumerate}
\end{Theorem}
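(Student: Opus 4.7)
The plan is to first establish the pointwise bound \eqref{ineq1} on $|\dot h(t)|$ by a careful analysis of $\vu$ in the thin gap between $\del\ms(t)$ and $\del\Omega$, and then derive parts (a) and (b) by elementary ODE comparison. I will work in (generalized) cylindrical coordinates $(r,\theta,x_d)$ centered on the vertical axis of the falling solid, restricting to the gap $\Omega_{h,r_0}=\{0\le r\le r_0,\ 0\le x_d\le\psi(r)\}$ with $\psi(r)=h+r^{1+\alpha}$. The orientation of $\ms(t)$ (vertical translation, rotation at most about the $x_d$-axis) gives two crucial boundary conditions: $\vu=0$ on $\del\Omega\cap\{x_d=0\}$ and $\vu=\dot h\,\vc e_d$ on the lower tip of $\del\ms$. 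In particular, the horizontal component $u_r:=\vu\cdot\vc e_r$ vanishes on both the top and the bottom of the gap.

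The heart of the matter is to derive \eqref{ineq1} by exploiting incompressibility rather than naively applying the fundamental theorem of calculus to $\del_d u_d$. Integrating $\div\vu=0$ over the truncated cylinder $G_R=\{r\le R\}\cap\Omega_{h,r_0}$, the divergence theorem together with the boundary conditions gives, after averaging in the angular variable, the identity
\[
\dot h\,\frac{R^{d-1}}{d-1}=-R^{d-2}\,\Phi(R),\qquad \Phi(R)=\int_0^{\psi(R)}u_r(R,\cdot)\,\dd x_d,
\]
valid for every $R\in(0,r_0]$. Since $u_r$ vanishes at $x_d=0$ and $x_d=\psi(R)$, Hölder in $x_d$ followed by a Poincaré inequality yields
\[
|\Phi(R)|\le C\,\psi(R)^{2-1/p}\,\|\del_d u_r(R,\cdot)\|_{L^p(0,\psi(R))}.
\]
Combining the last two displays, raising to the $p$th power, and integrating over $R\in(0,r_0)$ with the cylindrical weight $R^{d-2}$ coming from the Lebesgue measure on $\R^d$, I get
\[
|\dot h|^p\int_0^{r_0}\frac{R^{p+d-2}}{(h+R^{1+\alpha})^{2p-1}}\,\dd R\,\lesssim\,\|\nabla\vu\|_{L^p(\Omega_{h,r_0})}^p.
\]
The substitution $R=h^{1/(1+\alpha)}u$ shows that the integral on the left is bounded below by a constant multiple of $h^{(p+d-1)/(1+\alpha)-(2p-1)}$ (from the contribution $u\sim 1$, independently of whether the rescaled $u$-integral converges at infinity), whence $|\dot h|\lesssim h^{2-1/p-(p+d-1)/(p(1+\alpha))}\|\nabla\vu\|_{L^p(\Omega)}$. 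Since $\tfrac{1}{p}+\tfrac{p+d-1}{p(1+\alpha)}=\tfrac{1}{p}+\tfrac{1}{1+\alpha}+\tfrac{d-1}{p(1+\alpha)}$, this is precisely \eqref{ineq1} with the stated exponent $\beta$.

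With \eqref{ineq1} in hand, both (a) and (b) follow from scalar ODE comparison. For (a), $\beta<1$ and $h(T_\ast)=0$: separating variables in \eqref{ineq1} gives $-\tfrac{1}{1-\beta}\tfrac{\rd}{\rd t}h^{1-\beta}\le C\|\vu(t)\|_{W^{1,p}}$, and integrating from $t$ to $T_\ast$ together with Hölder in time yields
\[
h^{1-\beta}(t)\le C(1-\beta)(T_\ast-t)^{(q-1)/q}\|\vu\|_{L^q(t,T_\ast;W^{1,p}(\Omega))},
\]
so $h(t)\le C'(T_\ast-t)^{\eta}\|\vu\|_{L^q(t,T_\ast;W^{1,p})}^{1/(1-\beta)}$ with $\eta=\tfrac{q-1}{q(1-\beta)}$. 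Absolute continuity of the Bochner integral forces the last $L^q$-factor to tend to zero as $t\to T_\ast$, upgrading the estimate to the claimed rate $h(t)=o(|t-T_\ast|^\eta)$. For (b), $\beta\ge 1$ and $h(T_\ast)>0$: when $\beta=1$, \eqref{ineq1} reads $|\tfrac{\rd}{\rd t}\log h|\le C\|\vu\|_{W^{1,p}}\in L^q(0,T)\subset L^1(0,T)$, so $\log h(t)$ stays bounded on $[0,T]$; when $\beta>1$, the same manipulation gives $|\tfrac{\rd}{\rd t}h^{1-\beta}|\le C(\beta-1)\|\vu\|_{W^{1,p}}$, and since $h^{1-\beta}(T_\ast)<\infty$ this keeps $h^{1-\beta}$ bounded above, forcing $h(t)>0$ throughout $[0,T]$.

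The main obstacle is the derivation of \eqref{ineq1} with the sharp exponent. A direct estimate $|\dot h|=|\int_0^\psi\del_d u_d\,\dd x_d|$ followed by Hölder loses an extra factor of $h^{\alpha/(1+\alpha)}$ relative to the target, which would collapse the borderline no-collision threshold $\alpha(p-1)=d$. The divergence-free trick (trading $\del_d u_d$ for $r^{2-d}\del_r(r^{d-2}u_r)$ via the divergence theorem, and then exploiting the two-sided vanishing of $u_r$ to run Poincaré in $x_d$) is exactly what supplies the extra power of $\psi\sim h$ needed to recover the correct $\beta$. Secondary technical points are the borderline values of $p$ for which the rescaled $u$-integral is only logarithmically divergent at infinity (producing at most $|\log h|$ factors that are harmless for (a) and (b)), and checking that the orientation assumption is preserved along the dynamics so that the gap profile retains the form $\psi(r)=h(t)+r^{1+\alpha}$ throughout.
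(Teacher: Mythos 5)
Your core mechanism is the same one the paper attributes to Starovoitov: convert the vertical flux into a lateral flux through a small cylinder via $\div\vu=0$, estimate the lateral flux by a Poincar\'e inequality in the vertical direction (gaining a factor $\psi\sim h+r^{1+\alpha}$), and scale the test radius as $h^{1/(1+\alpha)}$. Your slice-by-slice implementation (Poincar\'e on each $1$D fiber, then raise to the $p$-th power and integrate with the radial weight) is arguably cleaner than the paper's route (integrate the pointwise flux bound over $r$ to get a volume integral, then H\"older plus a whole-domain Poincar\'e, then pick $\rho$), because it avoids justifying a volume Poincar\'e constant for the cusp-shaped gap and produces the exponent $\beta$ automatically rather than by a separate optimisation. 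The ODE-comparison arguments for (a) and (b) are identical to the paper's.

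The genuine gap lies in the restriction to a flat wall and a purely vertical translation with rotation only about the $x_d$-axis. Theorem~\ref{thm:Starov} is stated for an arbitrary compact $C^{1,\alpha}$ body $\ms(t)$ and general $\Omega$, without the structural assumptions (A2)--(A5). Two of your steps fail outside that aligned setting. First, the flux identity $\dot h\,R^{d-1}/(d-1)=-R^{d-2}\Phi(R)$ requires the flux of $\vu$ through the parabolic cap of $\del\ms$ to be exactly $c\,R^{d-1}\dot h$; this uses $u_r\equiv 0$ on $\del\ms$, which is false for a rigid velocity that has any horizontal translation or off-axis rotation. Second, the true gap profile of a $C^{1,\alpha}$ body is not exactly $\psi(r)=h+r^{1+\alpha}$ but only comparable to it. The paper handles both points by introducing the two parabolic shells $\pi^\pm(t)$ (one inside $\ms$, one outside $\Omega$), which bound the true geometry, running the divergence theorem on the region $\mathcal G_r$ between them, using the divergence-freeness of any rigid velocity on $\pi^+\subset\ms$, and then proving the geometric identity $\dot h=-\vu_P\cdot\vc n_P$ via the auxiliary map $\mathfrak y$. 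Your proposal does not address these ingredients, so as written it proves only a special case of the statement. It is easy to repair by grafting your slice-wise $L^p$ bookkeeping onto the paper's geometric set-up, but the repair requires exactly those pieces you omitted.

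A smaller remark: the two-sided vanishing of $u_r$ that you invoke for the Poincar\'e step is unnecessary; vanishing at $x_d=0$ (from $\vu|_{\del\Omega}=0$) already gives the $1$D Poincar\'e with constant $\sim\psi$. The two-sided condition is doing real work only in the flux identity, not in the Poincar\'e estimate, and you should not present it as the key to the ``incompressibility trick.''
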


We shall compare the above result, especially the value of $\beta$, with the bounds of the test function $\vc w_h$ found in Lemma~\ref{BdsWh} (see also Remark~\ref{rem:fits}). Note also that in three dimensions and for $p=2$, the value for $\beta$ in \eqref{ineq1} is $\beta = \frac{3\alpha-1}{2(1+\alpha)}$, and also notice that $\beta<0$ precisely if $\alpha$ is less than our favourite fraction $\alpha < \frac13$. We will see this number again later on. Let us moreover emphasize that this theorem does \emph{not} prove that collision happens. It rather states that \emph{if} at some time $T_\ast$ the distance $h$ vanishes, then it does with zero speed and a certain rate determined by $\eta$. Indeed, as we will see in the next subsections, in dimension two the value for $\alpha$ to let collision happen is quite restricted, whereas in the three-dimensional case and $p=2$ we can allow for \emph{all} values $\alpha \in [0,1)$. Again, $\alpha=1$ is excluded since this corresponds to a ball-shaped object.\\

On the other hand, Starovoitov also showed that collision can happen by giving a precise example: he constructed a solution, the ``remainder'' of which when inserting in the Navier-Stokes equations gives an additional ``singular--in--$W^{-1,2}$'' force, showing that there exists a force such that the solid touches the container's bottom; we will come back to this in the next chapter. At this point, let us put a quote made by P.~Constantin: ``\emph{Be careful with statements that say `there exists a force'. Every function is a solution to Navier-Stokes: there exists a force.}''\footnote{P.~Constantin at \emph{Shocking Developments: New Directions in Compressible and Incompressible Flows: A Conference in Honor of Alexis Vasseur's 50th Birthday}, Leipzig, 26.06.--30.06.2023}

\begin{proof}[Proof of Theorem~\ref{thm:Starov}]
We just consider the case $d=3$. The case $d=2$ follows the same lines with obvious changes in the notations/definitions. To begin, since $\ms$ is rigid, we have $\vu|_{\del \ms} = \dot{\vc G}(t) + \omega(t) \times (x - \vc G(t))$. Then,
\begin{align*}
\int_\ms |\vu|^2 \dd x = \int_\ms |\dot{\vc G} + \omega \times (x-\vc G)|^2 \dd x = \int_\ms |\dot{\vc G}|^2 + |\omega \times (x - \vc G)|^2 \dd x,
\end{align*}
where we used that $\int_\ms \dot{\vc G }\cdot [\omega \times (x - \vc G)] \dd x = \dot{\vc G }\cdot [\omega \times \int_\ms x - \vc G \dd x ] = 0$ by the definition of $\vc G$ as the center of mass. This identity yields immediately a bound for $\dot{\vc G}$. To get a bound on $\omega$ itself, we take a small ball $B = B(\vc G) \subset \ms$ with midpoint $\vc G$ and write
\begin{align*}
\int_\ms |\omega \times (x-\vc G)|^2 \dd x \geq \int_B |\omega \times (x - \vc G)|^2 \dd x = \int_{B(0)} |\omega \times x|^2 \dd x.
\end{align*}
Note that the last integral now is rotationally symmetric \emph{for any rotation}. Thus, we choose to rotate the coordinate system such that $\omega = |\omega| \vc e_1$, yielding
\begin{align*}
\int_{B(0)} |\omega \times x|^2 \dd x = |\omega|^2 \int_{B(0)} |\vc e_1 \times x|^2 \dd x \gtrsim |\omega|^2.
\end{align*}
This finally enables us to conclude that
\begin{align*}
|\dot{\vc G}|^2 + |\omega|^2 \lesssim \int_\ms |\vu|^2 \dd x
\end{align*}
and since $\vu \in L^\infty(0,T;L^2(\Omega))$, we find that both $\dot{\vc G}$ and $\omega$ are bounded in $L^\infty(0,T)$, telling that the motion of $\ms$ and in particular $h$ is Lipschitz.\\

Having this in mind, let $P(t) \in \del \ms(t)$ and $Q \in \del \Omega$ be two points realizing the distance $h(t)=|P(t)-Q|$ (note that $Q$ might also depend on time in general, however, we can choose our coordinate system in such a way that $Q$ is fixed and even $Q=0$). Denote $x' = (x_1, x_2)$. Since $\del \ms$ and $\del \Omega$ are of class $C^{1,\alpha}$, there are constants $k,r>0$ such that the ``parabolic shells''
\begin{align*}
\pi^+(t) &:= \{x \in \R^3: |x'|<r, \ h(t)+k|x'|^{1+\alpha} < x_3 < h(t)+kr^{1+\alpha} \} \subset \ms(t),\\
\pi^-(t) &:= \{x \in \R^3: |x'|<r, \ -kr^{1+\alpha} < x_3 < -k|x'|^{1+\alpha} \} \subset \R^3 \setminus \Omega.
\end{align*}
Set further
\begin{align*}
\mathcal G_r := \{ x\in \R^3: |x'|<r, \ -k|x'|^{1+\alpha} < x_3 < h(t)+k|x'|^{1+\alpha} \}.
\end{align*}
Note that we can split $\del \mathcal G_r = \Gamma_r^+ \cup \Gamma_r^- \cup \Gamma_r^0$, where $\Gamma_r^\pm = \del \mathcal G_r \cap \del \pi^\pm$, and $\Gamma_r^0 =  \{x \in \R^3: |x'|=r, \ -kr^{1+\alpha} \leq x_3 \leq kr^{1+\alpha}\}$.

Extending the velocity by $\vu = 0$ in $\R^3 \setminus \Omega$, we see that by $\div \vu = 0$
\begin{align*}
\int_{\del \mathcal G_r} \vu \cdot \vc n \dd \sigma = \int_{\mathcal G_r} \div \vu \dd x = 0 = \int_{\Gamma_r^-} \vu \cdot \vc n \dd \sigma.
\end{align*}

Furthermore,
\begin{align*}
\int_{\Gamma_r^+} \vu \cdot \vc n \dd \sigma = \int_{\del \pi^+(t)} \vu \cdot \vc n \dd \sigma - \int_{A_k} \vu \cdot \vc n \dd \sigma,
\end{align*}
where $A_k = \{x \in \R^3: |x'| \leq r, \ x_3 = h(t)+kr^{1+\alpha} \}$ is the ``upper part'' of $\pi^+(t)$. Since $\vu = \dot {\vc G} + \omega \times (x-\vc G)$ is rigid on $\ms$, by Gau\ss' theorem, the integral over $\del \pi^+(t)$ vanishes. Moreover, by the same token, $\vu$ is the same at every hight, meaning we can change integration over $A_k$ by integration over $A_{k=0}$. Then, denoting $\vu_P$ and $\vc n_P$ the velocity and the outward normal at the point $P\in \del \ms$, respectively, we have $\vu |_{A_{k=0}} = \vu_P$ and $\vc n |_{A_{k=0}} = - \vc n_P$, leading to
\begin{align*}
\int_{\Gamma_r^+} \vu \cdot \vc n \dd \sigma = \int_{\{|x'|\leq r, \ x_3 = h(t)\}} \vu_P \cdot \vc n_P \dd \sigma = \pi r^2 (\vu_P \cdot \vc n_P).
\end{align*}
Altogether, the above calculations, together with $\Gamma_r^+ = \del \mathcal G_r \setminus ( \Gamma_r^0 \cup \Gamma_r^-)$ and $|\vu \cdot \vc n| \leq |\vu|$, imply
\begin{align*}
\pi r^2 |\vu_P \cdot \vc n_P| \leq \int_{\Gamma_r^0} |\vu| \dd \sigma.
\end{align*}
Integrating this last inequality with respect to $r$ from 0 to some $\rho \in (0,r)$, we find
\begin{align*}
\frac{\pi}{3}\rho^3 |\vu_P\cdot \vc n_P| \leq \int_{\mathcal G_\rho} |\vu| \dd x.
\end{align*}
We further estimate by the use of the Poincar\'e inequality \eqref{Poinc}
\begin{align*}
\int_{\mathcal G_\rho} |\vu| \dd x &\leq |\mathcal G_\rho|^{1-\frac1p} \|\vu\|_{L^p(\mathcal G_\rho)} \lesssim (h+2k\rho^{1+\alpha})|\mathcal G_\rho|^{1-\frac1p} \|\nabla \vu\|_{L^p(\mathcal G_\rho)}\\
&\leq (h+2k\rho^{1+\alpha})|\mathcal G_\rho|^{1-\frac1p} \|\nabla \vu\|_{L^p(\Omega)}.
\end{align*}
Seeing that $|\mathcal G_\rho| \lesssim \rho^2 (h+2k\rho^{1+\alpha})$ yields
\begin{align*}
|\vu_P\cdot\vc n_P| \lesssim \rho^{-1-\frac2p}(h+2k\rho^{1+\alpha})^{2-\frac1p} \|\vu\|_{W_0^{1,p}(\Omega)}.
\end{align*}
We may now take $\rho = c h^{1/(1+\alpha)}$, where $c>0$ is such that $\rho \in (0,r)$; in particular, the choice $c=r \diam(\Omega)^{-\frac{1}{1+\alpha}}$ is allowed. We finally get
\begin{align*}
|\vu_P\cdot\vc n_P|\lesssim h^\beta \|\vu\|_{W_0^{1,p}(\Omega)} \ \text{ with } \ \beta = 2-\frac{1}{1+\alpha} \left(1+\frac2p \right)-\frac1p.
\end{align*}

It remains to show that $|\vu_P \cdot \vc n_P|$ is an upper bound for the time derivative $\dot h$. We will indeed show that $\dot h = - \vu_P \cdot \vc n_P$, meaning the speed of the distance change is decreasing and happens in the normal direction of the solid's movement towards the container's bottom, which one might intuitively expect. To this end, define a function $\mathfrak{y}: [0,T] \times \del \Omega \to \R$ such that $y=x+\mathfrak{y}(t,x)\vc \nu_x \in \del \ms(t)$, where $\vc \nu_x$ is the internal normal on $\del \Omega$ at $x\in \del \Omega$. Note that $\mathfrak{y}$ is well defined on some neighborhood $U \subset \del \Omega$ of $Q$ by regularity of $\del \ms$ and $\del \Omega$. Moreover, we have
\begin{align*}
\vu_y = \dot y = \frac{\del y}{\del t} = \dot{\mathfrak{y}} \nu_x \ \Rightarrow \ \dot{\mathfrak{y}} = (\vu_y \cdot \vc n_y)(\nu_x \cdot \vc n_y)^{-1}
\end{align*}
for a.e.~$t\in [0,T]$ and any $x \in U$, where $\vc n_y$ is the outward normal at $y=x+\mathfrak{y}(t,x)\nu_x \in \del \ms$. We see that for $x=Q$ (and so $y=P$), we get $\nu_x \cdot \vc n_y = -1$ and hence
\begin{align*}
\dot{\mathfrak{y}}(t,Q) = - \vu_P \cdot \vc n_P.
\end{align*}
Seeing finally that by definition $h(t)=\mathfrak{y}(t,Q)$, we obtain \eqref{ineq1}.\\

To show the last assertions, we integrate \eqref{ineq1} from $s$ to $t$ to obtain
\begin{align}\label{ineq2}
H_\beta(s) - C \bigg| \int_s^t \|\vu(\tau)\|_{W^{1,p}(\Omega)} \dd \tau \bigg| \leq H_\beta(t) \leq H_\beta(s) + C \bigg| \int_s^t \|\vu(\tau)\|_{W^{1,p}(\Omega)} \dd \tau \bigg|,
\end{align}
where
\begin{align*}
H_\beta = \begin{cases}
\frac{1}{1-\beta} h^{1-\beta} & \text{if } \beta \neq 1,\\
\log h & \text{if } \beta=1.
\end{cases}
\end{align*}
Assertion a) now follows by setting $s=T_\ast$ for which $H_\beta(T_\ast)=\frac{1}{1-\beta}h^{1-\beta}(T_\ast)=0$, and applying H\"older's inequality \eqref{Holder}, together with $\|\vu(\tau)\|_{W^{1,p}(\Omega)} \in L^q(0,T)$, to get (for $T_\ast < t$)
\begin{align*}
h^{1-\beta}(t) \leq C \|\vu\|_{L^q(T_\ast,t;W^{1,p}(\Omega))} |t-T_\ast|^{1-\frac1q}.
\end{align*}
Seeing that $\|\vu\|_{L^q(T_\ast,t;W^{1,p}(\Omega))} \to 0$ as $t \to T_\ast$, we have $\lim_{t\to T_\ast} h(t)|t-T_\ast|^{-\eta} = 0$ as wished. The case $T_\ast>t$ follows the same lines.\\

The second statement b) follows similarly. Indeed, if $\beta=1$, then \eqref{ineq2} forces
\begin{align*}
h(t) \geq h(T_\ast) \exp\bigg(- C \bigg| \int_{T_\ast}^t \|\vu(\tau)\|_{W^{1,p}(\Omega)} \dd \tau \bigg| \bigg)
\end{align*}
for all $t \in [0,T]$ and hence $h(t)>0$ for any $t \in [0,T]$. For the case $\beta>1$, we calculate
\begin{align*}
h^{1-\beta}(t) \leq h^{1-\beta}(T_\ast) + C\|\vu\|_{L^q(0,T;W^{1,p}(\Omega))} |t-T_\ast|^{1-\frac1q}.
\end{align*}
Since the right-hand side of this inequality is finite for any $t\in [0,T]$, this together with $\beta>1$ means that $h$ can never vanish, thus showing the result.
\end{proof}

\section{Finer estimates and a wider class of obstacles}\label{sec:62}
After determining the rate of collision, if it happens, let us answer the question \emph{whether} there is a configuration such that the solid collides with its container. The (no-)collision results we present here were previously obtained for the linear Stokes equations in \cite{GerardVaretHillairet2012} and for the case $d=3$, and in \cite{GerardVaretHillairet2010} for $d=2$ (see also \cite{Hillairet2007} for the case of a ball-shaped obstacle). Therefore, we shall also concentrate on this case here.

\subsection{Preliminaries}

All the following techniques are similar to the proof of Theorem~\ref{theo1}, in the sense that we have to construct an appropriate test function $\vc w_h$. For this reason, we give here some new estimates, and apply them later in the special cases.\\

Indeed, the same arguments/heuristics made in Section~\ref{sec:31} yield that an appropriate test function for $d=2$ is given by $\vc w_h = \nabla^\perp \phi_h$, where $\phi_h$ in the region beneath $\ms$ is now given by
\begin{align*}
\phi_h(x_1, x_2) &= x_1 \Phi \bigg(\frac{x_2}{\psi_h(x_1)} \bigg), \quad \Phi(t) = t^2(3-2t),\\
\psi_h(x_1) &= h + |x_1|^{1+\alpha} \ \text{ or } \ \psi_h(x_1) = 1+h-\sqrt{1-x_1^2}.
\end{align*}
Note especially that in 2D, we have $r=|x_1|$, so $\phi_h$ has the same structure as in 3D. Extending $\phi_h$ in a proper way to the whole of $\Omega$ similar as before, we have a test function $\vc w_h$ as wished. More precisely, with the notations as in \eqref{chi} (and obvious adaptations for $d=2$), we set
\begin{align}\label{phih2}
\phi_h(x_1,x_2)= x_1 \begin{cases}
    1 & \text{on } \ms_h,\\
    (1-\chi(x_1,x_2))\eta(x_1,x_2-h+h(0)) + \chi(x_1,x_2)\Phi\left(\frac{x_2}{\psi_h(x_1)}\right) & \text{on } \Omega \setminus \ms_h,
\end{cases}
\end{align}
and $\mathbf w_h = \nabla^\perp \phi_h$.\\

To start, we generalize estimate \eqref{Lem:est} to all values of $(\alpha, q, s) \in (0, \infty)^3$:

\begin{Lemma}\label{lem:i1}
The integral
\begin{align*}
\int_{-r_0}^{r_0} \frac{r^q}{(h+r^{1+\alpha})^s} \dd r
\end{align*}
behaves like
\begin{itemize}
\item[i)] $h^{\frac{q+1}{1+\alpha} - s}$, if $q+1<s(1+\alpha)$;
\item[ii)] $\log h$, if $q+1=s(1+\alpha)$;
\item[iii)] $1$, if $q+1>s(1+\alpha)$.
\end{itemize}
\end{Lemma}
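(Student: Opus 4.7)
My plan is to reduce the problem to the analysis of a single dimensionless integral via rescaling. Writing $|r|^q$ for the numerator (the integrand is even, so I can first restrict to $(0,r_0)$ and double), I would substitute $r = h^{1/(1+\alpha)} t$, which gives $h + r^{1+\alpha} = h(1+t^{1+\alpha})$ and transforms the integral into
\begin{equation*}
\int_0^{r_0} \frac{r^q}{(h+r^{1+\alpha})^s}\,\mathrm d r \;=\; h^{\frac{q+1}{1+\alpha}-s}\int_0^{R(h)}\frac{t^q}{(1+t^{1+\alpha})^s}\,\mathrm d t,\qquad R(h):=r_0\,h^{-\frac{1}{1+\alpha}}.
\end{equation*}
Since $h\to 0$ is the regime of interest, $R(h)\to\infty$, so the whole question reduces to the large-$R$ asymptotics of $F(R):=\int_0^R t^q(1+t^{1+\alpha})^{-s}\,\mathrm d t$, whose integrand behaves like $t^{q-s(1+\alpha)}$ as $t\to\infty$.

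Next, I would separate the three cases exactly according to the sign of $q+1-s(1+\alpha)$. If $q+1 < s(1+\alpha)$, then $F(R)$ converges to a finite positive constant as $R\to\infty$, so the original integral is $\sim h^{(q+1)/(1+\alpha)-s}$, yielding (i). If $q+1 = s(1+\alpha)$, splitting $F(R)$ over $(0,1)$ and $(1,R)$ shows the tail contributes $\int_1^R t^{-1}\,\mathrm d t = \log R$ up to a bounded term, so $F(R)\sim \log R \sim \frac{1}{1+\alpha}|\log h|$, and since the prefactor $h^{(q+1)/(1+\alpha)-s}$ equals $1$, the integral behaves like $|\log h|$, giving (ii). If $q+1 > s(1+\alpha)$, the tail $\int_1^R t^{q-s(1+\alpha)}\,\mathrm d t$ dominates and is $\sim R^{q+1-s(1+\alpha)}$; multiplying by the prefactor $h^{(q+1)/(1+\alpha)-s}$ and recalling $R^{q+1-s(1+\alpha)} = r_0^{q+1-s(1+\alpha)}\,h^{s-(q+1)/(1+\alpha)}$ yields an $O(1)$ quantity (with a harmless lower-order correction from the $(0,1)$ part), giving (iii).

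I do not expect any real obstacle here; the only mild care point is to verify, in cases (ii) and (iii), that the contributions from $(0,1)$ in $F(R)$ are indeed lower order than the logarithmic or polynomial tail (which follows because $\int_0^1 t^q\,\mathrm d t < \infty$ as soon as $q>-1$, a condition that is automatically compatible with the hypothesis $(\alpha,q,s)\in(0,\infty)^3$). Finally, I would remark that this sharpens the earlier crude estimate \eqref{Lem:est}, which only recorded case (iii), and in particular it is precisely the dichotomy between cases (i)--(ii) and (iii) that underlies the threshold conditions on $q$ in Lemma~\ref{BdsWh}.
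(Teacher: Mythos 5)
Your proof is correct. The paper leaves this lemma as an exercise (``straightforward calculation''), so there is no printed argument to compare against; your rescaling $r = h^{1/(1+\alpha)}t$, reducing the whole question to the $R\to\infty$ behavior of $\int_0^R t^q(1+t^{1+\alpha})^{-s}\,\mathrm{d}t$, is the natural way to fill that gap, and you handle all three cases and the near-origin integrability correctly. One cosmetic point, consistent with your own writing: in case (ii) the statement should really read $|\log h|$ (as you have it), since $\log h<0$ for the small $h$ of interest.
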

\begin{proof}
This is straightforward calculation and we leave the details to the reader as an exercise.
\end{proof}

To lean notation, we will make the following agreement: for $d=2$, we set $\vc w_h = \nabla^\perp \phi_h$, where $\phi_h$ is as in \eqref{phih2}. If $d=3$, we set $\vc w_h = \nabla \times (\phi_h \vc e_\theta)$, where this time $\phi_h$ is as in \eqref{phih}. With this convention, we have:
\begin{Lemma}\label{lem:intEst}
For all $\alpha \in [0,1]$, the function $\vc w_h$ satisfies:
\begin{align*}
\|\vc w_h\|_{L^2(\Omega)} &\lesssim 1,\\
1 \lesssim h^\frac{3\alpha - (d-2)}{2(1+\alpha)} \|\nabla \vc w_h\|_{L^2(\Omega)} &\lesssim 1,\\
\|\nabla \vc w_h\|_{L^\infty(\Omega \setminus \Omega_{h, r_0})} &\lesssim 1.
\end{align*}
Moreover, for $r = |x_1|$ if $d=2$, and $r = \sqrt{x_1^2 + x_2^2}$ if $d=3$,
\begin{align*}
\sup_{r < r_0} |\psi_h(r)|^\frac32 \left( \int_0^{\psi_h(r)} |\nabla \vc w_h(r, x_d)|^2 \dd x_d \right)^\frac12 &\lesssim 1,\\
\int_{-r_0}^{r_0} \int_0^{\psi_h(r)} \psi_h^2(r) |\del_h \vc w_h|^2 \dd x &\lesssim 1.
\end{align*}
\end{Lemma}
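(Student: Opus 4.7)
The approach is to combine the pointwise bounds on $\vc w_h$, $\nabla \vc w_h$, and $\del_h \vc w_h$ obtained inside the proof of Lemma~\ref{BdsWh} with the scaling integrals of Lemma~\ref{lem:i1}, treating the cases $d=2$ and $d=3$ uniformly via the Jacobian factor $r^{d-2}$. The estimates outside the cusp region $\Omega_{h,r_0}$ are immediate from Lemma~\ref{BdsWh} itself (in particular $\|\nabla \vc w_h\|_{L^\infty(\Omega\setminus \Omega_{h,r_0})}\lesssim 1$), so all the work is concentrated in $\Omega_{h,r_0}$, where $\phi_h = (r/2)\Phi(x_d/\psi_h)$ with $\Phi(t)=t^2(3-2t)$ and $\psi_h(r)=h+r^{1+\alpha}$.

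The first bound, $\|\vc w_h\|_{L^2(\Omega)}\lesssim 1$, follows by taking $q=2$ in Lemma~\ref{BdsWh} (respectively in its 2D analogue of Remark~\ref{rem3}); the corresponding condition reduces to $\alpha<d$, which is fulfilled for $\alpha\in(0,1]$. For the two-sided bound on $\|\nabla \vc w_h\|_{L^2(\Omega)}$ I would square the pointwise inequality
\begin{align*}
|\nabla \vc w_h| \lesssim \frac{1}{\psi_h} + \frac{r}{\psi_h^2} + \frac{\del_r\psi_h}{\psi_h} \quad \text{in } \Omega_{h,r_0}
\end{align*}
taken from the proof of Lemma~\ref{BdsWh}. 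Integrating first in $x_d\in(0,\psi_h)$ and then in $r$ against the Jacobian $r^{d-2}$, the dominant contribution is $\int_0^{r_0} r^d/\psi_h^3\dd r$, which by Lemma~\ref{lem:i1}(i) with $q=d$, $s=3$ equals $h^{-(3\alpha-(d-2))/(1+\alpha)}$ precisely when $3\alpha>d-2$; taking a square root yields the claimed upper bound. For the matching lower bound in that regime I would isolate the single component $\del_d w_h^r = -\del_d^2\phi_h = -(r/2)\psi_h^{-2}\Phi''(x_d/\psi_h)$, in which the prefactor $r/\psi_h^2$ appears without cancellation; the change of variable $t=x_d/\psi_h$ decouples the integrations and gives
\begin{align*}
\int_{\Omega_{h,r_0}}|\del_d w_h^r|^2\dd x \gtrsim \Bigl(\int_0^1 |\Phi''(t)|^2\dd t\Bigr) \int_0^{r_0} \frac{r^d}{\psi_h^3}\dd r,
\end{align*}
the first factor being a positive absolute constant. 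When $3\alpha\leq d-2$, the claimed exponent is non-positive and the inequality degenerates to $\|\nabla \vc w_h\|_{L^2}\gtrsim 1$, which is immediate from Poincar\'e's inequality \eqref{Poinc} together with $\vc w_h=\vc e_d$ on $\ms_h$ and $\vc w_h=0$ on $\del\Omega$.

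The final two estimates follow the same template. Substituting the pointwise bound on $|\nabla \vc w_h|^2$ and integrating only in $x_d$ yields
\begin{align*}
\psi_h^3\int_0^{\psi_h} |\nabla \vc w_h|^2\dd x_d \lesssim \psi_h^2 + r^2 + \psi_h^2(\del_r\psi_h)^2 \lesssim 1
\end{align*}
uniformly in $r<r_0$, since $\psi_h,r,\del_r\psi_h\lesssim 1$ there. The weighted estimate on $\del_h \vc w_h$ starts from $|\del_h \vc w_h|\lesssim 1/\psi_h + r/\psi_h^2$ and gives
\begin{align*}
\int_{\Omega_{h,r_0}}\psi_h^2|\del_h \vc w_h|^2\dd x \lesssim \int_0^{r_0} r^{d-2}\bigl(\psi_h + r^2/\psi_h\bigr)\dd r,
\end{align*}
each summand of which is $\lesssim 1$ by Lemma~\ref{lem:i1}(iii) for $\alpha\leq 1$. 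I expect the real obstacle to be the lower bound on $\|\nabla \vc w_h\|_{L^2}$ in the blow-up regime: the two-sided form of that estimate is what eventually fixes the asymptotic drag behaviour used later, and one must be sure no hidden cancellation between the various contributions to $\nabla \vc w_h$ spoils the leading $r/\psi_h^2$ term. The choice $\del_d w_h^r$ above is convenient precisely because $\Phi''(t)=6(1-2t)$ is bounded below in absolute value on a macroscopic subset of $(0,1)$, so the integrand cannot be destroyed by the vertical averaging.
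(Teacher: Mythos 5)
Your proof is correct and uses the paper's own method: the official proof of this lemma is a single sentence deferring to the computation style of Lemma~\ref{BdsWh} and to Lemma~\ref{lem:i1}, with exactly the Jacobian factor $r^{d-2}$ and the pointwise bounds on $\phi_h$ that you invoke (and with $\psi_h \sim h + r^{1+\alpha}$ in both the parabolic and ball cases). The one substantive step you supply that the paper's citation does not is the lower bound on $\|\nabla\vc w_h\|_{L^2}$: Lemma~\ref{BdsWh} and its proof give only upper bounds, and your isolation of the $x_d$-derivative of the $\vc e_r$-component—whose $r/\psi_h^2$ prefactor decouples under $t = x_d/\psi_h$ and whose $\Phi''$ factor carries nonzero $L^2$-mass on $(0,1)$—is precisely the observation needed to rule out cancellation and justify the two-sided asymptotics.
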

\begin{proof}
The proof is follows the same lines as the one for Lemma~\ref{BdsWh} and using Lemma~\ref{lem:i1}, once seen that for the ball case $\frac{1}{2} r^2 \leq 1-\sqrt{1-r^2} \leq r^2$ for $r \leq 1$, that $\dd x = \dd x_1 \dd x_2$ if $d=2$, and $\dd x = r \dd r \dd \theta \dd x_3$ if $d=3$. In the latter case, we even can replace the integral over $\theta$ by the supremum over $\theta \in (0, 2\pi)$.
\end{proof}

\begin{Remark}
Compared to the calculations done in Remark~\ref{rem3}, Lemma~\ref{lem:intEst} gives the correct behavior of $\nabla \vc w_h$ in $L^p(\Omega)$ for $p=2$ instead of $1<p<2$.
\end{Remark}

Let $T_\ast \in (0, \infty]$ be the maximal existence time of the solution $(\rho, \vu)$ to \eqref{inc1}--\eqref{inc2}. To show the (no-)collision result, we would like to test the momentum equation by $\vc w_h$ and integrate by parts. As already noticed, the drag $\mathcal{D}_h$ is the main driving force to establish collision, or to prevent from it. In particular, we would need to shift the Laplace from the fluid's velocity $\vu$ to the test function $\vc w_h$; however, this causes quite strong singularities. To catch them, we will introduce a pressure $q_h$, which we then can insert in the weak formulation of the momentum equation without changes. Note that here the incompressibility condition of the fluid comes into play; without it, we would get an additional term $\int_\Omega q_h \div \vu \dd x$, which we do not know how to handle.

\begin{Lemma}\label{lem:singRemove}
There exists $q_h \in C^\infty(0,T_\ast; C(\overline{\Omega}))$ such that for any $\varphi \in W_0^{1,2}(\Omega)$ with $\varphi |_{\del \ms} = \vc e_2$, we have
\begin{align}\label{int1}
\int_{\mf(t)} |(\Delta \vc w_h - \nabla q_h) \cdot \varphi | \dd x \lesssim \|\varphi\|_{W_0^{1,2}(\Omega)}.
\end{align}
\end{Lemma}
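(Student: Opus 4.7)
My plan is to construct $q_h$ as the pressure associated with the lubrication (thin-film) approximation to the Stokes-type problem that $(\vc w_h, q_h)$ approximately solves inside the gap $\Omega_{h, r_0}$. Outside $\Omega_{h, r_0}$, where $\vc w_h$ is already smooth with uniformly bounded derivatives (Lemma~\ref{BdsWh}), I would simply take $q_h$ to vanish and match it to the interior formula through the cutoff $\chi$ from \eqref{chi}; the time-smoothness $q_h \in C^\infty(0,T_\ast; C(\overline{\Omega}))$ is then a consequence of the smooth $t$-dependence of $h(t)$ and $\psi_h$.

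Inside the gap, the dominant singular term in $\Delta \vc w_h$ is the horizontal component $\del_{x_d}^2(\vc w_h\cdot \vc e_r)$, which by Lemma~\ref{BdsWh} behaves like $r/\psi_h^3$ (using $\Phi'''=-12$, with the analogous 2D statement). I would therefore impose the lubrication balance $\del_r q_h = \del_{x_d}^2(\vc w_h \cdot \vc e_r)$, together with $\del_{x_d} q_h$ absorbing the next-order correction of the vertical balance, and define $q_h$ by antidifferentiation in $r$. A direct computation, using $r\del_r\psi_h\lesssim \psi_h$ and $r \del_r^2 \psi_h \lesssim \del_r\psi_h$ as in the proof of Lemma~\ref{BdsWh}, would then show that the residual $R_h := \Delta \vc w_h - \nabla q_h$ satisfies $|R_h| \lesssim \psi_h^{-1}$ in $\Omega_{h, r_0}$ and $|R_h|\lesssim 1$ outside, so that $\dist(\cdot, \del \mf(t))\,R_h \in L^\infty(\mf(t))$ uniformly in $h$.

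To pair $R_h$ with $\varphi$, I would lift the boundary data by writing $\varphi = \tilde\varphi + \vartheta_h$, where $\vartheta_h$ is a smooth cutoff extension with $\vartheta_h|_{\del \ms}=\vc e_d$, $\vartheta_h|_{\del \Omega}=0$ and $\|\vartheta_h\|_{W^{1,2}(\Omega)}\lesssim 1$ (taking $\vartheta_h = \vc w_h$ itself is a concrete choice). Then $\tilde \varphi \in W_0^{1,2}(\mf(t))$ vanishes on all of $\del \mf(t)$, so Hardy's inequality \eqref{Hardy} applied locally near the flat container bottom and near the $C^{1,\alpha}$ solid yields
\begin{align*}
\|\tilde\varphi/\dist(\cdot,\del\mf(t))\|_{L^2(\mf(t))} \lesssim \|\nabla\tilde\varphi\|_{L^2(\mf(t))} \lesssim \|\varphi\|_{W^{1,2}_0(\Omega)}.
\end{align*}
Cauchy--Schwarz against the bound on $R_h$ then gives
\begin{align*}
\int_{\mf(t)} |R_h\cdot \tilde\varphi| \dd x \leq \bigl\|R_h\,\dist(\cdot,\del\mf(t))\bigr\|_{L^2} \bigl\|\tilde\varphi/\dist(\cdot,\del\mf(t))\bigr\|_{L^2} \lesssim \|\varphi\|_{W^{1,2}_0(\Omega)},
\end{align*}
and the remaining contribution $\int |R_h \cdot \vartheta_h| \dd x$ is controlled via $\|R_h\|_{L^1(\mf(t))}\|\vartheta_h\|_{L^\infty} \lesssim 1$, which after a Poincaré normalization is $\lesssim \|\varphi\|_{W_0^{1,2}(\Omega)}$ as required.

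The main obstacle I anticipate is the algebraic bookkeeping behind the cancellation: $\Delta \vc w_h$ contains a whole hierarchy of terms scaling like $\psi_h^{-3}$, $\psi_h^{-2}$ and $\psi_h^{-1}$ (visible already in the pointwise bounds obtained in the proof of Lemma~\ref{BdsWh}), and the leading \emph{lubrication} pressure only cancels the worst $\psi_h^{-3}$ part. To bring every remaining term below the threshold $\psi_h^{-1}$, one has to augment $q_h$ with subleading corrections reflecting the full horizontal momentum balance -- not just its principal part -- and verify term-by-term that no spurious $\psi_h^{-2}$ contribution survives. Secondary difficulties are ensuring that $q_h$ glues continuously across $\del \Omega_{h,r_0}$ via the cutoffs without reintroducing $h$-singularities, and, in three dimensions, preserving the cylindrical symmetry of the construction.
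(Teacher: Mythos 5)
Your plan contains a genuine gap that makes the argument collapse at the boundary-data lifting step, and the pointwise bound you claim for the residual is not achievable.

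First, the claim $|R_h| \lesssim \psi_h^{-1}$ in $\Omega_{h,r_0}$ is false, no matter how cleverly you choose the pressure. Since $\phi_h$ is cubic in $x_2$ inside the gap we have $\partial_2^4\phi_h = 0$, so a gradient field can cancel all terms with two or more $x_2$-derivatives from $\Delta\vc w_h$ (that is exactly what the paper's $q_h$ does), leaving behind $\partial^3_{112}\phi_h$ and $\partial^3_{111}\phi_h$. From \eqref{derPhiAlpha}, $\partial^3_{112}\phi_h\sim x_2\psi_h'/\psi_h^3$, which at $x_2\sim\psi_h$ is $\sim r^\alpha/\psi_h^2$; this exceeds $\psi_h^{-1}$ precisely when $r^\alpha>\psi_h$, i.e.\ essentially throughout the gap. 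Consistently, \eqref{int12} and Lemma~\ref{lem:i1} show $\|\partial^3_{112}\phi_h\|_{L^1(\Omega_{h,r_0})}\sim|\log h|$, not $\lesssim 1$. So neither $|R_h|\lesssim\psi_h^{-1}$, nor the weaker statement $\dist(\cdot,\partial\mf(t))\,R_h\in L^\infty$ uniformly in $h$, can hold: the way out is not finer cancellation in $q_h$ but a further integration by parts, which is precisely the step the paper uses and you omit.

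Second, and fatally, your choice $\vartheta_h=\vc w_h$ does not have $\|\vartheta_h\|_{W^{1,2}(\Omega)}\lesssim 1$: by Lemma~\ref{lem:intEst}, $\|\nabla\vc w_h\|_{L^2(\Omega)}^2\sim h^{(d-2-3\alpha)/(1+\alpha)}$, which blows up for $d=2$ (any $\alpha>0$) and for $d=3$ when $\alpha>1/3$. Hence $\tilde\varphi=\varphi-\vc w_h$ has $\|\nabla\tilde\varphi\|_{L^2}$ that is \emph{not} controlled by $\|\varphi\|_{W_0^{1,2}(\Omega)}$, and your Hardy/Cauchy--Schwarz step gives a bound proportional to $\|\nabla\vc w_h\|_{L^2}$, not $\|\varphi\|$. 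The remainder estimate $\|R_h\|_{L^1}\|\vartheta_h\|_{L^\infty}\lesssim 1$ fails even more dramatically, since both factors diverge as $h\to 0$ ($\|R_h\|_{L^1}\sim|\log h|$ as above, and $\|\vc w_h\|_{L^\infty(\Omega_{h,r_0})}\sim h^{-\alpha/(1+\alpha)}$ follows from $|\vc w_h|\sim r/\psi_h$). The intermediate observation that $\|R_h\,\dist\|_{L^2(\Omega_{h,r_0})}\lesssim 1$ is actually true for $\alpha>0$ (it reduces, via Lemma~\ref{lem:i1}, to $\int_0^{r_0}r^{2\alpha}\psi_h^{-1}\,\rd r\lesssim1$), but it cannot rescue the argument because you have no admissible lifting $\vartheta_h$ realizing the boundary trace $\vc e_2$ with uniformly bounded $W^{1,2}$ norm. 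The paper avoids this entirely: it never decomposes $\varphi$, but instead integrates by parts once more to convert the residual $(-2\partial^3_{112}\phi_h,\partial^3_{111}\phi_h)$ into a boundary term proportional to $\partial^2_{11}\phi_h$ on $\partial\ms_h$ (controlled by $\Phi'(1)=0$, $\Phi''(1)=-6$, and a trace inequality) plus a bulk term $\partial^2_{11}\phi_h\cdot\nabla\varphi$, where $\partial^2_{11}\phi_h$ is bounded in $L^2(\Omega_{h,r_0})$ uniformly in $h$.
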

\begin{proof}
\textbf{The case $d=2$.} Recalling $\vc w_h = \nabla^\perp \phi_h$, we set
\begin{align*}
q_h = \del_{12}^2 \phi_h - \int_0^{x_1} \del_{222}^3 \phi_h(t, x_2) \dd t.
\end{align*}
Although this definition seems quite artificial at first glance, it is easily obtained by computing $\Delta \vc w_h$ and ``removing'' all terms that contain a singular part in $h$; particularly, we want to have as less derivatives in $x_2$-direction as possible. Indeed, we calculate
\begin{align*}
(-\Delta \vc w_h)_1 &= \Delta \del_2 \phi_h = \del_{112}^3 \phi_h + \del_{222}^3 \phi_h,\\
(\Delta \vc w_h)_2 &= \Delta \del_1 \phi_h = \del_{111}^3 \phi_h + \del_{122}^3 \phi_h.
\end{align*}
Since $\phi_h$ and in turn $q_h$ are smooth outside $\Omega_{h, r_0}$, we just have to focus on this inner part. Note that inside $\Omega_{h, r_0}$, we have
\begin{align*}
\phi_h(x_1, x_2) = x_1 \Phi \left( \frac{x_2}{\psi_h(x_1)} \right), \ \Phi(t) = t^2(3 - 2t).
\end{align*}
Hence,
\begin{align*}
q_h = \del_{12}^2 \phi_h + 12 \int_0^{x_1} \frac{t}{\psi_h(t)^3} \dd t \ \text{in} \ \Omega_{h, r_0},
\end{align*}
and thus $\del_1 q_h = \del_{112}^3 \phi_h - \del_{222}^3 \phi_h$ and $\del_2 q_h = \del_{122}^3 \phi_h$, which precisely cancel the terms of $x_2$-order two and three occurring in $(\Delta \vc w_h)_1$ and $(\Delta \vc w_h)_2$. Thus, we see
\begin{align*}
\Delta \vc w_h - \nabla q_h = \begin{pmatrix}
-2 \del_{112}^3 \phi_h\\
\del_{111}^3 \phi_h
\end{pmatrix} \ \text{in} \ \Omega_{h, r_0}.
\end{align*}

From the definition of $\vc w_h$, we have $\vc w_h \in C^\infty(\Omega \setminus \Omega_{h, r_0})$. Moreover, we see that the most singular term in $\Delta \vc w_h - \nabla q_h$ is $\del_{112}^3 \phi_h$; indeed, we have for $\psi_h(x_1) = h + |x_1|^{1+\alpha}$
\begin{align}\label{derPhiAlpha}
|\del_{112}^3 \phi_h| \lesssim x_2 \frac{\psi_h'}{\psi_h^3} + x_2^2 \frac{\psi_h'}{\psi_h^4}, && |\del_{111}^3 \phi_h| \lesssim x_2^2 \frac{\psi_h''}{\psi_h^3} + x_2^3 \frac{\psi_h''}{\psi_h^4}
\end{align}
and thus, by $\psi_h'(x_1) \sim x_1^\alpha$,
\begin{align}\label{int12}
\begin{split}
\int_{\Omega_{h, r_0}} |\del_{112}^3 \phi_h|^p \dd x &\lesssim \int_{-r_0}^{r_0} \int_0^{\psi_h(x_1)} x_2^p \frac{|\psi_h'|^p}{\psi_h^{3p}} + x_2^{2p} \frac{|\psi_h'|^p}{\psi_h^{4p}} \dd x_2 \dd x_1 \lesssim \int_0^{r_0} \frac{x_1^{\alpha p}}{(h+x_1^{1+\alpha})^{2p-1}},\\
\int_{\Omega_{h, r_0}} |\del_{111}^3 \phi_h|^p \dd x &\lesssim \int_{-r_0}^{r_0} \int_0^{\psi_h(x_1)} x_2^{2p} \frac{|\psi_h''|^p}{\psi_h^{3p}} + x_2^{3p} \frac{|\psi_h''|^p}{\psi_h^{4p}} \dd x_2 \dd x_1 \lesssim \int_0^{r_0} \frac{x_1^{(\alpha-1) p}}{(h+x_1^{1+\alpha})^{p-1}}.
\end{split}
\end{align}

In the case $\psi_h(x_1) = 1+h-\sqrt{1-x_1^2}$, we first check that for any $k \in \mathbb N$, we have
\begin{align*}
|\psi_h^{(2k+1)}(x_1)| \lesssim |x_1|, \qquad |\psi_h^{(2k)}(x_1)| \lesssim 1 \qquad \forall |x_1| \leq r_0 < 1.
\end{align*}
Consequently, the estimates \eqref{derPhiAlpha} have to be replaced by
\begin{align*}
\begin{aligned}
|\del_{112}^3 \phi_h| &\lesssim x_2 \bigg( \frac{|x_1|}{\psi_h^3} + \frac{|x_1|^3}{\psi_h^4} \bigg) + x_2^2 \bigg( \frac{|x_1|}{\psi_h^4} + \frac{|x_1|^3}{\psi_h^5} \bigg) \\
|\del_{111}^3 \phi_h| &\lesssim x_2^2 \bigg( \frac{1}{\psi^3} + \frac{x_1^2}{\psi^4} + \frac{x_1^4}{\psi^5} \bigg) + x_2^3 \bigg( \frac{1}{\psi^4} + \frac{x_1^2}{\psi^5} + \frac{x_1^4}{\psi^6} \bigg).
\end{aligned}
\end{align*}
These bounds precisely correspond to $\alpha=1$, hence we do not have to distinguish between $\alpha=1$ and $\alpha<1$ in the sequel.\\

For the exponents occurring in \eqref{int12}, we have
\begin{align*}
\alpha p + 1 &< (2p-1)(1+\alpha) \Longleftrightarrow p > 1,\\
(\alpha-1)p + 1 &> (p-1)(1+\alpha) \Longleftrightarrow p < \frac{2+\alpha}{2}.
\end{align*}
Hence, using Lemma~\ref{lem:i1}, we see that the second integral in \eqref{int12} is uniformly bounded in $h$ for all $p<(2+\alpha)/2$, whereas the first one is always unbounded. Nonetheless, it is now obvious that $\Delta \vc w_h - \nabla q_h \in L^p(\Omega)$ for any $p>1$ (\emph{without} a uniform bound in $h$), thus for $h>0$ the integral in \eqref{int1} is well defined since by Sobolev embedding \eqref{SobEmb}, $\varphi \in W_0^{1,2}(\Omega) \hookrightarrow L^q(\Omega)$ for any $1 \leq q < \infty$.

To prove the desired inequality \eqref{int1}, by $\vc w_h, q_h \in C^\infty(\Omega \setminus \Omega_{h, r_0})$, we might without loss of generality assume that $\supp \varphi \subset \overline{\Omega}_{h, r_0}$ such that
\begin{align*}
\int_{\mf(t)} (\Delta \vc w_h - \nabla q_h) \cdot \varphi \dd x = \int_{\Omega_{h, r_0}} (\Delta \vc w_h - \nabla q_h) \cdot \varphi \dd x.
\end{align*}
Since $\varphi |_{\del \ms_h} = (0,1)^T$, we integrate by parts to obtain
\begin{align*}
\int_{\Omega_{h, r_0}} (\Delta \vc w_h - \nabla q_h) \cdot \varphi \dd x = -\int_{\del \ms_h \cap \del \Omega_{h, r_0}} \del_{11}^2 \phi_h \vc n_1 \dd \sigma -\int_{\Omega_{h, r_0}} \del_{11}^2 \phi_h (2 \del_2 \varphi_1 - \del_1 \varphi_2) \dd x,
\end{align*}
where we calculate
\begin{align*}
\del_{11}^2 \phi_h = - \Phi'' \left( \frac{x_2}{\psi_h} \right) \frac{x_1 x_2^2 (\psi_h')^2}{\psi_h^4} -\Phi'\left( \frac{x_2}{\psi_h} \right) \left( 2 \frac{x_2 \psi_h'}{\psi_h^2} + \frac{x_1 x_2 \psi_h''}{\psi_h^2} - 2 \frac{x_1 x_2 (\psi_h')^2}{\psi_h^3} \right).
\end{align*}
By Lemma~\ref{lem:i1}, we can check that $\del_{11}^2 \phi_h$ is bounded in $L^2(\Omega_{h, r_0})$ uniformly in $h$. Furthermore, by definition of $\Phi$, we have $\Phi'(1)=0$ and $\Phi''(1)=-6$ such that, together with $x_2 = \psi_h$ on $\del \ms_h$,
\begin{align*}
\del_{11}^2 \phi_h = \frac{6 x_1 x_2^2 (\psi_h')^2}{\psi_h^2} = \frac{6 x_1 (\psi_h')^2}{\psi_h^2}.
\end{align*}
Lastly, as $\del \ms_h \cap \del \Omega_{h, r_0}$ is parametrized by the curve $\koppa: (-r_0, r_0) \ni x_1 \mapsto (x_1, \psi_h(x_1))^T$, we find
\begin{align*}
\vc n = \frac{\koppa'(x_1)^\perp}{|\koppa'(x_1)|} = \frac{1}{\sqrt{1 + (\psi_h')^2}} (- \psi_h', 1)^T,
\end{align*}
hence
\begin{align*}
\left| \int_{\del \ms_h} \del_{11}^2 \phi_h \vc n_1 \dd \sigma \right| \leq \int_{-r_0}^{r_0} \left| \frac{6 x_1 (\psi_h'(x_1))^2}{(\psi_h(x_1))^2} \ \frac{\psi_h'(x_1)}{\sqrt{1+(\psi_h'(x_1))^2}} \right| \dd x_1.
\end{align*}
Note that the last integral hides a factor $\|\varphi_2\|_{L^\infty(\del \Omega_{h, r_0})}$ by $\varphi|_{\del \ms_h} = (0,1)^T$ and thus $1 = \|\varphi_2\|_{L^\infty(\del \Omega_{h, r_0})}$. Moreover, by standard trace inequality, $\|\varphi_2\|_{L^\infty(\del \Omega_{h, r_0})} \lesssim \|\varphi \|_{W_0^{1,2}(\Omega)}$. Hence, for $\psi_h(x_1) = h + |x_1|^{1+\alpha}$, we have $\psi_h'(x_1) \sim x_1^\alpha$ and this integral may be estimated again uniformly in $h$ with the help of Lemma~\ref{lem:i1}. The same conclusion holds for $\psi_h(x_1) = 1 + h - \sqrt{1 - x_1^2}$ as $\psi_h(x_1) \sim x_1^2$ for $|x_1| \leq r_0 < 1$. Consequently, we arrive at
\begin{align*}
\left| \int_{\Omega_{h, r_0}} (\Delta \vc w_h - \nabla q_h) \cdot \varphi \dd x \right| \lesssim \|\varphi_2\|_{L^\infty(\del \Omega_{h, r_0})} + \|\nabla \varphi\|_{L^2(\Omega_{h, r_0})} \lesssim \|\varphi\|_{W_0^{1,2}(\Omega)}.
\end{align*}

\textbf{The case $d=3$.} Very similar, we proceed for the three-dimensional case. Recall that $\vc w_h = \nabla \times (\phi_h \vc e_\theta)$. Calculating as before $\Delta \vc w_h$, where in 3D and cylindrical coordinates
\begin{align*}
\nabla = \vc e_r \del_r + r^{-1} \vc e_\theta \del_\theta + \vc e_3 \del_3, && \Delta = r^{-1} \del_r (r \del_r) + r^{-2} \del_{\theta \theta}^2 + \del_{33}^2,
\end{align*}
we find that we may simply choose $q_h$ as
\begin{align*}
q_h(r, x_3) = \del_{r 3}^2 \phi_h(r,x_3) - \int_0^r \del_{333}^3 \phi_h(t, x_3) \dd t.
\end{align*}
Similar calculations as for the 2D case show the result. We leave the details to the reader (see, for instance, \cite[Section~3.4]{HillairetTakahashi2009}).
\end{proof}

Before coming to the proof of Theorem~\ref{thm:ParInc}, we need the following proposition to control the drag force and the remainder arising from the weak formulation of the momentum equation.

\begin{Proposition}\label{prop:Remainder}
Let $\vc w_h, q_h$ be as above and $(\rho, \vu)$ be the solution to equations \eqref{inc1}--\eqref{inc2}. Define further
\begin{align*}
n(h) &:= \int_{\del \mf_h} (\nabla \vc w_h - q_h \Id)\vc n \cdot \vc e_d \dd \sigma,\\
R(t) &:= \int_0^t \int_\Omega \rho \vu \cdot \del_t \vc w_h + \rho \vu \otimes \vu : \bD(\vc w_h) \dd x \dd s \\
&\quad + \int_\Omega \vc m_0 \cdot \vc w_{h(0)} \dd x - \int_\Omega \rho(t) \vu(t) \cdot \vc w_{h(t)} \dd x\\
&\quad + \int_0^t \int_{\mf(s)} (\Delta \vc w_{h(s)} - \nabla q_{h(s)}) \cdot \vu(s) \dd x \dd s.
\end{align*}
Then, for $h>0$ small enough, all $\alpha \in [0,1]$, and all $0 \leq t < T_\ast$,
\begin{align*}
1 \lesssim h^\beta n(h) &\lesssim 1, \quad \beta = \frac{3\alpha-(d-2)}{1+\alpha},\\
|R(t)| &\lesssim 1+ \sqrt{t}.
\end{align*}
\end{Proposition}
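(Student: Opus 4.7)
My plan is to treat the two bounds separately. For the estimate on $n(h)$, I would relate it to the Dirichlet energy of $\vc w_h$ via integration by parts. Since $\div \vc w_h = 0$, $\vc w_h|_{\del \Omega} = 0$, and $\vc w_h|_{\del \ms_h} = \vc e_d$, testing $(\Delta \vc w_h - \nabla q_h)\cdot \vc w_h$ in $\mf_h$ produces
\begin{align*}
\int_{\mf_h} (\Delta \vc w_h - \nabla q_h) \cdot \vc w_h \dd x = \int_{\del \ms_h} (\nabla \vc w_h - q_h \Id) \vc n \cdot \vc e_d \dd \sigma - \|\nabla \vc w_h\|_{L^2(\mf_h)}^2.
\end{align*}
Lemma~\ref{lem:singRemove} with $\varphi = \vc w_h$ bounds the left-hand side by $\|\vc w_h\|_{W_0^{1,2}(\Omega)} \lesssim h^{-\beta/2}$ (using Lemma~\ref{lem:intEst}), which is of lower order than the dominant $\|\nabla \vc w_h\|_{L^2(\mf_h)}^2 \sim h^{-\beta}$. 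The piece of $n(h)$ supported on $\del \Omega$ (not captured by the right-hand side above, since $\vc w_h$ vanishes there) is handled directly: away from the cusp it is uniformly bounded, while on the flat bottom $\{x_d = 0\}$ near $r = 0$ the normal derivative $\del_d w_{h,d}$ vanishes by $\Phi'(0) = 0$, and the remaining $q_h$ contribution is reshaped into the main-term scaling via the explicit form of $\phi_h$ and Lemma~\ref{lem:i1}. Combining these yields $n(h) \sim h^{-\beta}$, which gives both inequalities.

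For the bound on $R(t)$, I would estimate the four groups of contributions using the uniform energy bound $\|\vu\|_{L^\infty(0,T;L^2)} + \|\vu\|_{L^2(0,T;W_0^{1,2})} \lesssim 1$ (from the energy inequality, cf.\ \eqref{consEner}) together with the bounds on $\vc w_h$ and $\del_h \vc w_h$ from Lemma~\ref{lem:intEst}. The initial and final in-time terms $\int_\Omega \vc m_0 \cdot \vc w_{h(0)} \dd x - \int_\Omega \rho(t)\vu(t) \cdot \vc w_{h(t)} \dd x$ are controlled by H\"older's inequality and $\|\vc w_h\|_{L^2} \lesssim 1$, giving $\lesssim 1$. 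The time-derivative term is rewritten via $\del_t \vc w_h = \dot h\, \del_h \vc w_h$, and estimated using $|\dot h| \lesssim 1$ (from the solid's kinetic energy), the weighted bound $\int \psi_h^2|\del_h \vc w_h|^2 \lesssim 1$, and a Hardy-type control of $\vu/\psi_h$ inside the cusp where $\vu$ accommodates its two boundary traces across a channel of width $\psi_h$.

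The convective term (if present) is estimated by Ladyzhenskaya-type interpolation giving $\vu \in L^4((0,T)\times \Omega)$ in 2D (respectively $L^{10/3}$ in 3D), combined with the smallness of $\vu$ in the cusp $\Omega_{h,r_0}$ (scaling like $|\dot h| \psi_h$) which compensates the singular scaling of $\nabla \vc w_h$ there. For the singular remainder $\int_0^t \int_{\mf_h} (\Delta \vc w_h - \nabla q_h) \cdot \vu \dd x \dd s$, I apply Lemma~\ref{lem:singRemove} to $\varphi = \vu/\dot h$, which satisfies $\varphi|_{\del \ms} = \vc e_d$ under the purely vertical motion assumption \ref{a3}; this yields the pointwise-in-time bound $|\int (\Delta \vc w_h - \nabla q_h) \cdot \vu \dd x| \lesssim \|\vu\|_{W^{1,2}(\Omega)}$, and Cauchy--Schwarz in $t$ produces $\sqrt{t}\, \|\vu\|_{L^2(0,T;W^{1,2})} \lesssim \sqrt{t}$.

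The main obstacle is the scaling match in the estimate on $n(h)$: ensuring that the error $h^{-\beta/2}$ from Lemma~\ref{lem:singRemove} and the $\del \Omega$-boundary piece are genuinely of lower order than $\|\nabla \vc w_h\|_{L^2}^2 \sim h^{-\beta}$, especially when $\beta$ is close to zero or negative (as happens in 3D for $\alpha$ near or below $1/3$). Resolving this may require a sharper form of Lemma~\ref{lem:singRemove} tailored to $\varphi = \vc w_h$, or a direct evaluation of the $\del \Omega$ contribution using the precise form of $q_h$ and $\phi_h$ near the contact point.
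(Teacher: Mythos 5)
Your overall strategy mirrors the paper's: integrate by parts to express $n(h)$ as the Dirichlet energy of $\vc w_h$ plus a remainder controlled by Lemma~\ref{lem:singRemove}, and estimate the four pieces of $R(t)$ separately using Hardy/Poincar\'e inside the cusp, the $\psi_h$-weighted bounds from Lemma~\ref{lem:intEst}, and Lemma~\ref{lem:singRemove} for the singular remainder. Two of your technical steps, however, diverge from how the argument actually closes and would run into trouble if carried through.

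First, the $\del\Omega$ piece of $n(h)$. You are right that the naive IBP with $\vc w_h$ only produces the $\del\ms_h$ surface integral (because $\vc w_h$ vanishes on $\del\Omega$), so the $\del\Omega$ term does not appear on the right-hand side; but your proposed fix --- directly evaluating the $\del\Omega$ contribution and observing that the $q_h$ term ``is reshaped into the main-term scaling'' --- is not a fix. With the pressure normalization $q_h(0,0)=0$ used in Lemma~\ref{lem:singRemove}, the trace $q_h|_{x_2=0}=12\int_0^{x_1}t/\psi_h(t)^3\,\mathrm{d}t$ is strictly positive and its integral over the flat bottom is genuinely singular in $h$ (of the same or even worse order than $\|\nabla\vc w_h\|_{L^2}^2$). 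There is no argument in your proposal controlling its sign or showing it combines coherently with the $\del\ms_h$ term, and including it would not yield $n(h)\sim h^{-\beta}$. The resolution used implicitly in the paper is that $n(h)$ in the Proposition should be read as the $\del\ms_h$ integral (only this quantity ever enters, because the momentum equation is tested against $\vu$, which vanishes on $\del\Omega$). Once that is accepted, the IBP closes and the $\del\Omega$ piece simply does not occur; your extra work is not merely superfluous, it is an attempt to bound a term that would spoil the scaling if literally retained.

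Second, applying Lemma~\ref{lem:singRemove} to $\varphi=\vu/\dot h$ is problematic: $\dot h$ passes through zero and the division is ill-defined there. The way the paper gets around the formal hypothesis ``$\varphi|_{\del\ms}=\vc e_d$'' is to note that the estimate in Lemma~\ref{lem:singRemove} is in fact linear both in $\varphi$ and in its boundary trace (the boundary term scales with $\|\varphi_d\|_{L^\infty(\del\ms)}$, the interior term with $\|\nabla\varphi\|_{L^2}$); so for $\varphi=\vu$ with $\vu|_{\del\ms}=\dot h\,\vc e_d$ one gets $\lesssim |\dot h|+\|\vu\|_{W^{1,2}(\Omega)}\lesssim\|\vu\|_{W^{1,2}(\Omega)}$ directly, with no division to perform. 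Finally, for the convective term, your global Ladyzhenskaya interpolation alone will not work: $\|\bD(\vc w_h)\|_{L^2}\sim h^{-\beta/2}$ is not uniform, so $\|\vu\|_{L^4}^2\|\bD(\vc w_h)\|_{L^2}$ blows up in $h$. The paper instead uses a vertical-slice Poincar\'e inequality $\bigl(\int_0^l|\vc v|^4\bigr)^{1/4}\lesssim l^{3/4}\bigl(\int_0^l|\del_x\vc v|^2\bigr)^{1/2}$ inside the cusp, whose gain of $\psi_h^{3/4}$ exactly compensates the local singularity of $\nabla\vc w_h$; your sentence about ``smallness of $\vu$ like $|\dot h|\psi_h$'' gestures at this but is not the right pointwise statement, and the estimate would need to be an $L^4$-in-a-slice bound as in inequality \eqref{i1} of the paper.
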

\begin{proof}
We start with the estimate for $n(h)$. Using $\div \vc w_h = 0$, integration by parts gives
\begin{align*}
n(h) = 2 \int_\Omega |\bD(\vc w_h)|^2 \dd x + \int_{\mf_h} (\Delta \vc w_h - \nabla q_h) \cdot \vc w_h \dd x.
\end{align*}
By Lemma~\ref{lem:intEst}, we have $\|\nabla \vc w_h\|_{L^2(\Omega)}^2 \sim h^{(d-2-3\alpha)/(1+\alpha)}$. Combining this with Lemma~\ref{lem:singRemove} yields
\begin{align*}
\bigg| \int_{\mf_h} (\Delta \vc w_h - \nabla q_h) \cdot \vc w_h \dd x \bigg| \lesssim \|\nabla \vc w_h\|_{L^2(\Omega)} \lesssim 1 + \delta \|\nabla \vc w_h\|_{L^2(\Omega)}^2 \lesssim 1 + \delta h^\frac{d-2-3\alpha}{1+\alpha}
\end{align*}
for $\delta>0$ small enough (see \eqref{Young}). In particular,
\begin{align*}
n(h) \lesssim \|\nabla \vc w_h\|_{L^2(\Omega)}^2 + \bigg| \int_{\mf_h} (\Delta \vc w_h - \nabla q_h) \cdot \vc w_h \dd x \bigg| \lesssim 1 + h^\frac{d-2-3\alpha}{1+\alpha} \lesssim h^{-\beta}
\end{align*}
as wished. Additionally, for $h<1$,
\begin{align*}
n(h) \gtrsim \|\nabla \vc w_h\|_{L^2(\Omega)}^2 - \bigg| \int_{\mf_h} (\Delta \vc w_h - \nabla q_h) \cdot \vc w_h \dd x \bigg| \gtrsim (1-\delta) h^\frac{d-2-3\alpha}{1+\alpha} - 1 \gtrsim h^\frac{d-2-3\alpha}{1+\alpha}.
\end{align*}

To estimate $R(t)$, we control each term separately. Recalling again Lemma~\ref{lem:singRemove}, integrating this inequality with respect to $t$, and using H\"older's inequality \eqref{Holder}, we get
\begin{align*}
\int_0^t \bigg| \int_{\mf(s)} (\Delta \vc w_{h(s)} - \nabla q_{h(s)}) \cdot \vu(s) \dd x \bigg| \dd s \lesssim \|\vu\|_{L^2(0,t; W^{1,2}(\Omega))} \sqrt{t} \leq C(\|\vu_0\|_{L^2(\Omega)}) \sqrt{t},
\end{align*}
the last estimate coming from basic energy estimates for the velocity $\vu$, and the fact that the gravitational force $g \vc e_2$ is conservative (compare this with \eqref{consEner}); thus, the constant does not depend on the time $t$. Similarly,
\begin{align*}
\bigg| \int_\Omega \vc m_0 \cdot \vc w_{h(0)} \dd x - \int_\Omega \rho(t) \vu(t) \cdot \vc w_{h(t)} \dd x \bigg| \lesssim \|\vu\|_{L^\infty(0,t; L^2(\Omega))} \|\vc w_{h(\cdot)} \|_{L^\infty(0,t; L^2(\Omega))} \leq C(\|\vu_0\|_{L^2(\Omega)}).
\end{align*}

Before we come to the term involving $\del_t \vc w_h$, we state the following general result: for any $(r, \vc v) \in L^\infty(\Omega) \times W_0^{1,2}(\Omega)$ and any $\vc \phi \in W_0^{1,2}(\Omega)$,
\begin{align*}
\bigg| \int_\Omega r \vc v \cdot \phi \dd x \bigg| \lesssim \|r\|_{L^\infty(\Omega)} \|\nabla \vc v\|_{L^2(\Omega)} \bigg( \|\phi\|_{L^2(\Omega \setminus \Omega_{h, r_0})} + \bigg( \int_{-r_0}^{r_0} \int_0^{\psi_h(x_1)} |\psi_h(x_1)|^2 |\phi(x)|^2 \dd x \bigg)^\frac12 \bigg).
\end{align*}
The proof of this inequality follows easily from splitting $\Omega = (\Omega \setminus \Omega_{h, r_0}) \cup \Omega_{h, r_0}$, using H\"older's and Poincar\'e's inequality \eqref{Holder} and \eqref{Poinc} for the first part, and Hardy's inequality \eqref{Hardy} for the second. We leave the details to the reader.\\

Using the above inequality for $\phi = \del_t \vc w_h = \dot{h} \del_h \vc w_h$, combined with Lemma~\ref{lem:intEst} and the fact that $\del_h \vc w_h$ is bounded outside the region $\Omega_{h, r_0}$ by smoothness of $\vc w_h$ there, we arrive at
\begin{align*}
&\bigg| \int_0^t \int_\Omega \rho \vc u \cdot \del_t \vc w_h \dd x \dd t \bigg| \lesssim \|\dot h\|_{L^\infty ([0,T_\ast))} \int_0^t \|\nabla \vu(s)\|_{L^2(\Omega)} \bigg( \|\del_h \vc w_{h(s)}\|_{L^2(\Omega \setminus \Omega_{h(s), r_0})} \\
&\quad + \bigg( \int_{-r_0}^{r_0} \int_0^{\psi_h(x_1)} |\psi_h(x_1)|^2 |\del_h \vc w_{h(s)}(x)|^2 \dd x \bigg)^\frac12 \bigg) \dd s \leq C(\|\vu_0\|_{L^2(\Omega)}) \sqrt{t}.
\end{align*}
To finally deal with the convective term, we use similarly as before a general result: for any $(r, \vc v) \in L^\infty(\Omega) \times W_0^{1,2}(\Omega)$ and any $\phi \in W_0^{1,2}(\Omega)$, we have
\begin{align}\label{i1}
\begin{split}
\bigg| \int_\Omega r \vc v \otimes \vc v : \bD(\phi) \dd x \bigg| &\lesssim \|r\|_{L^\infty(\Omega)} \|\nabla \vc v\|_{L^2(\Omega)}^2 \bigg( \|\bD(\phi)\|_{L^\infty(\Omega \setminus \Omega_{h, r_0})} \\
&\quad + \sup_{x_1 \in (-r_0, r_0)} |\psi_h(x_1)|^\frac32 \bigg( \int_0^{\psi_h(x_1)} |\nabla \phi(x)|^2 \dd x_1 \bigg)^\frac12 \bigg).
\end{split}
\end{align}
As before, the first part of this inequality follows from H\"older's and Poincar\'e's inequality \eqref{Holder} and \eqref{Poinc}. The second part uses a refined Poincar\'e inequality
\begin{align*}
\bigg( \int_0^l |\vc v|^p \dd x \bigg)^\frac1p \lesssim l^{\frac12 + \frac1p} \bigg( \int_0^l |\del_x \vc v|^2 \dd x \bigg)^\frac12 \ \forall p \geq 2 \ \forall \vc v \in W^{1,2}(0,l), \ \vc v(0)=0
\end{align*}
that can be obtained via a scaling argument. We use this refined inequality for $l=\psi_h(x_1)$ and $p=4$ to conclude \eqref{i1}. Finally, we proceed as before, using again Lemma~\ref{lem:intEst} to get
\begin{align*}
\bigg| \int_\Omega \rho \vu \otimes \vu : \bD(\vc w_h) \dd x \bigg| \leq C(\|\vu_0\|_{L^2(\Omega)}).
\end{align*}
Putting together the bounds obtained, we finish the proof of the proposition.
\end{proof}

\subsection{The (no-)collision results}

We will prove the (no-)collision results in a general setting, meaning for both two and three dimensions, as well as for both parabolic and ball-shaped solids. Our main theorem reads:
\begin{Theorem}\label{thm:ParInc}
Let $d \in \{2,3\}$, and $\ms \subset \R^d$ be of class $C^{1,\alpha}$ for some $\alpha \in [0,1]$ such that $\dist(\ms(0), \del \Omega)>0$. Moreover, assume that the solid's density $\rho_\ms$ is larger than the fluid's one $\rho_\mf$, meaning $\rho_\ms > \rho_\mf$.
\begin{enumerate}
\item If $\alpha < \frac{d-1}{2}$, then the solid collides with $\del \Omega$ in finite time.
\item If $\alpha \geq \frac{d-1}{2}$, then the solid stays away from $ \del \Omega$ for all times.
\end{enumerate}
\end{Theorem}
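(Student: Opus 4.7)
The plan is to derive a scalar integral equation for the solid's height $h(t)$ by testing the combined momentum equation of \eqref{inc1}--\eqref{inc2} against $\zeta(t) \vc w_{h(t)}$, where $\zeta$ is the time cutoff from \eqref{zeta} and $\vc w_h$ is the test function constructed in Section~\ref{sec:62}. This is admissible since $\vc w_h$ is divergence-free, equals $\vc e_d$ on $\ms_h$ (so $\bD(\vc w_h) = 0$ there), and vanishes on $\del \Omega$. For simplicity I assume that the solid moves only vertically, $\vu|_{\del \ms_h} = \dot h(t) \, \vc e_d$, which is justified by axisymmetry of $\ms$ in three dimensions and by uniqueness of the 2D Stokes problem \cite{GerardVaretHillairet2010}.

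The central computation is to identify the viscous contribution with the drag. Invoking the Stokes identity together with the pressure corrector $q_h$ of Lemma~\ref{lem:singRemove}, one writes
\begin{align*}
\int_{\mf_h} 2 \bD(\vu) : \bD(\vc w_h) \dd x = -\int_{\mf_h}(\Delta \vc w_h - \nabla q_h)\cdot \vu \dd x + \dot h(t) \, n(h(t)),
\end{align*}
the boundary contribution being identified with $\dot h \cdot n(h)$ from Proposition~\ref{prop:Remainder} after exploiting $\div \vc w_h = 0$ and the identity $\int_{\del \ms} \nabla^T \vc w_h \, \vc n \cdot \vc e_d \dd \sigma = 0$ (itself a consequence of $\vc w_h|_{\ms} = \vc e_d$ being constant). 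For the gravitational term, writing $-g \vc e_d = -g \nabla[x \mapsto x_d]$ and using $\div \vc w_h = 0$, $\vc w_h|_{\del \Omega}=0$, $\vc w_h|_{\ms}=\vc e_d$ collapses it to the buoyancy-corrected weight
\begin{align*}
\int_\Omega \rho \vc f \cdot \vc w_h \dd x = -g(\rho_\ms - \rho_\mf)|\ms|.
\end{align*}
All remaining inertial and convective contributions are precisely the remainder $R(t)$ of Proposition~\ref{prop:Remainder}, and substituting $\int_0^t \dot h(s) \, n(h(s)) \dd s = -\int_{h(t)}^{h(0)} n(\xi) \dd \xi$ reduces the weak formulation to
\begin{align*}
\int_{h(t)}^{h(0)} n(\xi) \dd \xi = g(\rho_\ms - \rho_\mf) |\ms| \, t - R(t), \qquad |R(t)| \lesssim 1 + \sqrt{t}.
\end{align*}

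The two assertions then follow from the sharp bound $n(\xi) \sim \xi^{-\beta}$ of Proposition~\ref{prop:Remainder}, with $\beta = (3\alpha-(d-2))/(1+\alpha)$ satisfying $\beta \gtrless 1 \iff \alpha \gtrless (d-1)/2$. For (i), $\beta < 1$ implies $\int_0^{h(0)} n(\xi) \dd \xi < \infty$, so the left-hand side stays bounded by a finite constant as $h(t) \to 0$; since $\rho_\ms > \rho_\mf$, the right-hand side grows linearly in $t$ and eventually exceeds this constant, forcing $h(t) = 0$ at some finite $T_\ast$. For (ii), the lower bound $n(\xi) \gtrsim \xi^{-\beta}$ with $\beta \geq 1$ implies that $\int_{h(t)}^{h(0)} n(\xi) \dd \xi \to \infty$ whenever $h(t) \to 0$, contradicting the finiteness of the right-hand side at any finite $t$; more quantitatively one obtains $h(t) \gtrsim (1+t)^{-1/(\beta-1)}$ if $\beta > 1$, or $h(t) \gtrsim h(0) e^{-C(1+t)}$ if $\beta = 1$, so $h$ remains strictly positive for every finite $t$. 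The main obstacle I anticipate is the careful boundary bookkeeping matching the trace $(2 \bD(\vc w_h) - q_h \Id)\vc n \cdot \vc e_d$ arising from the Stokes identity with the integrand $(\nabla \vc w_h - q_h \Id)\vc n \cdot \vc e_d$ defining $n(h)$: the $\nabla^T \vc w_h$ contribution vanishes as above, and the boundary piece on $\del \Omega$ is absorbed into $R$ via the uniform bounds of Lemma~\ref{lem:intEst}.
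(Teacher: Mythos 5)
Your proposal is correct and follows essentially the same route as the paper: test the momentum equation against $\vc w_{h(t)}$, isolate the drag $\dot h\,n(h)$ via the corrected Stokes identity of Lemma~\ref{lem:singRemove}, collapse the gravitational term to the buoyancy-corrected weight $g(\rho_\ms-\rho_\mf)|\ms|$, absorb everything else into the remainder $R(t)\lesssim 1+\sqrt{t}$ of Proposition~\ref{prop:Remainder}, and read off (no-)collision from the (non-)integrability of $n(\xi)\sim\xi^{-\beta}$ near $\xi=0$. The insertion of the time cutoff $\zeta$ and the explicit substitution $\int_0^t\dot h\,n(h)\,\mathrm{d}s=-\int_{h(t)}^{h(0)}n(\xi)\,\mathrm{d}\xi$ are harmless cosmetic variants of the paper's $N(h(t))$.
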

The proof of the above theorem we present here is the one given in \cite{Hillairet2007} for the case $d=2$, but the same ideas also work for $d=3$, see \cite{HillairetTakahashi2009}, where a ball-shaped solid is considered, and \cite{GerardVaretHillairet2012} for an energy-based consideration for parabolic shapes. Moreover, combined with Starovoitov's results, we can state the following
\begin{Corollary}
Under the assumptions of Theorem~\ref{thm:ParInc}, for $2\alpha < d-1$ such that collision happens, the solid collides with rate
\begin{align*}
\lim_{t \to T_\ast} h(t) |t-T_\ast|^{-\eta} = 0, \qquad \eta = \frac{d-\alpha}{4(1+\alpha)}.
\end{align*}
\end{Corollary}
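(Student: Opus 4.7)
The plan is to read the corollary as an immediate combination of the two results already at hand: Theorem~\ref{thm:ParInc}(1) supplies, under the assumption $2\alpha < d-1$, a finite collision time $T_\ast$ with $h(T_\ast)=0$, while Starovoitov's Theorem~\ref{thm:Starov}(a) provides a quantitative rate once sufficient integrability of $\vu$ and the constraint $\beta<1$ are available. So the only thing to do is to feed correctly chosen exponents $p,q$ into Theorem~\ref{thm:Starov}.

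The natural endpoint is $p=q=2$. This regularity is supplied for free by the standard energy identity for \eqref{inc1}--\eqref{inc2}: testing the momentum equation against $\vu$ (extended rigidly inside $\ms$), observing that gravity is conservative exactly as in the computation \eqref{consEner}, and closing with Korn's and Poincar\'e's inequalities \eqref{Korn}--\eqref{Poinc}, one obtains
\[
\|\vu\|_{L^\infty(0,T;L^2(\Omega))}^2 + \|\vu\|_{L^2(0,T;W_0^{1,2}(\Omega))}^2 \leq C E_0
\]
with a constant depending only on the data and, crucially, independent of $T<T_\ast$. Hence $\vu\in L^2(0,T_\ast;W^{1,2}(\Omega))$ up to the collision time.

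Plugging $p=2$ into the formula of Theorem~\ref{thm:Starov} yields $\beta=\tfrac{3\alpha-(d-2)}{2(1+\alpha)}$, so that the condition $\beta<1$ reduces to $\alpha(p-1)=\alpha<d$, which is trivially met for $\alpha\in[0,1]$. Since the first step already delivers $h(T_\ast)=0$, both hypotheses of Theorem~\ref{thm:Starov}(a) hold, giving
\[
\lim_{t\to T_\ast} h(t)|t-T_\ast|^{-\eta} = 0, \qquad \eta=\frac{q-1}{q}\cdot\frac{d-\alpha(p-1)}{(1+\alpha)p}.
\]
Substituting $p=q=2$ produces $\eta=\tfrac{1}{2}\cdot\tfrac{d-\alpha}{2(1+\alpha)}=\tfrac{d-\alpha}{4(1+\alpha)}$, which is exactly the claimed rate.

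No genuine obstacle appears: the substantive content sits in Theorem~\ref{thm:ParInc} (which rules out the long-channel drag blocking descent when $\alpha$ is small) and in Theorem~\ref{thm:Starov} (which converts the normal-velocity estimate $|\dot h|\lesssim h^\beta\|\vu\|_{W^{1,p}(\Omega)}$ into a quantitative decay). The only point requiring a sentence of care is the uniformity of the energy bound up to $T_\ast$, which is automatic because gravity is conservative and therefore produces no external-work term in the energy identity, so the bound reduces to the initial energy $E_0$ alone.
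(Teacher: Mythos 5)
Your overall approach mirrors what the paper intends: invoke Theorem~\ref{thm:ParInc} for the existence of a finite collision time $T_\ast$ with $h(T_\ast)=0$, invoke Theorem~\ref{thm:Starov}(a) at $p=q=2$, and supply the regularity $\vu\in L^2(0,T_\ast;W^{1,2}(\Omega))$ from the energy bound. That much is sound. There is, however, a genuine problem in the final substitution that your proposal inherits rather than uncovers.

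You correctly compute $1-\beta=\frac{d-\alpha(p-1)}{p(1+\alpha)}$ at $p=2$ (this is the same algebra that gave you $\beta=\frac{3\alpha-(d-2)}{2(1+\alpha)}$), and then you substitute $p=q=2$ into the \emph{closed form} $\eta=\frac{q-1}{q}\frac{d-\alpha(p-1)}{(1+\alpha)p}$ printed in Theorem~\ref{thm:Starov}(a). But that closed form equals $\frac{q-1}{q}(1-\beta)$, whereas the \emph{other} expression stated in the same item is $\eta=\frac{q-1}{q}\cdot\frac{1}{1-\beta}$. These two disagree (they are reciprocal up to the common prefactor) unless $1-\beta=\pm 1$, so at most one of them can be what Theorem~\ref{thm:Starov} actually establishes. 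The theorem's proof runs through $h^{1-\beta}(t)\lesssim\|\vu\|_{L^q(t,T_\ast;W^{1,p})}\,|t-T_\ast|^{1-1/q}$, hence $h(t)\lesssim\|\vu\|_{L^q(t,T_\ast;W^{1,p})}^{1/(1-\beta)}|t-T_\ast|^{\frac{q-1}{q(1-\beta)}}$, so the exponent is $\eta=\frac{q-1}{q}\cdot\frac{1}{1-\beta}$. For $p=q=2$ this is $\eta=\frac{1+\alpha}{d-\alpha}$, which does \emph{not} equal the Corollary's $\frac{d-\alpha}{4(1+\alpha)}$ except in the coincidental case $1-\beta=1$ (e.g.\ $d=2$, $\alpha=0$); at $d=3$, $\alpha=0$ the proof gives $\eta=\frac{1}{3}$ while the Corollary claims $\frac{3}{4}$. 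A careful derivation should notice the internal inconsistency between the two printed expressions for $\eta$, return to the Hölder step in Theorem~\ref{thm:Starov}'s proof to decide which is right, and flag that the Corollary's exponent appears to carry the same inversion; your proposal instead accepts the printed closed form at face value and so reproduces the error rather than catching it.

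A minor separate imprecision: for system~\eqref{inc1}--\eqref{inc2} the body force is the density-weighted $\rho g\vc e_d$ with $\rho$ taking different constants on $\mf$ and $\ms$, so $\int_\Omega\rho g\vc e_d\cdot\vu\,dx$ does not vanish identically as in~\eqref{consEner}; it equals $g(\rho_\ms-\rho_\mf)|\ms(0)|\,\dot h(t)$, whose time integral is $g(\rho_\ms-\rho_\mf)|\ms(0)|\,(h(T)-h(0))$, bounded because $h\leq\diam\Omega$. The energy bound is still uniform up to $T_\ast$, but not because the work term is literally zero.
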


\begin{Remark} In contrast to Theorem~\ref{thm:ParInc}, Starovoitov gave an example of a moving ball such that collision appears, which we will review in Chapter~\ref{ch:7}. As it seems at first glance to be in contradiction with the above result such a ball shall behave like a ``parabola'' with ${\alpha=1}$, we see that Starovoitov's example needs an additional ``singular'' force $\vc f \in L^2(0,T;\widehat{W}^{-1,2}(\Omega))$ adapted to the constructed velocity to ensure collision, where $\widehat{W}^{-1,2}(\Omega)$ is the dual to $\{ \vu \in W_0^{1,2}(\Omega): {\div \vu = 0} \}$. On the other hand, Theorem~\ref{thm:ParInc} is valid for arbitrary forces in $(L^p + \nabla L^p)(\Omega)$, $1<p\leq \infty$, sufficiently regular in time.
\end{Remark}

With the help of Lemmata~\ref{lem:intEst}, \ref{lem:singRemove}, and Proposition~\ref{prop:Remainder}, we are ready to prove Theorem~\ref{thm:ParInc}.
\begin{proof}[Proof of Theorem~\ref{thm:ParInc}]
By the regularity achieved for $\vc w_h$, we are allowed to use it as a test function for the momentum equation. This gives
\begin{align*}
&\int_0^t \int_\Omega \rho \vu \cdot \del_s \vc w_{h(s)} + \rho \vu \otimes \vu : \nabla \vc w_{h(s)} - \nabla \vu : \nabla \vc w_{h(s)} - \rho g \vc e_d \cdot \vc w_{h(s)} \dd x \dd s\\
&= \int_\Omega (\rho \vu \cdot \vc w_h)(t) \dd x - \int_\Omega \vc m_0 \cdot \vc w_{h(0)} \dd x.
\end{align*}
Since $\rho = \rho_\mf \chi_{\mf(t)} + \rho_\ms \chi_{\ms(t)}$ and $\div \vc w_h = 0$, we get
\begin{align*}
\int_\Omega \rho g \vc e_d \cdot \vc w_{h(t)} \dd x &= \int_{\ms(t)} \rho_\ms g \vc e_d \cdot \vc e_d + \int_{\mf(t)} \rho_\mf g \nabla [x \mapsto x_d] \cdot \vc w_{h(t)}\\
&= \rho_\ms g |\ms(0)| + \rho_\mf g \int_{\del \mf(t) \setminus \del \Omega} x_d \vc e_d \cdot \vc n \dd \sigma = \rho_\ms g |\ms(0)| - \rho_\mf g \int_{\del \ms(t)} x_d \vc e_d \cdot \vc n \dd \sigma \\
&= \rho_\ms g |\ms(0)| - \rho_\mf g \int_{\ms(t)} \div(x_d \vc e_d) \dd x = g |\ms(0)| (\rho_\ms - \rho_\mf),
\end{align*}
where we have used that $\vc w_h |_{\del \Omega} = 0$ and $\vc n |_{\del \mf(t) \setminus \del \Omega} = -\vc n|_{\del \ms(t)}$. Further, we write
\begin{align*}
2 \int_\Omega \bD(\vu)(t) : \bD(\vc w_{h(t)}) \dd x &= \dot{h}(t) \int_{\del \mf(t)} (\nabla \vc w_{h(t)} - q_{h(t)} \Id)\vc n \cdot \vc e_d \dd \sigma - \int_{\mf(t)} (\Delta \vc w_{h(t)} - \nabla q_{h(t)}) \cdot \vu \dd x\\
&= \dot{h}(t) n(h) - \int_{\mf(t)} (\Delta \vc w_{h(t)} - \nabla q_{h(t)}) \cdot \vu \dd x,
\end{align*}
where $n(h)$ is as in Proposition~\ref{prop:Remainder}. Note that here we have used $\div \vc u = 0$ in order to ``smuggle in'' the pressure $q_{h(t)}$. Denoting
\begin{align*}
N(h(t)) := \int_0^t n(h(s)) \dd s,
\end{align*}
and recalling the definition of $R(t)$ as in Proposition~\ref{prop:Remainder}, we can write the weak formulation of the momentum equation as
\begin{align*}
N(h(t)) + (\rho_\ms - \rho_\mf) g |\ms(0)| t &= R(t).
\end{align*}
The proof is now easily finished: if $\beta \geq 1$ in Proposition~\ref{prop:Remainder}, then
\begin{align*}
N(h(t)) \gtrsim (\rho_\mf - \rho_\ms)t - (1+\sqrt{t}).
\end{align*}
Moreover, by the same token for $h>0$ small enough,
\begin{align*}
N(h(t)) \lesssim \begin{cases}
|\log h| & \text{if } \beta=1,\\
h^{1-\beta} & \text{if } \beta>1.
\end{cases}
\end{align*}
This yields
\begin{align*}
|\log h|(t) \lesssim (\rho_\ms - \rho_\mf) t + \sqrt{t} + 1 < \infty \quad \forall \ t<T_\ast,
\end{align*}
in particular,
\begin{align*}
h(t) \geq C \exp\{-(\rho_\ms - \rho_\mf) t - \sqrt{t} \}.
\end{align*}
Recalling $\rho_\ms>\rho_\mf$, this means that $h$ cannot vanish in finite time and no collision occurs. Especially, the maximal existence time of the solution $(\rho, \vu)$ is $T_\ast = \infty$. Note that $\beta \geq 1$ is satisfied if $d=2$ and $\alpha \geq 1/2$, or if $d=3$ and $\alpha=1$, which fits precisely the cases of a ``blunt'' parabola and a ball-shaped obstacle, respectively.

If $\beta<1$ which can be just the case for $d=2$ and $\alpha<1/2$, then $n \in L^1(0,T_\ast)$ and $N$ as its primitive is continuous. Since $h(t)$ is bounded below and above, we deduce
\begin{align}\label{ineq3}
- \infty < \inf_{t \in (0, T_\ast)} N(h(t)) \lesssim (\rho_\mf - \rho_\ms) t + \sqrt{t} + 1.
\end{align}
If now $T_\ast = \infty$, we can send $t \to \infty$ on the right-hand side of the above inequality. But $\rho_\ms>\rho_\mf$, thus the right-hand side goes to $-\infty$, which is a contradiction. Eventually, $T_\ast < \infty$, meaning that $h$ vanishes in finite time and collision occurs. The proof is finished.
\end{proof}

\subsection{Concluding remarks}\label{sec:conc}
A similar but more sophisticated estimation of the drag force using energy considerations was given in \cite[Section~3.1]{GerardVaretHillairet2012}, although just for the incompressible Stokes equations. To begin, the drag force on the solid $\ms$ is defined as the component of the total force the fluid applies on the solid which is parallel to the flow velocity, that is,
\begin{align*}
\mathcal{D}_h = \int_{\del \ms_h} \vu \cdot (\bD(\vu)-p\Id)\mathbf n \dd \sigma.
\end{align*}
Next, by the standard existence theory of Stokes equations, we can show the following
\begin{Lemma}
There exists a unique solution $\vv_h \in W_0^{1,2}(\Omega)$ to
\begin{align*}
\begin{cases}
\div \vv_h = 0 & \text{in } \mf_h,\\
-\Delta \vv_h + \nabla p_h = 0 & \text{in } \mf_h,\\
\vv_h = \mathbf e_3 & \text{on } \ms_h,\\
\vv_h = 0 & \text{on } \del \Omega.
\end{cases}
\end{align*}
\end{Lemma}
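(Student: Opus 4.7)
The plan is to recast the problem as a coercive variational problem on a divergence-free Hilbert space, solve it by Lax--Milgram, and then recover the pressure via de Rham's theorem. Since $h>0$ and both $\del \ms$ and $\del \Omega$ are of class $C^{1,\alpha}$, the fluid domain $\mf_h = \Omega \setminus \ms_h$ is bounded and Lipschitz with two disjoint boundary components.

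First I would lift the inhomogeneous Dirichlet datum $\vc e_3$ on $\del \ms_h$. The function $\vc w_h$ constructed in Section~\ref{sec:31} is an explicit such lift: it belongs to $C_c^\infty(\Omega)$, is divergence-free, vanishes on $\del \Omega$, and equals $\vc e_3$ on $\ms_h$. Writing $\vv_h = \vc w_h + \tilde{\vv}$, the problem reduces to finding $\tilde{\vv} \in V_h := \{ \varphi \in W_0^{1,2}(\mf_h) : \div \varphi = 0 \}$ and $p_h \in L^2(\mf_h)$ of zero mean such that $-\Delta \tilde{\vv} + \nabla p_h = \Delta \vc w_h$ in $\mf_h$. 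The global object $\vv_h \in W_0^{1,2}(\Omega)$ is then obtained by declaring $\tilde{\vv} = 0$ on $\ms_h$, so that $\vv_h = \vc e_3$ on $\ms_h$ as required.

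Testing against $\varphi \in V_h$ and integrating by parts yields the variational problem
\begin{align*}
a(\tilde{\vv}, \varphi) := \int_{\mf_h} \nabla \tilde{\vv} : \nabla \varphi \dd x = -\int_{\mf_h} \nabla \vc w_h : \nabla \varphi \dd x =: \langle L, \varphi \rangle \qquad \forall \varphi \in V_h.
\end{align*}
The bilinear form $a$ is obviously continuous, and Poincar\'e's inequality \eqref{Poinc} makes it coercive on $V_h$; the right-hand side $L$ is a continuous linear functional on $V_h$ since $\nabla \vc w_h \in L^2(\Omega)$. The Lax--Milgram theorem thus produces a unique $\tilde{\vv} \in V_h$. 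Uniqueness of $\vv_h$ follows at once: the difference of two solutions lies in $V_h$, is $a$-orthogonal to itself, and hence vanishes.

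To recover the pressure, I would invoke de Rham's theorem: the distribution $-\Delta \vv_h \in H^{-1}(\mf_h)$ annihilates every divergence-free test field in $C_c^\infty(\mf_h)$, so there exists $p_h \in L^2(\mf_h)$, unique up to an additive constant (fixed by normalizing to zero mean), with $-\Delta \vv_h + \nabla p_h = 0$ in $\mathcal{D}'(\mf_h)$. The only mildly technical point is the applicability of de Rham's theorem to the possibly irregular annular domain $\mf_h$, which holds here thanks to its Lipschitz character; no genuine obstacle arises.
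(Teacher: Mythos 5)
Your proof is correct and is precisely what the paper leaves implicit: the paper appeals to ``the standard existence theory of Stokes equations'' without writing out an argument, and the variational formulation via Lax--Milgram on the divergence-free subspace of $W_0^{1,2}(\mf_h)$, with the pressure recovered by de Rham, is exactly that standard theory (cf.\ \cite{Ladyzhenskaya1969, Ladyzhenskaya1975}). The use of the explicitly constructed $\vc w_h$ as the divergence-free lift of the boundary datum, and the extension of the corrector $\tilde{\vv}$ by zero across $\ms_h$ to land in $W_0^{1,2}(\Omega)$, are both handled correctly.
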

Since $\vv_h |_{\del \ms_h} = \mathbf e_3$, $\vv_h |_{\del \Omega}=0$, and $\div \vv_h=0$, we may write
\begin{align}\label{drag}
\begin{split}
\mathcal{D}_h &= \int_{\del \ms_h \cup \del \Omega} \vv_h \cdot (\bD(\vv_h)-p_h\Id)\mathbf n \dd \sigma = \int_{\mf_h} \div(\bD(\vv_h) \vv_h - p_h\vv_h) \dd x\\
 &= \int_{\mf_h} (\Delta \vv_h- \nabla p_h) \cdot \vv_h + \bD(\vv_h):\nabla \vv_h - p_h \div \vv_h \dd x = \int_{\mf_h} |\bD(\vv_h)|^2 \dd x.
\end{split}
\end{align}
Thus, the drag force is completely determined by the behavior of the symmetric gradient of $\vv_h$. The test functions $\vc w_h$ constructed above now ``almost'' solve this Stokes problem for $\vv_h$ in the sense that $-\Delta \vc w_h + \nabla q_h$ does not vanish, but is controlled as seen by Lemma~\ref{lem:singRemove}. Hence, the drag force calculated with $\vc w_h$ instead of $\vv_h$ shall behave similarly. Indeed, the outcomes of \cite{GerardVaretHillairet2012} roughly read
\begin{align*}
\mathcal{D}_h(\vc w_h) \sim \begin{cases}
h^\frac{1-3\alpha}{1+\alpha} & \text{if } \alpha>\frac13,\\
|\log h| & \text{if } \alpha=\frac13,\\
1 & \text{if } \alpha<\frac13,
\end{cases}
\end{align*}
which fits our calculations done before. Note moreover that this means the convective term $\rho_\mf \div(\vu \otimes \vu)$ does not play a role in the question whether or not collision occurs. Heuristically, this is again easy to explain: by Theorem~\ref{thm:Starov}, the solid touches the ground with zero speed, meaning the fluid behaves as a creeping (also called Stokes) flow. Such flows are usually modelled by the Navier-Stokes equations with negligible convective term (if the fluid's velocity is small, then the ``quadratic'' convective term is even smaller), which gives rise precisely to the Stokes equations.\\

As for compressible fluids, there arises another difficulty. Applying the same technique as before, introducing the functions $(\vc w_h, q_h)$ as test functions, and integrating by parts in the term $\int_\Omega \bD(\vu) : \bD(\vc w_h) \dd x$, we get an additional term
\begin{align*}
\int_{\mf_h} q_h \div \vu \dd x,
\end{align*}
which we cannot control although the pressure $q_h$ is explicitly given.
Moreover, we cannot apply the same technique as for incompressible fluids since there might be regions with $\rho_\mf = 0$ (vacuum), but also regions with $\rho_\mf > \rho_\ms$, hence, naively, inequality \eqref{ineq3} tells us nothing.\\

Lastly, let us also mention Starovoitov's work \cite{Starovoitov2003}, where the author gives a detailed analysis of the solid's behavior \emph{during} contact. The main outcomes of this work are that if $d=2$, the solid sticks to the boundary of its container as long as $\alpha \geq \frac23$ (in fact, for non-Newtonian fluids with $\vu \in L^p(0,T;W^{1,p}(\Omega))$, this holds as long as $(2p-1)\alpha \geq 2$). If $d=3$, the solid still can rotate around an axis orthogonal to the boundary at the point of collision as long as $\alpha=1$ (non-Newtonian: $(2p-1) \alpha \geq 3$). In particular, if the solid touches the boundary in more than one point, then also in three dimensions, it sticks motionless on $\del \Omega$. The same restrictions occur in \cite[Theorem~3.2]{FilippasTersenov2021} for $d=2$, and \cite[Theorem~1.1]{FilippasTersenov2024} for $d=3$ as optimal values in the sense that for the reversed inequalities, there exist $\Omega$ and a vector field $\vu \in L^p(0,T; W_0^{1,p}(\Omega))$ such that the solid $\ms$ still moves (surprisingly, when dropping the assumption $\div \vu = 0$, the condition for $d=2$ has to be replaced by $(p-1)\alpha \geq 2$, see \cite[Theorem~2.1]{FilippasTersenov2021}). We will come back to this in the next chapter.



\chapter{Special forces, non-uniqueness, and no-collision with controls}\label{ch:7}
In this chapter, we investigate the following three problems for ball-shaped solids:
\begin{enumerate}
\item Give a specific example of a solution $\vu$ and a driving force $\vc f$ for incompressible Navier-Stokes equations such that collision for a ball-shaped solid happens.
\item Show that, for a specific force $\vc f$, the solution to the Navier-Stokes equations is not unique.
\item Show that collision in a compressible fluid is forbidden for additional controls, or higher regularity of the solution.
\end{enumerate}

Recall that for incompressible fluids and gravity, we just proved that collision for a ball-shaped body is \emph{forbidden}. As we will see in the next section, this does not contradict the outcome for the first problem, since the corresponding driving force will be sufficiently ``bad''.

\section{A singular force}
\providecommand{\g}{{\mathrm g}}
Let us start with the example of a ``singular'' driving force for incompressible Navier-Stokes equations for the two-dimensional case $d=2$, following the presentation of \cite{Starovoitov2003}. We make the assumptions $\Omega = B_R(0)$, $\ms(t) = B_r(\vc G(t))$, $0<r<R$, and $\vc G(t) = (\g(t), 0)$ with $|\g(t)| \leq R-r$ for all $t \in (0,T)$, meaning the solid moves inside $\Omega$ just along the horizontal axis. The idea in constructing a colliding solution will be the following: first, we define a specific velocity field having the desired properties of energy estimates and collision; second, the driving force will be the error coming from inserting the constructed solution in the Navier-Stokes equations. This follows the heuristics of ``construct a function that you want, calculate the derivatives, and call everything that you don't want to have the driving force''.\\

Following these heuristics, we construct a velocity field that will ensure collision in finite time. We will rely on the construction of \cite[Section~2]{Starovoitov2005}, however, we choose here to directly apply polar coordinates instead of first using Cartesian coordinates and then make a second change of variables as in the reference. To this end, let us define a map $F: (0,T) \times \Omega \ni (t, \rho, \theta) \mapsto x \in \Omega$ via
\begin{align*}
x_1 &= F_1(t, \rho, \theta) = \rho \cos \theta + \sigma(t) \big( R - \rho \big),\\
x_2 &= F_2(t, \rho, \theta) = \rho \sin \theta,
\end{align*}
where $\sigma(t) = \g(t) (R-r)^{-1} \in [0, 1)$ such that $\lim_{t \to T_\ast} \sigma(t) = 1$. As it is convenient for us, for $\xi = (\rho \cos \theta, \rho \sin \theta)$, we will not distinguish between $F(t, \rho, \theta)$ and $F(t, \xi) = F(t, \rho \cos \theta, \rho \sin \theta)$. It is easy to see that for any $t \in (0,T)$ we have $F(t, \Omega)=\Omega$ and $F(t, B_r(0)) = \ms(t)$. Moreover, if $\xi \in \del \Omega$ such that $\rho=R$, then $F(t, \xi) = \xi$. Computing the inverse mapping $F^{-1}(t,x)$, we first find
\begin{align*}
(x_1 - \sigma R)^2 + x_2^2 = \rho^2 (1-\sigma^2) - 2 \sigma \rho (x_1 - \sigma R),
\end{align*}
which then yields
\begin{align*}
\rho = \frac{1}{1-\sigma^2} \left( \sigma (x_1 - \sigma R) + \sqrt{(1 + \sigma^2) (x_1 - \sigma R)^2 + x_2^2} \right), &&
\theta = \begin{cases}
\arcsin \frac{x_2}{\rho} & \text{if } x_2 \geq 0,\\
2\pi - \arcsin \frac{x_2}{\rho} & \text{if } x_2<0.
\end{cases}
\end{align*}
Especially, we have $F^{-1}(t, \Omega) = \Omega$ and $F^{-1}(t, \ms(t)) = B_r(0)$. The actions of $F$ and $F^{-1}$ are depicted in Figure~\ref{fig2}.

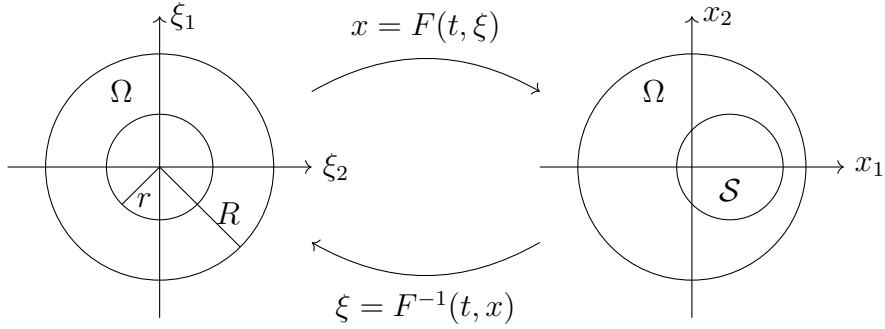
\begin{figure}[H]\label{fig2}
\centering
\begin{tikzpicture}
\draw[->] (-2,0) -- (2,0);
\node at (0,2) [anchor=west] {$\xi_1$};
\draw[->] (0,-2) -- (0,2);
\node at (2,0) [anchor=west] {$\xi_2$};

\draw (0,0) circle (0.7cm);
\node at (-.5, 1) {$\Omega$};
\draw[rotate=45] (-.7,0) -- (0,0);
\node at (-.2,-.2) [anchor=north] {$r$};
\draw (0,0) circle (1.5cm);
\draw[rotate=135] (-1.5,0) -- (0,0);
\node at (.9,-.9) [anchor=south] {$R$};

\draw[->] (5,0)--(9,0);
\node at (9,0) [anchor=west] {$x_1$};
\draw[->] (7,-2) -- (7,2);
\node at (7,2) [anchor=west] {$x_2$};

\draw (7.5,0) circle (0.7cm);
\node at (6.5, 1) {$\Omega$};
\draw (7,0) circle (1.5cm);
\node at (7.5, -.3) {$\ms$};

\draw[->] (2,1) to [out=30, in=150] (5,1);
\node at (3.5,1.5) [anchor=south] {$x = F(t,\xi)$};
\draw[->] (5,-1) to [out=-150, in=-30] (2,-1);
\node at (3.5,-1.5) [anchor=north] {$\xi = F^{-1}(t,x)$};
\end{tikzpicture}
\caption{The actions of the map $F(t,\xi) = F(t, \rho, \theta)$ and its inverse $F^{-1}(t,x)$.}
\end{figure}

\begin{Remark}
We will use a similar mapping to transform a ball falling over a half-plane to a concentric situation in Chapter~\ref{ch:8} by using complex analysis. In fact, the same results found in the present section can be obtained by mapping the domain $\Omega \setminus \ms = B_R(0) \setminus B_r((\g,0)) \subset \R^2 \sim \mathbb C$ to the concentric domain $B_1(0) \setminus B_{r_0}(0) \subset \mathbb C$ by the conformal function
\begin{align*}
\tilde{F}(z) &= R \frac{z-c}{R^2-cz}, \quad c = \frac{1}{2 \g} (R^2+\g^2-r^2-W), \quad r_0 = \tilde{F}(\g + r) = \frac{2Rr}{R^2+r^2-\g^2+W}, \\
W &= \big( (R+r+\g)(R+r-\g)(R-r+\g)(R-r-\g) \big)^\frac12,
\end{align*}
see \cite[§5.7]{Henrici1988} for the derivation of this mapping.
\end{Remark}

Computing the Jacobi matrices of $F$ and $F^{-1}$, we see
\begin{align*}
\nabla_{(\rho,\theta)} F &= \begin{pmatrix}
\cos \theta - \sigma & - \rho \sin \theta\\
\sin \theta & \rho \cos \theta
\end{pmatrix}, \\
\nabla_x (F^{-1}) &= [\nabla_{(\rho, \theta)} F]^{-1} \circ F^{-1} = \left[ \frac{1}{\rho (1 - \sigma \cos \theta)} \begin{pmatrix}
\rho \cos \theta & \rho \sin \theta\\
- \sin \theta & \cos \theta - \sigma 
\end{pmatrix} \right] \circ F^{-1},
\end{align*}
leading for the Jacobians to
\begin{align*}
J_F &= \det (\nabla_\xi F) = \rho (1 - \sigma(t) \cos \theta),\\
J_{F^{-1}} |_{x = F(t, \xi)} &= J_F^{-1} \circ F^{-1} = \frac{1}{\rho( 1-\sigma(t) \cos \theta )} \circ F^{-1}(t,x).
\end{align*}

Since we consider a two-dimensional domain, it is convenient to define the velocity via a stream function\footnote{The streamlines $\vc X$ defined by $\dot{\vc X} = \vu(t, \vc X)$ are in this context lines where the stream function $\psi(t, \vc X)$ is constant (that is, level sets of $\psi$).} $\psi(t, x)$ associated to the flow, that is, $\vu(t, x) = \nabla^\perp \psi(t, x)$. We still want that
\begin{align}\label{i2}
\vu = 0 \ \text{on} \ \del \Omega, \quad \bD(\vu)=0 \ \text{in} \ \ms.
\end{align}
Let us consider the symmetric domain $B_R(0) \setminus B_r(0) = F(t, \Omega \setminus \ms(t))$, and we search for a function $\tilde \vu$ inside this domain. Then, the function $\tilde \vu(t, \xi)$ shall satisfy
\begin{align*}
\tilde \vu(t, \xi) = 0 \ \text{whenever} \ |\xi|=R, \quad \bD(\tilde \vu)(t, \xi) = 0 \ \text{whenever} \ |\xi| < r.
\end{align*}
As our solid shall move in $x_1$-direction with speed $\dot \g(t)$, we search for $\tilde \vu$ in the form $\tilde \vu(t, \xi)|_{B_r(0)} = \dot \g(t) (1, 0)^T = \dot \g(t) \nabla^\perp [\xi \mapsto \xi_2 = \rho \sin \theta]$. This leads us to the ansatz $\tilde \psi(t, \xi) = \tilde \psi(t, \rho, \theta) = \dot \g(t) \rho \phi(\rho) \sin \theta$ for some smooth function $\phi$ with $\phi(\rho)=1$ for $\rho<r$. The boundary condition for $\tilde \vu$ on $\del \Omega$ then suggests to search $\phi$ with $\phi(R)=0$. In all other parameters, $\phi$ is free to choose; we therefore may choose this function such that
\begin{align*}
&\phi: [0,\infty) \to [0,1] \ \text{is decreasing on} \ [0, \infty),\\
&\phi(\rho) = 1 \ \text{if} \ \rho<r, \quad \phi(R)=\phi'(r)=\phi'(R)=0.
\end{align*}
The example given in \cite{Starovoitov2005} is
\begin{align*}
\phi(\rho) = \begin{cases}
1 & \text{if } 0 \leq \rho < r,\\
(R-r)^{-3} (\rho - R)^2 (2\rho - 3r + R) & \text{if } r \leq \rho \leq R,\\
0 & \text{else},
\end{cases}
\end{align*}
which is an easy function in the sense of applications and numerical experiments since it is just a polynomial of degree three. Especially for $r=1$ and $R=3$, $\phi(\rho) = \frac14 \rho (\rho - 3)^2$ inside $[r, R]$. As another example the function $\zeta$ introduced in Section~\ref{sec:PfThm} may serve.\\

Back to the velocity field in the original domain, we define $\vu(t, x) = \nabla^\perp \psi(t, x)$ with
\begin{align*}
\psi(t, x) = \tilde{\psi}(t, \rho, \theta)|_{(\rho, \theta) = F^{-1}(t, x)} = \dot{\g}(t) [ \rho \phi(\rho) \sin \theta ] \circ F^{-1}(t,x).
\end{align*}
Note especially that $\vu$ satisfies precisely \eqref{i2}. To verify that $\vu$ serves as an appropriate solution to our problem, we have to verify that $\vu \in L^\infty(0,T;L^2(\Omega)) \cap L^2(0,T;W_0^{1,2}(\Omega))$. As for the $L^2$-norm and since $|\nabla^\perp \psi| = |\nabla \psi|$, we first calculate by chain rule
\begin{align}\label{gradpsi}
\begin{split}
\nabla_x \psi(t,x) &= \left( \nabla_{(\rho, \theta)} \tilde{\psi}(t, \rho, \theta) \cdot [\nabla_{(\rho, \theta)} F(t,\rho,\theta)]^{-1} \right) \circ F^{-1}(t,x) \\
&= \left[ \frac{\dot{\g}}{1-\sigma \cos \theta} \big( \rho \phi'(\rho) \sin \theta \cos \theta, \ \phi(\rho) (1 - \sigma \cos \theta) + \rho \phi'(\rho) \sin^2 \theta \big) \right] \circ F^{-1}(t,x).
\end{split}
\end{align}
Thus, as $F(t,\Omega) = \Omega = F^{-1}(t, \Omega)$ and $J_F = \rho (1 - \sigma \cos \theta)$,
\begin{align*}
\|\vu\|_{L^2(\Omega)}^2 &= \int_\Omega |\nabla_x \psi(t,x)|^2 \dd x \\
&= \int_\Omega \Big| \nabla_{(\rho, \theta)} \tilde \psi(t, \rho, \theta) \cdot [\nabla_{(\rho, \theta)} F(t, \rho, \theta)]^{-1} \cdot \sqrt{J_F} \Big|^2 \circ F^{-1}(t,x) \cdot J_F^{-1} \dd x \\
&= \int_0^R \int_0^{2 \pi} \Big| \nabla_{(\rho, \theta)} \tilde \psi(t, \rho, \theta) \cdot [\nabla_{(\rho, \theta)} F(t, \rho, \theta)]^{-1} \Big|^2 \rho (1 - \sigma \cos \theta) \dd \rho \dd \theta.
\end{align*}
With equation \eqref{gradpsi}, we have that the integrand equals
\begin{align*}
\frac{|\dot{\g}|^2}{1-\sigma \cos \theta}
\Big[ \rho^3 (\phi'(\rho))^2  \sin^2 \theta + \rho^2 (\phi^2(\rho))' \sin^2 \theta (1 - \sigma \cos \theta) + \rho (\phi(\rho))^2 (1 - \sigma \cos \theta)^2 \Big],
\end{align*}
leading finally to
\begin{align}\label{i4}
\begin{split}
\|\vu\|_{L^2(\Omega)}^2 &= |\dot{\g}|^2 \int_0^R \int_0^{2\pi} \frac{1}{1-\sigma \cos \theta}
\Big[ \rho^3 (\phi'(\rho))^2  \sin^2 \theta \\
&\qquad + \rho^2 (\phi^2(\rho))' \sin^2 \theta (1 - \sigma \cos \theta) + \rho (\phi(\rho))^2 (1 - \sigma \cos \theta)^2 \Big] \dd \rho \dd \theta \\
&= \mu_1(r,R) \nu_1(\sigma) |\dot \sigma|^2,
\end{split}
\end{align}
where
\begin{align*}
\mu_1(r,R) &= (R-r)^2 \int_r^R \rho^3 (\phi'(\rho))^2 \dd \rho < \infty, \\
\nu_1(\sigma) &= \int_0^{2\pi} \frac{\sin^2 \theta}{1 - \sigma \cos \theta} \dd \theta = \frac{2\pi}{1 + \sqrt{1-\sigma^2}} \in [\pi, 2\pi].
\end{align*}
In turn, we have:
\begin{Lemma}\label{lem1}
The function $\vu \in L^\infty(0,T;L^2(\Omega))$ if and only if $\dot \sigma \in L^\infty(0,T)$.
\end{Lemma}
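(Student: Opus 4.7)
The proof plan is essentially to read off the equivalence directly from the explicit identity \eqref{i4}, since all that is needed is a two-sided bound on the multiplicative constants that relate $\|\vu(t)\|_{L^2(\Omega)}^2$ and $|\dot\sigma(t)|^2$.

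First I would recall the identity
\begin{align*}
\|\vu(t)\|_{L^2(\Omega)}^2 = \mu_1(r,R)\,\nu_1(\sigma(t))\,|\dot\sigma(t)|^2
\end{align*}
established in \eqref{i4}, where $\mu_1(r,R) = (R-r)^2 \int_r^R \rho^3 (\phi'(\rho))^2 \dd\rho$ and $\nu_1(\sigma) = \int_0^{2\pi}\sin^2\theta/(1-\sigma\cos\theta)\dd\theta$. The whole content of the lemma is that the two factors $\mu_1(r,R)$ and $\nu_1(\sigma(t))$ are bounded from above and below by strictly positive constants independent of $t$.

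For $\mu_1(r,R)$, this is immediate: by construction $\phi$ is smooth with $\phi(r)=1$ and $\phi(R)=0$, so $\phi'$ is continuous and not identically zero on $[r,R]$, hence $\mu_1(r,R)\in(0,\infty)$ is a fixed geometric constant depending only on $r, R$ and the cut-off $\phi$. For $\nu_1(\sigma)$, the residue computation already given in the excerpt yields the closed form $\nu_1(\sigma) = 2\pi/(1+\sqrt{1-\sigma^2})$; since $\sigma(t)\in[0,1)$, we have $1+\sqrt{1-\sigma^2}\in(1,2]$ and thus $\nu_1(\sigma(t))\in[\pi,2\pi)$, uniformly in $t$. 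I would briefly double-check this closed form by writing $\int_0^{2\pi}\sin^2\theta/(1-\sigma\cos\theta)\dd\theta = (1/\sigma^2)\int_0^{2\pi}(1-\cos^2\theta)/(1-\sigma\cos\theta)\dd\theta$ and applying the standard Poisson-kernel type integrals $\int_0^{2\pi}\dd\theta/(1-\sigma\cos\theta) = 2\pi/\sqrt{1-\sigma^2}$ and $\int_0^{2\pi}\cos\theta\,\dd\theta/(1-\sigma\cos\theta) = (2\pi/\sigma)(1/\sqrt{1-\sigma^2}-1)$, but this is merely bookkeeping.

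Combining these two observations gives the two-sided bound
\begin{align*}
c\,|\dot\sigma(t)|^2 \leq \|\vu(t)\|_{L^2(\Omega)}^2 \leq C\,|\dot\sigma(t)|^2
\end{align*}
for $0\leq t<T$, with constants $c,C>0$ depending only on $r,R$ and $\phi$. Taking essential suprema over $t\in(0,T)$ on both sides yields the stated equivalence $\vu\in L^\infty(0,T;L^2(\Omega))\Longleftrightarrow \dot\sigma\in L^\infty(0,T)$.

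There is essentially no obstacle here; the only mildly delicate point is confirming that $\nu_1(\sigma)$ does \emph{not} blow up as $\sigma\to 1^-$, i.e.~in the collision limit. This is the one place one has to be careful, because the Jacobian $J_F = \rho(1-\sigma\cos\theta)$ degenerates at $(\rho,\theta)=(R,0)$ when $\sigma\to 1$; however, the $\sin^2\theta$ factor in the numerator of $\nu_1$ precisely cancels this degeneracy and keeps $\nu_1$ bounded. This cancellation is exactly what makes the construction viable, and it is already encoded in the closed form above.
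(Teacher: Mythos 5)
Your proof is correct and coincides with the paper's (implicit) argument: Lemma~\ref{lem1} is a direct reading of the identity \eqref{i4}, once one observes $\mu_1(r,R)\in(0,\infty)$ and $\nu_1(\sigma)\in[\pi,2\pi]$ uniformly on $[0,1)$, which gives a two-sided comparison $\|\vu(t)\|_{L^2(\Omega)}^2\asymp|\dot\sigma(t)|^2$. One small slip in the bookkeeping aside ($\sin^2\theta=1-\cos^2\theta$, so the $1/\sigma^2$ prefactor in your verification of the closed form of $\nu_1$ should not be there), the argument is exactly what the paper intends.
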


To verify that $\vu \in L^2(0,T;W_0^{1,2}(\Omega))$, by boundedness of Riesz transform in $L^2(\R^2)$ (see \eqref{Riesz}), it is enough to estimate $\Delta \psi$. First, using again chain rule,
\begin{align*}
\Delta_x \psi &= \div_x (\nabla_x \psi) = \div_x ((\nabla_{(\rho, \theta)} \tilde \psi(t, \rho, \theta) \cdot [\nabla_{(\rho, \theta)} F(t, \rho, \theta)]^{-1}) \circ F^{-1}(t,x)) \\
&= \left( \nabla_{(\rho, \theta)} [\nabla_{(\rho, \theta)} \tilde \psi(t, \rho, \theta) \cdot (\nabla_{(\rho, \theta)} F(t, \rho, \theta))^{-1}] : [ \nabla_{(\rho, \theta)} F(t, \rho, \theta) ]^{-T} \right) \circ F^{-1}(t,x).
\end{align*}

Some quick formal discussion\footnote{Here, we do not have to concentrate on the dependence on $\rho$ since this part will always be integrable due to compact support of $\phi$.} hints in terms of $\theta$ that we have $\big|\nabla_{(\rho, \theta)} [\nabla_{(\rho,\theta)} F(t,\rho,\theta)]^{-1} \big| \sim \big|[\nabla_{(\rho, \theta)} F(t,\rho,\theta)]^{-1} \big|^2$. Hence,
\begin{align*}
|\Delta \psi| \sim \big|\nabla_{(\rho, \theta)} \tilde \psi(t, \rho, \theta)  \cdot [\nabla_{(\rho, \theta)} F(t, \rho, \theta)]^{-1} \big| \cdot \big| [\nabla_{(\rho,\theta)} F(t,\rho,\theta)]^{-1} \big| \sim \sin \theta (1 - \sigma \cos \theta)^{-2},
\end{align*}
and therefore we shall expect
\begin{align*}
\|\Delta \psi\|_{L^2(\Omega)}^2 \lesssim \int_0^{2 \pi} \frac{\sin^2 \theta}{(1 - \sigma \cos \theta)^3} \dd \theta.
\end{align*}

To make this formal guess rigorous, we calculate
\begin{align*}
&\nabla_{(\rho, \theta)} (\nabla_{(\rho, \theta)} \tilde \psi \cdot (\nabla_{(\rho, \theta)} F)^{-1}) \\
&= \nabla_{(\rho,\theta)} \left[ \frac{\dot{\g}}{1-\sigma \cos \theta} ( \rho \phi' \sin \theta \cos \theta, \ \phi (1 - \sigma \cos \theta) + \rho \phi' \sin^2 \theta ) \right] \\
&= \dot{\g} \begin{pmatrix}
\frac{(\phi' + \rho \phi'') \sin \theta \cos \theta}{1-\sigma \cos \theta} & \rho \phi' \frac{\cos^2 \theta - \sin^2 \theta - \sigma \cos^3 \theta}{(1 - \sigma \cos \theta)^2} \\
\phi' + \frac{(\phi' + \rho \phi'') \sin^2 \theta}{1-\sigma \cos \theta} & \rho \phi' \frac{2 \cos \theta \sin \theta - 2 \sigma \cos^2 \theta \sin \theta - \sigma \sin^3 \theta}{(1- \sigma \cos \theta)^2}
\end{pmatrix},
\end{align*}
thus
\begin{align*}
\Delta_x \psi
&= \left[ \frac{\dot{\g}}{1-\sigma \cos \theta} \left( \frac{\rho \phi''(\rho) \sin \theta}{1-\sigma \cos \theta} + 3 \phi'(\rho) \sin \theta + \frac{\sigma^2 \phi'(\rho) \sin^3 \theta}{(1- \sigma \cos \theta)^2} \right) \right] \circ F^{-1}(t,x), \\
|\Delta_x \psi|^2 &= \left[ \frac{|\dot{\g}|^2}{(1-\sigma \cos \theta)^2} \left( \frac{\rho^2 (\phi''(\rho))^2 \sin^2 \theta}{(1-\sigma \cos \theta)^2} + \frac{3 \rho [(\phi'(\rho))^2]' \sin^2 \theta}{1-\sigma \cos \theta} + \frac{\rho [(\phi'(\rho))^2]' \sigma^2 \sin^4 \theta}{(1- \sigma \cos \theta)^3} \right. \right. \\
&\qquad \left. \left. + 9 (\phi'(\rho))^2 \sin^2 \theta + \frac{6 \sigma^2 (\phi'(\rho))^2 \sin^4 \theta}{(1- \sigma \cos \theta)^2} + \frac{\sigma^4 (\phi'(\rho))^2 \sin^6 \theta}{(1- \sigma \cos \theta)^4} \right) \right] \circ F^{-1}(t,x).
\end{align*}
Hence, for $f(\rho) = \rho^2 (\phi''(\rho))^2 + 4 \rho [(\phi'(\rho))^2]' + 16 (\phi'(\rho))^2$, we finally get
\begin{align*}
\|\Delta \psi\|_{L^2(\Omega)}^2 &\leq |\dot{\g}|^2 \int_r^R f(\rho) \dd \rho \int_0^{2\pi} \frac{\sin^2 \theta}{(1-\sigma \cos \theta)^3} + \frac{\sin^2 \theta}{(1-\sigma \cos \theta)^2} + \frac{\sigma^2 \sin^4 \theta}{(1- \sigma \cos \theta)^4} \\
&\qquad + \frac{\sin^2 \theta}{1-\sigma \cos \theta} + \frac{\sigma^2 \sin^4 \theta}{(1- \sigma \cos \theta)^3} + \frac{\sigma^4 \sin^6 \theta}{(1- \sigma \cos \theta)^5} \dd \theta \\
&\lesssim |\dot{\g}|^2 \int_0^{2 \pi} \frac{\sin^2 \theta}{(1-\sigma \cos \theta)^3} + \frac{\sigma^2 \sin^4 \theta}{(1- \sigma \cos \theta)^4} + \frac{\sigma^4 \sin^6 \theta}{(1- \sigma \cos \theta)^5} \dd \theta,
\end{align*}
where the implicit constant just depends on $r$ and $R$ (and $\phi$). Noting that in the above integral, all terms are equally singular as $\sigma \to 1$, we indeed find\footnote{This last integral can be solved in using the complex representations of $\sin \theta$ and $\cos \theta$ via $e^{i \theta}$ and residue theorem (see \cite[§4.7]{Henrici1988}). Similar calculations work for $\nu_1(\sigma)$ in \eqref{i4}.}
\begin{align*}
&\int_0^{2 \pi} \frac{\sin^2 \theta}{(1-\sigma \cos \theta)^3} + \frac{\sigma^2 \sin^4 \theta}{(1- \sigma \cos \theta)^4} + \frac{\sigma^4 \sin^6 \theta}{(1- \sigma \cos \theta)^5} \dd \theta \lesssim \int_0^{2 \pi} \frac{\sin^2 \theta}{(1-\sigma \cos \theta)^3} \dd \theta = \frac{\pi}{(1-\sigma^2)^{3/2}};
\end{align*}
%
in turn,
\begin{align*}
\|\Delta \psi\|_{L^2(\Omega)}^2 \leq \mu_2(r,R) (1-\sigma^2)^{-3/2} |\dot \sigma|^2,
\end{align*}
where $\mu_2(r,R) < \infty$ is a positive constant depending only on $r$ and $R$. Thus, we can state
\begin{Lemma}\label{lem2}
The function $\vu \in L^2(0,T;W_0^{1,2}(\Omega))$ if and only if $(1-\sigma^2)^{-3/2} |\dot \sigma|^2 \in L^1(0,T)$.
\end{Lemma}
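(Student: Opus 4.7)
My plan is to reduce everything to controlling $\|\Delta \psi\|_{L^2(\Omega)}$ in a two-sided way. Since $\vu = \nabla^\perp \psi$, a pointwise computation gives $|\nabla \vu|^2 = |\nabla^2 \psi|^2$. Moreover, by the choice $\phi(R)=\phi'(R)=0$, both $\tilde\psi$ and $\nabla_\xi \tilde\psi$ vanish at $\rho = R$; since $F(t,\cdot)$ restricts to the identity on $\partial \Omega$, the function $\psi$ and its gradient vanish on $\partial \Omega$ as well. Extending $\psi$ by zero to $\R^2$ therefore yields a $W^{2,2}(\R^2)$ function, and integration by parts (equivalently, the Riesz-transform bound \eqref{Riesz}) gives the sharp identity
\[
\|\nabla \vu\|_{L^2(\Omega)}^2 \;=\; \|\nabla^2 \psi\|_{L^2(\R^2)}^2 \;=\; \|\Delta \psi\|_{L^2(\R^2)}^2 \;=\; \|\Delta \psi\|_{L^2(\Omega)}^2.
\]

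The ``if'' direction is then immediate by integrating in time the bound $\|\Delta \psi\|_{L^2(\Omega)}^2 \leq \mu_2(r,R)(1-\sigma^2)^{-3/2} |\dot\sigma|^2$ shown just above the statement of the lemma. For the ``only if'' direction I would follow the pattern of Lemma~\ref{lem1}: perform the change of variables $x = F(t,\xi)$ in $\int_\Omega |\Delta_x\psi|^2 \dd x$ \emph{without} invoking any triangle inequality, multiply by the Jacobian $\rho(1-\sigma\cos\theta)$, and evaluate each of the resulting elementary angular integrals
\[
J_{k,s}(\sigma) \;:=\; \int_0^{2\pi} \frac{\sin^{2k}\theta}{(1-\sigma\cos\theta)^s} \dd\theta
\]
by residues. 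A scaling argument near $\theta = 0$ gives $J_{k,s}(\sigma) \sim (1-\sigma)^{k+1/2-s}$ as $\sigma \to 1^-$, so the dominant $(1-\sigma)^{-3/2}$ behaviour comes from precisely the three pairs $(k,s) \in \{(1,3),(2,4),(3,5)\}$ appearing in the expansion of $|\Delta_x\psi|^2$, while all other terms are of strictly lower order.

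The main obstacle will be to rule out cancellation among these three leading contributions, since the coefficient of the $(k,s)=(2,4)$ term contains $\phi'\phi''$ with indefinite sign (and after integrating by parts in $\rho$ against $\rho^2$, $\int_r^R \rho^2((\phi')^2)' \dd\rho = -2\int_r^R \rho(\phi')^2\dd\rho$ is in fact strictly negative). The cleanest way to bypass this is to replace $\|\Delta\psi\|_{L^2}^2$ in the lower bound by $\|\partial_1^2\psi\|_{L^2(\Omega)}^2$ (a single squared second derivative, hence a single positive quantity), rerun the change-of-variables plus residue analysis on that one component, and extract there a strictly positive leading coefficient in front of $(1-\sigma)^{-3/2}$; combined with $\|\nabla\vu\|_{L^2(\Omega)}^2 \geq \|\partial_1^2\psi\|_{L^2(\Omega)}^2$ this produces the matching lower bound $\|\nabla \vu\|_{L^2(\Omega)}^2 \gtrsim (1-\sigma^2)^{-3/2}|\dot\sigma|^2$ and thereby the claimed equivalence.
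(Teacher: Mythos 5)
Your reduction to $\|\Delta\psi\|_{L^2}$ and the ``if'' direction match what the paper actually does: the paper only derives the upper bound $\|\Delta\psi\|_{L^2(\Omega)}^2\leq \mu_2(r,R)(1-\sigma^2)^{-3/2}|\dot\sigma|^2$ and then states the lemma as an equivalence without proving (or ever needing) the converse. So the extra work you propose for the ``only if'' side goes beyond the paper, and your exact identity $\|\nabla\vu\|_{L^2}^2 = \|\Delta\psi\|_{L^2}^2$ (via zero extension, justified because $\phi(R)=\phi'(R)=0$ force $\psi$ and $\nabla\psi$ to vanish on $\partial\Omega$) is precisely what makes the converse tractable, whereas the paper's one-sided Riesz-transform bound only gives ``if''.

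On the converse, your cancellation worry is prudent but dissolves more directly than you suggest. Since $|\Delta_x\psi|^2$ is manifestly a square, the leading coefficient of $(1-\sigma)^{-3/2}$ in $\|\Delta\psi\|_{L^2}^2$ is the squared $L^2(\mathrm{d}\rho\,\mathrm{d}u)$-norm of the blow-up profile of $\Delta_x\psi$ under $\theta = \sqrt{2(1-\sigma)}\,u$, namely (up to constants)
\begin{align*}
u\left(\frac{\rho\,\phi''(\rho)}{(1+u^2)^2} + \frac{2\phi'(\rho)\,u^2}{(1+u^2)^3}\right),
\end{align*}
which is not identically zero for non-constant $\phi$, hence the coefficient is strictly positive -- no need to retreat to $\|\partial_{11}\psi\|_{L^2}^2$, though that route works identically for the same reason. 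Two small corrections to your write-up: the scaling $J_{k,s}(\sigma)\sim(1-\sigma)^{k+\frac12-s}$ is valid only when $k+\tfrac12-s<0$; for $k+\tfrac12-s\geq 0$ the blow-up region contributes $o(1)$ and $J_{k,s}=O(1)$ (this does not affect your leading triple $(1,3),(2,4),(3,5)$, which all lie in the singular regime). And your proposal remains a plan rather than a finished proof: the residue or blow-up computation and the strict positivity of the limiting coefficient would need to be carried out, not merely asserted.
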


\begin{Remark}
Note that the result of \cite[Lemma~2.2]{Starovoitov2005} reads
\begin{align*}
\|\Delta \psi\|_{L^2(\Omega)}^2 \leq \mu_2(r,R) (1-\sigma)^{-3/2} |\dot \sigma|^2,
\end{align*}
which, compared to our outcome, seems to miss an exponent $2$ in $\sigma$. However, since $\sigma < 1$, we immediately see $(1-\sigma^2)^{-3/2} \leq (1-\sigma)^{-3/2}$; hence, our estimate is just sharper. Moreover, recalling that $h(t) = (R-r)(1-\sigma(t))$, and that Starovoitov's condition reads $h^{-3/4} |\dot h| \in L^2(0,T)$, this fits precisely the estimate \eqref{ineq1} in Theorem~\ref{thm:Starov}, where then with $\alpha=1$ and $p=d=2$, we have $\beta = \frac{1+2\alpha}{p(1+\alpha)} (p-\frac{d+\alpha}{1+2\alpha}) = \frac34$.
\end{Remark}

All that is left to do now is to find a function $\sigma(t)$ with $\sigma(T_\ast)=1$ for some $T_\ast \in (0,T)$ fulfilling the conditions of Lemmata \ref{lem1} and \ref{lem2}. Such a function is easy to find: let $T>0$ and $T_\ast \in (0,T)$ be fixed, then we can take
\begin{align}\label{i3}
\sigma(t) = 1 - \bigg(\frac{t-T_\ast}{T} \bigg)^4.
\end{align}
It is easy to see that $\dot \sigma, (1-\sigma^2)^{-3/2} |\dot \sigma|^2 \in L^\infty(0,T)$ and in particular $\sigma$ satisfies the conditions of Lemmata \ref{lem1} and \ref{lem2}. Hence, for this $\sigma$, the function $\vu$ constructed above is a velocity field satisfying the energy estimate and ensuring collision in finite time.\\

However, we are still missing one point: till now we don't know that $\vu$ is a weak solution to the Stokes problem. To ensure also this last requirement, we need to bound the time derivative $\del_t \vu$. Obviously, it is enough to do this for $\del_t \psi$, yielding
\begin{align*}
\del_t \psi(t,x) &= \del_t [\dot \g(t) \rho \phi(\rho) \sin \theta \circ F^{-1}(t,x)] \\
&= \ddot \g(t) [\rho \phi(\rho) \sin \theta] \circ F^{-1}(t,x) + \dot \g(t) [ \nabla_{(\rho, \theta)} (\rho \phi(\rho) \sin \theta)] \circ F^{-1}(t,x) \cdot \del_t F^{-1}(t, x).
\end{align*}
Together with $\del_t F(t,\rho, \theta) = \dot \sigma(t) (R-\rho, 0)^T$, by the usual change of variables $x = F(t,\rho, \theta)$ we arrive at
\begin{align*}
\|\del_t \psi\|_{L^2(\Omega)} \leq \mu_3(r,R) (|\ddot \sigma| + |\dot \sigma|^2)
\end{align*}
for some constant $\mu_3(r,R)<\infty$. This yields
\begin{Lemma}
If $\ddot \sigma \in L^2(0,T)$, then $\del_t \vu \in L^2(0,T;W^{-1,2}(\Omega))$.
\end{Lemma}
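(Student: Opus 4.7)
The plan is to reduce the statement to an estimate on $\del_t \psi$ in $L^2((0,T)\times \Omega)$, using that $\vu = \nabla^\perp \psi$ so that $\del_t \vu = \nabla^\perp \del_t \psi$. Since the operator $\nabla^\perp : L^2(\Omega) \to W^{-1,2}(\Omega)$ is bounded (essentially with operator norm $1$, by testing against $W_0^{1,2}$-functions and integrating by parts), we have
\begin{align*}
\|\del_t \vu\|_{L^2(0,T; W^{-1,2}(\Omega))} \lesssim \|\del_t \psi\|_{L^2(0,T; L^2(\Omega))}.
\end{align*}
So the task reduces to bounding the right-hand side.

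For this, I would invoke the pointwise-in-time estimate already established just before the lemma, namely
\begin{align*}
\|\del_t \psi(t,\cdot)\|_{L^2(\Omega)} \leq \mu_3(r,R)\bigl(|\ddot \sigma(t)| + |\dot \sigma(t)|^2\bigr),
\end{align*}
which came from differentiating $\psi(t,x) = \dot \g(t)\,[\rho\phi(\rho)\sin\theta]\circ F^{-1}(t,x)$ using the chain rule, together with $\del_t F = \dot\sigma (R-\rho, 0)^T$ and the change of variables $x = F(t,\rho,\theta)$ whose Jacobian $J_F = \rho(1-\sigma\cos\theta)$ is bounded uniformly (away from zero) on the support of $\phi'$. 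Squaring and integrating gives
\begin{align*}
\|\del_t \psi\|_{L^2(0,T;L^2(\Omega))}^2 \lesssim \|\ddot \sigma\|_{L^2(0,T)}^2 + \|\dot \sigma\|_{L^4(0,T)}^4.
\end{align*}

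The first summand is finite by hypothesis. For the second, I would use that $\ddot \sigma \in L^2(0,T)$, combined with the natural assumption that the initial velocity $\dot \sigma(0)$ is finite, to deduce via the fundamental theorem of calculus $\dot \sigma(t) = \dot \sigma(0) + \int_0^t \ddot \sigma(s)\dd s$ and Cauchy--Schwarz that $\dot \sigma \in L^\infty(0,T) \subset L^4(0,T)$. Combining everything yields $\del_t \psi \in L^2(0,T;L^2(\Omega))$ and therefore $\del_t \vu \in L^2(0,T;W^{-1,2}(\Omega))$.

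There is no real obstacle here, since the main analytic content (the pointwise bound on $\|\del_t \psi\|_{L^2(\Omega)}$) has already been obtained in the preceding derivation; the only mild subtleties are verifying the continuity of $\nabla^\perp : L^2 \to W^{-1,2}$ and the integrability upgrade $\dot\sigma \in L^\infty \subset L^4$ from the $L^2$-hypothesis on $\ddot\sigma$. For the explicit choice \eqref{i3}, both $\dot\sigma$ and $\ddot\sigma$ are even in $L^\infty(0,T)$, so the hypotheses are trivially satisfied in that concrete case.
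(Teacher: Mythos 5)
Your proposal is correct and follows essentially the same route as the paper: reduce to $\del_t\psi$ via $\del_t\vu=\nabla^\perp\del_t\psi$, invoke the pointwise bound $\|\del_t\psi(t,\cdot)\|_{L^2(\Omega)}\lesssim |\ddot\sigma(t)|+|\dot\sigma(t)|^2$ already derived, and integrate in time. Your explicit observation that the hypothesis $\ddot\sigma\in L^2(0,T)$ (together with finite $\dot\sigma(0)$) upgrades to $\dot\sigma\in L^\infty(0,T)\subset L^4(0,T)$ by the fundamental theorem of calculus and Cauchy--Schwarz is a point the paper leaves implicit, and it is a welcome clarification since the stated lemma's hypothesis does not mention $\dot\sigma$ directly.
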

Again, for our function $\ddot \sigma = \frac{\rd^2}{\rd t^2} (1 - (t-T_\ast)^4 T^{-4}) \in L^\infty(0,T)$, hence fulfilling the condition of the lemma. Setting now the force
\begin{align*}
\vc f := \del_t \vu + \div(\vu \otimes \vu) - \div \bS(\nabla \vu) \in L^2(0,T; W^{-1,2}(\Omega)),
\end{align*}
we have proven the following
\begin{Theorem}
If $\sigma(t)$ is such that $\dot \sigma \in L^\infty(0,T)$, $\ddot \sigma \in L^2(0,T)$, and $(1-\sigma^2)^{-3/2} |\dot \sigma|^2 \in L^1(0,T)$, then there exists a force $\vc f \in L^2(0,T;W^{-1,2}(\Omega))$ such that the function $\vu$ constructed above is a weak solution to the Navier-Stokes equations ensuring collision in finite time.
\end{Theorem}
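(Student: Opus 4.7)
The plan is to simply assemble the pieces that were already established in the preceding subsection. Given a function $\sigma$ satisfying the three stated conditions, Lemma~\ref{lem1} immediately gives $\vu \in L^\infty(0,T;L^2(\Omega))$, Lemma~\ref{lem2} gives $\vu \in L^2(0,T;W_0^{1,2}(\Omega))$, and the last lemma of the section ensures $\del_t \vu \in L^2(0,T;W^{-1,2}(\Omega))$. The only real work that remains is to check that the \emph{defining formula} $\vc f := \del_t \vu + \div(\vu\otimes\vu) - \div \bS(\nabla \vu)$ indeed lies in $L^2(0,T;W^{-1,2}(\Omega))$, and to interpret $\sigma(T_\ast)=1$ as collision at time $T_\ast$.

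First I would bound the dissipative term: since $\vu \in L^2(0,T;W_0^{1,2}(\Omega))$ and $\bS = 2\mu \bD(\vu)$ is linear, $\div \bS(\nabla \vu) \in L^2(0,T;W^{-1,2}(\Omega))$ by duality. Second, for the convective term I would invoke the two-dimensional Ladyzhenskaya inequality
\begin{align*}
\|\vu\|_{L^4(\Omega)}^4 \lesssim \|\vu\|_{L^2(\Omega)}^2 \|\nabla \vu\|_{L^2(\Omega)}^2,
\end{align*}
which, combined with $\vu \in L^\infty(0,T;L^2(\Omega)) \cap L^2(0,T;W_0^{1,2}(\Omega))$, yields $\vu \in L^4(0,T;L^4(\Omega))$. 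Hence $\vu\otimes\vu \in L^2((0,T)\times\Omega)$ and $\div(\vu\otimes\vu) \in L^2(0,T;W^{-1,2}(\Omega))$. Together with the regularity of $\del_t \vu$, the force $\vc f$ constructed in this way has the claimed regularity. Note that by construction, testing $\vc f$ against any divergence-free $\varphi \in C_c^\infty([0,T)\times \Omega)$ with $\bD(\varphi)=0$ near $\ms(t)$ reproduces exactly the weak momentum formulation of \eqref{inc1}--\eqref{inc2} with driving force $\vc f$, so $\vu$ is a weak solution in the sense of Definition~\ref{def:Stokes} (with the obvious extension allowing non-conservative $\vc f$).

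Finally, collision is encoded geometrically in the choice of $\sigma$: since $\g(t) = (R-r)\sigma(t)$ and $\sigma(T_\ast) = 1$, the center $\vc G(T_\ast) = ((R-r), 0)$ places the ball $\ms(T_\ast) = B_r(\vc G(T_\ast))$ tangent to $\del \Omega = \del B_R(0)$ at the single point $(R,0)$, so $h(T_\ast) = \dist(\ms(T_\ast), \del \Omega) = 0$. The explicit choice \eqref{i3} verifies all three integrability assumptions on $\sigma$, so such a $\sigma$ exists and the construction is non-vacuous. The only genuinely delicate step is the verification of the convective-term bound, which without the 2D Ladyzhenskaya inequality would fail; this is precisely the point where the argument is restricted to two space dimensions, and it is also the reason why the corresponding construction of a singular force in 3D requires a different strategy.
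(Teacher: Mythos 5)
Your proposal is correct and follows essentially the paper's own route: assemble Lemmas~\ref{lem1}, \ref{lem2}, and the time-derivative lemma, then define $\vc f$ by the defining formula. The paper simply \emph{asserts} that the resulting $\vc f$ lies in $L^2(0,T;W^{-1,2}(\Omega))$; your verification of that fact via the Ladyzhenskaya interpolation $\|\vu\|_{L^4}^4 \lesssim \|\vu\|_{L^2}^2\|\nabla\vu\|_{L^2}^2$ (giving $\vu\otimes\vu\in L^2((0,T)\times\Omega)$, hence $\div(\vu\otimes\vu)\in L^2(W^{-1,2})$) and by linearity of $\div\bS$ fills a gap the paper leaves implicit, and your remark that this is exactly where 2D is used is accurate.
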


\section{Two colliding solutions}
Having found a solution to our problem, we investigate now the question whether this solution is uniquely determined. To this end, we will assume that there exists $T_\ast \in (0,T)$ such that the ball-shaped body $\ms = B(\vc G(t))$ touches the boundary of $\Omega$. As seen in the last section, this can just happen if the driving force $\vc f$ is ``sufficiently bad'', meaning a distribution rather than a function. In particular, gravity is forbidden in this setting, since collision does not occur for a ball as proven in Chapter~\ref{ch:6}.\\

We will follow the construction from \cite{Starovoitov2005}. Indeed, the idea is similar as in the foregoing section: the domains $\Omega$ and $\ms$ will be the same disks in $\R^2$. We also take the same velocity $\vu = \nabla^\perp \psi(t, x)$ up to time $T_\ast$. From time $T_\ast$ onwards, one solution will simply go ``back in time'', whereas the other one will stick to the boundary of $\Omega$ without any further movement. Note that this last solution is in some sense ``allowed to exist'': indeed, as mentioned in Section~\ref{sec:conc}, our function $\vu \in L^p(0,T;W_0^{1,p}(\Omega))$ with $p=2$ and hence $(2p-1)\alpha \geq 2$ as $\alpha=1$ for a ball.\\

Recalling the form of $\sigma(t)$ in \eqref{i3}, we see that $|\sigma(t)|<1$ if $t \in [0, T_\ast) \cup (T_\ast, T]$ and $\sigma(T_\ast)=1$. Hence, in agreement with the calculations done in the previous section, we have $\vu \in L^\infty(0, T_\ast; L^2(\Omega)) \cap L^2(0,T_\ast; W_0^{1,2}(\Omega))$ and $\del_t \vu \in L^2(0, T_\ast; W^{-1, 2}(\Omega))$. In particular, the function $\vu$ is strongly continuous in time in $L^2(\Omega)$, since $\|\vu(t, \cdot)\|_{L^2(\Omega)}$ is continuous in time by \eqref{i4}. Thus, we can prolong $\vu$ continuously in time.

\paragraph{The first solution.} For $t>T_\ast$, we simply define as before $\vu(t, x) = \nabla^\perp \psi(t, x)$. Since $\sigma$ and hence $\psi$ is symmetric in $(t- T_\ast)$, this is the solution moving ``back in time'' (note that this is precisely the reason for taking the fourth power in the definition of $\sigma$). By construction, we have
\begin{gather*}
\vu \in L^\infty(0,T; L^2(\Omega)) \cap L^2(0,T; W_0^{1,2}(\Omega)), \quad \del_t \vu \in L^2(0,T;W^{-1,2}(\Omega)),\\
\div \vu = 0, \quad \vu|_{\del \Omega} = 0, \quad \bD(\vu)|_{\ms} = 0,
\end{gather*}
and that $\vu$ fulfils the energy inequality. All is left is to find a force $\vc f \in L^2(0,T;W^{-1,2}(\Omega))$ such that the weak form of the momentum equation
\begin{align*}
\int_0^T \int_\Omega \del_t \vu \cdot \phi - (\vu \otimes \vu - \bD(\vu)):\bD(\phi) \dd x \dd t = \int_0^T \int_\Omega \vc f \cdot \phi \dd x \dd t
\end{align*}
holds for any $\phi \in L^2(0,T;W_0^{1,2}(\Omega))$ such that $\div \phi = 0$ and $\bD(\phi)|_{\ms} = 0$.

This is easy as well: by strong continuity of $\vu$ in time, we can view the left-hand side of the above equality as a continuous linear functional on $L^2(0,T;W_0^{1,2}(\Omega))$ applied to $\phi$. In turn, we can find a force $\vc f$ fulfilling the requirements needed; namely, as before,
\begin{align}\label{i6}
\vc f := \del_t \vu + \div(\vu \otimes \vu) - \div \bS(\nabla \vu) \in L^2(0,T;W^{-1,2}(\Omega)).
\end{align}
As $\vc f$ fulfils the weak formulation for \emph{any} $\phi \in L^2(0,T;W_0^{1,2}(\Omega))$, in particular it is an appropriate force for solenoidal $\phi$ with $\bD(\phi)|_{\ms} = 0$. This gives us the force and the first solution to our problem.

\paragraph{The second solution.} In order to finish the proof of non-uniqueness, we need to construct another solution $\vv(t, x)$ to the Navier-Stokes equations for the vector field $\vc f$ constructed above. This solution will collide with $\del \Omega$ and then stick at its position. Note moreover that the body $\ms$ is a part of the solution; hence, for the second solution $\vv$, we also need to make precise what the solid does. Denote by $\mathcal R$ the domain occupied by the body for the velocity field $\vv$.\\

We now proceed as the heuristics may suggest: the part until collision is
\begin{align*}
\vv(t, x) = \vu(t, x), \quad \mathcal{R}(t) = \ms(t), \quad t \in [0, T_\ast].
\end{align*}
As the solid touches $\del \Omega$ at $t=T_\ast$, the extended solution will be defined by
\begin{align*}
\mathcal{R}(t) &= \mathcal{R}(T_\ast) = \mathcal{R}_\ast = \ms(T_\ast), \quad t \in (T_\ast, T],\\
\vv(t, x) &= 0 \ \text{ for } \ t \in (T_\ast, T], \quad x \in \mathcal{R}(t) = \mathcal{R}_\ast.
\end{align*}
We remark that this extension, till now, \emph{is just inside $\mathcal{R}$}. It means that the solid does not move, but on the other hand, the fluid around may still do something. Thus, we search for $\vv$ in the (fixed) domain $\Omega \setminus \mathcal{R}_\ast$ with right-hand side $\vc f$ and $\vv|_{\del (\Omega \setminus \mathcal{R}_\ast)} = 0$. As $\vu(T_\ast, x)=0$ for every $x \in \Omega$, we have
\begin{align}\label{i5}
\int_{T_\ast}^T \int_{\Omega \setminus \mathcal{R}_\ast} \del_t \vv \cdot \phi - (\vv \otimes \vv - \bD(\vv)) : \bD(\phi)  - \vc f \cdot \phi \dd x \dd t = 0
\end{align}
for any solenoidal function $\phi \in L^2(T_\ast, T; W_0^{1,2}(\Omega \setminus \mathcal{R}_\ast))$ with $\bD(\phi)|_{\mathcal{R}_\ast} = 0$. The general existence theory for two-dimensional incompressible Navier-Stokes equations \cite{Ladyzhenskaya1975} now implies that there is a unique function $\vv \in L^\infty(T_\ast, T; L^2(\Omega \setminus \mathcal{R}_\ast)) \cap L^2(T_\ast, T; W_0^{1,2}(\Omega \setminus \mathcal{R}_\ast))$ such that $\div \vv = 0$ and $\del_t \vv \in L^2(T_\ast, T; W^{-1,2}(\Omega \setminus \mathcal{R}_\ast))$ satisfying \eqref{i5}.

In turn, the functions $\vv$ and $\mathcal{R}$ are now defined for all times $t \in [0,T]$ and all $x \in \Omega$. It is left to show that $\vv$ and $\mathcal{R}$ fulfil the momentum equation in its weak form in the whole of $\Omega$. We emphasize that the function $\vv$ inside $\Omega \setminus \mathcal{R}_\ast$ is constructed using test functions from $L^2(T_\ast, T; W_0^{1,2}(\Omega \setminus \mathcal{R}_\ast))$, whereas the momentum equation in the whole of $\Omega$ needs to be valid for all test functions in $L^2(0, T; W_0^{1,2}(\Omega))$.\\

As $\vv = 0$ in $\mathcal{R}(t)$ for any $t \in [T_\ast, T]$, it is enough to check that
\begin{align*}
\int_{T_\ast}^T \int_{\Omega \setminus \mathcal{R}_\ast} \del_t \vv \cdot \phi - (\vv \otimes \vv - \bD(\vv)) : \bD(\phi)  - \vc f \cdot \phi \dd x \dd t = 0
\end{align*}
for any solenoidal $\phi \in L^2(T_\ast, T; W_0^{1,2}(\Omega))$ with $\bD(\phi)|_{\mathcal{R}} = 0$. To this end, we recall a result from \cite[Theorem~2.1]{Starovoitov2003}:
\begin{Lemma}
Let $\mathcal{R} \subset \Omega \subset \R^2$ be connected domains of class $C^{1, \alpha}$ with $\alpha \in [\frac23, 1]$ such that $\dist(\mathcal{R}, \del \Omega) = 0$. If $\phi \in W_0^{1,2}(\Omega)$ is solenoidal and satisfies $\bD(\phi) = 0$ on $\mathcal{R}$, then $\phi = 0$ on $\mathcal{R}$.
\end{Lemma}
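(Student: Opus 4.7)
The plan combines two energy estimates: a vertical Poincaré argument on slices of the thin cusp handles the translational part of the rigid motion, and a flux argument exploiting $\div \phi = 0$ handles the rotational part, the latter being where the threshold $\alpha \geq \tfrac{2}{3}$ enters.

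Since $\bD(\phi) = 0$ on the connected set $\mathcal R$, the kernel characterization of the symmetric gradient gives $\phi|_{\mathcal R}(x) = V + \omega (x-G)^\perp$ for some $V \in \R^2$ and $\omega \in \R$. Fix a contact point $P \in \overline{\mathcal R} \cap \del \Omega$, and choose local coordinates so that $P = 0$ and the common tangent is horizontal, writing $\del \Omega$ and $\del \mathcal R$ locally as graphs $x_2 = -f(x_1)$ and $x_2 = g(x_1)$, respectively, with $f, g \geq 0$ vanishing at $0$ together with their derivatives. The $C^{1,\alpha}$ regularity yields $f(x_1), g(x_1) \lesssim |x_1|^{1+\alpha}$, so the cusp height $\psi(x_1) := f(x_1) + g(x_1) \lesssim |x_1|^{1+\alpha}$.

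First I would show $\phi(P) = 0$. Applying the one-dimensional Poincaré inequality in $x_2$ on each slice, using $\phi(x_1, -f(x_1)) = 0$, gives $|\phi(x_1, g(x_1))|^2 \leq \psi(x_1) \int_{-f(x_1)}^{g(x_1)} |\del_2 \phi|^2 \, dx_2$. Dividing by $\psi(x_1)$ and integrating in $x_1$ over $(0, \delta)$, and noting that $\phi(x_1, g(x_1)) \to \phi(P)$ as $x_1 \to 0$ by smoothness of $\phi|_{\mathcal R}$, yields $|\phi(P)|^2 \int_0^\delta dx_1/\psi(x_1) \lesssim \|\nabla \phi\|_{L^2(\Omega)}^2 < \infty$. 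Since $\int_0^\delta |x_1|^{-(1+\alpha)} \, dx_1 = \infty$ for every $\alpha > 0$, one concludes $\phi(P) = 0$ and therefore $\phi|_{\mathcal R}(x) = \omega(x - P)^\perp$.

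The main obstacle is ruling out $\omega \neq 0$: the tangential component $\omega x_1$ on $\del \mathcal R$ does not decay fast enough for the naive slice argument to work, and here incompressibility must be invoked. I would introduce the horizontal flux $Q(x_1) := \int_{-f(x_1)}^{g(x_1)} \phi_1(x_1, s) \, ds$, which satisfies $Q(0) = 0$. A Leibniz computation combined with $\int \del_1 \phi_1 \, dx_2 = -\int \del_2 \phi_2 \, dx_2$ gives $Q'(x_1) = -\phi_2(x_1, g(x_1)) + \phi_1(x_1, g(x_1)) g'(x_1) = -\omega x_1 - \omega g(x_1) g'(x_1)$, so $Q(x_1) = -\tfrac12 \omega x_1^2 + O(|x_1|^{2(1+\alpha)})$ and $|Q(x_1)| \gtrsim |\omega| x_1^2$ near $0$. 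Chaining Cauchy--Schwarz $|Q|^2 \leq \psi \int |\phi_1|^2 \, dx_2$ with the one-dimensional Poincaré bound $\int |\phi_1|^2 \, dx_2 \lesssim \psi^2 \int |\del_2 \phi_1|^2 \, dx_2$ produces $\int |\del_2 \phi_1|^2 \, dx_2 \gtrsim Q^2/\psi^3 \gtrsim \omega^2 |x_1|^{1-3\alpha}$. Integrating in $x_1$ near $0$ gives infinity precisely when $1 - 3\alpha \leq -1$, i.e., $\alpha \geq \tfrac{2}{3}$, contradicting $\phi \in W^{1,2}(\Omega)$ unless $\omega = 0$. Hence $\phi \equiv 0$ on $\mathcal R$.
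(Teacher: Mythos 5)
The paper does not prove this lemma itself; it is quoted verbatim from \cite[Theorem~2.1]{Starovoitov2003} and no argument is reproduced. Against that backdrop, your reconstruction is correct and recovers precisely the mechanism that the paper's concluding discussion in Section~\ref{sec:conc} alludes to. The rigid-motion ansatz $\phi|_{\mathcal R}=V+\omega(x-G)^\perp$, the vertical-slice Poincar\'e that kills $\phi(P)$ (valid for every $\alpha\ge 0$, since $\int_0^\delta\psi^{-1}\,\rd x_1=\infty$ once $\psi\lesssim |x_1|^{1+\alpha}$), and the horizontal flux $Q(x_1)=\int_{-f}^{g}\phi_1\,\rd x_2$ driven by $\div\phi=0$ are all the right ingredients; the chain Cauchy--Schwarz followed by 1D Poincar\'e yields $\int|\del_2\phi_1|^2\,\rd x_2\gtrsim Q^2/\psi^3\gtrsim\omega^2|x_1|^{1-3\alpha}$, whose $x_1$-integral diverges exactly when $\alpha\ge\tfrac23$. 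As a sanity check, the same chain in $L^p$ produces the exponent $1-(2p-1)\alpha$ and hence the threshold $(2p-1)\alpha\ge 2$, which is the non-Newtonian statement quoted at the end of Section~\ref{sec:conc}; your argument literally generalises to cover it.

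The one spot that needs patching is the evaluation $Q(0)=0$, which you state without justification. It is not immediate from the degenerating interval because $\phi_1$ need not be bounded near $P$. But it follows from the estimate you already have: $Q'$ equals a bounded trace quantity, so $Q$ is Lipschitz and $Q(0^+)$ exists; if $Q(0^+)=c\neq 0$ then $|Q|\ge |c|/2$ for small $x_1$ and the same chain gives $\int|\del_2\phi_1|^2\,\rd x_2\gtrsim c^2\psi^{-3}\gtrsim c^2|x_1|^{-3(1+\alpha)}$, which has a divergent $x_1$-integral for \emph{every} $\alpha\ge 0$, forcing $c=0$. With $Q(0^+)=0$ in hand, $Q(x_1)=-\tfrac{\omega}{2}(x_1^2+g(x_1)^2)$ exactly and the rest of your proof goes through as written.
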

The above lemma in combination with \eqref{i5}, the fact that $\vv = \vu$ for any $t \in [0, T_\ast]$, and that $\vu$ is a weak solution to the Navier-Stokes equations with force $\vc f$ given by \eqref{i6} immediately yields the result. Moreover, as $\vv$ obviously differs from $\vu$ when $t>T_\ast$, this finishes the proof of non-uniqueness.

\begin{Remark}
As for the three-dimensional case, we refer to \cite{Sabbagh2018}, where non-uniqueness was proven in a similar way as shown above. The ideas are essentially the same as in \cite{Starovoitov2005}, but the main difference is that the considered geometry consists of a bounded domain with two spherical holes, symmetric to a line on which the obstacle moves, and at a distance such that the body $\ms$ perfectly fits through them. This geometry enables the author to show that the force $\vc f$ from \eqref{i6} is not a mere distribution, but rather an element of $L^2(0,T;L^p(\Omega))$ for some $1 \leq p < 2$.
\end{Remark}

\section{Feedback law and higher regularity of solutions}
We finish this chapter by again focussing on the compressible Navier-Stokes equations as given in Section~\ref{CNSE} in dimension $d=3$, and give three particular examples of modified equations that ensure no collision.

\paragraph{Feedback law.} Following \cite[Section~4]{JNOR2022}, the equations of motion of fluid-structure interaction we consider here are similar as before, the only difference being that we replace the fifth equation in \eqref{Stokes} by
\begin{align*}
m\ddot{\vc G} = -\int_{\del \ms}\big(\bS - p \Id \big)\vc n \dd \sigma + \vc b,
\end{align*}
where $\vc b(t)$ is a feedback control of the form
\begin{equation}\label{feedback}
\vc b(t)= - k_p(\vc G(t) - \vc G_1) - k_d \dot{\vc{G}}(t).
\end{equation}
In control engineering, such a feedback \eqref{feedback} is known as a \emph{proportional-derivative (PD) controller} (see, for instance, \cite{AAKA2021PDcontrol} for a fluid-beam interaction, and the references therein). The feedback $\vc b(t)$ can be thought of being generated by a (massless) spring with spring constant $k_p>0$, and a mechanical damper with damping constant $k_d\geq 0$ connected between the solid's center of mass $\vc G(t)$ and a fixed point $\vc G_1 \in \Omega$. The definition of weak solutions is similar to Definition~\ref{def:wkSol}, the only difference being that the weak formulation of the momentum equation is replaced by 
\begin{align}\label{momentum_weak1}
\begin{split}
&\int_0^\tau \int_\Omega (\rho\vu)\cdot \del_t \phi+ (\rho\vu \otimes \vu) : \bD(\phi) + p(\rho) \div \phi - \bS(\vu) : \bD(\phi) \dd x \dd t\\
&= \int_\Omega \rho(\tau)\vu(\tau) \cdot \phi(\tau) - \vc m_0 \cdot \phi(0) \dd x + \int_0^\tau \vc{b} \cdot \ell_{\phi} \dd t
\end{split}
\end{align}
for any $\phi \in C_c^\infty([0,T) \times \Omega)$ with $\phi(t,x) = \ell_\phi(t) + \omega_\phi(t)(x-\vc G(t))$ near $\ms(t)$, and the energy inequality \eqref{CEISt} is replaced by
\begin{align}\label{StdEI1}
\left[ \int_\Omega \frac12 \rho|\vu|^2 + P(\rho) \dd x \right]_{t=0}^{t=\tau} + \int_0^\tau \int_\Omega \bS: \bD(\vu) \dd x \dd t \leq \int_0^\tau \vc b \cdot \dot{\vc G} \dd t.
\end{align}

The existence of weak solutions can be established by following \cite[Theorem~4.1]{Feireisl2004}, and the existence of strong solutions can be found in \cite[Theorem 1.1]{RoyTakahashi2021}. Combining the energy estimate \eqref{StdEI1} with the feedback law \eqref{feedback}, we obtain the following no-collision result:
\begin{Proposition}\label{energyest}
Let $\vc{G}_1 \in \Omega$ with $\operatorname{dist}(\vc{G}_1,\del \Omega)>1$ and assume that $\vc b$ satisfies the feedback law \eqref{feedback}. Let  $(\rho,\vu,\vc G)$ be a weak solution. Then
\begin{align}\label{energy estimate full system}
\begin{split}
&\int_{\mf(\tau)} \frac{1}{2} \rho |\vu|^2 + P(\rho) \dd x + \frac{m}{2}|\dot{\vc{G}}|^2 + \frac12 \mathbb{J} \omega\cdot\omega + \frac{k_p}{2}|\vc{G}(\tau) - \vc{G}_1|^2 + {k_d}\int_0^\tau |\dot{\vc G}|^2 \dd t \\
&+ \int_0^\tau \int_{\mf(\tau)} \bS:\bD(\vu) \dd x \dd t\\
&\leq \int_{\mf(0)} \frac{|\vc m_0|^2}{2\rho_0} + P(\rho_0) \dd x + \frac{m}{2}|\vc V_0|^2 + \frac12 \mathbb{J} \omega_0 \cdot \omega_0 + \frac{k_p}{2}|\vc G_1|^2.
\end{split}
\end{align}
Moreover, if there exists $\delta \in (0, \dist(\vc G_1, \del \Omega) - 1)$ such that
\begin{align}\label{delta1}
\frac{2}{k_p} \left( \int_{\mf(0)} \frac{|\vc m_0|^2}{2 \rho_0} + P(\rho_0) \dd x + \frac{m}{2}|\vc G_0|^2 + \frac12 \mathbb{J} \omega_0 \cdot \omega_0 + \frac{k_p}{2}|\vc{G}_1-\vc G_0|^2 \right) \leq \delta^2,
\end{align}
then there exists $\eps = \eps(\delta) > 0$ such that
\begin{equation}\label{noc}
\dist(\vc G(t),\partial\Omega) \geq 1+\varepsilon \quad \forall t \geq 0.
\end{equation}
\end{Proposition}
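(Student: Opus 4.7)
The plan is to start from the modified energy inequality \eqref{StdEI1}, explicitly compute the work done by the feedback control, and read off both the energy estimate and the no-collision bound from the resulting inequality.

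First, I would compute the right-hand side of \eqref{StdEI1}. Inserting the feedback law \eqref{feedback} gives
\begin{align*}
\int_0^\tau \vc b \cdot \dot{\vc G} \dd t = -k_p \int_0^\tau (\vc G(t) - \vc G_1) \cdot \dot{\vc G}(t) \dd t - k_d \int_0^\tau |\dot{\vc G}|^2 \dd t,
\end{align*}
and recognizing the first integrand as a time derivative of $\tfrac12 |\vc G - \vc G_1|^2$, this integrates to $-\tfrac{k_p}{2}|\vc G(\tau) - \vc G_1|^2 + \tfrac{k_p}{2}|\vc G_0 - \vc G_1|^2$. Bringing the $k_p/k_d$ terms to the left-hand side yields an inequality in which these quantities are already in the right spots.

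Next, I would decompose the extended kinetic and potential energy terms. Using the extension \eqref{extended} together with $\rho|_{\ms} = \rho_\ms$ and $\vu|_{\ms} = \dot{\vc G}(t) + \omega(t) \times (x-\vc G(t))$, the cross terms vanish by the definition of the centre of mass, and the $|x-\vc G|^2|\omega|^2 - |(x-\vc G)\cdot \omega|^2$ computation (exactly as in the third remark of the introduction) identifies the rotational contribution as $\tfrac12 \mathbb J \omega \cdot \omega$. A similar split of $\int_\Omega P(\rho) \dd x$ into a fluid part and a constant solid contribution (which cancels on both sides since $\rho_\ms$ is constant and $|\ms(t)|$ is preserved) yields \eqref{energy estimate full system}. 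This part is essentially bookkeeping; the only subtle point is to ensure that the test function used in \eqref{momentum_weak1} to arrive at \eqref{StdEI1} is admissible, which is standard once one approximates $\vu$ itself by smooth rigid-on-$\ms$ fields.

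For the second assertion, I would simply drop all non-negative terms on the left-hand side of \eqref{energy estimate full system} except $\tfrac{k_p}{2}|\vc G(\tau) - \vc G_1|^2$, giving
\begin{align*}
\frac{k_p}{2} |\vc G(\tau) - \vc G_1|^2 \leq \int_{\mf(0)} \frac{|\vc m_0|^2}{2 \rho_0} + P(\rho_0) \dd x + \frac{m}{2}|\vc V_0|^2 + \frac12 \mathbb J(0)\omega_0\cdot\omega_0 + \frac{k_p}{2}|\vc G_1 - \vc G_0|^2.
\end{align*}
The smallness hypothesis \eqref{delta1} then yields $|\vc G(\tau) - \vc G_1| \leq \delta$ for all $\tau \geq 0$, so by the triangle inequality
\begin{align*}
\dist(\vc G(\tau), \del \Omega) \geq \dist(\vc G_1, \del \Omega) - |\vc G(\tau) - \vc G_1| \geq \dist(\vc G_1, \del \Omega) - \delta =: 1 + \eps,
\end{align*}
where $\eps > 0$ by the choice of $\delta < \dist(\vc G_1, \del \Omega) - 1$, proving \eqref{noc}.

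The main (and really only) obstacle in this plan is the first step, namely justifying the splitting of the energy on the extended domain and checking that \eqref{StdEI1} is actually obtained by a legitimate testing procedure against $\vu$ in the weak formulation \eqref{momentum_weak1} with $\ell_\phi = \dot{\vc G}$, $\omega_\phi = \omega$; after that the argument reduces to Gr\"onwall-free algebra because the $k_p$-term on the left already encodes exactly the distance we want to bound.
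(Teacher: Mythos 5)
Your proposal is correct and follows essentially the same route as the paper: compute $\int_0^\tau \vc b\cdot\dot{\vc G}\dd t$ from the feedback law, absorb it into \eqref{StdEI1}, split the extended energy into fluid and rigid parts, and then isolate the $\tfrac{k_p}{2}|\vc G(\tau)-\vc G_1|^2$ term to deduce \eqref{noc} via the triangle inequality. The only difference is that you flag the decomposition of the extended energy (and the admissibility of testing against $\vu$) as a potential gap, whereas the paper treats it as built into the definition of weak solutions and dispatches it implicitly; in a fully rigorous write-up you would be right to spell out that step.
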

\begin{proof}
As $\vc b(t)= - k_p(\vc G(t) - \vc G_1)-k_d \dot{\vc G}(t)$, we obtain 
\begin{equation}\label{feeden}
-\vc b \cdot \dot{\vc G} = k_p(\vc G(t) - \vc G_1)\cdot \dot{\vc G}(t) + k_d|\dot{\vc G}(t)|^2 = \frac{\rd}{\rd t}\left(\frac{k_p}{2} |\vc G(t)-\vc G_1|^2\right) + k_d|\dot{\vc G}(t)|^2.
\end{equation}
Inserting \eqref{feeden} into the energy inequality \eqref{StdEI1}, and using the extensions of $\rho,\vu$ as in \eqref{extended}, we conclude \eqref{energy estimate full system}.\\

In order to establish \eqref{noc}, we use \eqref{energy estimate full system} and \eqref{delta1} to obtain
\begin{align}\label{energy estimate full system1}
|\vc G(t)-\vc G_1|^2  
\leq \frac{2}{k_p}\left(\int_{\mf(0)} \frac{|\vc m_0|^2}{2\rho_0} + P(\rho_0) \dd x + \frac{m}{2}|\vc{V}_0|^2 + \frac12 \mathbb{J}\omega_0\cdot \omega_0+\frac{k_p}{2}|\vc G_0-\vc G_1|^2\right) \leq \delta^2.
\end{align}
Finally, we conclude \eqref{noc} by
\begin{align*}
\dist(\vc G(t), \del \Omega) \geq \dist(\vc G_1, \del \Omega) - |\vc G(t) - \vc G_1| \geq 1 + \underset{=: \eps > 0}{\underbrace{[\dist(\vc G_1, \del \Omega) - 1 - \delta ]}}.
\end{align*}
\end{proof}

\begin{Remark}
Another way how to interpret \eqref{delta1} is as follows: If the initial center of mass of the solid is close enough to the given point $\vc G_1$, and the spring is stiff enough, meaning $k_p \gg 1$, then, regardless of its mass and the initial fluid's energy, the solid stays close to $\vc G_1$ for all times.
\end{Remark}

\paragraph{Besov spaces.} A little more advanced as the situation above is the fact that if we allow for higher regularity, then we can obtain the no-collision result even \emph{without} the external PD-controller. To this end, for $k \in \mathbb{N}$ and every $0<s<k$, $1\leq p,q <\infty$, we define the \emph{Besov space} $B^s_{q,p}(\Omega)$ by real interpolation of Sobolev spaces as $B^s_{q,p}(\Omega)=(L^q(\Omega),W^{k,q}(\Omega))_{s/k,p}$. Besov spaces are somewhat related to H\"older spaces in the setting of Lebesgue and Sobolev spaces and can measure the ``smoothness'' of a function $f \in L^q(\Omega)$. There exist several equivalent definitions of these spaces; one of them is the following (see \cite{DevoreSharpley1993}):
\begin{Definition}
Let $\Omega \subset \R^d$ be an open set, $1 \leq q<\infty$, and $f \in L^q(\Omega)$. Let $\Id(f) = f$ be the identity operator, and denote for any $h \in \R^d$ by $\tau_h(f,x) = f(x+h)$ the shift operator. Define
\begin{align*}
\bigtriangleup_h^k (f,x,\Omega) &= \begin{cases}
[\tau_h - \mathbb{I}]^k(f,x) & \text{if } x+jh \in \Omega, \ j \in \{0,...,k\},\\
0 & \text{else}\, ;
\end{cases} \\
\omega_{k,q}(f,t,\Omega) &= \sup_{|h| \leq t} \|\bigtriangleup_h^k(f, \cdot, \Omega)\|_{L^q(\Omega)}.
\end{align*}
For $0<s<k$ and $1 \leq p<\infty$, the Besov space $B_{q,p}^s(\Omega)$ is defined as all functions $f \in L^q(\Omega)$ such that
\begin{align*}
|f|_{B_{q,p}^s(\Omega)}^p = \int_0^1 \left| \frac{\omega_{k,q}(f,t,\Omega)}{t^s} \right|^p \frac{{\rm d} t}{t} < \infty.
\end{align*}
It becomes a Banach space if we equip it with the norm $\|\cdot\|_{B_{q,p}^s(\Omega)} = \|\cdot\|_{L^q(\Omega)} + |\cdot|_{B_{q,p}^s(\Omega)}$.
\end{Definition}
We will not give further properties of these spaces since it would go beyond the scope of this chapter, so we refer to \cite{DevoreSharpley1993,Triebel1983} for a detailed presentation of Besov spaces. With this at hand, as in \cite[Theorem 1.2]{HMTT2019}, we can establish the following result for a smooth rigid body $\ms(t)$:
\begin{Theorem}
Let $2<p<\infty$, $3<q<\infty$ satisfy the condition $\frac{1}{p}+\frac{1}{2q}\neq \frac{1}{2}$. Let the initial data satisfy 
\begin{equation*}
\rho_0 \in W^{1,q}(\mf(0)),\quad \vu_0\in B^{2(1-1/p)}_{q,p}(\mf(0)), \quad \inf_{\mf(0)} \rho_0 > 0,
\end{equation*}
\begin{equation*}
\vc{G}_0\in \R^3,\quad \vc{V}_0\in \R^3,\quad \omega_0\in \R^3,
\end{equation*}
\begin{equation*}
\frac{1}{|\mf(0)|}\int_{\mf(0)}\rho_0 = \bar{\rho}>0 ,
\end{equation*}
\begin{equation*}
\vu_0=0 \mbox{ on } \del \Omega,\quad \vu_0=\vc{V}_0 + \omega_0\times (y-\vc{G}_0) \mbox{ on } \del \ms(0).
\end{equation*}
If there exists $\delta > 0$ and $\varepsilon >0$ such that
\begin{align*}
\|(\rho_0-\bar{\rho},\vu_0,\vc{V}_0, \omega_0)\|_{W^{1,q}\times B^{2(1-1/p)}_{q,p} \times \R^3 \times \R^3}\leq \delta,\quad \operatorname{dist}(\ms(0),\del\Omega)\geq \varepsilon >0,
\end{align*}
then
\begin{align*}
\dist(\ms(t),\del\Omega)\geq \frac{\varepsilon}{2}\mbox{ for all } t\in [0,\infty).
\end{align*}
\end{Theorem}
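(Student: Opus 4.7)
The plan is to combine a local-in-time strong-solution theory in the maximal $L^p_t L^q_x$ regularity framework with a global-in-time propagation of smallness. The natural trace space for the velocity in this setting is precisely $B^{2(1-1/p)}_{q,p}(\mf(0))$, which explains the choice of data spaces; the restriction $\tfrac{1}{p}+\tfrac{1}{2q}\neq\tfrac12$ is the standard condition ensuring a defect-free trace identification $W^{1,p}(0,T;L^q)\cap L^p(0,T;W^{2,q})\hookrightarrow C([0,T];B^{2(1-1/p)}_{q,p})$ with no logarithmic loss.

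First I would transform the moving-domain system onto the fixed reference configuration $\mf(0)$ via a diffeomorphism $X(t,\cdot):\mf(0)\to\mf(t)$ which coincides with the rigid motion $y\mapsto \vc G(t)+\mathbb{O}(t)(y-\vc G_0)$ in a tubular neighborhood of $\del\ms(0)$ (such a neighborhood exists since $\dist(\ms(0),\del\Omega)\geq\eps$) and is the identity near $\del\Omega$. This yields a quasilinear system on the fixed domain for $(\rho,\vu)$, coupled with ODEs for $(\vc G,\mathbb{O},\vc V,\omega)$ and whose coefficients depend smoothly on the rigid motion. Linearizing around the equilibrium $(\bar\rho,0,\vc G_0,\mathrm{Id},0,0)$ and invoking maximal regularity for the associated Lam\'e-type operator with no-slip data (Da Prato-Grisvard / Weis theory), one sets up a contraction-mapping argument in a small ball of
\[
X_T=\Big\{(\sigma,\vu,\vc V,\omega):\ \sigma\in C([0,T];W^{1,q}),\ \vu\in L^p(0,T;W^{2,q})\cap W^{1,p}(0,T;L^q),\ (\vc V,\omega)\in W^{1,p}(0,T)\Big\}
\]
to produce a unique local-in-time strong solution. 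Since $q>3$, Sobolev embedding gives $L^p(0,T;W^{2,q})\cap W^{1,p}(0,T;L^q)\hookrightarrow L^1(0,T;W^{1,\infty})$, so $\vu$ induces a Lipschitz flow and $X(t,\cdot)$ is a genuine diffeomorphism as long as $\ms(t)$ stays away from $\del\Omega$.

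Second, the nonlinear terms of the transformed system (the convective terms, the pressure nonlinearity around $\bar\rho$, the commutators coming from $X$, and the force terms on the solid) are all controlled quadratically in the $X_T$-norm, leading to an a priori estimate of the form $\mathcal{E}(T)\leq C_1\|\text{data}\|+C_2\,\mathcal{E}(T)^2$ with constants independent of $T$. For $\delta$ small enough, a standard bootstrap/continuity argument then extends the local solution to $[0,\infty)$ with $\mathcal{E}(\infty)\leq 2C_1\delta$.

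The genuinely hard step is to pass from this smallness bound to the \emph{total displacement} estimate $|\vc G(t)-\vc G_0|<\eps/2$: having $\vc V\in W^{1,p}(0,\infty)$ does not by itself bound $\int_0^\infty |\vc V|\dd t$. One must upgrade maximal regularity to integrable decay of the solid's velocity, either through spectral analysis of the linearized fluid-structure operator around the equilibrium (whose spectrum lies in $\{\mathrm{Re}\,\lambda<0\}$ after quotienting the Galilean modes, thanks to viscous dissipation and the confining geometry) or through a weighted-in-time refinement of the maximal regularity estimate. Granted $\|\vc V\|_{L^1(0,\infty)}\lesssim \delta$, one concludes
\[
|\vc G(t)-\vc G_0|\leq\int_0^t |\vc V(s)|\dd s\leq C\delta,
\]
and choosing $\delta$ small enough that $C\delta<\eps/2$ yields $\dist(\ms(t),\del\Omega)\geq \dist(\ms(0),\del\Omega)-|\vc G(t)-\vc G_0|\geq \eps/2$ for all $t\geq 0$, completing the proof.
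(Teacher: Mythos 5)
The paper itself gives no proof of this theorem; it is stated as a consequence of \cite[Theorem~1.2]{HMTT2019} and nothing more. Your roadmap does match the strategy of that reference at a high level: pass to the fixed reference domain $\mf(0)$ via a diffeomorphism that is a rigid motion near $\del\ms(0)$ and the identity near $\del\Omega$, invoke maximal $L^p$--$L^q$ regularity, and close a contraction in what you call $X_T$. Your reading of the restriction $\tfrac1p+\tfrac1{2q}\neq\tfrac12$ is also essentially correct: it excludes the borderline real-interpolation case $2(1-1/p)=1/q$ for the Dirichlet realization, so that $(L^q, W^{2,q}\cap W_0^{1,q})_{1-1/p,p}$ has a clean Besov-space description.

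The gap is precisely where you flag it, but it is more structural than your phrasing suggests, and it splits into two intertwined issues that you cannot separate. First, a time-uniform a~priori bound $\mathcal{E}(T)\leq C_1\|\text{data}\|+C_2\,\mathcal{E}(T)^2$ with constants \emph{independent of $T$} does not follow by quoting maximal regularity, because the linearized compressible fluid--structure operator around $(\bar\rho,0)$ has $0$ in its spectrum (conservation of total fluid mass), so the maximal-regularity constant on $(0,T)$ is not uniform in $T$. The hypothesis $\frac{1}{|\mf(0)|}\int_{\mf(0)}\rho_0=\bar\rho$ is there exactly to project out this mode, and the contraction must be organized on the complementary invariant subspace; you never make this decomposition. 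Second, the $L^1(0,\infty)$ bound on $\dot{\vc G}$ that controls the net displacement is not an add-on: in \cite{HMTT2019} it is the exponential decay of the linearized semigroup on the kernel complement, obtained from a resolvent/spectral analysis of the full coupled operator, and it is \emph{the} substantive content of the theorem. Writing ``granted $\|\vc V\|_{L^1(0,\infty)}\lesssim\delta$'' therefore assumes away the core of the argument. Until both of these are supplied, the route from small data to $|\vc G(t)-\vc G_0|<\eps/2$ for all $t\geq 0$ is not established, so the proposal has the right architecture but remains incomplete where it matters.
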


\paragraph{Smoothies.} As the last example in this chapter we mention the result for so-called multi-polar or $k$-fluids as described in \cite{FeireislNecasova2008}. These fluids are characterized by a higher-order stress tensor, hence giving higher regularity to the fluid's velocity and, in turn, to the density. More precisely, for $k \in \mathbb{N}$, the stress tensor takes the form
\begin{align*}
\bS(\nabla \vu) = \sum_{j=0}^{k-1} (-1)^j \Delta^j \bigg[ \mu_j \bigg( \nabla \vu + \nabla^T \vu - \frac23 \div \vu \Id \bigg) + \eta_j \div \vu \Id \bigg],
\end{align*}
where $\mu_j, \eta_j \geq 0$, $\mu_{k-1} > 0$ are the (constant) viscosity coefficients, and $\Delta^j$ is the standard Laplacian applied $j$ times. In this context, classical Newtonian fluids are mono-polar, that is, $k=1$ in the above. Again by the properties of Riesz transform \eqref{Riesz}, one shall expect that the velocity field $\vu$ is very regular; in particular, $\vu \in L^2(0,T;W_0^{k, 2}(\Omega))$. The density then can be shown to be bounded away from zero and infinity as long as the initial density is; hence, these types of fluids behave roughly as incompressible ones, and indeed also $\vu \in L^\infty(0,T;L^2(\Omega))$ and $\rho \in L^\infty((0,T) \times \Omega) \cap C([0,T]; L^1(\Omega))$ is bounded away from zero.

Since $\vu \in L^2(0,T;W_0^{k, 2}(\Omega))$, by Sobolev embedding \eqref{SobEmb} one a priori has $\vu \in L^2(0,T;C_0^1(\overline{\Omega}))$ as long as $k \geq 3$; in particular, the streamlines given by the unique solution of
\begin{align*}
\frac{\rd}{\rd t} \vc X(t, x) = \vu(t, \vc X(t, x)), \quad t>0, \quad \vc X(0, x) = x,
\end{align*}
are well defined and no collision can occur.



\chapter{Other models}\label{ch:8}
Having investigated several (in)compressible systems in two and three dimensions for smooth and non-smooth bodies, we review here the (no-)collision results for several other models:
\begin{itemize}
\item Ideal incompressible fluids: Here, the viscosity is zero, and collision happens with non-zero speed.
\item One-dimensional models: We will show that points never collide.
\item Slip boundary conditions: This is a kind of roughness allowing for collision.
\item Tresca's boundary condition: It is a variant of slip conditions, also allowing the solid to touch the container's wall.
\end{itemize}

Without being exhaustive, we will just focus on the main ideas in the references given for the specific problem. The interested reader might consult them to get detailed versions of the statements and proofs.

\section{Euler fluids: a perfect flow}
Let us start with the description of incompressible, inviscid, and irrotational fluids and the collision result given in \cite{HouotMunnier2008}. In their paper, however, no real PDE is stated; the whole procedure rather relies on local coordinates, the physical principle of least action, and the resulting Lagrangian. For this reason, let us recall how one can get from the common PDE formulation to the Lagrangian one, where we follow the presentation of \cite{FarazmandSerra2018}.\\

First, the incompressible Euler\footnote{after Leonhard Euler (1707--1783)} equations are given by the system
\begin{align}\label{Euler}
\begin{cases}
\div \vu = 0 & \text{in } (0,T) \times \Omega,\\
\rho_\mf (\del_t \vu + \div(\vu \otimes \vu) ) + \nabla p = 0 & \text{in } (0,T) \times \Omega,\\
\vu \cdot \vc n = 0 &\text{on } (0,T) \times \del \Omega,
\end{cases}
\end{align}
together with the usual coupling equations for the solid inside the fluid (with $\bS = 0$). The streamlines satisfy
\begin{align*}
\dot{\vc X}(t, x_0) = \vu(t, \vc X(t, x_0)), \quad \vc X(0, x_0)=x_0,
\end{align*}
for any point $x_0 \in \Omega$, where $\dot{\vc X} = \frac{\rd}{\rd t} \vc X$. Now, as the flow is incompressible (and hence volume preserving), we have
\begin{align*}
\det (\nabla_{x_0} \vc X(t, x_0)) = 1 \quad \forall (t, x_0) \in (0,T) \times \Omega.
\end{align*}
Further, if $\vu$ is a solution to \eqref{Euler}, we get
\begin{align*}
\rho_\mf \ddot{\vc X}(t, x_0) &= \rho_\mf \frac{\rd}{\rd t} \vu(t, \vc X(t,x_0)) = \rho_\mf \big( \del_t \vu(t, \vc X(t, x_0)) + \nabla_x \vu(t, \vc X(t, x_0)) \cdot \dot{\vc X}(t, x_0) \big)\\
&= -\nabla_x p(t, \vc X(t, x_0)).
\end{align*}
This is nothing else than the Euler-Lagrange equations for the Lagrangian
\begin{align*}
\mathcal{L}(t, \vc X, \dot{\vc X}) = \frac12 \rho_\mf |\dot{\vc X}|^2 - p(t,\vc X).
\end{align*}
Indeed, by least action principle, we find
\begin{align}\label{i7}
0 = \frac{\rd}{\rd t} \frac{\del \mathcal{L}}{\del \dot{\vc X}} - \frac{\del \mathcal{L}}{\del {\vc X}} = \rho_\mf \frac{\rd}{\rd t} \dot{\vc X} - \frac{\del}{\del {\vc X}}(-p) = \rho_\mf \ddot{\vc X} + \nabla p.
\end{align}
Recall also that the Lagrangian $\mathcal{L}$ is physically the difference of kinetic energy (here, $\frac12 \rho_\mf |\vu|^2 = \frac12 \rho_\mf |\dot{\vc X}|^2$) and potential energy (here, the pressure $p$). The total energy, the so-called Hamiltonian $\mathcal{H}$, is instead the \emph{sum} of both, meaning $\mathcal{H} = \frac12 \rho_\mf |\dot{\vc X}|^2 + p = \frac12 \rho_\mf |\vu|^2 + p$; note that this totally coincides with the energy balances derived in previous chapters. Lastly, the initial conditions read
\begin{align}\label{i8}
\vc X(0, x_0) = x_0, \quad \dot{\vc X}(0, x_0) = \vu(0, x_0) = \vu_0(x_0).
\end{align}

On the other hand, given a one-parameter family $\vc X(t, x_0)$ and a function $p(t, x)$ satisfying \eqref{i7} and \eqref{i8}, we define the flow as $\vu(t,x)=\vu(t, \vc X(t, x_0)) = \dot{\vc X}(t, x_0)$. Following the lines above, we can show that the so constructed function is indeed a solution to \eqref{Euler}. This shows that the PDE formulation and the Lagrangian formulation coincide (for a more detailed discussion and references, see \cite{FarazmandSerra2018}).

\begin{Remark}
If the force density on the right hand side of \eqref{Euler}$_2$ is not zero, but conservative, meaning of the form $\vc f = \nabla f$ for some function $f=f(t, x)$, the Lagrangian reads
\begin{align*}
\mathcal{L}(t, \vc X, \dot{\vc X}) = \frac12 \rho_\mf |\dot{\vc X}|^2 - p(t, \vc X) + f(t, \vc X),
\end{align*}
which can be verified by the same arguments. In particular, for gravitational force $\vc f = -g \vc e_3 = \nabla[x \mapsto -g x_3]$, we can write the Euler equations as a Lagrangian flow.
\end{Remark}

Let us come to the collision result from \cite[Section~5]{HouotMunnier2008}. The considered configuration is an infinite cylinder of radius $1$ falling in half-space. By symmetry, the whole configuration reduces to a two-dimensional system, where a ball $\ms$ of radius $1$ is falling in a half-plane $\Omega = \R \times (0, \infty)$. As before, we denote by $h>0$ the distance of $\ms$ to $\del \Omega$.\\

For the sake of fixing notations, we set
\begin{align*}
\Omega = \{(x_1, x_2) \in \R^2: x_2>0\}, \quad \ms(t)=B_1(\vc G(t)), \quad \vc G(t) = (0, 1+h(t)), \quad \vu|_{\ms} = \dot{\vc G} = (0, \dot{h}).
\end{align*}
Note that this means we consider a spherical solid moving vertically along the axis $x_1=0$ with speed $\dot{\vc G}$ and zero angular velocity. Moreover, as the fluid flow is irrotational and hence ${\rm curl}(\vu) = 0$, we may find a potential $\phi$ such that $\vu|_{\mf} = \nabla \phi$. Moreover, by the movement of the solid, this potential is linear in $\dot h$; in particular, $\phi = \dot h \varphi$, where $\varphi$ solves
\begin{align}\label{i9}
\begin{cases}
-\Delta \varphi = 0 & \text{in } \ms(t)\\
\nabla \varphi \cdot \vc n = 1+h - x_2 & \text{on } \del \ms(t),\\
\nabla \varphi \cdot \vc n = 0 & \text{on } \del \Omega,
\end{cases}
\end{align}
where we used that $\vc n = -r \vc e_1 + (1+h-x_2) \vc e_2$ on $\del \ms$ (see also Figure~\ref{fig:82} below). In order to verify that collision may occur, we will solve system \eqref{i9} explicitly by using conformal mappings and transforming the whole configuration to concentric circles; this is essentially the same idea as we used before in Chapter~\ref{ch:7}. As it is notationally convenient for us, we will not distinguish between the complex variable $z = z_1 + i z_2 \in \mathbb{C}$ and the vector $\vc z = (z_1, z_2) \in \R^2$. In the same spirit, we have $\nabla_z f(z) = f'(z)$ for a function $f$ defined on $\R^2 \sim \mathbb{C}$.\\

We begin by transforming the whole configuration with two simple lemmata from complex analysis, the proofs of which can be found in \cite[§§ 5.4 and 5.6]{Henrici1988}:
\begin{Lemma}\label{lem:konf1}
Let $h>0$ and
\begin{align*}
k(z)=\frac{z-ia}{z+ia}, \quad a=\sqrt{(1+h)^2-1}.
\end{align*}
Then, $k$ is a conformal map from $\Omega = \R \times (0, \infty)$ to $B_1(0)$. Moreover, for $\ms = B_1(0)$ and $A_\sigma = B_1(0) \setminus \overline{B_\sigma(0)}$ with $\sigma = (1+h+a)^{-1}$, we have the following properties:
\begin{align*}
k(\ms) = B_\sigma(0), \quad k(\Omega \setminus \ms) = A_\sigma, \quad k(\del \ms) = \del B_\sigma(0), \quad k(\del \Omega) = \del B_1(0) \setminus \{1\}.
\end{align*}
As for the inverse mapping, we have
\begin{align*}
k^{-1}(w) = ia \frac{1+w}{1-w}, \quad 1+h=\frac{\sigma^{-1} + \sigma}{2}, \quad a=\frac{\sigma^{-1}-\sigma}{2}.
\end{align*}
\end{Lemma}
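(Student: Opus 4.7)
\medskip

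\noindent\textbf{Proof plan.} The map $k$ is a M\"obius transformation with unique pole at $z=-ia$. Since $a=\sqrt{(1+h)^2-1}>0$, the pole lies strictly in the lower half-plane, so $k$ is holomorphic (hence conformal) on the closed upper half-plane $\overline{\Omega}$. The first step is to show $k(\Omega)=B_1(0)$ and $k(\partial\Omega\setminus\{\infty\})=\partial B_1(0)\setminus\{1\}$. Writing $z=x+iy$, a direct computation gives
\begin{align*}
|k(z)|^2=\frac{x^2+(y-a)^2}{x^2+(y+a)^2},
\end{align*}
which is $<1$ iff $y>0$, equal to $1$ iff $y=0$, and $>1$ iff $y<0$. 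Thus $k(\Omega)\subset B_1(0)$ and $k(\mathbb R)\subset \partial B_1(0)$; noting $k(\infty)=1$, surjectivity follows from solving $w=(z-ia)/(z+ia)$ algebraically, which yields $z=ia(1+w)/(1-w)$, a well-defined element of $\Omega$ for every $w\in \overline{B_1(0)}\setminus\{1\}$. This simultaneously establishes the inverse formula, and by injectivity of M\"obius maps, $k$ is a conformal bijection of the claimed domains.

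The next step is to identify the image of $\partial \mathcal{S}=\{|z-i(1+h)|=1\}$. Since M\"obius transformations send generalized circles to generalized circles, $k(\partial\mathcal{S})$ is either a circle or a line in $\overline{B_1(0)}$; being the image of a compact set avoiding the pole, it is a circle. To pin down the center and radius, I would evaluate $k$ at the two diametrically opposite points $z_\pm=i(1+h)\pm i$ on the imaginary axis, which lie on $\partial\mathcal{S}$, giving
\begin{align*}
k(z_+)=\frac{2+h-a}{2+h+a},\qquad k(z_-)=\frac{h-a}{h+a},
\end{align*}
both real. The key algebraic identity is
\begin{align*}
(2+h-a)(1+h+a)=2+h+a,
\end{align*}
which follows instantly from $a^2=(1+h)^2-1$, so that $k(z_+)=(1+h+a)^{-1}=\sigma$. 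An analogous (in fact symmetric, since $a>h$) calculation shows $k(z_-)=-\sigma$. The image circle therefore has diameter $[-\sigma,\sigma]$ on the real axis, hence equals $\partial B_\sigma(0)$. Since $k$ is a homeomorphism of $\overline{\Omega}$ onto $\overline{B_1(0)}\setminus\{1\}$ and $\mathcal{S}\subset\Omega$ is a simply connected domain bounded by $\partial\mathcal{S}$, its image $k(\mathcal{S})$ is the bounded component of $B_1(0)\setminus\partial B_\sigma(0)$, namely $B_\sigma(0)$. Consequently $k(\Omega\setminus\mathcal{S})=B_1(0)\setminus\overline{B_\sigma(0)}=A_\sigma$.

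The final step is to verify the two identities $1+h=(\sigma^{-1}+\sigma)/2$ and $a=(\sigma^{-1}-\sigma)/2$. With $\sigma^{-1}=1+h+a$, one computes
\begin{align*}
\sigma^{-1}-\sigma=\frac{(1+h+a)^2-1}{1+h+a}=\frac{2a(1+h+a)}{1+h+a}=2a,
\end{align*}
where I used $(1+h+a)^2-1=(1+h)^2-1+2a(1+h)+a^2=2a^2+2a(1+h)=2a(1+h+a)$. Adding $\sigma^{-1}+\sigma=2\sigma^{-1}-(\sigma^{-1}-\sigma)=2(1+h+a)-2a=2(1+h)$ gives the remaining identity.

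The only genuinely computational step is the one identifying the center of $k(\partial\mathcal{S})$ as the origin; this is where the specific value $a=\sqrt{(1+h)^2-1}$ is used, and where all the hidden symmetry of the construction materializes. Everything else is standard M\"obius geometry.
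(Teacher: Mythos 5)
The paper does not supply a proof of this lemma; it simply refers the reader to \cite[\S\S 5.4, 5.6]{Henrici1988}. Your proof is a complete and correct reconstruction of the standard M\"obius-geometry argument, and all the algebraic identities you invoke check out: $(2+h-a)(1+h+a)=2+h+a$, $(h-a)(1+h+a)=-(h+a)$, and $(1+h+a)^2-1=2a(1+h+a)$ all follow directly from $a^2=(1+h)^2-1$. The one spot where you compress slightly is the claim that $[-\sigma,\sigma]$ is a \emph{diameter} of $k(\partial\ms)$: knowing two points of a circle does not by itself locate its centre. The missing line is that $k$ intertwines reflection across the imaginary axis with reflection across the real axis (one checks $k(-\bar z)=\overline{k(z)}$ directly), so the image of the axis-symmetric circle $\partial\ms$ is a circle symmetric about $\mathbb R$; a circle symmetric about $\mathbb R$ meets $\mathbb R$ at antipodal points, whence the centre is $0$ and the radius $\sigma$. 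Since you flagged this step as ``standard M\"obius geometry,'' it is a small elision rather than a gap, but it is worth making explicit when writing the proof out.
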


\begin{Lemma}\label{lem:konf2}
Let $U, V \subset \R^2$ be open and $l =l_1 + il_2 : U \to V$ be a conformal map.
\begin{enumerate}
\item Let $f \in C^2(V)$ and $F = f \circ l$. Then $F \in C^2(U)$ and for any $w = w_1 + iw_2 \in U$,
\begin{align*}
\Delta F(w) = |l'(w)|^2 \Delta f(l(w)),
\end{align*}
where $l'(w) = \frac{\rd}{\rd w} l(w)$ is the complex derivative of $l$.

\item Let $\gamma \subset U$ be a $C^1$ curve and $\vc n$ be the unit normal on $\gamma$, and let $l:U \to V$ be a conformal map. Similarly, let $\tilde{\gamma} = l(\gamma) \subset V$ and $\tilde{\vc n} = l(\vc n)$. Then, for all $w=w_1+iw_2 \in U$,
\begin{align*}
\nabla F(w_1, w_2) \cdot \vc n = \bigg| \frac{\del l}{\del w_1}(w_1, w_2) \bigg| \ \nabla f(l(w_1, w_2)) \cdot \tilde{\vc n}.
\end{align*}
\end{enumerate}
\end{Lemma}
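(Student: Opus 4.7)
The proof rests on two elementary consequences of conformality of $l=l_1+il_2:U\to V$: firstly, that $l_1,l_2$ satisfy the Cauchy-Riemann equations $\del_{w_1}l_1=\del_{w_2}l_2$ and $\del_{w_2}l_1=-\del_{w_1}l_2$, so in particular $l_1,l_2$ are harmonic; secondly, that the Jacobian $Dl(w)$ acts on $\R^2$ as $|l'(w)|$ times a rotation, where $|l'(w)|^2=(\del_{w_1}l_1)^2+(\del_{w_1}l_2)^2=|\del l/\del w_1|^2$. The $C^2$-regularity of $F$ follows immediately from $l\in C^\infty(U)$ (every conformal map is smooth) and $f\in C^2(V)$ via the chain rule.

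For part (1), I would expand $\Delta F$ by the chain rule:
\begin{align*}
\Delta F = \sum_{i,j=1}^2 \del_{ij}^2 f(l)\,\bigl(\nabla l_i\cdot\nabla l_j\bigr) + \sum_{i=1}^2 \del_i f(l)\,\Delta l_i.
\end{align*}
The second sum vanishes since $l_1,l_2$ are harmonic. Using the Cauchy-Riemann equations, the diagonal terms give $|\nabla l_1|^2=|\nabla l_2|^2=|l'(w)|^2$, while the cross term satisfies
\begin{align*}
\nabla l_1\cdot\nabla l_2 = \del_{w_1}l_1\,\del_{w_1}l_2 + \del_{w_2}l_1\,\del_{w_2}l_2 = \del_{w_1}l_1\,\del_{w_1}l_2 - \del_{w_1}l_2\,\del_{w_1}l_1 = 0.
\end{align*}
Collecting these identities yields $\Delta F(w)=|l'(w)|^2\Delta f(l(w))$.

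For part (2), the chain rule gives $\nabla F(w) = Dl(w)^T\,\nabla f(l(w))$, so
\begin{align*}
\nabla F(w)\cdot\vc n = \nabla f(l(w))\cdot Dl(w)\vc n.
\end{align*}
The plan is to identify $Dl(w)\vc n$. Since $Dl(w)$ acts as $|l'(w)|$ times a rotation, it preserves angles and preserves orthogonality; hence $Dl(w)\vc n$ is orthogonal to $Dl(w)\tau$ for any tangent $\tau$ to $\gamma$, i.e.\ orthogonal to the tangent of $\tilde\gamma$ at $l(w)$, so $Dl(w)\vc n$ is parallel to $\tilde{\vc n}$. Its length is $|Dl(w)\vc n|=|l'(w)|\,|\vc n|=|l'(w)|=|\del l/\del w_1|$. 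Thus $Dl(w)\vc n=|\del l/\del w_1|\,\tilde{\vc n}$, and substituting gives the claim.

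The only genuinely delicate point is verifying that $Dl(w)\vc n$ points in the \emph{same} direction as $\tilde{\vc n}$ (and not $-\tilde{\vc n}$), which is the orientation convention implicit in the statement $\tilde{\vc n}=l(\vc n)$; this is guaranteed by the orientation-preserving nature of conformal maps (the Jacobian determinant $|l'(w)|^2$ is positive), so the rotation $|l'(w)|^{-1}Dl(w)$ sends the outward normal of $\gamma$ to the outward normal of $\tilde\gamma$. Beyond this, the proof is pure bookkeeping, and the essential ingredients — harmonicity of real/imaginary parts and the conformal scaling of the Jacobian — are standard facts of complex analysis that I would simply invoke.
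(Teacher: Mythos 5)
The paper does not give a proof of this lemma---it simply cites \cite[\S\S 5.4 and 5.6]{Henrici1988}---so you have actually supplied an argument where the paper merely points to a reference, and what you wrote is correct. Part (1) is the standard calculation: expand $\Delta F$ by the chain rule, drop $\sum_i \del_i f(l)\,\Delta l_i$ by harmonicity of $l_1,l_2$, and reduce $\sum_{i,j}\del_{ij}^2 f(l)\,(\nabla l_i\cdot\nabla l_j)$ to $|l'|^2\Delta f(l)$ via the Cauchy--Riemann identities $|\nabla l_1|=|\nabla l_2|=|l'|$, $\nabla l_1\cdot\nabla l_2=0$. In part (2) the keys are the adjoint identity $\nabla F\cdot\vc n=\nabla f(l)\cdot Dl\,\vc n$ and the fact that $Dl=\bigl(\begin{smallmatrix}a&-b\\b&a\end{smallmatrix}\bigr)$ is $|l'|$ times a rotation, so $Dl\,\vc n$ is a normal to $\tilde\gamma$ of length $|l'|=|\del l/\del w_1|$; you also correctly flag that identifying $Dl\,\vc n$ with $+|\del l/\del w_1|\,\tilde{\vc n}$ (rather than the opposite sign) rests on orientation preservation, i.e.\ $\det Dl=|l'|^2>0$. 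The only cosmetic note is that the statement's notation $\tilde{\vc n}=l(\vc n)$ is shorthand for the pushed-forward normal $Dl\,\vc n/|Dl\,\vc n|$, as you implicitly and correctly read it.
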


As the map $k$ from Lemma~\ref{lem:konf1} is conformal, its inverse $l=k^{-1}$ is as well. Thus, applying Lemma~\ref{lem:konf2} with $U=A_\sigma$ and $V=\Omega$ to problem \eqref{i9} and defining $\zeta(z) = \varphi(k^{-1}(z))$, we find
\begin{align*}
\nabla \zeta \cdot \vc n |_{\del B_\sigma(0)} &= \big| \del_{w_1} k^{-1}(w) \big| \nabla \varphi \cdot \vc n |_{\del \ms(t)} = \frac{2a}{|1-w|^2} (1+h-x_2) \circ k^{-1}(w) \\
&= \frac{2a (1+h-a\frac{1-\sigma^2}{|1-w|^2})}{|1-w|^2} = \frac{2a (2\sigma^2 - (1+\sigma^2)w_1)}{\sigma (1+\sigma^2 - 2w_1)^2}.
\end{align*}
Since clearly $\nabla \zeta \cdot \vc n |_{\del B_1(0)} = 0$, the system for $\zeta$ reads\footnote{We remark that in \cite{HouotMunnier2008}, a factor $2$ is missing in the boundary condition on $\del B_\sigma(0)$.}
\begin{align}\label{i10}
\begin{cases}
-\Delta \zeta = 0 & \text{in } A_\sigma,\\
\nabla \zeta \cdot \vc n = \frac{2a(2 \sigma^2 - (1+\sigma^2) w_1)}{\sigma (1+\sigma^2-2w_1)^2} & \text{for } w=w_1 + iw_2 \in \del B_\sigma(0),\\
\nabla \zeta \cdot \vc n = 0 & \text{on } \del B_1(0).
\end{cases}
\end{align}

As $\zeta$ is harmonic, there exists a harmonic complex conjugate $\xi$ and the function $F=\zeta + i \xi$ is holomorphic in $\Omega \setminus \ms$ and defined up to an additive constant (this is because $\zeta$ solves a Neumann boundary problem). Without loss of generality, we may fix this constant to be equal to zero. The boundary conditions for $F$ follow from those for $\zeta$: on $|w|=1$, we have $\vc n = w$ and hence $\nabla F \cdot \vc n = F'(w)w$. On $|w|=\sigma$, the normal is given by $\vc n = -w/\sigma$, thus $-\sigma \nabla F \cdot \vc n = F'(w)w$. Finally, writing $w = w_1 + iw_2$, \eqref{i10}$_2$ and \eqref{i10}$_3$ give us
\begin{align*}
\begin{cases}
\Re[F'(w)w] = -\frac{2a(2\sigma^2 - (1+\sigma^2)w_1)}{(1+\sigma^2-2w_1)^2} & \text{if } |w|=\sigma,\\
\Re[F'(w)w] = 0 & \text{if } |w|=1,
\end{cases}
\end{align*}
where $\Re[w_1+iw_2]=w_1$ is the real part of $w$.\\

Since $F$ is holomorphic in $A_\sigma$, it admits a Laurent series expansion. More precisely, we have:
\begin{Lemma}\label{lem84}
For any $w \in A_\sigma$, the function $F$ has Laurent series expansion
\begin{align*}
F(w) = -2a \sum_{n \geq 1} \frac{\sigma^{2n}}{1-\sigma^{2n}} (w^n + w^{-n}).
\end{align*}
\end{Lemma}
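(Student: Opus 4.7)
The plan is to derive the Laurent coefficients of $F$ directly from the two Neumann boundary conditions using Fourier series on the bounding circles. Since $F$ is holomorphic on the annulus $A_\sigma$, it admits a Laurent expansion $F(w)=\sum_{n\in\mathbb{Z}} c_n w^n$ converging uniformly on compact subsets, and we fix the undetermined additive constant by $c_0=0$. The Neumann data on both circles depend on $w_1$ only, hence are invariant under $\theta\mapsto-\theta$, which forces $F(\bar w)=\overline{F(w)}$ and consequently every $c_n$ is real. This symmetry will pair positive and negative indices in the expected way.

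Substituting into the boundary conditions, on $|w|=1$ (where $w=e^{i\theta}$) one has $F'(w)w=\sum_n nc_n e^{in\theta}$, so
\begin{align*}
\Re[F'(e^{i\theta})e^{i\theta}]=\sum_{n\geq 1} n(c_n-c_{-n})\cos(n\theta).
\end{align*}
Vanishing for all $\theta$ together with Fourier orthogonality yields $c_n=c_{-n}$ for every $n\geq 1$. On the inner circle $w=\sigma e^{i\theta}$, the analogous computation combined with $c_n=c_{-n}$ reduces to
\begin{align*}
\Re[F'(\sigma e^{i\theta})\sigma e^{i\theta}]=\sum_{n\geq 1} nc_n(\sigma^n-\sigma^{-n})\cos(n\theta),
\end{align*}
which must equal the explicit right-hand side of \eqref{i10}.

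The crucial step is to Fourier-expand this explicit right-hand side. Starting from the standard geometric identity
\begin{align*}
\sum_{n\geq 1}\sigma^n\cos(n\theta)=\Re\!\left(\frac{\sigma e^{i\theta}}{1-\sigma e^{i\theta}}\right)=\frac{\sigma\cos\theta-\sigma^2}{1+\sigma^2-2\sigma\cos\theta},
\end{align*}
applying $\sigma\partial_\sigma$ termwise produces $\sum_{n\geq 1}n\sigma^n\cos(n\theta)$, and applying it to the closed form reduces, after a brief algebraic simplification in which the $\cos^2\theta$ and $\sigma^3$ terms cancel, to
\begin{align*}
\sum_{n\geq 1} n\sigma^n\cos(n\theta)=\frac{(1+\sigma^2)\sigma\cos\theta-2\sigma^2}{(1+\sigma^2-2\sigma\cos\theta)^2}.
\end{align*}
This is exactly the right-hand side of the Neumann condition on $|w|=\sigma$ divided by $-2a$. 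Matching Fourier coefficients therefore gives $c_n(\sigma^n-\sigma^{-n})=2a\sigma^n$, i.e., $c_n=-\dfrac{2a\sigma^{2n}}{1-\sigma^{2n}}$ for every $n\geq 1$.

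Combined with $c_{-n}=c_n$ and $c_0=0$, this produces the stated Laurent series, and uniform convergence on compact subsets of $A_\sigma$ is immediate from the geometric decay $|c_n|\lesssim\sigma^{2n}$. The main obstacle is the Poisson-kernel derivative identity above, as that is the only step where nontrivial cancellation occurs; everything else is bookkeeping of Fourier coefficients and invoking uniqueness of the Neumann problem (modulo the fixed additive constant). An equally valid but less illuminating route is to guess the expansion, plug it into both boundary conditions — noting that on $|w|=1$ the telescoping $\tfrac{\sigma^{2n}}{1-\sigma^{2n}}(\sigma^n-\sigma^{-n})=-\sigma^n$ makes the inner boundary term simplify to precisely $2a\sum n\sigma^n\cos(n\theta)$ — and appeal to uniqueness.
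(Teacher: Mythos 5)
Your proof is correct and shares the same overall architecture as the paper's — Laurent-expand $F$ on the annulus, impose both Neumann conditions circle by circle, and match Fourier coefficients — but it departs at two points, one of which is a genuine methodological difference. First, you derive the reality of all Laurent coefficients at the outset via the Schwarz reflection symmetry $F(\bar w)=\overline{F(w)}$ (forced by the evenness in $\theta$ of both Neumann data and the uniqueness of the Neumann problem), whereas the paper carries four real sequences $a_n,b_n,c_n,d_n$ and kills the sine parts only after observing that $\beta$ is even; your framing is a bit cleaner. Second, and more substantively, you obtain the Fourier expansion of the inner-circle Neumann datum by applying $\sigma\partial_\sigma$ to the closed form
\begin{align*}
\sum_{n\geq 1}\sigma^n\cos(n\theta)=\Re\!\left(\frac{\sigma e^{i\theta}}{1-\sigma e^{i\theta}}\right),
\end{align*}
a one-line calculus computation that identifies the right-hand side of \eqref{i10}$_2$ as the $\sigma$-derivative of a Poisson kernel. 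The paper instead writes $\hat\beta(n)$ as a contour integral over $|z|=1$, performs a partial-fraction decomposition, and invokes the residue theorem. Both routes arrive at $\hat\beta(n)=2an\sigma^n$, but yours avoids residue calculus entirely and is therefore more elementary and arguably more illuminating. One small slip in your closing aside: the telescoping identity $\tfrac{\sigma^{2n}}{1-\sigma^{2n}}(\sigma^n-\sigma^{-n})=-\sigma^n$ is the simplification one uses on the \emph{inner} circle $|w|=\sigma$ (where the nontrivial datum lives); on $|w|=1$ the outer boundary condition is verified by the factor $(1-1)=0$, not by telescoping.
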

\begin{Remark}
Let us note that the radius of convergence of this series is $\sigma^2 < |w| < \sigma^{-2}$, which by $\sigma<1$ is a superset of $A_\sigma$.
\end{Remark}
\begin{proof}[Proof of Lemma~\ref{lem84}]
We start in writing
\begin{align*}
F(w) = \sum_{n \geq 1} (a_n + ib_n) w^n + (c_n + id_n) w^{-n}.
\end{align*}
This gives for any $w = r e^{i \theta}$
\begin{align*}
\Re[F'(w)w] = \sum_{n \geq 1} n[r^n a_n - r^{-n} c_n] \cos(n \theta) - n [r^n b_n + r^{-n} d_n] \sin(n \theta).
\end{align*}
Accordingly, from the boundary conditions of $F'(w)w$, for $r=1$ we find $a_n=c_n$ and $b_n = -d_n$, hence
\begin{align*}
\Re[F'(w)w] = \sum_{n \geq 1} n [r^n - r^{-n}] (a_n \cos(n\theta) - b_n \sin(n \theta)).
\end{align*}
For $r=\sigma$, by observing $w_1 = \sigma \cos(\theta)$ we define
\begin{align*}
\beta(\theta) = -\frac{2a\sigma (2\sigma-(1+\sigma^2)\cos(\theta))}{(1+\sigma^2-2\sigma \cos(\theta))^2}.
\end{align*}
Thus,
\begin{align*}
\beta(\theta) = \sum_{n \geq 1} n [\sigma^n - \sigma^{-n}] (a_n \cos(n\theta) - b_n \sin(n \theta)).
\end{align*}
As $\beta$ is an even function, we immediately have $b_n=0$ for all $n \geq 1$. Moreover, we find
\begin{align*}
\int_0^{2\pi} \beta(\theta) \cos(n \theta) \dd \theta = n a_n [\sigma^n - \sigma^{-n}] \int_0^{2\pi} \cos^2(n \theta) \dd \theta = \pi n a_n [\sigma^n - \sigma^{-n}].
\end{align*}
Therefore,
\begin{align*}
a_n &= -\frac{\sigma^n}{n(1-\sigma^{2n})} \frac{1}{\pi} \int_0^{2\pi} \beta(\theta) \cos(n\theta) \dd \theta = -\frac{\sigma^n}{n(1-\sigma^{2n})} \Re\bigg[ \frac{1}{\pi} \int_0^{2\pi} \beta(\theta) e^{-in \theta} \dd \theta \bigg]\\
&= -\frac{\sigma^n}{n(1-\sigma^{2n})} \Re[\hat{\beta}(n)],
\end{align*}
where $\hat{\beta}(n)$ is the Fourier coefficient of $\beta$ (note that the seemingly ``missing'' factor 2 is already included in $a_n$, since we consider a one-sided rather than a two-sided series expansion). It is now sufficient to calculate the Fourier coefficients of $\beta$ explicitly. To this end, for $z= e^{i\theta}$, we have
\begin{align*}
\hat{\beta}(n) = \frac{1}{2\pi i} \int_{\del B_1(0)} \frac{-2a\sigma(4\sigma z - (1+\sigma^2)(z^2+1))}{((1+\sigma^2)z-\sigma(z^2+1))^2} \frac{\rd z}{z^n}.
\end{align*}
As we may write
\begin{align*}
\frac{-2a\sigma(4\sigma z - (1+\sigma^2)(z^2+1))}{((1+\sigma^2)z-\sigma(z^2+1))^2} = 2a\sigma \bigg[ \frac{1}{(z-\sigma)^2} + \frac{1}{\sigma^2} \frac{1}{(z - \sigma^{-1})^2} \bigg],
\end{align*}
by virtue of the residue theorem, we get
\begin{align*}
\hat{\beta}(n) = 2a n \sigma^n \quad \forall n \geq 1.
\end{align*}
Plugging this expression in the one for $a_n$, we complete the proof.
\end{proof}

With the series expansion of $F$ at hand, we can compute the kinetic energy of the fluid:
\begin{Lemma}
The kinetic energy of the fluid inside $\Omega \setminus \ms$ is given by $E_f = \frac12 \rho_\mf |\dot h|^2 \mathcal{E}(\sigma)$, where
\begin{align*}
\mathcal{E}(\sigma) = 4 \pi a^2 \sum_{n \geq 1} n \sigma^{2n} \frac{1+\sigma^{2n}}{1-\sigma^{2n}}.
\end{align*}
Moreover, $\mathcal{E}(\sigma)$ is continuous on $(0,1)$ with finite strictly positive limits as $\sigma \to 0$ and $\sigma \to 1$, respectively.
\end{Lemma}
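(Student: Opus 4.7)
The plan is to rewrite the kinetic energy as a Dirichlet energy of the potential $\varphi$, transfer it to the concentric annulus $A_\sigma$ via the conformal map $k$, and then compute the resulting integral term-by-term using the Laurent expansion of Lemma~\ref{lem84}. Continuity and the two limits are then read off from the explicit series formula.

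First, since $\vu = \dot h \, \nabla \varphi$ in the fluid domain, we have
\begin{align*}
E_f = \frac12 \rho_\mf \int_{\Omega \setminus \ms} |\vu|^2 \dd z = \frac12 \rho_\mf |\dot h|^2 \int_{\Omega \setminus \ms} |\nabla \varphi|^2 \dd z.
\end{align*}
I would then invoke the conformal change of variable $w = k(z)$: by Lemma~\ref{lem:konf2}(1), $|\nabla_z \varphi(z)|^2 = |k'(z)|^2 |\nabla_w \zeta(w)|^2$, whereas the area element satisfies $\dd z = |k'(z)|^{-2} \dd w$, so the two Jacobian factors cancel and
\begin{align*}
\int_{\Omega \setminus \ms} |\nabla \varphi|^2 \dd z = \int_{A_\sigma} |\nabla \zeta|^2 \dd w = \int_{A_\sigma} |F'(w)|^2 \dd w,
\end{align*}
the last equality by the Cauchy--Riemann equations applied to the holomorphic function $F = \zeta + i\xi$.

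Next I would differentiate the Laurent series of Lemma~\ref{lem84} termwise (legitimate since $\sigma^2 < |w| < \sigma^{-2}$ is a strict superset of $\overline{A_\sigma}$) to obtain
\begin{align*}
F'(w) = -2a \sum_{n\geq 1} \frac{n\sigma^{2n}}{1-\sigma^{2n}} (w^{n-1} - w^{-n-1}).
\end{align*}
Switching to polar coordinates $w = r e^{i\theta}$ and exploiting the orthogonality $\int_0^{2\pi} e^{ik\theta} \dd \theta = 2\pi \delta_{k0}$, the cross-terms involving both positive and negative powers vanish (since $m + n \neq 0$ for $m,n \geq 1$), leaving only the diagonal contributions
\begin{align*}
\int_0^{2\pi} |F'(re^{i\theta})|^2 \dd \theta = 8 \pi a^2 \sum_{n \geq 1} \left( \frac{n\sigma^{2n}}{1-\sigma^{2n}} \right)^2 (r^{2n-2} + r^{-2n-2}).
\end{align*}
A direct integration $\int_\sigma^1 (r^{2n-1} + r^{-2n-1}) \dd r = (\sigma^{-2n} - \sigma^{2n})/(2n)$ and the identity $\sigma^{-2n} - \sigma^{2n} = (1-\sigma^{2n})(1+\sigma^{2n})/\sigma^{2n}$ produce, after cancellation,
\begin{align*}
\int_{A_\sigma} |F'|^2 \dd w = 4 \pi a^2 \sum_{n \geq 1} n \sigma^{2n} \frac{1+\sigma^{2n}}{1-\sigma^{2n}} = \mathcal{E}(\sigma),
\end{align*}
which is the formula claimed.

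Continuity of $\mathcal{E}$ on $(0,1)$ follows from the Weierstrass M-test: on each compact subset $[\sigma_0,\sigma_1] \subset (0,1)$ the $n$-th term is bounded by $C n \sigma_1^{2n}/(1-\sigma_1^{2n})$, which is summable. Positivity of $\mathcal{E}(\sigma)$ is obvious from $a>0$ and termwise positivity. The delicate part, which I expect to be the only real obstacle, is the analysis of the two endpoints. As $\sigma \to 0^+$, the dominant $n=1$ term gives $4\pi a^2 \sigma^2 (1+\sigma^2)/(1-\sigma^2)$; combined with $a^2 = (1-\sigma^2)^2/(4\sigma^2)$ (from Lemma~\ref{lem:konf1}) this tends to $\pi$, while all other terms are $O(\sigma^2)$. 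As $\sigma \to 1^-$, the series diverges like $(1-\sigma^2)^{-2}$, which precisely cancels $a^2 \sim (1-\sigma^2)^2/4$; to make this rigorous I would set $q = \sigma^2$, use the rearrangement
\begin{align*}
\sum_{n\geq 1} n q^n \frac{1+q^n}{1-q^n} = 2 \sum_{N\geq 1} \sigma_1(N) q^N - \sum_{N \geq 1} N q^N,
\end{align*}
where $\sigma_1(N)$ is the sum-of-divisors, and then apply the standard Tauberian asymptotic $\sum_N \sigma_1(N) q^N \sim \zeta(2)(1-q)^{-2}$ as $q \to 1^-$ to conclude $\mathcal{E}(\sigma) \to \pi^3/3$. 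Both limits are finite and strictly positive, as required.
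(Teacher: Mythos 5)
Your argument is correct, and the core computation --- passing to the annulus $A_\sigma$ via the conformal invariance of the Dirichlet energy, identifying $|\nabla\zeta|^2 = |F'(w)|^2$, and then extracting the series from the Laurent expansion by orthogonality/Parseval followed by the radial integration --- is essentially identical to the paper's. Where you genuinely diverge is in the endpoint analysis. The paper trades $4\pi a^2$ and the Lambert-series factor against each other to produce the alternative closed form
\[
\mathcal{E}(\sigma) = 2\pi\Bigg(\sum_{n\geq 1}\sigma^{2n-2}\bigg[\frac{1-\sigma^2}{1-\sigma^{2n}}\bigg]^2\Bigg)-\pi,
\]
whose summands tend to $1/n^2$ as $\sigma\to 1^-$, so that dominated convergence directly yields $\mathcal{E}(1)=2\pi\zeta(2)-\pi=\pi^3/3-\pi$ (and $\mathcal{E}(0)=\pi$ from the $n=1$ term). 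You instead re-expand the same Lambert series into the divisor sum $2\sum_N\sigma_1(N)q^N-\sum_N Nq^N$ and invoke the Tauberian asymptotic $\sum_N\sigma_1(N)q^N\sim\zeta(2)(1-q)^{-2}$, arriving at the same value. The two rearrangements are literally the same identity ($\sum_n q^n/(1-q^n)^2 = \sum_N\sigma_1(N)q^N$) viewed from opposite ends; the paper's form lets one stop at dominated convergence (more elementary, though it does not exhibit an explicit dominating function), while yours outsources the endpoint limit to a known Tauberian/Abelian estimate (heavier machinery, but it also makes the divergence rate $(1-\sigma^2)^{-2}$ that cancels $a^2$ completely transparent). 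Both routes are sound.
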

\begin{proof}
We will just show the main steps of this lemma. By change of variables, we have
\begin{align*}
E_f = \frac12 \rho_\mf |\dot h|^2 \int_{\Omega \setminus \ms} |\nabla \varphi|^2 \dd x = \frac12 \rho_\mf |\dot h|^2 \int_{A_\sigma} |\nabla \zeta|^2 \dd w,
\end{align*}
where $\varphi$ is the potential of the velocity, and $\zeta$ is as above. Since $|\nabla \zeta|^2 = |F'(w)|^2$, we get
\begin{align*}
\mathcal{E}(\sigma) = \int_{A_\sigma} |\nabla \zeta|^2 \dd w = \int_\sigma^1 \int_0^{2\pi} |F'(\rho \cos \theta, \rho \sin \theta)|^2 \rho \dd \theta \dd \rho.
\end{align*}
We wish to apply Parseval's theorem, for which we have to proof that for any $\rho \in [\sigma, 1]$, the function $\theta \mapsto F'(\rho \cos \theta, \rho \sin \theta)$ is an element of $L^2(0, 2\pi)$. Indeed, a simple calculation shows
\begin{align*}
|F'(\rho \cos \theta, \rho \sin \theta)| \leq \frac{2a}{(1-\sigma)^3}.
\end{align*}
Thus, Parseval's theorem gives
\begin{align*}
\mathcal{E}(\sigma) = \pi \int_\sigma^1 \sum_{n \geq 1} \bigg[ \frac{2 a n \sigma^{2n}}{1-\sigma^{2n}} \bigg]^2 [\rho^{2n-2} + \rho^{-2n-2}] \rho \dd \rho = 4 \pi a^2 \sum_{n \geq 1} n \sigma^{2n} \frac{1+\sigma^{2n}}{1-\sigma^{2n}}.
\end{align*}

Some simple consequences from the series expansions of $(1-x)^{-1}$ and $(1-x)^{-2}$, and the fact that $a = (\sigma^{-1} - \sigma)/2$ then give another representation for $\mathcal{E}(\sigma)$ as
\begin{align*}
\mathcal{E}(\sigma) = 2\pi \bigg( \sum_{n \geq 1} \sigma^{2n-2} \bigg[ \frac{1-\sigma^2}{1-\sigma^{2n}} \bigg]^2 \bigg) - \pi.
\end{align*}
Lebesgue's theorem now shows that indeed $\mathcal{E}(\sigma)$ is continuous on $(0,1)$ with limit $\lim_{\sigma \to 0} \mathcal{E}(\sigma) = \pi$. As the summand tends to $1/n^2$ as $\sigma \to 1$, we have also that $\mathcal{E}(\sigma)$ is continuous on $[0,1]$ with value $\mathcal{E}(1) = \frac{\pi^3}{3} - \pi$.
\end{proof}

Let us finally show how the calculations above yield the collision result. As $\sigma = \sigma(h)$, with a slight abuse of notation, the Lagrangian of our system under consideration is simply its kinetic energy, given by\footnote{Compared to the Lagrangian discussed previously, here we can neglect the pressure $p$ since it just plays the role of a Lagrange multiplier for the incompressibility condition $\div \vu = 0$.}
\begin{align*}
\mathcal{L} = \frac12 m |\dot h|^2 + \frac12 \rho_\mf |\dot h|^2 \mathcal{E}(h).
\end{align*}
Hence, the least action principle yields
\begin{align*}
0=\frac{\rd}{\rd t} \frac{\del \mathcal{L}}{\del \dot h} - \frac{\del \mathcal{L}}{\del h} = \ddot{h}(m + \rho_\mf\mathcal{E}(h)) + \frac12 \rho_\mf |\dot{h}|^2 \frac{\del \mathcal{E}}{\del h}(h).
\end{align*}
Multiplying by $2\dot h$ gives $\frac{\rd}{\rd t} [|\dot h|^2 (m + \rho_\mf \mathcal{E}(h))] = 0$, this is,
\begin{align}\label{Euler-h-dot}
\dot{h}=\dot{h}_0 \sqrt{\frac{m + \rho_\mf \mathcal{E}(h_0)}{m + \rho_\mf \mathcal{E}(h)}}.
\end{align}
(Note that the negative solution is indeed ruled out, since this would imply $\dot{h}(0)=-\dot{h}_0$.) The above equation ensures that $\dot h$ keeps the same sign as $\dot{h}_0$. If we choose $\dot{h}_0<0$ and $h_0>0$, then this ensures $0 \leq h \leq h_0$. As $1+h=(\sigma^{-1}+\sigma)/2$ and $\mathcal{E}(h)$ is right-continuous as $h \to 0$ (that is, $\sigma \to 1$), there exists $h_0>0$ such that $\mathcal{E}(h) \leq \frac32 \mathcal{E}(0)$ for all $0 \leq h < h_0$. Hence, for any $\dot{h}_0<0$ and any $0 \leq h < h_0$,
\begin{align*}
h(t) \leq h_0 + \dot{h}_0 t \sqrt{\frac{m}{m + \frac32 \rho_\mf \mathcal{E}(0)}}.
\end{align*}
As always $h \geq 0$, this shows that there exists some finite $T=T(m, h_0, \dot{h}_0)<\infty$ such that $h(t) \to 0$ as $t \to T$, meaning collision happens in finite time and, additionally, with non-zero speed.

\begin{Remark} 
We remark that this collision result is in contrast to Theorem~\ref{thm:Starov}, where collision occurs indeed with zero velocity. This shows that viscosity has some strong damping effect on the solid's speed. Further investigation of \eqref{Euler-h-dot} as made in \cite[Section~5.2]{HouotMunnier2008} shows that for Euler fluids, there is also a damping effect, although not that strong. As a matter of fact, for the foregoing situation such that $m = \pi \rho_\ms$, even if the density ratio $\rho_\mf / \rho_\ms \to \infty$, the damping in an Euler fluid is not higher than $1- (\pi^2/3 - 1)^{-1/2} \approx 34 \%$, meaning that the solid collides with a speed that is at least $66 \%$ of its initial velocity.
\end{Remark}

\section{On a line}
After investigation of inviscid fluids, let us come to a much simpler model of one-dimensional flows as described in \cite{VazquezZuazua2006}. Here, we consider $N$ particles (points) on the real line having positions $h_i(t)$ with
\begin{align*}
- \infty < h_1(t) < h_2(t) < ... < h_N(t) < \infty
\end{align*}
for some $0<t<T$. The system under consideration is the 1D Navier-Stokes system
\begin{align}\label{1DNSE}
\begin{cases}
\del_t u + \kappa \del_x u^2 - \del_{xx}^2 u = 0 & \text{for } x \in I_i(t), \ i \in \{0,..., N\}, \ t>0\\
\dot h_i(t) = u(t, h_i(t)) & \text{for } i \in \{1,...,N\}, \ t>0,\\
m_i \ddot{h}_i(t) = [\del_x u](t, h_i(t)) & \text{for } i \in \{1,...,N\}, \ t>0,\\
u(0,x) = u_0(x) & \text{for } x \in \R,\\
h_i(0) = h_{i,0}, \ \dot{h}_i(0) = \dot{h}_{i,0} & \text{for } i \in \{1,...,N\}.
\end{cases}
\end{align}
Here, we denoted by $I_i(t)$ the intervals occupied by the fluid and separated by the particles, given as
\begin{align*}
I_0(t) = (-\infty, h_1(t)), \quad I_i(t) = (h_i(t), h_{i+1}(t)), \ i \in \{1,...,N-1\}, \quad I_N(t) = (h_N(t), \infty);
\end{align*}
see Figure~\ref{fig:1D} for the main notations.

\begin{figure}[H]
\centering
\begin{tikzpicture}
\draw (0,0) -- (10,0);
\foreach \a in {2,4,6,8}
\filldraw (\a,0) circle (1pt);
\foreach \a in {1,2,3,4}
\node at (2*\a, 0) [anchor=north] {$h_{\a}(t)$};
\foreach \a in {0,1,...,4}
	\node at (2*\a+1, 0) [anchor=south] {$I_{\a}(t)$};
\end{tikzpicture}
\caption{The configuration for $N=4$.}
\label{fig:1D}
\end{figure}
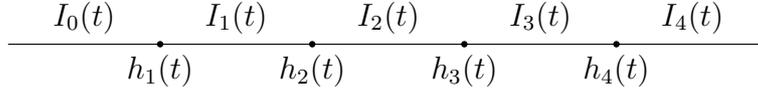

The mass of particle $i$ is $m_i>0$, and the real number $\kappa$ is the ratio between convection and diffusion (in processes involving heat, this is also called the Pecl\'et number). Lastly, the jump of a function $f$ is defined as
\begin{align*}
[f](x) = \lim_{s \to 0} [f(x+s) - f(x-s)].
\end{align*}
As written in \cite[Section~1]{VazquezZuazua2006}, ``according to the transmission conditions satisfied at the point mass locations $x = h_i(t), \ i = 1, ... ,N$, the velocities of the fluid and the particles coincide, and each particle is accelerated by the difference of the velocity gradient on both sides of it. Thus, the velocity gradient acts as a pressure.''\\

The main theorem of this section now reads:
\begin{Theorem}[{\cite[Theorem~2.1]{VazquezZuazua2006}}]\label{thm:1D}
Let $u_0 \in W^{1,2}(\R)$, $m_i>0$, and $h_{i,0}, \dot{h}_{i,0} \in \R$ such that
\begin{align*}
-\infty < h_{1,0} < h_{2,0} < ... < h_{N, 0} < \infty,
\end{align*}
together with the compatibility conditions $u_0(h_{i, 0}) = \dot{h}_{i, 0}$ for any $i \in \{1,...,N\}$. Then, there exists a unique global solution $(u, h_1,...,h_N)$ to system \eqref{1DNSE} with
\begin{align*}
u \in C([0,\infty); W^{1,2}(\R)), \quad \del_{xx}^2 u \in L^2((0,T) \times I_i(t)), \quad h_i \in C([0, \infty)), \quad \ddot{h}_i \in L^2(0,T),
\end{align*}
for any finite time $T>0$. Moreover, for any $t>0$,
\begin{align*}
- \infty < h_1(t) < h_2(t) < ... < h_N(t) < \infty,
\end{align*}
meaning there is no collision in finite time.
\end{Theorem}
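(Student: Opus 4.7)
The strategy is to control the inter-particle distances $d_i(t) := h_{i+1}(t) - h_i(t)$ through the Sobolev regularity of $u$. First I would establish local existence and uniqueness of $(u, h_1, \ldots, h_N)$ with the claimed regularity by a fixed-point argument applied to the coupled subsystems on each subinterval $I_i(t)$, using the transmission conditions $\dot h_i = u(t, h_i)$ and $m_i \ddot h_i = [\partial_x u](t, h_i)$ as matching relations at the particle positions. Next, multiplying the momentum equation by $u$, integrating over each $I_i(t)$ (using the Reynolds transport formula to handle the moving boundaries), summing, and applying the jump relations yields the energy identity
\begin{align*}
\frac{d}{dt}\left(\frac{1}{2}\int_\R u^2\,dx + \sum_{i=1}^N \frac{m_i}{2}|\dot h_i|^2\right) + \int_\R (\partial_x u)^2\,dx = 0,
\end{align*}
where the cubic convective flux telescopes out thanks to the continuity of $u$ across each particle and the decay at infinity, and the boundary terms $u\,\partial_x u$ at interior particles combine with $m_i\ddot h_i \dot h_i$ via the jump relation. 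Hence $\int_0^t \|\partial_x u(s,\cdot)\|_{L^2(\R)}^2\,ds \leq E_0$ uniformly in $t$, and, together with the parabolic structure of the equations on each $I_i$, this yields $u \in C([0,T]; W^{1,2}(\R))$.

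The heart of the argument is the barrier estimate. Since $\dot h_i = u(t, h_i)$, the fundamental theorem of calculus gives
\begin{align*}
\dot d_i(t) = \int_{h_i(t)}^{h_{i+1}(t)} \partial_x u(t, x)\,dx, \qquad |\dot d_i(t)| \leq \sqrt{d_i(t)}\,\|\partial_x u(t,\cdot)\|_{L^2(I_i(t))},
\end{align*}
by Cauchy--Schwarz, which translates into the differential inequality $\tfrac{d}{dt}\sqrt{d_i} \geq -\tfrac{1}{2}\|\partial_x u\|_{L^2(\R)}$ almost everywhere while $d_i > 0$. Integration plus a second application of Cauchy--Schwarz in time then give
\begin{align*}
\sqrt{d_i(t)} \geq \sqrt{d_i(0)} - \tfrac{1}{2}\int_0^t \|\partial_x u(s,\cdot)\|_{L^2(\R)}\,ds \geq \sqrt{d_i(0)} - \tfrac{1}{2}\sqrt{t\, E_0},
\end{align*}
which already rules out collision on the initial interval $[0, T_1)$ with $T_1 := 4 d_i(0)/E_0$ for every $i$.

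Global non-collision then follows by a continuation procedure. The only obstruction to prolonging the solution is collision itself, since an $H^1$-blowup of $u$ is forbidden by the energy identity. Assuming for contradiction that $T^* < \infty$ is the first collision time with $d_j(T^*) = 0$, the same barrier estimate applied on $[s, T^*]$ yields
\begin{align*}
\sqrt{d_j(s)} \leq \tfrac{1}{2}\int_s^{T^*} \|\partial_x u\|_{L^2(\R)}\,d\tau \leq \tfrac{1}{2}\sqrt{(T^*-s)\bigl(E(s) - E(T^*)\bigr)},
\end{align*}
and one iterates by restarting the estimate at times $t_k \uparrow T^*$ with updated data $(u(t_k), h_i(t_k), \dot h_i(t_k))$, exploiting that $E(t_k)-E(t_{k+1}) \to 0$ by energy dissipation to cover $[0, T^*)$ with collision-free subintervals. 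The main obstacle is making this iteration quantitative: one must balance the shrinking of $d_j(t_k)$ against the decay of the dissipation rate so that the induced step sizes $\Delta t_k \gtrsim d_j(t_k)/E(t_k)$ cannot accumulate at any finite $T^*$. Handling this delicate interplay, together with an \emph{a priori} lower bound on $d_j$ extracted from the parabolic smoothing of $u$ away from particles (which upgrades $\|\partial_x u(t,\cdot)\|_{L^2}$ to a pointwise-in-$t$ finite quantity), is the central technical step.
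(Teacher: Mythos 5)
Your opening steps (energy identity, $\int_0^T\|\partial_x u\|_{L^2}^2\,dt\leq E_0$, the barrier identity $\dot d_i=\int_{h_i}^{h_{i+1}}\partial_x u\,dx$) are correct, and the Cauchy--Schwarz refinement to $\tfrac{d}{dt}\sqrt{d_i}\geq -\tfrac12\|\partial_x u\|_{L^2(\R)}$ is a nice observation. But the continuation argument that you yourself flag as ``the main obstacle'' is a genuine, unclosed gap, and with only the basic energy estimate it cannot close. The difficulty is structural: integrating your differential inequality from $s$ to a putative first collision time $T^*$ gives $\sqrt{d_j(s)}\leq \tfrac12\sqrt{(T^*-s)\,E(s)}$, which is perfectly consistent with $d_j(s)\to 0$ linearly or faster in $(T^*-s)$. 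The quantity $E(s)$ need not decay to zero, so there is no leverage to force $T^*=\infty$, and your proposed iteration produces step sizes $\Delta t_k$ that may well accumulate. The square-root-type barrier only yields no collision on a short initial interval.

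What you are missing is the stronger a priori estimate that the paper extracts from the existence theory: not just $\partial_x u\in L^2((0,T)\times\R)$, but the higher-order bound
\begin{align*}
\sup_{t\in(0,T)}\|\partial_x u(t,\cdot)\|_{L^2(\R)}^2 + \sum_{i}\int_0^T\|\partial_{xx}^2 u\|_{L^2(I_i(t))}^2\,dt + \sum_i\int_0^T m_i|\ddot h_i(t)|^2\,dt \leq C,
\end{align*}
which, via the interpolation inequality \eqref{viii1} for functions decomposable as an $H^1$ piece plus finitely many Dirac masses (the jumps of $\partial_x u$ at $h_i$ have amplitude $m_i\ddot h_i$), upgrades the regularity to $\partial_x u\in L^2(0,T;L^\infty(\R))$. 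This is exactly the ``pointwise-in-$t$ finite quantity'' you allude to, but you need it in a form strong enough to be time-integrable: with $\|\partial_x u(t,\cdot)\|_{L^\infty}\in L^1(0,T)$ in hand, $u(t,\cdot)$ is Lipschitz with an integrable constant, and the paper's argument applies the \emph{backward} Gr\"onwall inequality \eqref{GronwallDiff} to the differential inequality $|\dot h_1-\dot h_2|\leq\|\partial_x u\|_{L^\infty}|h_1-h_2|$: if $h_1(T)=h_2(T)$ then $h_1\equiv h_2$ on $[0,T]$, contradicting $h_{1,0}<h_{2,0}$. This Lipschitz route gives an \emph{exponential} lower bound $d_j(t)\gtrsim d_j(0)\exp(-\int_0^t\|\partial_x u\|_{L^\infty})$, which is what closes the global argument, whereas your square-root estimate gives only a polynomial barrier that degenerates in finite time. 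To repair your proof you would need to establish the higher energy estimate and the $L^2_t L^\infty_x$ bound on $\partial_x u$ first; once you have those, you should switch from the Cauchy--Schwarz barrier to the Lipschitz barrier and apply backward Gr\"onwall, rather than attempting the quantitative restart iteration.
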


For the existence proof of a strong solution, we refer to the original paper \cite{VazquezZuazua2006}. Instead, we will just show how the regularity properties of $u$ imply the no-collision result. Indeed, similar to before, the dynamics of the particles are given by
\begin{align*}
\dot{h}_i(t) = u(t, h_i(t)), \quad i \in \{1,...,N\}.
\end{align*}
If two points (say, $h_1$ and $h_2$) collide at time $t=T>0$, then they are both solutions of the Cauchy problem
\begin{align}\label{ODE1}
\begin{cases}
\dot{h}_i(t) = u(t, h_i(t)), & \text{for } 0 \leq t < T, \ i \in \{1, 2\},\\
h_1(T)=h_2(T).
\end{cases}
\end{align}
Hence, to show the result, it is sufficient to prove that \eqref{ODE1} is indeed uniquely solvable backwards in time. This in turn will follow from the fact that $u(t,h)$ is Lipschitz in the second variable.\\

As a first step, we let the reader prove the following version of Gr\"onwall's inequality:
\begin{Lemma}[Gr\"onwall's inequality (differential form, backwards in time)]
Let $f:[0,T] \to \R$ be differentiable and assume there is a function $\alpha(t) \in L^1(0,T)$ such that
\begin{align*}
f'(t) \geq -\alpha(t) f(t).
\end{align*}
Then, we have
\begin{align}\label{GronwallDiff}
f(t) \leq f(T) \exp\left( \int_t^T \alpha(s) \dd s \right)
\end{align}
for almost every $t \in [0,T]$.
\end{Lemma}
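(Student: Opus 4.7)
The idea is to convert the differential inequality into a monotonicity statement for an auxiliary product involving an integrating factor, exactly as in the forward-time case. The only change is that instead of bounding $f(t)$ in terms of $f(0)$, we bound it in terms of $f(T)$, and the differential inequality points the ``right way'' for this comparison to work.

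More concretely, define the integrating factor
\begin{align*}
A(t) := \exp\left( \int_0^t \alpha(s) \dd s \right),
\end{align*}
which is absolutely continuous on $[0,T]$ with $A'(t) = \alpha(t) A(t)$ for a.e.~$t \in [0,T]$ by the fundamental theorem of calculus for $L^1$-integrable integrands (this is where the hypothesis $\alpha \in L^1(0,T)$ enters). Then I would consider the product $h(t) := f(t) A(t)$, which is again absolutely continuous, and compute
\begin{align*}
h'(t) = f'(t) A(t) + f(t) \alpha(t) A(t) = \big[ f'(t) + \alpha(t) f(t) \big] A(t) \geq 0
\end{align*}
for a.e.~$t \in [0,T]$, where the last inequality uses both $A(t) > 0$ and the assumed differential inequality $f'(t) \geq -\alpha(t) f(t)$.

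From this I conclude that $h$ is non-decreasing on $[0,T]$, hence $h(t) \leq h(T)$ for every $t \in [0,T]$. Unwinding the definition of $h$ yields
\begin{align*}
f(t) \exp\left( \int_0^t \alpha(s) \dd s \right) \leq f(T) \exp\left( \int_0^T \alpha(s) \dd s \right),
\end{align*}
and dividing by the (strictly positive) integrating factor $A(t)$ gives exactly \eqref{GronwallDiff}. The main conceptual point to be careful about is that $f$ is only assumed differentiable (and $\alpha$ only $L^1$), so the conclusion ``$h$ non-decreasing'' should be justified via the a.e.~sign of $h'$ together with absolute continuity of $h$; no pointwise regularity of $\alpha$ is needed. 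This is the only subtlety — the rest is a direct one-line computation.
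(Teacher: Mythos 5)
Your integrating-factor argument is the standard and correct route, and since the paper explicitly leaves this lemma as a reader exercise there is no competing proof to compare it against; this is clearly what is intended. The one sentence I would push back on is the closing remark that ``$h$ non-decreasing'' follows from the a.e.\ sign of $h'$ together with absolute continuity of $h$: you have not actually established that $h = fA$ is absolutely continuous. Differentiability of $f$ alone does not give absolute continuity, nor even bounded variation (the classical example $f(x) = x^2 \sin(1/x^2)$, $f(0)=0$, is everywhere differentiable on $[-1,1]$ yet not of bounded variation there), so ``differentiable times absolutely continuous'' need not be absolutely continuous. Passing from ``$h' \ge 0$ a.e.'' to ``$h$ non-decreasing'' genuinely requires absolute continuity (or at least a Luzin-type $N$ property), and that is not free from the stated hypotheses. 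The cleanest repair is to read ``differentiable'' in the lemma as ``absolutely continuous with $f' \in L^1$'' --- which is plainly the intent, and is what holds in the paper's application, where $f(t) = |h_1(t)-h_2(t)|$ is Lipschitz --- under which your proof goes through verbatim. With that reading, the argument is complete and matches the intended one.
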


Next, we see
\begin{align}\label{viii2}
|\dot{h}_1(t) - \dot{h}_2(t)| = |u(t, h_1(t)) - u(t, h_2(t))| \leq \|\del_x u(t, \cdot)\|_{L^\infty(I_1(t))} |h_1(t) - h_2(t)|.
\end{align}
That means that if $\|\del_x u(t, \cdot)\|_{L^\infty(I_1(t))} \in L^1(0, T)$, then an application of \eqref{GronwallDiff} shows that the solution to \eqref{ODE1} is uniquely determined by the data at time $t=T$. As a matter of fact, we have even more.
\begin{Proposition}\label{prop81}
Under the assumptions of Theorem~\ref{thm:1D}, it holds $\del_x u \in L^2(0,T;L^\infty(\R))$.
\end{Proposition}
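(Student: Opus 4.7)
Fix $t$ inside the interval of existence guaranteed by Theorem~\ref{thm:1D}, so that the positions are strictly ordered and $L_i(t):=h_{i+1}(t)-h_i(t)>0$ for $i=1,\dots,N-1$. The idea is to decompose $\R = \bigcup_{i=0}^{N} \overline{I_i(t)}$ and to estimate $\|\del_x u(t,\cdot)\|_{L^\infty(I_i(t))}$ separately on each piece, using that $u(t,\cdot)\in W^{2,2}(I_i(t))$ (hence $\del_x u(t,\cdot)\in C(\overline{I_i(t)})$) almost everywhere in $t$, and then summing via $\|\del_x u(t)\|_{L^\infty(\R)}^2 \leq \sum_i \|\del_x u(t)\|_{L^\infty(I_i(t))}^2$. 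Integrating in time will then use only the global bounds already available, namely $\del_x u\in L^\infty(0,T;L^2(\R))$ from the energy class and $\del_{xx}^2 u\in L^2((0,T)\times I_i(t))$ from the interior regularity.

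The piecewise estimate relies on the 1D interpolation inequality
\begin{equation*}
\|v\|_{L^\infty(I)}^2 \leq 2\|v\|_{L^2(I)}\|v'\|_{L^2(I)} + \frac{2}{|I|}\|v\|_{L^2(I)}^2,
\end{equation*}
which is proved, for $v\in W^{1,2}(I)$ on a bounded interval $I=(a,b)$, by picking $c\in I$ with $v(c)^2$ equal to the mean of $v^2$ (so that $v(c)^2\leq \tfrac{1}{b-a}\|v\|_{L^2(I)}^2$) and writing $v(x)^2-v(c)^2=2\int_c^x v v'$. For the unbounded intervals $I_0,I_N$, using that $\del_x u(t,\cdot)\in W^{1,2}$ on each and hence vanishes at $\pm\infty$, the boundary term disappears entirely and one has $\|\del_x u(t)\|_{L^\infty(I_0)}^2 \leq 2\|\del_x u(t)\|_{L^2(I_0)}\|\del_{xx}^2 u(t)\|_{L^2(I_0)}$, and similarly for $I_N$. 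Applying the inequality to $v=\del_x u(t,\cdot)$ on each piece, summing, and using Cauchy--Schwarz in the discrete index $i$ yields
\begin{equation*}
\|\del_x u(t)\|_{L^\infty(\R)}^2 \leq 2\|\del_x u(t)\|_{L^2(\R)}\bigl(\textstyle\sum_i \|\del_{xx}^2 u(t)\|_{L^2(I_i(t))}^2\bigr)^{1/2} + 2\max_{1\leq i\leq N-1}L_i(t)^{-1}\,\|\del_x u(t)\|_{L^2(\R)}^2.
\end{equation*}

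Integrating in $t\in(0,T)$, the first term is handled by $\del_x u\in L^\infty(0,T;L^2(\R))$, Cauchy--Schwarz in time, and $\del_{xx}^2 u\in L^2((0,T)\times I_i(t))$ applied for each $i\in\{0,\dots,N\}$. The main obstacle is the second term, which carries the factor $\max_i L_i(t)^{-1}$: here one uses that $T$ is strictly less than the maximal existence time asserted by Theorem~\ref{thm:1D}, so that each $L_i$ is a continuous strictly positive function on $[0,T]$ and hence bounded below by some $c(T)>0$. This closes the estimate and gives $\del_x u\in L^2(0,T;L^\infty(\R))$ as claimed; in the bootstrap structure of Theorem~\ref{thm:1D}, combined with the backward Gr\"onwall inequality \eqref{GronwallDiff} applied to \eqref{viii2}, this then prevents any $L_i$ from vanishing and yields a uniform global lower bound, closing the no-collision argument.
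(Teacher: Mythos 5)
Your piecewise Gagliardo--Nirenberg inequality $\|v\|_{L^\infty(I)}^2 \leq 2\|v\|_{L^2(I)}\|v'\|_{L^2(I)} + 2|I|^{-1}\|v\|_{L^2(I)}^2$ is fine as a statement, but the $|I|^{-1}$ term makes the whole argument circular. After summing over the $I_i$, your bound on $\|\del_x u(t)\|_{L^\infty(\R)}$ carries a factor $\max_i L_i(t)^{-1}$, and you propose to control it by a positive lower bound on $\min_{[0,T]}\min_i L_i$. But such a lower bound is precisely the no-collision conclusion of Theorem~\ref{thm:1D} that the Proposition is invoked to \emph{prove}. Concretely: if a first collision occurred at some finite $T_\ast$, the contradiction argument needs $\del_x u \in L^1(0,T_\ast;L^\infty(\R))$ to run the Cauchy problem \eqref{ODE1} backwards via Gr\"onwall \eqref{GronwallDiff}. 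Your estimate on $[0,T]$ blows up as $T \uparrow T_\ast$ exactly because $\min_i L_i \to 0$, so it does not deliver integrability up to $T_\ast$ and the collapse-to-a-contradiction step fails. You cannot assume the maximal existence time is $+\infty$ here; that is the conclusion, not a hypothesis.

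The paper avoids the $L_i^{-1}$ factor by not anchoring at a mean-value point inside each $I_i$. The decisive input is that the jump of $\del_x u$ at each $h_i(t)$ equals $m_i\ddot h_i(t)$ by \eqref{1DNSE}$_3$, so one writes the global identity $f^2(x) = -\int_{-\infty}^x 2 f\, f_{\rm reg}'\dd s + \sum_{x_i< x}[f^2](x_i)$ for $f=\del_x u$ and deduces the interpolation inequality \eqref{viii1}, in which your geometric penalty $\max_i L_i^{-1}$ is replaced by $\sum_i m_i|\ddot h_i|$. Together with the a priori estimate of \cite[Proposition~4.1]{VazquezZuazua2006}, whose constant depends only on $T$, $\|u_0\|_{W^{1,2}(\R)}$, and $\sum_i m_i|\dot h_{i,0}|^2$ and \emph{not} on the gaps $L_i$, this yields an $L^2(0,T;L^\infty(\R))$ bound that does not degenerate as particles approach one another, which is what makes the backward-Gr\"onwall argument go through. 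If you want a piecewise phrasing, anchor at the interface values rather than at interior means: start on $I_0$, where $\del_x u\to 0$ at $-\infty$ and no boundary term arises, then propagate the endpoint values of $\del_x u$ across each $h_i$ using the controlled jump $m_i\ddot h_i$; your mean-value anchoring discards exactly the information that rescues the estimate.
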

\begin{proof}
As a key step in the existence proof, the authors in \cite[Proposition~4.1]{VazquezZuazua2006} show that the solution $u$ to \eqref{1DNSE} satisfies in the time interval $(0,T)$ without collision that, for any $t \in (0,T)$,
\begin{align*}
\int_{\R} |\del_x u(t, x)|^2 \dd x + \sum_{i=0}^N \int_0^T \|\del_{xx}^2 u\|_{L^2(I_i(t))}^2 \dd t + \sum_{i=1}^N \int_0^T m_i |\ddot{h}_i(t)|^2 \dd t \leq C,
\end{align*}
where $C>0$ depends on $T$, $\|u_0\|_{W^{1,2}(\R)}$, and $\sum_{i=1}^N m_i |\dot{h}_{i, 0}|^2$. Moreover, recall that the jump of $\del_x u(t, h_i(t))$ is precisely given by $m_i \ddot{h}_i$ by \eqref{1DNSE}$_3$; hence, we may decompose $\del_x u$ in a regular part $\del_x u_{\rm reg}$ and a finite number of Dirac measures located at positions $h_i(t)$ and having amplitudes $m_i$. Since $\del_{xx}^2 u(t, \cdot) \in L^2(I_i(t))$ for any $i$ and any $t$, meaning $\del_{xx}^2 u_{\rm reg}(t, \cdot) \in L^2(\R)$ for any $t \in (0,T)$, we have by Sobolev embedding \eqref{SobEmb} that $\del_x u_{\rm reg}(t, \cdot) \in L^\infty(\R)$. We may now impose the following interpolation inequality: for any $f \in L^2(\R)$ that can be decomposed in a regular part $f_{\rm reg} \in L^\infty(\R)$ and a finite number of Dirac measures with amplitudes $a_i \in \R$, we have
\begin{align}\label{viii1}
\|f\|_{L^\infty(\R)} \lesssim \|f\|_{L^2(\R)}^\frac12 \|\del_x f_{\rm reg}\|_{L^2(\R)}^\frac12 + \sum_{i=1}^N |a_i|.
\end{align}
Choosing $f = \del_x u$ yields finally
\begin{align*}
\|\del_x u\|_{L^2(0,T;L^\infty(\R))}^2 \lesssim 1 + \int_0^T \|\del_x u\|_{L^2(\R)} \|\del_{xx}^2 u_{\rm reg}\|_{L^2(\R)} \dd t \lesssim 1.
\end{align*}
\end{proof}

\begin{Remark}
Inequality \eqref{viii1} follows by observing
\begin{align*}
f^2(x) = -\int_{-\infty}^x 2 f(t) f_{\rm reg}'(t) \dd t + \sum_{x_i \in (-\infty, x)} [f^2](x_i),
\end{align*}
where $x_i$ is the location of the Dirac measure with amplitude $a_i$. Moreover,
\begin{align*}
|[f^2](x_i)| \leq 2 \|f\|_{L^\infty(\R)} |a_i|.
\end{align*}
It follows
\begin{align*}
\|f\|_{L^\infty(\R)}^2 \leq 2 \|f\|_{L^2(\R)} \|f'_{\rm reg}\|_{L^2(\R)} + 2 \|f\|_{L^\infty(\R)} \sum_{i=1}^N |a_i|.
\end{align*}
Solving for $\|f\|_{L^\infty(\R)}$ gives the desired.
\end{Remark}

Finally, with the help of Proposition~\ref{prop81}, we indeed find $\|\del_x u\|_{L^\infty(I_1(t))} \in L^1(0,T)$. Hence, going back to \eqref{viii2}, we get with Gr\"onwall's inequality \eqref{GronwallDiff}
\begin{align*}
|h_1(t)-h_2(t)|\lesssim |h_1(T)-h_2(T)| \ \quad \forall \, 0 \leq t \leq T.
\end{align*}
As $h_1(T)=h_2(T)$, we have $h_1 \equiv h_2$, thus showing that the ODE \eqref{ODE1} is uniquely solvable; in turn, no collision can occur.



\section{Slip-boundary conditions}
Let us go back to three-dimensional flows. The paradox that a ball does not collide its boundary clearly contradicts our physical intuition and experience. To resolve this issue, one can take a closer look to the physical behavior of fluids on surfaces, or, mathematically speaking, the boundary conditions of the fluid velocity. Indeed, the common no-slip condition $\vu|_{\del \Omega} = 0$, although very close to reality, is just an approximation to a more complex slip-condition
\begin{align}
(\vu - \dot{\vc G}(t))\cdot \vc n|_{\del \ms} &= 0, & (\vu - \dot{\vc G}(t)) \times \vc n|_{\del \ms} &= -\beta_\ms [\bS(\vu)\vc n]\times \vc n|_{\del \ms}, \label{slipBC1}\\
\vu \cdot \vc n|_{\del \Omega} &= 0, & \vu \times \vc n|_{\del \Omega} &= -\beta_\Omega [\bS(\vu)\vc n]\times \vc n|_{\del \Omega}, \label{slipBC2}
\end{align}
where the slip coefficients $\beta_\ms, \beta_\Omega \geq 0$ represent the slip lengths on the solid's and container's boundary, respectively. Typically, those slip lengths are very small of order $10^{-9} - 10^{-6}$\,m (see, e.g., \cite{ChoiWestinBreuer2003, TrethewayMeinhart2002}), so it is convenient to simply set them equal to zero and recover the no-slip conditions. However, as we shall show in the sequel, the fact that they are \emph{not equal} to zero allows for collision even for a ball-shaped solid. In a nutshell, our main result for slip-boundary conditions reads as follows.
\begin{Theorem}\label{thm:slip}
Let $\ms=B(\vc G(t))\subset \R^3$ be the unit ball centered at $\vc G(t)$ with $\dist(\ms(0), \del \Omega)>1$. Let $\vu$ be a solution to the incompressible Stokes equations \eqref{Stokes} with stress tensor $\bS = 2\mu \bD(\vu)$ that complies with the slip-boundary conditions \eqref{slipBC1}--\eqref{slipBC2}. Let moreover the fluid's density $\rho_\mf$ be smaller than the solid's one, meaning $\rho_\ms > \rho_\mf > 0$.
\begin{itemize}
\item If both $\beta_\ms>0$ and $\beta_\Omega>0$, collision occurs in finite time.
\item If at least one of the coefficients $\beta_\ms$ and/or $\beta_\Omega$ vanishes, then the solid stays away from $\del \Omega$ for all times.
\end{itemize}
In other words, a ball can just collide a wall if \emph{all} surfaces are slippery.
\end{Theorem}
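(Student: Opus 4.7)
The proof strategy extends the drag-force framework of Section~\ref{sec:62} to the slip setting. My plan is to construct a solenoidal, admissible test function $\vc w_h$ equal to $\vc e_3$ on $\ms$ and satisfying $\vc w_h \cdot \vc n = 0$ on $\del\Omega$ (but, crucially, not vanishing tangentially there), together with an auxiliary pressure $q_h$, so that testing the momentum equation by $\zeta(t)\vc w_{h(t)}$ yields a scalar identity of the form $N(h(t)) + (\rho_\ms-\rho_\mf) g|\ms(0)| t = R(t)$ with $|R(t)| \lesssim 1+\sqrt{t}$ and $N(h) = \int_0^t n(h(s))\,\rd s$, exactly as in the proof of Theorem~\ref{thm:ParInc}. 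The drag coefficient $n(h) \sim \int_{\mf_h} |\bD(\vc w_h)|^2 \,\rd x$ will carry all the slip-length dependence, and the dichotomy of Theorem~\ref{thm:slip} will then reduce to the scaling of $n(h)$ as $h \to 0$.

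Near the collision zone the ball is described, up to lower order, by $\psi_h(r) = h + r^2/2$ (i.e.\ $\alpha = 1$, $d=3$ in the notation of Chapter~\ref{ch3}), and I take $\vc w_h = \nabla \times (\phi_h \vc e_\theta)$ with $\phi_h(r,x_3) = \tfrac{r}{2} \Phi_{\beta_\ms,\beta_\Omega}\bigl(x_3/\psi_h(r)\bigr)$, where the profile $\Phi_{\beta_\ms,\beta_\Omega}$ is a cubic polynomial in $t = x_3/\psi_h$ whose coefficients are determined not by Dirichlet data at $t=0,1$ but by the linearized Navier conditions $\Phi'(0) = (\beta_\Omega/\psi_h)\Phi''(0)$ and $\Phi'(1) = -(\beta_\ms/\psi_h)\Phi''(1)$, obtained as the Euler--Lagrange equation of the energy $\int |\del_3^2 \phi_h|^2 \,\rd x$ subject to the slip traces. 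Outside the thin channel $\Omega_{h,r_0}$, $\vc w_h$ is extended by the same cut-off procedure as in \eqref{phih}. The typical integrand controlling the energy in the channel is then of order $\psi_h^{-1}(r)\bigl(1 + (\beta_\ms + \beta_\Omega)/\psi_h(r)\bigr)^{-2}$, and an application of Lemma~\ref{lem:i1} distinguishes the regimes: if both $\beta_\ms,\beta_\Omega > 0$, the slip lengths dominate $\psi_h$ for $h$ small and one obtains $n(h) \sim h^{-1/2}$, whereas as soon as one of $\beta_\ms,\beta_\Omega$ vanishes the no-slip cubic profile is recovered to leading order and $n(h) \sim h^{-1}$ (the critical ball scaling with $\beta = 1$ in the sense of Proposition~\ref{prop:Remainder}).

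Plugging these scalings into the scalar identity closes both cases. When $\beta_\ms,\beta_\Omega > 0$, $N(h(t))$ stays bounded as $h \to 0$, so $(\rho_\ms-\rho_\mf) g |\ms(0)| t \leq C(1+\sqrt t) - \inf N$ forces $t$ to remain bounded; hence $T_\ast < \infty$ and collision occurs, just as in the subcritical part of Theorem~\ref{thm:ParInc}. When at least one slip length vanishes, integrating $n(h) \sim h^{-1}$ produces $N(h(t)) \sim |\log h(t)|$, which combined with $|\log h(t)| \lesssim 1 + t + \sqrt t$ yields the exponential lower bound $h(t) \gtrsim \exp(-C(1+t))$, so the ball stays away from $\del\Omega$ for all times. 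The main obstacle will be to push through the slip-adapted analogue of Lemma~\ref{lem:singRemove}: because $\vc w_h$ no longer vanishes tangentially on $\del\Omega \cup \del\ms$, the integration-by-parts relating $n(h)$ to $\int |\bD(\vc w_h)|^2$ picks up boundary traces of the form $\beta_{\ms}^{-1}|\vc w_h \times \vc n|^2$ and $\beta_\Omega^{-1}|\vc w_h \times \vc n|^2$, and one must design $q_h$ so that the pointwise singularities of $\Delta \vc w_h - \nabla q_h$ are still controlled in duality against Sobolev functions satisfying merely $\vc \varphi \cdot \vc n|_{\del\Omega} = 0$. Once this slip-compatible pressure correction is constructed and the remainder estimate of Proposition~\ref{prop:Remainder} is established uniformly in $\beta_\ms,\beta_\Omega \in [0,\infty)$, the proof is concluded by the above dichotomy.
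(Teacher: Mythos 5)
Your strategy is in essence the one the paper follows: build a slip-compatible divergence-free test function $\vc w_h$ from a cubic profile in $x_3/\psi_h$ via the Euler--Lagrange equation $\del_3^4\phi_h=0$, impose the Navier slip traces on $\del\Omega$ and $\del\ms$ instead of Dirichlet data, construct a companion pressure $q_h$ so that $\Delta\vc w_h-\nabla q_h$ is controlled in duality against fields with $\varphi\cdot\vc n|_{\del\Omega}=0$, and close via the scalar inequality $N(h(t))+(\rho_\ms-\rho_\mf)g|\ms(0)|t=R(t)$. The boundary conditions you write for the profile are correct up to the viscosity $\mu$ and the geometric factor $\sqrt{1-r^2}/(1-2r^2)$ (which is $1+O(r^2)$ near the tip and therefore harmless). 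All of this matches.

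The concrete gap is in your drag scaling for the both-slip case. You claim $n(h)\sim h^{-1/2}$; the correct asymptotics as $h\to 0$ is $n(h)\sim|\log h|$. To see this, write $\phi_h=\tfrac{r}{2}\bigl[P_1\tau+P_2\tau^2+P_3\tau^3\bigr]$ with $\tau=x_3/\psi_h$ and note that when both $\beta_\ms,\beta_\Omega>0$ are fixed and $\psi_h\to 0$, the slip parameters $\alpha_\ms,\alpha_\Omega\sim\psi_h/\beta\to 0$, which drives $P_1\to 1$ while $P_2,P_3=O(\psi_h)$. The profile degenerates to the \emph{linear} shape $\Phi(\tau)\to\tau$, so $\del_3\phi_h=\tfrac{r}{2\psi_h}+O(1)$ is essentially $x_3$-independent, the quadratic/cubic contributions $\del_{33}^2\phi_h\sim r/\psi_h$ are subdominant, and the leading singularity in $\nabla\vc w_h$ comes from $\del_r(\del_3\phi_h)\sim\psi_h^{-1}$. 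Hence
\begin{align*}
\|\nabla\vc w_h\|_{L^2(\Omega_{h,r_0})}^2\sim\int_0^{r_0}\int_0^{\psi_h}\frac{r}{\psi_h^2}\,\dd x_3\,\dd r
=\int_0^{r_0}\frac{r}{\psi_h(r)}\,\dd r\sim|\log h|,
\end{align*}
two powers of $\psi_h$ weaker than the no-slip $r/\psi_h^2$-type integrand that gives $h^{-1}$. Your stated integrand $\psi_h^{-1}\bigl(1+\beta/\psi_h\bigr)^{-2}=\psi_h/(\psi_h+\beta)^2$ is in fact bounded as $\psi_h\to 0$, so it is internally inconsistent with the $h^{-1/2}$ you assert and also misses the logarithm. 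Both $h^{-1/2}$ and $|\log h|$ happen to be integrable at $0$, so the dichotomy conclusion of the theorem survives your error, but the actual lubrication scaling --- and the quantity that must be reproduced in the slip analogue of Lemma~\ref{lem:intEst} and Proposition~\ref{prop:Remainder} --- is $|\log h|$, not $h^{-1/2}$.
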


The problem of collision with slip boundary conditions was investigated in \cite{GerardVaretHillairet2012, GerardVaretHillairet2014, GVHW2015}, and we follow the presentation of \cite{GVHW2015}. The ideas to prove Theorem~\ref{thm:slip} are basically the same as for no-slip conditions, however, the test function now has to comply with the slip boundary conditions. This will complicate both its form as well as the computations to be done.

\subsection{Construction of test function}
As before, we want to construct a proper test function for the drag force. Due to the slip-conditions, the energy functional now reads
\begin{align*}
\mathcal{E}_h = \int_{\mf_h} |\nabla \vu|^2 \dd x + (1+\beta_\ms^{-1}) \int_{\del \ms_h} |(\vu - \vc e_3) \times \vc n|^2 \dd \sigma + \beta_\Omega^{-1} \int_{\del \Omega} |\vu \times \vc n|^2 \dd \sigma,
\end{align*}
which formally can be obtained by multiplying the momentum equation by $\vu$ and integrating by parts. Again, we search for a function of the form $\vc w_h = \nabla \times (\phi_h \vc e_\theta) = -\del_3 \phi_h \vc e_r + \frac1r \del_r (r \phi_h) \vc e_3$ for some function $\phi_h$ to be determined. As we saw in previous chapters, the convective part does not really play a role in whether or not the solid can collide with its container; hence, in the formulation of the boundary conditions, we can replace $\dot{\vc G}$ simply by $\dot{h} \vc e_3$ as in the linear case for Stokes equations. Skipping the index $h$ for brevity, the Euler-Lagrange equation for $\mathcal{E}$ is again $\del_3^4 \phi(r,x_3)=0$, thus
\begin{align*}
\phi(r, x_3) = a(r) + b(r)x_3 + c(r)x_3^2 + d(r)x_3^3.
\end{align*}
To determine the boundary conditions of $\phi$, from which we will find the functions $a,b,c$, and $d$, let us first note that the normal on $\del \Omega \cap \del \Omega_{h, r_0}$ is simply $\vc n = -\vc e_3$, and, since $\ms$ is a ball of radius one, the normal on $\del \ms \cap \del \Omega_{h, r_0}$ is given by
\begin{align*}
\vc n = -r \vc e_r + \sqrt{1-r^2} \vc e_3.
\end{align*}

\begin{figure}[H]
\centering
\begin{tikzpicture}
\draw[->] (0,-3) -- (0,3);
\node at (0,3) [anchor=west] {$x_3$};
\draw[->] (-3,0) -- (3,0);
\node at (3,0) [anchor=south] {$(r, \theta)$};
\draw (0,0) circle (2cm);
\node at (1.9,0) [anchor=south west] {$1$};
\draw[rotate=45, -Stealth, very thick] (-2,0) -- (0,0);
\node at (-.5,-1.1) [anchor=south] {$\vc n$};
\draw[dashed] (-{2*sqrt(1/2)}, -{2*sqrt(1/2)}) -- (-{2*sqrt(1/2)},0);
\node at (-{2*sqrt(1/2)}, 0) [anchor=south] {$r$};
\draw[dashed] (-{2*sqrt(1/2)}, -{2*sqrt(1/2)}) -- (0,-{2*sqrt(1/2)});
\node at (0,-{2*sqrt(1/2)}) [anchor=south west] {$\sqrt{1-r^2}$};
\draw[-Stealth, very thick] (0,0) -- (-2,0);
\node at (-.5,0) [anchor=south] {$\vc e_r$};
\draw[-Stealth, very thick] (0,0) -- (0,2);
\node at (0,1) [anchor=west] {$\vc e_3$};
\node at (2,-2) [anchor=west] {$\otimes \, \vc e_\theta$};
\end{tikzpicture}
\caption{The normal $\vc n$ on $\del \ms \cap \del \Omega_{h, r_0}$.}
\label{fig:82}
\end{figure}

With this at hand, we compute
\begin{align*}
0 &= -\vc w \cdot \vc n|_{\del \Omega} =  \Big(-\del_3 \phi \vc e_r + \frac1r \del_r (r \phi) \vc e_3 \Big)(r,0) \cdot \vc e_3 = \frac1r \del_r(r \phi)(r,0),\\
0 &= (\vc w - \vc e_3) \cdot \vc n|_{\del \ms} = \Big(-\del_3 \phi \vc e_r + \frac1r \del_r (r \phi) \vc e_3 - \vc e_3 \Big)(r,\psi) \cdot (-r \vc e_r + \sqrt{1-r^2} \vc e_3)\\
&= r \del_3 \phi(r,\psi) + \frac{\sqrt{1-r^2}}{r} \del_r (r\phi)(r,\psi) - \sqrt{1-r^2}.
\end{align*}
Noting that for the ball case, where the shape function is given by $\psi = \psi_h(r) = 1+h-\sqrt{1-r^2}$, we see that
\begin{align*}
\psi' = \frac{r}{\sqrt{1-r^2}}, && \sqrt{1+|\psi'|^2} = \frac{1}{\sqrt{1-r^2}}=\frac{1}{r} \psi'.
\end{align*}
Hence, the impermeability condition on $\del \ms$ reduces to
\begin{align*}
0 = r \del_3 \phi(r,\psi) + \frac{\sqrt{1-r^2}}{r} \del_r (r\phi)(r,\psi) - \sqrt{1-r^2} = \frac{1}{\psi'} \frac{\rd}{\rd r} \bigg[r \phi(r, \psi) - \frac{r^2}{2} \bigg],
\end{align*}
that is,
\begin{align*}
\phi(r,\psi) = \frac{r}{2} + \frac{\kappa}{r}, \quad \kappa \in \R.
\end{align*}
In order to get a smooth function up to the origin, we again choose $\kappa=0$. We moreover note that from the impermeability condition on $\del \ms$, we find
\begin{align*}
0 = \del_3 \phi + r \del_{r 3}^2 \phi - \frac{1}{r^2 \sqrt{1-r^2}} \del_r (r \phi) + \frac{\sqrt{1-r^2}}{r} \del_{r r}^2 (r \phi) + \frac{r}{\sqrt{1-r^2}}.
\end{align*}
Hence, as $\phi(r, \psi) = r/2$, we see that
\begin{align*}
- \frac{1}{r^2 \sqrt{1-r^2}} \del_r (r \phi) + \frac{\sqrt{1-r^2}}{r} \del_{r r}^2 (r \phi) + \frac{r}{\sqrt{1-r^2}} = - \frac{r}{r^2 \sqrt{1-r^2}} + \frac{\sqrt{1-r^2}}{r} + \frac{r}{\sqrt{1-r^2}} = 0,
\end{align*}
in turn,
\begin{align}\label{relDerPhi}
\del_{r 3}^2 \phi(r, \psi) = -\frac{1}{r} \del_3 \phi(r, \psi).
\end{align}
We will need this relation later on.

Similarly, the no-penetration condition on $\del \Omega$ leads to
\begin{align*}
\phi(r, 0) = 0,
\end{align*}
giving rise to $a(r)=0$. Considering now incompressible Newtonian fluids with $\div \vu = 0$ and $\bS = \mu(\nabla \vu + \nabla^T \vu) = 2 \mu \bD(\vu)$, we infer from the slip boundary conditions that on $\del \Omega$
\begin{align}
\mu \beta_\Omega \del_{33}^2 \phi(r,0) - \del_3 \phi(r,0) &= 0, \label{slipOm}
\end{align}
see Section~\ref{app:slip} for its derivation. This yields $b(r) = 2 \mu \beta_\Omega c(r)$, hence
\begin{align*}
\phi(r, x_3) = c(r) (2 \mu \beta_\Omega x_3 + x_3^2) + d(r) x_3^3.
\end{align*}
As moreover $\phi(r, \psi) = r/2$, we find
\begin{align*}
c(r)(2 \mu \beta_\Omega \psi + \psi^2) + d(r) \psi^3 = \frac{r}{2} \ \Rightarrow \ d(r) = \frac{1}{\psi^3} \bigg( \frac{r}{2} - c(r) (2 \mu \beta_\Omega \psi + \psi^2 ) \bigg),
\end{align*}
in turn,
\begin{align}\label{formPhi1}
\phi(r, x_3) = c(r) \bigg( 2 \mu \beta_\Omega x_3 + x_3^2 - \frac{2 \mu \beta_\Omega + \psi}{\psi^2} x_3^3 \bigg) + \frac{r}{2} \bigg( \frac{x_3}{\psi} \bigg)^3.
\end{align}
The missing coefficient $c(r)$ will be determined from the last boundary condition on $\del \ms$. One finds (see Section~\ref{app:slip})
\begin{align}\label{slipS}
0 = \del_{33}^2 \phi + \bigg( 2 + \frac{1}{\mu \beta_\ms} \bigg) \frac{\sqrt{1-r^2}}{1-2r^2} \del_3 \phi,
\end{align}
which yields
\begin{align*}
c(r) = \frac{3r}{2\psi} \frac{2 \mu \beta_\ms + \big( 2 \mu \beta_\ms + 1 \big) \frac{\sqrt{1-r^2}}{1-2r^2} \psi}{ (4 \mu \beta_\Omega + \psi) ( 4 \mu \beta_\ms + \big( 2 \mu \beta_\ms + 1 \big) \frac{\sqrt{1-r^2}}{1-2r^2} \psi) - 4\mu^2 \beta_\Omega \beta_\ms }.
\end{align*}

Note that in the limit $\beta_\Omega, \beta_\ms \to 0$ representing no-slip conditions, we obtain
\begin{align*}
c(r) \to \frac{3r}{2\psi^2}
\end{align*}
and hence
\begin{align*}
\phi(r, x_3) \to \frac{3r}{2\psi^2} \bigg( x_3^2 - \frac{x_3^3}{\psi} \bigg) + \frac{r}{2} \frac{x_3^3}{\psi^3} = \frac{r}{2} ( 3 t^2 - 2t^3 )|_{t=\frac{x_3}{\psi}}
\end{align*}
as expected. Moreover, we may represent $\phi(r, x_3 \psi) = \frac{r}{2}[P_1(r)x_3 + P_2(r) x_3^2 + P_3(r) x_3^3]$ with
\begin{gather*}
P_1(r) = \frac{6(2+\alpha_\ms)}{12 + 4(\alpha_\ms + \alpha_\Omega) + \alpha_\ms \alpha_\Omega}, \qquad
P_2(r) = \frac{3(2+\alpha_\ms)\alpha_\Omega}{12 + 4(\alpha_\ms + \alpha_\Omega) + \alpha_\ms \alpha_\Omega},\\
P_3(r) = -\frac{2(\alpha_\ms + \alpha_\ms \alpha_\Omega + \alpha_\Omega)}{12 + 4(\alpha_\ms + \alpha_\Omega) + \alpha_\ms \alpha_\Omega},\\
\alpha_\Omega = \frac{1}{\mu \beta_\Omega} \psi, \qquad \alpha_\ms = \bigg( 2 + \frac{1}{\mu \beta_\ms} \bigg) \frac{\sqrt{1-r^2}}{1-2r^2} \psi.
\end{gather*}

\begin{Remark}
Let us notice that the form of $\alpha_\ms$ in \cite{GVHW2015} does not match the one above, namely, the coefficient $\sqrt{1-r^2}/(1-2r^2)$ is missing there. This is due to the fact that for small $r$, this fraction is bounded below and above (and even close to $1$), hence the \emph{qualitative} behavior of the function does not change and one might neglect it.
\end{Remark}

\subsection{Uniform estimates, corresponding pressure, and proof of Theorem~\ref{thm:slip}}
As for the case of no-slip, we need some bounds of the constructed test function, as well as an additional pressure catching the singularities of $\Delta \vc w_h$. This is done in the following Lemmata:

\begin{Lemma}\label{lem3}
Let $\beta_\Omega, \beta_\ms > 0$. For the function $\vc w_h$ constructed in the previous section, there holds
\begin{align*}
\|\vc w_h\|_{L^2(\mf_h)} + \|\beta_\ms [\bS(\vc w_h)\vc n] \times \vc n + (\vc w_h - \vc e_3) \times \vc n\|_{L^2(\del \ms_h)} &\lesssim 1,\\
\|\nabla \vc w_h\|_{L^2(\mf_h)}^2 \lesssim |\log h| &\lesssim \|\bD(\vc w_h)\|_{L^2(\mf_h)}^2.
\end{align*}

Moreover, denoting $x = (r \cos \theta, r \sin \theta, x_3)$,
\begin{align*}
\left\| \int_0^\psi \del_h \vc w_h(r,s) \dd s \right\|_{L^2(\del \Omega \cap \del \Omega_{h, r_0})}^2 &\lesssim |\log h|,\\
\left\| \int_{x_3}^\psi \del_h \vc w_h(r, s) \dd s \right\|_{L^2(\Omega_{h, r_0})} + \left\| \psi \nabla \int_{x_3}^\psi \del_h \vc w_h(r,s) \dd s \right\|_{L^2(\Omega_{h, r_0})} &\lesssim 1,\\
\sup_{x \in \overline{\Omega}_{h, r_0}} \left| \int_{x_3}^\psi \bD(\vc w_h)(r, s) \dd s \right| + \sup_{x \in \overline{\Omega}_{h, r_0}} \left|\psi \nabla \int_{x_3}^\psi \bD(\vc w_h)(r, s) \dd s \right| &\lesssim 1.
\end{align*}
\end{Lemma}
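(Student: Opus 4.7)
The plan is to follow verbatim the strategy of Lemma~\ref{BdsWh}, using the explicit representation $\phi_h(r,x_3) = \tfrac{r}{2}[P_1(r)\,t + P_2(r)\,t^2 + P_3(r)\,t^3]$ with $t = x_3/\psi(r)$, computing $\vc w_h$ and its derivatives by the chain rule, and reducing every norm over $\Omega_{h,r_0}$ to a one-dimensional integral in $r$ handled by Lemma~\ref{lem:i1}. Outside of $\Omega_{h,r_0}$, the field $\vc w_h$ is smooth and bounded uniformly in $h$ by definition, so nothing needs to be proved there. The preliminary bookkeeping is elementary: the denominator $12 + 4(\alpha_\ms + \alpha_\Omega) + \alpha_\ms\alpha_\Omega \geq 12$ gives $|P_i(r)| \lesssim 1$ uniformly, while $\del_r P_i$ and $\del_h P_i$ each carry at most one extra factor $|\psi'|$ or~$1$.

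The heart of the proof is the matched pair of bounds~(iii). Since $P_1(r) \to 1$ as $\psi \to 0$, the leading contribution to $\del_3 \phi_h$ is $\frac{rP_1(r)}{2\psi}$, and a computation parallel to Lemma~\ref{BdsWh} will yield the pointwise bound $|\nabla \vc w_h| \lesssim 1/\psi + r/\psi^2 + |\psi'|/\psi$. Using $\psi(r)\sim h+r^2$ and $\psi'(r) \sim r$ near $r=0$, this leads to
\[
\int_{\Omega_{h,r_0}}|\nabla \vc w_h|^2 \dd x \lesssim \int_0^{r_0} \frac{r}{h+r^2} + \frac{r^5}{(h+r^2)^3} \dd r \lesssim |\log h|,
\]
where both summands correspond to the borderline case $q+1 = s(1+\alpha)$ of Lemma~\ref{lem:i1} with $\alpha=1$. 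For the matching lower bound, I would integrate only the square of the leading term $P_1(r)/(2\psi)$ appearing in $\bD(\vc w_h)_{r3}$ over $\Omega_{h,r_0}$ to get $\|\bD(\vc w_h)\|_{L^2}^2 \gtrsim |\log h|$, the remaining terms in $\bD(\vc w_h)$ carrying an extra power of $\psi$ or $r$ and being absorbed for $h$ small. The $L^2$-bound on $\vc w_h$ itself is immediate because the pointwise estimate $|\vc w_h| \lesssim 1 + r|\psi'|/\psi$ is strictly below borderline in Lemma~\ref{lem:i1}.

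Estimate~(ii) will follow by inspection: by construction of $\phi_h$, the relations \eqref{slipOm} and \eqref{slipS} are enforced pointwise on $\del \Omega$ and $\del \ms$, so the combination $\beta_\ms [\bS(\vc w_h)\vc n]\times\vc n + (\vc w_h - \vc e_3)\times \vc n$ vanishes up to error terms carrying one of the factors $\psi$, $\psi'$, $\psi''$, whose $L^2(\del\ms_h \cap \del\Omega_{h,r_0})$-norm is uniformly bounded by Lemma~\ref{lem:i1}. Estimates~(iv)--(vi) have the same flavor, the crucial observation being that integration in $x_3$ on an interval of length $\le \psi$ drops the $\psi$-order of the singularity by one: this transforms the borderline logarithm of~(iii) into uniform bounds for the volume norms in~(v) and~(vi), while preserving the $|\log h|$ size in~(iv), where the outer integration is only over $\del \Omega \cap \del\Omega_{h,r_0}$.

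The main obstacle I expect is the careful bookkeeping of the $r$-dependence of $P_i$: each $r$-derivative must produce exactly one extra factor $|\psi'|/\psi$ and never the dangerous $r^{-1}$ that appears formally from taking derivatives in cylindrical coordinates. Once this is checked, the rest of the estimates reduce to systematic applications of Lemma~\ref{lem:i1} entirely analogous to the no-slip case of Lemma~\ref{BdsWh}; the difference between the two regimes is purely a matter of the leading coefficient $P_1$ no longer vanishing at $\psi=0$, which is precisely what forces the logarithmic (rather than uniform) bound on $\|\nabla \vc w_h\|_{L^2}^2$.
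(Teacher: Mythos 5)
The paper itself never writes out a proof of this lemma: it points to \cite[Appendix~A]{GVHW2015} with a one-line remark that the calculations are the same as in the no-slip case but with the slip test function. Your general plan — express $\phi_h$ via the $P_i$, differentiate, and reduce everything to one-dimensional $r$-integrals handled by Lemma~\ref{lem:i1} — is exactly the intended strategy. However, there is a genuine gap in the execution: you import the no-slip pointwise bound on $\nabla \vc w_h$ without noticing that it is wrong in the slip setting, and your analysis is internally inconsistent in a way that actually matters.

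Concretely, you state ``the denominator $\geq 12$ gives $|P_i(r)|\lesssim 1$ uniformly'' as the only structural input, but this is not enough. The crucial fact, which you do not use, is that
\[
P_2(r) = O(\psi), \qquad P_3(r) = O(\psi) \quad \text{as } \psi \to 0,
\]
since both numerators carry a factor $\alpha_\Omega \sim \psi$ or $\alpha_\ms \sim \psi$ while $P_1 = 1 + O(\psi)$. This is what kills the dangerous term: in the no-slip case one has $\del_{33}^2 \phi_h = \tfrac{r}{2\psi^2}\Phi''(t)$ with $\Phi'' = O(1)$, whence the term $r/\psi^2$ in the pointwise bound and the resulting $\|\nabla\vc w_h\|_{L^2}^2 \sim h^{-1}$ for a ball; in the slip case $\del_{33}^2 \phi_h = \tfrac{r}{2\psi^2}[2P_2 + 6P_3 t] = O(r/\psi)$ because of the extra factor $\psi$. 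You copy the no-slip bound $|\nabla\vc w_h|\lesssim 1/\psi + r/\psi^2 + |\psi'|/\psi$ verbatim, but for the ball the $r/\psi^2$ term alone gives $\int_0^{r_0} (r/\psi^2)^2\,r\psi\,\dd r = \int_0^{r_0} r^3/(h+r^2)^3\,\dd r \sim h^{-1}$, not $|\log h|$; your displayed integral $\int_0^{r_0} r^5/(h+r^2)^3\,\dd r$ is not what this pointwise bound produces, so your conclusion does not follow from your stated estimate. The correct dominant pointwise term is $1/\psi$, which comes from $\del_{r3}^2\phi_h \supset \tfrac{1}{2\psi}P_1$ with $P_1 \to 1$, and this gives $\int_0^{r_0} r/(h+r^2)\,\dd r \sim |\log h|$. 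Similarly, your claim $|\vc w_h| \lesssim 1 + r|\psi'|/\psi \lesssim 1$ misses that the tangential component $\vc w_h\cdot\vc e_r = -\del_3\phi_h \sim -\tfrac{r}{2\psi}P_1 \sim -\tfrac{r}{2\psi}$ is in fact \emph{unbounded} — this is precisely the slip velocity on $\del\ms_h$ and $\del\Omega$; the $L^2$-bound on $\vc w_h$ is still correct, but only because $\int_0^{r_0}(r/\psi)^2 r\psi\,\dd r = \int_0^{r_0} r^3/(h+r^2)\,\dd r \lesssim 1$, not because of a pointwise bound by a constant.

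Once the role of $P_2,P_3 = O(\psi)$ is recognized, the rest of your outline (matched lower bound on $\|\bD(\vc w_h)\|_{L^2}^2$ from the $1/\psi$ term; the boundary $L^2$-norm controlled by the error in \eqref{slipOm}--\eqref{slipS}; the drop of one $\psi$-power upon $x_3$-integration over a length-$\psi$ interval in the estimates for $\int_{x_3}^\psi\del_h\vc w_h$ and $\int_{x_3}^\psi\bD(\vc w_h)$) is compatible with what is carried out in \cite[Appendix~A]{GVHW2015}. But the argument as you have written it would be rejected because the pointwise bounds you assert directly contradict the $|\log h|$ conclusion you claim to derive from them.
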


\begin{Lemma}
There exists a pressure $q_h$ satisfying
\begin{align*}
|\log h| \lesssim \int_{\del \ms_h} (\bS(\vc w_h) - q_h \Id)\vc n \cdot (\vc e_3 - \vc w_h) \dd \sigma - \int_{\del \Omega} (\bS(\vc w_h) - q_h \Id)\vc n \cdot \vc w_h \dd \sigma \lesssim |\log h|,
\end{align*}
and for any $\vc v \in W^{1,2}(\mf_h)$ with $\vc v \cdot \vc n = 0$ on $\del \Omega$, it holds
\begin{align*}
\left| \int_{\mf_h} (\Delta \vc w_h - \nabla q_h) \cdot \vc v \dd x \right| \lesssim \|\bD(\vc v)\|_{L^2(\mf_h)} + \|\vc v\|_{L^2(\del \Omega)}.
\end{align*}
\end{Lemma}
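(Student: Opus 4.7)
The plan is to follow the strategy of Lemma~\ref{lem:singRemove} and define $q_h$ explicitly so that $(\vc w_h, q_h)$ is ``almost a solution'' to a Stokes problem in $\Omega_{h, r_0}$. Since $\vc w_h$ is smooth outside $\Omega_{h, r_0}$, I would take $q_h$ smooth there by a harmless extension and focus only on the inner region. Inside $\Omega_{h,r_0}$, mimicking the 3D computation of Lemma~\ref{lem:singRemove}, I would set
\begin{align*}
q_h(r, x_3) := \del_{r 3}^2 \phi_h(r, x_3) - \int_0^r \del_{333}^3 \phi_h(s, x_3) \dd s,
\end{align*}
where $\phi_h$ is given by \eqref{formPhi1}. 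A direct calculation in cylindrical coordinates of $\Delta \vc w_h = \nabla \times \Delta(\phi_h \vc e_\theta)$ then shows that, as in the no-slip case, $\Delta \vc w_h - \nabla q_h$ retains only terms with $r$-derivatives of $\phi_h$ and products of $x_3$ with $\psi_h^{-k}$, $k\le 2$, discarding all $\psi_h^{-3}$ and $\psi_h^{-4}$ singularities.

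For the second estimate (the remainder bound), I would split the integral over $\mf_h$ into contributions from $\Omega_{h, r_0}$ and its complement. The outer part is uniformly bounded thanks to the smoothness of $\vc w_h$ and $q_h$ there, combined with Korn's inequality \eqref{Korn} adapted to slip conditions (which costs the trace term $\|\vc v\|_{L^2(\del \Omega)}$ precisely because $\vc v$ need not vanish on $\del\Omega$). For the inner part, after expressing $\Delta \vc w_h - \nabla q_h$ in the surviving less singular terms, a scaling argument combining Poincar\'e \eqref{Poinc} and Hardy \eqref{Hardy} inequalities in the narrow channel $\Omega_{h, r_0}$ (arguing in strips $0 \leq x_3 \leq \psi_h(r)$ and using the refined integrability coming from Lemma~\ref{lem:i1}) yields the bound $\lesssim \|\bD(\vc v)\|_{L^2(\mf_h)} + \|\vc v\|_{L^2(\del\Omega)}$.

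The first estimate (the two-sided $|\log h|$ bound) would then be obtained from integration by parts. Since $(\vc e_3 - \vc w_h) \cdot \vc n = 0$ on $\del \ms_h$ by impermeability, and $\vc w_h \cdot \vc n = 0$ on $\del \Omega$, the pressure contributions $q_h \Id\, \vc n$ drop out of both boundary integrals. Using the divergence theorem together with $\div \vc w_h = 0$, one obtains
\begin{align*}
I &= 2 \int_{\mf_h} |\bD(\vc w_h)|^2 \dd x + \int_{\mf_h} (\Delta \vc w_h - \nabla q_h) \cdot \vc w_h \dd x
\end{align*}
(with the sign convention on $\vc n$ fixed consistently). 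By Lemma~\ref{lem3}, the first term is comparable to $|\log h|$, while the second term is controlled by the remainder estimate applied to $\vc v = \vc w_h$, whose relevant norms are at most of order $\sqrt{|\log h|}$ again by Lemma~\ref{lem3}. Hence the remainder is dominated by the leading energy term for $h$ small enough, giving the two-sided control.

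The main obstacle is tracking the cancellations between $\Delta \vc w_h$ and $\nabla q_h$ given the more intricate form of $\phi_h$ in the slip setting: the coefficient $c(r)$ now involves both $\psi_h$ and the factor $\sqrt{1-r^2}/(1-2r^2)$ arising from the second slip condition on the sphere, which is absent in Lemma~\ref{lem:singRemove}. To close the estimates one needs to exploit the identities \eqref{relDerPhi}, \eqref{slipOm}, \eqref{slipS} to verify that, after subtracting $\nabla q_h$, the remaining terms really produce only a logarithmic growth in $h$ rather than polynomial blow-up, which is what separates the borderline ball-with-slip case from the no-slip ball where no collision is possible.
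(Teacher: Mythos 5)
Your proposal takes essentially the same route the paper sketches: take
\begin{align*}
q_h(r, x_3) = \del_{r 3}^2 \phi_h(r,x_3) - \int_0^r \del_{333}^3 \phi_h(t, x_3) \dd t
\end{align*}
exactly as in Lemma~\ref{lem:singRemove} (the paper says this verbatim just after the statement and then refers the bookkeeping to \cite[Appendix~A]{GVHW2015}), use \eqref{relDerPhi} and the slip form \eqref{formPhi1} of $\phi_h$ to cancel the worst $\psi_h^{-3}$ and $\psi_h^{-4}$ singularities, split the remainder estimate into $\Omega_{h, r_0}$ and its complement via a slip-adapted Korn inequality (which produces the $\|\vc v\|_{L^2(\del \Omega)}$ trace term), and obtain the two-sided $|\log h|$ bound by integration by parts plus Lemma~\ref{lem3}. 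One imprecision worth tightening: the clean identity $I = 2\int_{\mf_h}|\bD(\vc w_h)|^2\dd x + \int_{\mf_h}(\Delta\vc w_h-\nabla q_h)\cdot\vc w_h\dd x$ holds only when $\vc w_h$ satisfies homogeneous Dirichlet conditions on $\del\Omega$ and equals $\vc e_3$ on $\del\ms_h$. In the slip setting $\vc w_h$ is neither, so the integration by parts leaves additional boundary integrals over $\del\ms_h$ and $\del\Omega$ encoding the tangential slip residuals; these are precisely what the first bound of Lemma~\ref{lem3} controls (they are $\mathcal{O}(1)$, not $\mathcal{O}(|\log h|)$), so the conclusion stands, but your stated identity should carry them explicitly rather than absorb them into a remark about sign conventions.
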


As a matter of fact, the pressure will be determined as before, meaning
\begin{align*}
q_h(r, x_3) = \del_{r 3}^2 \phi_h(r,x_3) - \int_0^r \del_{333}^3 \phi_h(t, x_3) \dd t.
\end{align*}
The same calculations as made for the no-slip case (but using the above constructed ``slip-test-function'') yield the desired bounds; we therefore do not repeat them and refer to \cite[Appendix~A]{GVHW2015} for details.\\

Also, the proof of Theorem~\ref{thm:slip} follows essentially the same lines as for the no-slip case, but this time using the weaker (logarithmic) bounds obtained in Lemma~\ref{lem3}. Repeating the steps done in Chapter~\ref{sec:62}, the final outcome is an inequality of the form
\begin{align}\label{ineqSlip1}
\mu \int_{h_0}^{h(T)} \mathcal{D}(s) \dd s + \frac{4\pi}{3} (\rho_\ms - \rho_\mf) g T \leq C_0 (1 + \sqrt{T})
\end{align}
for some generic constant $C_0>0$ that is independent of time, where the \emph{drag} $\mathcal{D}$ is the energy of the test function $\vc w_h$, given by
\begin{align*}
\mathcal{D}(h) = \mathcal{E}_h(\vc w_h) = \int_{\mf_h} |\nabla \vc w_h|^2 \dd x &+ (1+\beta_\ms)^{-1} \int_{\del \ms_h} |(\vc w_h - \vc e_3)\times \vc n|^2 \dd \sigma \\
&+ \beta_\Omega^{-1} \int_{\del \Omega} |\vc w_h \times \vc n|^2 \dd \sigma.
\end{align*}

This formula is the counterpart to \eqref{drag} for slip boundary conditions. Moreover, the bounds obtained for $\vc w_h$ yield a drag force of order
\begin{align*}
|\mathcal{D}(h)| \lesssim |\log h|,
\end{align*}
hence
\begin{align}\label{ineqSlip2}
\int_{h_0}^{h(T)} \mathcal{D}(s) \dd s \gtrsim - \int_0^{\sup\limits_{t \in (0,T)} h(t)} |\mathcal{D}(s)| \dd s \gtrsim -1.
\end{align}
As the fluid's density is smaller than the solid's one, \eqref{ineqSlip1}--\eqref{ineqSlip2} yield collision in finite time.\\

For the mixed case, we focus just on $\beta_\ms = 0$ and $\beta_\Omega>0$. The reverse case follows similar lines. Rigorously, one has to re-define the test function $\vc w_h$, and estimate its corresponding norms accordingly; however, it turns out that this is equivalent in taking the limit $\alpha_\ms \to \infty$ (respectively $\beta_\ms \to 0$) in the definition of $\vc w_h$, and hence the corresponding norms change as one shall expect. This limit procedure yields as in the no-slip case
\begin{align*}
\|\vc w_h\|_{L^2(\mf_h)}^2 + h \|\nabla \vc w_h\|_{L^2(\mf_h)}^2 \lesssim 1 &\lesssim h \|\bD(\vc w_h)\|_{L^2(\mf_h)}^2,
\end{align*}
and for the drag force, one gets
\begin{align*}
\int_{h_0}^{h(T)} \mathcal{D}(s) \dd s \lesssim 1+T,
\end{align*}
yielding
\begin{align*}
|\log h(T)| \lesssim 1+T
\end{align*}
and hence collision cannot occur. More details regarding the calculations can be found in \cite[Section~4]{GVHW2015}.

\subsection{Appendix: tangential slips}\label{app:slip}
Let us show how the equations \eqref{slipOm} and \eqref{slipS} arise. First, we recall the gradient in cylindrical coordinates:
\begin{Lemma}
We have
\begin{align*}
\nabla = \vc e_r \del_r + \frac{1}{r} \vc e_\theta \del_\theta + \vc e_3 \del_3
\end{align*}
as well as
\begin{align*}
\nabla \vc w = \vc e_r \otimes \nabla (\vc w \cdot \vc e_r) + \vc e_\theta \otimes \nabla (\vc w\cdot \vc e_\theta) + \vc e_3 \otimes \nabla (\vc w \cdot \vc e_3) + \frac{1}{r} \big( (\vc w \cdot \vc e_r) \vc e_\theta - (\vc w \cdot \vc e_\theta) \vc e_r \big) \otimes \vc e_\theta.
\end{align*}
\end{Lemma}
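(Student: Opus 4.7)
The plan is to treat the two formulae in sequence: first derive the scalar gradient representation from the chain rule and the standard transition matrix between the Cartesian and cylindrical frames, then obtain the vector gradient by writing $\vc w$ in the cylindrical basis and differentiating term by term. The main point is that, unlike the Cartesian basis, the cylindrical frame vectors $\vc e_r$ and $\vc e_\theta$ depend on the position $x$, so the product rule produces the correction term $\frac{1}{r}\big((\vc w \cdot \vc e_r)\vc e_\theta - (\vc w\cdot \vc e_\theta)\vc e_r\big) \otimes \vc e_\theta$.

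For the first identity, I would start from $x_1 = r \cos \theta$, $x_2 = r\sin\theta$, $x_3 = x_3$, and invert the Jacobian to obtain
\begin{align*}
\del_{x_1} = \cos \theta \, \del_r - \frac{\sin \theta}{r}\del_\theta, \quad \del_{x_2} = \sin \theta\, \del_r + \frac{\cos\theta}{r}\del_\theta, \quad \del_{x_3}=\del_3.
\end{align*}
Combining this with $\vc e_1 = \cos\theta \, \vc e_r - \sin\theta\, \vc e_\theta$ and $\vc e_2 = \sin\theta \, \vc e_r + \cos\theta\,\vc e_\theta$ in $\nabla = \vc e_1 \del_{x_1}+\vc e_2 \del_{x_2}+\vc e_3\del_{x_3}$ and using $\sin^2\theta+\cos^2\theta=1$ immediately yields $\nabla = \vc e_r \del_r + \frac{1}{r}\vc e_\theta \del_\theta + \vc e_3 \del_3$.

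For the second identity, the key preliminary computation is the gradient of the cylindrical basis vectors themselves. Using the above expression for $\nabla$, one checks that $\nabla \theta = \frac{1}{r}\vc e_\theta$, and by writing $\vc e_r(\theta)=\cos\theta\,\vc e_1+\sin\theta\,\vc e_2$ and $\vc e_\theta(\theta)=-\sin\theta\,\vc e_1+\cos\theta\,\vc e_2$ one obtains
\begin{align*}
\nabla \vc e_r = \vc e_\theta \otimes \nabla \theta = \frac{1}{r}\,\vc e_\theta \otimes \vc e_\theta, \qquad \nabla \vc e_\theta = -\vc e_r \otimes \nabla \theta = -\frac{1}{r}\,\vc e_r \otimes \vc e_\theta, \qquad \nabla \vc e_3 = 0.
\end{align*}

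Decomposing $\vc w = w_r \vc e_r + w_\theta \vc e_\theta + w_3 \vc e_3$ with $w_r = \vc w\cdot\vc e_r$, $w_\theta = \vc w\cdot\vc e_\theta$, $w_3 = \vc w\cdot\vc e_3$, and applying the product rule $\nabla(f\vc a) = \vc a \otimes \nabla f + f\nabla \vc a$ to each summand, the three ``Cartesian-like'' contributions $\vc e_r\otimes \nabla w_r$, $\vc e_\theta\otimes \nabla w_\theta$, $\vc e_3\otimes \nabla w_3$ appear directly, while the basis-vector derivatives combine into
\begin{align*}
w_r \nabla\vc e_r + w_\theta \nabla \vc e_\theta = \frac{1}{r}\big(w_r \vc e_\theta - w_\theta \vc e_r\big)\otimes \vc e_\theta,
\end{align*}
which is exactly the announced correction. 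No step is really an obstacle here; the only subtlety is the consistent bookkeeping of which slot of the tensor product is for the gradient direction and which is for the vector component, and being careful that $\vc e_r$, $\vc e_\theta$ are only functions of $\theta$ so that their Cartesian gradients reduce to $\nabla\theta$-multiples.
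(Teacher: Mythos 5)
Your proof is correct; this is the standard derivation of the gradient in cylindrical coordinates via the chain rule and the angular dependence of $\vc e_r,\vc e_\theta$, and your tensor-index bookkeeping (vector component in the first slot, gradient direction in the second) is consistent with the convention that yields the stated formula. The paper states this lemma without proof as a recall of a classical fact, so there is no authorial argument to compare against.
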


Recalling $\phi = \phi(r, x_3)$ is independent of the angle $\theta$, as well as $\vc w = -\del_3 \phi \vc e_r + \frac{1}{r} \del_r(r\phi) \vc e_3$, we find
\begin{align*}
\nabla \vc w &= -\del_{r 3}^2 \phi \vc e_r \otimes \vc e_r - \del_{33}^2 \phi \vc e_r \otimes \vc e_3 - \frac{1}{r} \del_3 \phi \vc e_\theta \otimes \vc e_\theta + \del_r \bigg( \frac{1}{r} \del_r(r \phi) \bigg) \vc e_3 \otimes \vc e_r + \frac{1}{r} \del_r (r \del_3 \phi) \vc e_3 \otimes \vc e_3. 
\end{align*}
Hence, using also that $(\vc a \otimes \vc b) \cdot \vc c = (\vc b \cdot \vc c) \vc a$, we get
\begin{align*}
2 \bD(\vc w) \vc n &= -2\del_{r 3}^2 \phi (\vc e_r \cdot \vc n) \vc e_r + \bigg[ \del_r \bigg( \frac{1}{r} \del_r(r \phi) \bigg) - \del_{33}^2 \phi \bigg] \big( (\vc e_r \cdot \vc n)\vc e_3 + (\vc e_3 \cdot \vc n) \vc e_r \big)\\
&\quad + \frac{2}{r} \del_r (r \del_3 \phi) (\vc e_3 \cdot \vc n) \vc e_3 - \frac{2}{r} \del_3 \phi (\vc e_\theta \cdot \vc n) \vc e_\theta.
\end{align*}
In turn, we have on $\del \Omega$ where $\vc n = - \vc e_3$ and by the relation $\vc e_r \times \vc e_3 = -\vc e_\theta$
\begin{align*}
2[\bD(\vc w) \vc n] \times \vc n |_{\del \Omega} = \bigg[ \del_{33}^2 \phi - \del_r \bigg( \frac{1}{r} \del_r (r \phi) \bigg) \bigg] \vc e_\theta.
\end{align*}
Taking into account that $\bS(\vc w) = 2 \mu \bD(\vc w)$ and that on $\del \Omega$, we have $\phi(r, 0) = 0$, this yields
\begin{align*}
0 = (\beta_\Omega [\bS(\vc w)\vc n] + \vc w) \times \vc n |_{\del \Omega} = [\mu \beta_\Omega \del_{33}^2 \phi - \del_3 \phi](r, 0) \vc e_\theta,
\end{align*}
which is precisely \eqref{slipOm}. Similarly, on $\del \ms$ where $\vc n = -r \vc e_r + \frac{r}{\psi'} \vc e_3$, we find
\begin{align*}
(\vc e_r \cdot \vc n) \vc e_r \times \vc n &= \frac{r^2}{\psi'}\vc e_\theta, &
(\vc e_r \cdot \vc n) \vc e_3 \times \vc n &= r^2 \vc e_\theta,\\
(\vc e_3 \cdot \vc n) \vc e_r \times \vc n &= - \bigg(\frac{r}{\psi'} \bigg)^2 \vc e_\theta, &
(\vc e_3 \cdot \vc n) \vc e_3 \times \vc n &= -\frac{r^2}{\psi'}\vc e_\theta.
\end{align*}
Hence,
\begin{align*}
2 [\bD(\vc w) \vc n] \times \vc n |_{\del \ms} = \bigg[ -2\del_{r 3}^2 \phi \frac{r^2}{\psi'} + \bigg[ \del_r \bigg( \frac{1}{r} \del_r (r \phi) \bigg) - \del_{33}^2 \phi \bigg] \bigg( r^2 - \frac{r^2}{|\psi'|^2} \bigg) - 2 \del_r (r \del_3 \phi) \frac{r}{\psi'} \bigg] \vc e_\theta.
\end{align*}
Moreover,
\begin{align*}
(\vc w - \vc e_3) \times \vc n = \bigg[ \del_3 \phi \frac{r}{\psi'} - (\del_r(r \phi)-r) \bigg] \vc e_\theta.
\end{align*}
Using the specific form $\phi(r, \psi) = r/2$ on $\del \ms$, this yields
\begin{align*}
(\vc w - \vc e_3) \times \vc n &= \frac{r}{\psi'} \del_3 \phi \vc e_\theta,\\
2[\bD(\vc w) \vc n] \times \vc n &= -\bigg[ 2\del_{r 3}^2 \phi \frac{r^2}{\psi'} + \del_{33}^2 \phi \bigg( r^2 - \frac{r^2}{|\psi'|^2} \bigg) + 2 \del_r (r \del_3 \phi) \frac{r}{\psi'} \bigg] \vc e_\theta,
\end{align*}
in turn,
\begin{align*}
0 &= [(\vc w - \vc e_3) \times \vc n + 2 \mu \beta_\ms [\bD(\vc w) \vc n] \times \vc n] \cdot \vc e_\theta \\
&= \frac{r}{\psi'} \del_3 \phi - \mu \beta_\ms \bigg[ 2\del_{r 3}^2 \phi \frac{r^2}{\psi'} + \del_{33}^2 \phi \bigg( r^2 - \frac{r^2}{|\psi'|^2} \bigg) + 2 \del_r (r \del_3 \phi) \frac{r}{\psi'} \bigg]\\
&= - \del_{33}^2 \phi \mu \beta_\ms \bigg( r^2 - \frac{r^2}{|\psi'|^2} \bigg) + \frac{r}{\psi'} \bigg( \del_3 \phi (1 - 2 \mu \beta_\ms) - 4 r \mu \beta_\ms \del_{r3}^2 \phi \bigg).
\end{align*}
Using \eqref{relDerPhi}, we get for the last term
\begin{align*}
\del_3 \phi (1-2 \mu \beta_\ms) - 4r \mu \beta_\ms \del_{r 3}^2 \phi = \del_3 \phi (1-2 \mu \beta_\ms) + 4 \mu \beta_\ms \del_3 \phi = \del_3 \phi (1 + 2 \mu \beta_\ms).
\end{align*}
By \eqref{formPhi1}, we find
\begin{align*}
\del_3 \phi(r, \psi) &= -c(4 \mu \beta_\Omega + \psi) + \frac{3r}{2 \psi},\\
\del_{33}^2 \phi(r, \psi) &= -4c \frac{3 \mu \beta_\Omega + \psi}{\psi} + \frac{3r}{\psi^2}.
\end{align*}
Hence, recalling $\psi = 1+h-\sqrt{1-r^2}$, we are left with
\begin{align*}
0 &= - \mu \beta_\ms \del_{33}^2 \phi \bigg( r^2 - \frac{r^2}{|\psi'|^2} \bigg) + (1+2\mu \beta_\ms) \frac{r}{\psi'} \del_3 \phi \\
&= \mu \beta_\ms(1-2r^2) \left[ \del_{33}^2 \phi + \bigg( 2 + \frac{1}{\mu \beta_\ms} \bigg) \frac{\sqrt{1-r^2}}{1-2r^2} \del_3 \phi \right]
\end{align*}
and consequently
\begin{align*}
0 &= -4c \frac{3 \mu \beta_\Omega + \psi}{\psi} + \frac{3r}{\psi^2} + \bigg( 2 + \frac{1}{\mu \beta_\ms} \bigg) \frac{\sqrt{1-r^2}}{1-2r^2} \bigg( -c (4 \mu \beta_\Omega + \psi) + \frac{3r}{2\psi} \bigg)\\
&= -c \bigg( 4 \frac{3 \mu \beta_\Omega + \psi}{\psi} + \bigg( 2 + \frac{1}{\mu \beta_\ms} \bigg) \frac{\sqrt{1-r^2}}{1-2r^2} (4\mu \beta_\Omega + \psi) \bigg) + \frac{3r}{\psi^2} + \bigg( 2 + \frac{1}{\mu \beta_\ms} \bigg) \frac{\sqrt{1-r^2}}{1-2r^2} \frac{3r}{2\psi},
\end{align*}
so finally
\begin{align*}
c &= \frac{\frac{3r}{\psi^2} + \big( 2 + \frac{1}{\mu \beta_\ms} \big) \frac{\sqrt{1-r^2}}{1-2r^2} \frac{3r}{2\psi}}{ 4 \frac{3 \mu \beta_\Omega + \psi}{\psi} + \big( 2 + \frac{1}{\mu \beta_\ms} \big) \frac{\sqrt{1-r^2}}{1-2r^2} (4\mu \beta_\Omega + \psi) } \\
&= \frac{3r}{2\psi} \frac{2 \mu \beta_\ms + \big( 2 \mu \beta_\ms + 1 \big) \frac{\sqrt{1-r^2}}{1-2r^2} \psi}{ (4 \mu \beta_\Omega + \psi) ( 4 \mu \beta_\ms + \big( 2 \mu \beta_\ms + 1 \big) \frac{\sqrt{1-r^2}}{1-2r^2} \psi) - 4\mu^2 \beta_\Omega \beta_\ms }.
\end{align*}

\section{Tresca's boundary condition}\label{sec:Tresca}
Letting the fluid slip on the boundary makes collision happen, however, this model is not realistic in the sense that the fluid slips regardless of the boundary's shear. Another model to handle this issue are the so-called Tresca\footnote{after Henri Tresca (1814--1885)} boundary conditions as a version of slip boundary conditions, but dependent on the rate of shear exerted by the fluid. More precisely, as stated in \cite{HillairetTakahashi2021}, ``[in] these boundary conditions, the fluid sticks to the interface up to a shear-rate threshold that the fluid is prevented to exceed by allowing slip on the interface. The boundaries of the fluid domain split then in a zone of small shear rates where Dirichlet boundary conditions are imposed and high shear rates where a type of Navier boundary conditions are imposed (but with an unknown slip length which encodes that the shear rate cannot
exceed the threshold value).'' We shall show that under such modified slip conditions, collision can still occur, provided the body is close enough to its container's wall and its mass is large enough.\\

To precise the presentation, we focus on the following setting: let $\Omega = (-L, L) \times (0, L') \subset \R^2$ with $L>1, \ L'>2$, and let the solid $\ms = \ms_h = B_1((h+1) \vc e_2)$ be the unit disk centred as $(h+1)\vc e_2$. Denote as usual $\mf = \Omega \setminus \ms$. The equations governing the fluid's and solid's motion are given by
\begin{align*}
\begin{cases}
\rho_\mf (\del_t \vu + \vu \cdot \nabla \vu) - \div \bS + \nabla p = 0 & \text{in } \mf,\\
\div \vu = 0 & \text{in } \mf,\\
m \ddot{h} = - \int_{\del \ms} (\bS(\nabla \vu) - p \Id) \vc n \cdot \vc e_2 \dd \sigma - (m - \pi \rho_\mf) g,\\
h(0)=h_0, \ \dot{h}(0) = \dot{h}_0, \ \vu(0, \cdot) = \vu_0 & \text{in } \mf(0),
\end{cases}
\end{align*}
where $\bS = 2 \mu \bD(\vu) = \mu (\nabla \vu + \nabla^T \vu)$, and $m = \pi \rho_\ms$ is the solid's mass. The no-penetration boundary conditions are as before
\begin{align*}
(\vu - \dot{h} \vc e_2)\cdot \vc n|_{\del \ms} &= 0, & \vu \cdot \vc n|_{\del \Omega} &= 0.
\end{align*}
For the tangential parts, this time we have on $\del \ms$
\begin{align*}
\begin{cases}
(\vu - \dot{h} \vc e_2) \times \vc n = 0 & \text{if } |[\bS\vc n] \times \vc n|<\varsigma^\ast,\\
\exists \beta \geq 0: \ (\vu - \dot{h}\vc e_2) \times \vc n = - \beta [\bS \vc n] \times \vc n & \text{if } |[\bS\vc n] \times \vc n| \geq \varsigma^\ast,
\end{cases}
\end{align*}
and similar on $\del \Omega$
\begin{align*}
\begin{cases}
\vu \times \vc n = 0 & \text{if } |[\bS\vc n] \times \vc n|<\varsigma^\ast,\\
\exists \beta \geq 0: \ \vu \times \vc n = - \beta [\bS \vc n] \times \vc n & \text{if } |[\bS\vc n] \times \vc n| \geq \varsigma^\ast.
\end{cases}
\end{align*}
Here, the \emph{shear threshold} $\varsigma^\ast>0$, and without loss of generality, we may set $\varsigma^\ast=1$ in the sequel.

Introducing the function
\begin{align*}
\vc w^\ast = \begin{cases}
\vc e_2 & \text{on } \del \ms,\\
0 & \text{on } \del \Omega,
\end{cases}
\end{align*}
these boundary conditions can be shortly written as
\begin{align}\label{TrescaBC}
\begin{cases}
(\vu - \dot{h}\vc w^\ast) \cdot \vc n|_{\del \mf} = 0, & \\
(\vu - \dot{h}\vc w^\ast) \times \vc n|_{\del \mf} = 0 & \text{if } |[\bS \vc n] \times \vc n| < 1,\\
\exists
\beta \geq 0: \ (\vu - \dot{h} \vc w^\ast) \times \vc n|_{\del \mf} = - \beta [\bS \vc n] \times \vc n|_{\del \mf} & \text{if } |[\bS \vc n] \times \vc n| \geq 1.
\end{cases}
\end{align}

The proof that contact occurs relies on several estimates including an adapted Korn inequality and bounds obtained from the energy inequality. These bounds then enable the authors in \cite{HillairetTakahashi2021} to prove the following two statements:
\begin{Lemma}[{\cite[Lemma~2.5]{HillairetTakahashi2021}}]\label{lem:Tresca1}
Let $\dot{h}_0 = 0$, $\vu_0 \in L^2(\mf(0))$ with $\div \vu_0 = 0$, and $\rho_\mf>0$ be given. As long as $h \leq 1$, there holds
\begin{align*}
\dot{h}(t) \leq - \frac{gt}{2} + C^\sharp g h_0 + \frac{C^\flat}{m} \int_0^t |\log h(s)| \dd s,
\end{align*}
where the constants $C^\sharp, C^\flat > 0$ are independent of $m$ and $h_0$.
\end{Lemma}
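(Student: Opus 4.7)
The plan is to derive an effective Newton-type equation for the ball by testing the weak momentum formulation with a stream-function-based test function $\vc w_h$ adapted to the Tresca boundary conditions, then integrate twice in time using the energy estimate to extract the logarithmic drag contribution.

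First, I would construct a divergence-free test function $\vc w_h \in W^{1,2}(\mf_h)$ satisfying $\vc w_h = \vc e_2$ on $\del \ms_h$ and $\vc w_h \cdot \vc n = 0$ on $\del \Omega$, mimicking the slip-type construction from the preceding section but in two dimensions and for $\psi_h(r) = 1+h-\sqrt{1-r^2}$. As in Lemma~\ref{lem3} (adapted from 3D to 2D), such a function can be built so that
\begin{align*}
\|\vc w_h\|_{L^2(\mf_h)} + \|(\vc w_h - \vc e_2) \times \vc n\|_{L^2(\del \ms_h)} \lesssim 1, \qquad \|\nabla \vc w_h\|_{L^2(\mf_h)}^2 \sim |\log h|,
\end{align*}
together with the analogous logarithmic control of $\del_h \vc w_h$. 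The borderline logarithmic scaling is what ultimately produces the $|\log h|$ factor in the final estimate.

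Second, I would test the momentum equation against $\vc w_h$, integrate by parts using $\div \vc w_h = 0$ to kill the pressure away from $\del \ms_h$, and exploit the Tresca conditions \eqref{TrescaBC} on $\del \Omega$. The crucial point is that, since $\vc w_h \cdot \vc n = 0$ on $\del \Omega$ and the tangential part of $\vc w_h$ can be absorbed into the slip term (whose sign is dissipative whenever the threshold is exceeded, and which vanishes otherwise), the boundary contribution on $\del \Omega$ is controlled by $\|\vc w_h\|_{L^2(\del \Omega)}$ and the threshold $\varsigma^\ast=1$. Combining this with Newton's equation for the ball gives an identity of the form
\begin{align*}
m\ddot h + (m-\pi\rho_\mf)g + \int_{\mf_h} \rho_\mf \del_t \vu \cdot \vc w_h \dd x + 2\mu \int_{\mf_h} \bD(\vu):\bD(\vc w_h) \dd x = R(t),
\end{align*}
where $R(t)$ collects the convective term, the boundary-shear contributions bounded by the threshold, and error terms controllable by the fluid energy.

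Third, I would integrate this identity from $0$ to $t$. The term involving $\del_t \vu$ is treated by
\begin{align*}
\int_0^t \int_{\mf_h} \rho_\mf \del_t \vu \cdot \vc w_h \dd x \dd s = \left[\int_{\mf_h} \rho_\mf \vu \cdot \vc w_h \dd x\right]_0^t - \int_0^t \dot h(s) \int_{\mf_h} \rho_\mf \vu \cdot \del_h \vc w_h \dd x \dd s,
\end{align*}
where the boundary values at $t=0,t$ are controlled by $\|\vu_0\|_{L^2}$ and $\|\vu(t)\|_{L^2}$, and the latter is in turn bounded through the energy inequality by initial data plus $g h_0$ (the potential work done by gravity while the body drops from height $h_0$, using $h \leq 1$). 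The diffusive term is estimated via Cauchy--Schwarz and the key bound $\|\bD(\vc w_h)\|_{L^2} \lesssim \sqrt{|\log h|}$, and then Young's inequality $|\bD(\vu)|\sqrt{|\log h|} \leq \frac12|\bD(\vu)|^2 + \frac12|\log h|$ combined with the energy estimate $\int_0^T \|\bD(\vu)\|_{L^2}^2 \dd s \lesssim \|\vu_0\|_{L^2}^2 + gh_0$ yields the desired $\int_0^t |\log h(s)| \dd s$ contribution. Finally, dividing through by $m$, noting that $(m-\pi\rho_\mf)/m \geq 1/2$ for the large-mass regime in which the lemma is applied, turns $-(m-\pi\rho_\mf)gt/m$ into $-gt/2$.

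The main technical obstacle is the subtle switching behavior of the Tresca boundary conditions on $\del \mf_h$: one must argue that the unknown slip coefficient $\beta \geq 0$, paired with the test function, contributes only a \emph{non-negative} boundary term (absorbable into the left-hand side), while on the no-slip portion one has $\vc w_h = \vc w^\ast = \vu/\dot h$ producing a manageable remainder. A secondary difficulty is obtaining the $h$-independent Korn-type inequality needed to convert $\|\bD(\vu)\|_{L^2}$ bounds into $\|\nabla \vu\|_{L^2}$ bounds in the shrinking domain $\mf_h$, which in this Tresca setting is more delicate than for Dirichlet data and is where the geometric restriction $h \leq 1$ enters.
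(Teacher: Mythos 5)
There is a genuine gap in the test-function construction, and it is the crux of the whole argument. You impose $\vc w_h = \vc e_2$ (full Dirichlet) on $\del \ms_h$ and only the no-penetration condition $\vc w_h \cdot \vc n = 0$ on $\del \Omega$. This is the ``mixed'' configuration of Theorem~\ref{thm:slip}: no slip on the solid, slip on the wall. As the paper computes explicitly in that setting ($\beta_\ms = 0$, $\beta_\Omega > 0$), the corresponding drag scales like $\|\nabla \vc w_h\|_{L^2(\mf_h)}^2 \sim h^{-1}$, not $|\log h|$, and in fact that configuration \emph{forbids} collision. Your claimed estimate $\|\nabla \vc w_h\|_{L^2(\mf_h)}^2 \sim |\log h|$ therefore does not hold for the function you describe, and the logarithmic drag contribution $\int_0^t |\log h(s)| \dd s$ in the lemma cannot be produced this way. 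The proposal is also internally inconsistent: you claim $\vc w_h = \vc e_2$ on $\del \ms_h$ and in the same breath state a \emph{non-trivial} bound $\|(\vc w_h - \vc e_2) \times \vc n\|_{L^2(\del \ms_h)} \lesssim 1$, which would be identically zero under your own boundary condition; that bound is copied from the slip lemma~\ref{lem3}, which makes different assumptions than the ones you wrote down.

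The test function that actually closes the argument, as the paper indicates, satisfies only the normal (impermeability) condition $(\vc w_h - \vc w^\ast)\cdot \vc n = 0$ on \emph{all} of $\del \mf$, leaving the tangential part free on both $\del \ms_h$ and $\del \Omega$, and in addition imposes the shear-free condition $[\bS(\nabla \vc w_h)\vc n] \times \vc n = 0$ on $\del \Omega$. This is the doubly-slippery configuration, which is exactly what yields the borderline $|\log h|$ drag. With that choice the problematic boundary pairings on $\del \ms_h$ do appear (your full-Dirichlet choice would suppress them, but at the cost of the wrong drag), and they are the very terms one controls using the Tresca dichotomy: on the stick portion the shear stress is bounded by the threshold, while on the slip portion the unknown $\beta \geq 0$ gives the term a favorable sign. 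The remainder of your outline --- testing the momentum equation, integrating in time, transferring $\del_t$ to the test function, controlling $\|\bD(\vu)\|_{L^2}^2$ through the energy estimate with the $g h_0$ potential-work bookkeeping, the adapted Korn inequality issue, and the $(m - \pi\rho_\mf)/m \geq 1/2$ reduction to $-gt/2$ --- is compatible with the paper's sketch, but the entire estimate pivots on getting $\mathcal{D}(h) \lesssim |\log h|$, and that requires the slip-type boundary condition on the obstacle as well as on the wall.
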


\begin{Lemma}[{\cite[Lemma~2.6]{HillairetTakahashi2021}}]\label{lem:Tresca2}
Let $\sigma \in (0, 1/2)$ and define a sequence of times $(t_n)_{n \geq 0}$ via
\begin{align*}
\begin{cases}
t_0 = \frac14 \sqrt{\frac{h_0}{g}},\\
t_{n+1} = t_n + \sigma \frac{h(t_n)}{2 \sqrt{g h_0}}.
\end{cases}
\end{align*}
This sequence is well defined for $h_0$ sufficiently small and $m h_0$ sufficiently large. Moreover, there holds
\begin{align}\label{TrescaH}
\frac{h_0}{2} (1-\sigma)^n \leq h(t_n) \leq \frac32 h_0 \Big( 1 - \frac{\sigma^2}{32} \Big)^n.
\end{align}
\end{Lemma}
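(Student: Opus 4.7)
The plan is to prove both bounds by induction on $n$, simultaneously verifying that $h(t_n)>0$ so that the sequence is well defined. Two complementary estimates drive the argument. First, the upper bound on $\dot h$ from Lemma~\ref{lem:Tresca1}, which for $t\gtrsim t_0$ should yield $\dot h(t)\leq -c\sqrt{gh_0}$ once the explicit drift term $-gt/2$ dominates both $C^\sharp gh_0$ (small when $h_0$ is small) and the error $(C^\flat/m)\int_0^t|\log h|\,ds$ (small when $m$ is large). Second, a crude Lipschitz-type control $|\dot h(t)|\lesssim 1/\sqrt{m}$ coming from the energy inequality: the initial kinetic energy is fixed and the work of gravity over times of order $\sqrt{h_0/g}$ is $O(\sqrt{gh_0})$, so $\tfrac{m}{2}|\dot h|^2$ stays uniformly bounded on the time-interval of interest.

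For the base case $n=0$, the Lipschitz bound combined with $\dot h(0)=0$ gives $|h(t_0)-h_0|\lesssim \sqrt{h_0/(mg)}$, which is $\leq h_0/2$ as soon as $mh_0$ is large enough, establishing $h_0/2\leq h(t_0)\leq 3h_0/2$. The lower bound in the inductive step follows the same line: on $[t_n,t_{n+1}]$, the Lipschitz estimate gives
\begin{equation*}
h(t_{n+1})\geq h(t_n)-\tfrac{C}{\sqrt m}(t_{n+1}-t_n) = h(t_n)\Big(1-\tfrac{C\sigma}{2\sqrt{mgh_0}}\Big),
\end{equation*}
and for $mgh_0$ large the bracket is $\geq 1-\sigma$, which combined with the inductive hypothesis $h(t_n)\geq \tfrac{h_0}{2}(1-\sigma)^n$ closes the induction.

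For the upper bound, I would apply Lemma~\ref{lem:Tresca1} on $[t_n,t_{n+1}]$: since $t\geq t_0=\tfrac14\sqrt{h_0/g}$, the first term satisfies $-gt/2\leq -\tfrac18\sqrt{gh_0}$, while $C^\sharp gh_0=C^\sharp\sqrt{gh_0}\cdot\sqrt{gh_0}$ is dominated by $\tfrac1{32}\sqrt{gh_0}$ for $h_0$ small. The log-integral is then absorbed by requiring $m$ large enough. Combining these yields $\dot h(t)\leq -\tfrac1{16}\sqrt{gh_0}$ on the interval. Integrating and using $t_{n+1}-t_n=\sigma h(t_n)/(2\sqrt{gh_0})$ produces
\begin{equation*}
h(t_{n+1})-h(t_n)\leq -\tfrac{\sigma}{32}h(t_n),
\end{equation*}
and since $\sigma<1/2$ implies $1-\sigma/32\leq 1-\sigma^2/32$, the inductive hypothesis $h(t_n)\leq \tfrac32 h_0(1-\sigma^2/32)^n$ gives the desired upper bound at step $n+1$.

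The delicate point will be the uniform-in-$n$ control of $\int_0^{t_n}|\log h(s)|\,ds$, since $|\log h|$ blows up as contact approaches. However, the inductive upper bound forces the time steps to decay geometrically, $t_{n+1}-t_n\sim h_0(1-\sigma^2/32)^n/\sqrt{gh_0}$, while the inductive lower bound gives $|\log h(t_k)|\lesssim |\log h_0|+k|\log(1-\sigma)^{-1}|$, so the telescoped sum $\sum_k(t_{k+1}-t_k)|\log h(t_k)|$ is a convergent geometric-times-linear series uniformly bounded by a constant depending only on $h_0,\sigma,g$. Fixing the threshold on $m$ at the outset large enough to swamp this constant closes the loop. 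Well-definedness of the entire sequence follows from $h(t_n)\geq \tfrac{h_0}{2}(1-\sigma)^n>0$, and the geometric upper bound implies $t_\infty=\lim_n t_n<\infty$ with $h(t_\infty)=0$, so contact occurs in finite time.
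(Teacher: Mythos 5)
Your overall strategy matches what the paper describes: induction on $n$, with the upper bound obtained by integrating the estimate of Lemma~\ref{lem:Tresca1} over the step $[t_n,t_{n+1}]$ (using $h_0$ small to dominate $C^\sharp gh_0$ and $m$ large to absorb the log-integral), the lower bound from a uniform control on $|\dot h|$, and the log-integral handled uniformly in $n$ by a convergent geometric-times-linear series -- that last point is precisely the $m$-threshold the paper displays. Your upper-bound step even produces the sharper decay factor $(1-\sigma/32)$ rather than the stated $(1-\sigma^2/32)$, which is fine since the former implies the latter for $\sigma\leq 1$.

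The gap is in the justification of the lower bound. You claim $|\dot h|\lesssim 1/\sqrt m$ on the grounds that \emph{``$\tfrac m2|\dot h|^2$ stays uniformly bounded.''} That is not what the energy balance gives. The work done by the net gravitational force is $(m-\pi\rho_\mf)\,g\,(h_0-h(t))=O(mgh_0)$, which grows \emph{linearly} in $m$ at fixed $h_0$, so
\begin{align*}
\tfrac m2|\dot h|^2\leq E_0+(m-\pi\rho_\mf)gh_0,\qquad\text{i.e.}\qquad
\|\dot h\|_{L^\infty}\leq\sqrt{2E_0/m}+\sqrt{2gh_0},
\end{align*}
and the $\sqrt{2gh_0}$ piece does \emph{not} vanish as $m\to\infty$. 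Consequently your displayed estimate
\begin{align*}
h(t_{n+1})\geq h(t_n)\Bigl(1-\tfrac{C\sigma}{2\sqrt{mgh_0}}\Bigr)
\end{align*}
is wrong as stated; the correct formula is
\begin{align*}
h(t_{n+1})\geq h(t_n)\Bigl(1-\tfrac{\sigma\,\|\dot h\|_{L^\infty}}{2\sqrt{gh_0}}\Bigr),
\end{align*}
and the factor $\|\dot h\|_{L^\infty}/(2\sqrt{gh_0})$ tends to $\sqrt 2/2\approx 0.71$ (not to $0$) as $mh_0\to\infty$. The induction still closes, but only because $\sqrt 2<2$, i.e.\ because the energy estimate \emph{happens} to land $\|\dot h\|_{L^\infty}$ below the critical value $2\sqrt{gh_0}$ that the step size forces on you. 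That numerical coincidence is doing real work, and your write-up hides it behind a false scaling $|\dot h|\lesssim 1/\sqrt m$. The fix is to replace that claim with ``$\|\dot h\|_{L^\infty}\leq 2\sqrt{gh_0}$ once $mh_0$ is large,'' which is what your induction actually consumes.
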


Similar to the chapters before, the proof of Lemma~\ref{lem:Tresca1} relies on the construction of a suitable test function $\vc w_h$ for the momentum equation satisfying
\begin{align*}
\div \vc w_h = 0 \ \text{ in } \ \mf, \qquad (\vc w_h - \vc w^\ast) \cdot \vc n|_{\del \mf} = 0, \qquad [\bS(\nabla \vc w_h)\vc n] \times \vc n|_{\del \Omega} = 0,
\end{align*}
and a corresponding pressure $q_h$ (see \cite[Section~3]{HillairetTakahashi2021} for details). The proof of Lemma~\ref{lem:Tresca2} is given by induction and an application of Lemma~\ref{lem:Tresca1}. The terms ``sufficiently small'' and ``sufficiently large'' are interpreted as
\begin{align*}
h_0 < \max \left \lbrace \frac{2}{3(1+\sigma)}, \frac{1}{(32 C^\sharp)^2 g} \right \rbrace
\end{align*}
and
\begin{align*}
-\frac{1}{32} \sqrt{g h_0} + \frac{3 C^\flat \sigma}{4m} \sqrt{\frac{h_0}{g}} \sum_{k=0}^\infty \Big( 1 - \frac{\sigma^2}{32} \Big)^k \Big| \log \Big[ (1-\sigma)^{k+1} \frac{h_0}{2} \Big] \Big| \leq - \frac{\sigma}{16} \sqrt{g h_0},
\end{align*}
which ensure that $h(t) \in (0,1)$ for all $t \in [0, t_n]$ and all $n \geq 0$, and also that $h(t)$ satisfies \eqref{TrescaH}. The final step is now to see that the definition of $t_n$, together with \eqref{TrescaH}, ensures that the sequence $(t_{n+1} - t_n)_{n \geq 0}$ is bounded from above by a convergent geometric series. In turn, we have $\lim_{n \to \infty} t_n = T_\ast < \infty$, and additionally $\lim_{n \to \infty} h(t_n) = 0$ by \eqref{TrescaH}. Since $h$ is continuous, this shows $h(T_\ast)=0$, meaning collision happens, and the maximal existence time of the solution is in this case finite.

\begin{Remark}
Note carefully that the assumptions of a large mass and zero initial speed fit, to some extend, the assumptions of Theorem~\ref{theo1} for the compressible setting with no-slip boundary conditions. This might be interpreted in such a way that for compressible fluids, the non-constant density plays the role of a kind of ``pseudo-slip'', hence allowing for collision.
\end{Remark}



\chapter*{Some bibliographical remarks}
Among the literature cited in this work, there is a bunch of works dealing with different situations, although most of them are focussed on incompressible fluids. We will give here some literature, and refer the interested reader to the works cited in there. First, let us mention the article \cite{Hillairet2007navier}, where the author summarizes the up-to-date known results of collisions for incompressible Newtonian fluids with no-slip boundary conditions. In \cite{ChemetovNecasova2017}, the authors show existence for incompressible fluids even in the presence of contacts, where no-slip boundary conditions are imposed on the container's wall, and Navier slip boundary conditions are imposed on the solid, extending the work of \cite{DesjardinsEsteban1999, GerardVaretHillairet2014}, where existence was shown \emph{up to collision}. Existence of strong solutions for incompressible two-dimensional fluid-solid interaction with no-slip boundary conditions and a solid of class $C^2$ was proven in \cite{Takahashi2003}, where either the maximal existence time $T_\ast=\infty$, or collision occurs in finite time. For a deformable structure in two dimensions, global existence of strong solution was shown in \cite{GrandmontHillairet2016}. Short time existence for elastic structures has been shown in \cite{Boulakia2007}, and in \cite{CoutandShkoller2005} for quite high regularity assumptions on the container's and solid's boundary.\\

Besides the examples of roughness given in previous chapters, the authors in \cite[Section~4]{GerardVaretHillairet2012} investigated the case of a corrugated container's wall, that is, for some given smooth $1$-periodic function $\Gamma: \R^2 \to (-\infty, 0]$ with $\max_{(x_1, x_2) \in \R^2} \Gamma(x_1, x_2) = \Gamma(0,0)=0$ and some given $\e>0$, we have
\begin{align*}
\del \Omega \cap (\R^2 \times (-\infty, 0]) = \left\{ x \in \R^3: x_3 = \e \Gamma \left( \frac{x_1}{\e}, \frac{x_2}{\e} \right) \right\}.
\end{align*}
Focussing just on the case $\e \ll h \ll 1$, the authors ``quote that this is the only regime for which this modelling of the roughness is relevant. Indeed, when $h$ becomes comparable to $\e$, a rescaling in space by a factor $1/ \e$ brings back to the classical situation of smooth boundaries. From this point of view, the model we consider in this section is peculiar: it does not allow to conclude anything about the possibility of collisions for a given small roughness size $\e$.''

Concerning the other parameters of the configuration, they assumed $\ms$ to be a ball of unit radius, and no-slip boundary conditions on every part of the boundary. The outcome of their computations is a drag force of order
\begin{align*}
\frac{6 \pi}{h+\lambda \e} + \mathcal{O}(|\log(h + \lambda \e)|) \leq \mathcal{D}_h \leq \frac{6 \pi}{h} + \mathcal{O}(|\log h|), \qquad \lambda = - \min_{(x_1, x_2) \in \R^2} \Gamma > 0,
\end{align*}
which matches the experimental results found in \cite{KunertHartingVinogradova2010, VinogradovaYakubov2006}. In this context, let us also cite \cite{LACSF2004}, where the authors investigate the drag force of a sphere falling over a corrugated wall. Moreover, the references in there show that for a flat bottom, we have $\mathcal{D}_h \sim h^{-1}$, hence giving the experimental verification of our foregoing calculations.



\chapter*{Acknowledgements}
{\it The author thanks Valentin Calisti and \v{S}\'arka Ne\v{c}asov\'a for several helpful discussions. F. O. has been supported by the Czech Science Foundation (GA\v CR) project 22-01591S, and the Czech Academy of Sciences project L100192351. The Institute of Mathematics, CAS is supported by RVO:67985840.}

\addcontentsline{toc}{chapter}{Bibliography}
\bibliography{C:/Users/oschmann/Nextcloud/New/RandomBog/Lit.bib}

\begin{thebibliography}{10}

\bibitem{AAKA2021PDcontrol}
{\sc A.~J.~S. Al-hamadani, K.~J. Kadhim, and H.~F. Al-Shuka}, {\em
  Proportional-derivative {PD} vibration control with adaptive approximation
  compensator for a nonlinear smart thin beam interacting with fluid},
  International Journal of Mechanical Engineering and Robotics Research, 10
  (2021), pp.~566--571.

\bibitem{Boulakia2007}
{\sc M.~Boulakia}, {\em Existence of weak solutions for the three-dimensional
  motion of an elastic structure in an incompressible fluid}, Journal of
  Mathematical Fluid Mechanics, 9 (2007), pp.~262--294.

\bibitem{Brezina2008}
{\sc J.~B{\v{r}}ezina}, {\em {S}elected {M}athematical {P}roblems in the
  {T}hermodynamics of {V}iscous {C}ompressible {F}luids}, PhD thesis,
  Univerzita Karlova, Matematicko-fyzik{\'a}ln{\'\i} fakulta, 2008.

\bibitem{BGMSG2012}
{\sc M.~Bulicek, P.~Gwiazda, J.~M{\'a}lek, and A.~Swierczewska-Gwiazda}, {\em
  On unsteady flows of implicitly constituted incompressible fluids}, SIAM
  Journal on Mathematical Analysis, 44 (2012), pp.~2756--2801.

\bibitem{ChemetovNecasova2017}
{\sc N.~V. Chemetov and {\v{S}}.~Ne{\v{c}}asov{\'a}}, {\em The motion of the
  rigid body in the viscous fluid including collisions. {G}lobal solvability
  result}, Nonlinear Analysis: Real World Applications, 34 (2017),
  pp.~416--445.

\bibitem{CHSWW2018}
{\sc Y.-z. Chen, Q.-l. He, X.-d. Shi, T.~Wang, and X.-p. Wang}, {\em On the
  {M}otion of {S}hear-{T}hinning {H}eat-{C}onducting {I}ncompressible
  {F}luid-{R}igid {S}ystem}, Acta Mathematicae Applicatae Sinica, English
  Series, 34 (2018), pp.~534--552.

\bibitem{ChoiWestinBreuer2003}
{\sc C.-H. Choi, K.~J.~A. Westin, and K.~S. Breuer}, {\em Apparent slip flows
  in hydrophilic and hydrophobic microchannels}, Physics of fluids, 15 (2003),
  pp.~2897--2902.

\bibitem{CoutandShkoller2005}
{\sc D.~Coutand and S.~Shkoller}, {\em Motion of an elastic solid inside an
  incompressible viscous fluid}, Archive for rational mechanics and analysis,
  176 (2005), pp.~25--102.

\bibitem{DesjardinsEsteban1999}
{\sc B.~Desjardins and M.~J. Esteban}, {\em Existence of weak solutions for the
  motion of rigid bodies in a viscous fluid}, Archive for rational mechanics
  and analysis, 146 (1999), pp.~59--71.

\bibitem{DevoreSharpley1993}
{\sc R.~A. DeVore and R.~C. Sharpley}, {\em Besov spaces on domains in
  $\mathbb{R}^d$}, Transactions of the American Mathematical Society, 335
  (1993), pp.~843--864.

\bibitem{DieningRuzickaWolf2010}
{\sc L.~Diening, M.~R{\r u}{\v{z}}i{\v{c}}ka, and J.~Wolf}, {\em Existence of
  weak solutions for unsteady motions of generalized {N}ewtonian fluids},
  Annali della Scuola Normale Superiore di Pisa-Classe di Scienze, 9 (2010),
  pp.~1--46.

\bibitem{Duoandikoetxea2001}
{\sc J.~Duoandikoetxea}, {\em Fourier analysis}, vol.~29, American Mathematical
  Soc., 2001.

\bibitem{EvansPDE2010}
{\sc L.~C. Evans}, {\em Partial differential equations}, vol.~19, Rhode Island,
  USA, 2010.

\bibitem{FarazmandSerra2018}
{\sc M.~Farazmand and M.~Serra}, {\em Variational {L}agrangian formulation of
  the {E}uler equations for incompressible flow: a simple derivation}, arXiv
  preprint arXiv:1807.02726,  (2018).

\bibitem{Feireisl2003}
{\sc E.~Feireisl}, {\em On the motion of rigid bodies in a viscous compressible
  fluid}, Archive for rational mechanics and analysis, 167 (2003), p.~281.

\bibitem{Feireisl2004}
\leavevmode\vrule height 2pt depth -1.6pt width 23pt, {\em Dynamics of viscous
  compressible fluids}, vol.~26 of Oxford Lecture Series in Mathematics and its
  Applications, Oxford University Press, Oxford, 2004.

\bibitem{FeireislHillairetNecasova2008}
{\sc E.~Feireisl, M.~Hillairet, and {\v{S}}.~Ne{\v{c}}asov{\'a}}, {\em On the
  motion of several rigid bodies in an incompressible non-{N}ewtonian fluid},
  Nonlinearity, 21 (2008), p.~1349.

\bibitem{FeireislNecasova2008}
{\sc E.~Feireisl and {\v{S}}.~Ne{\v{c}}asov{\'a}}, {\em On the motion of
  several rigid bodies in a viscous multipolar fluid}, Functional Analysis and
  Evolution Equations: The G{\"u}nter Lumer Volume,  (2008), pp.~291--305.

\bibitem{FeireislNovotny2009singlim}
{\sc E.~Feireisl and A.~Novotn{\'y}}, {\em Singular limits in thermodynamics of
  viscous fluids}, vol.~2, Springer, 2009.

\bibitem{FilippasTersenov2021}
{\sc S.~Filippas and A.~Tersenov}, {\em On vector fields describing the 2d
  motion of a rigid body in a viscous fluid and applications}, Journal of
  Mathematical Fluid Mechanics, 23 (2021), pp.~1--24.

\bibitem{FilippasTersenov2024}
\leavevmode\vrule height 2pt depth -1.6pt width 23pt, {\em Estimates on the
  velocity of a rigid body moving in a fluid}, Nonlinear Analysis: Real World
  Applications, 77 (2024), p.~104057.

\bibitem{GerardVaretHillairet2010}
{\sc D.~G\'{e}rard-Varet and M.~Hillairet}, {\em Regularity issues in the
  problem of fluid structure interaction}, Arch. Ration. Mech. Anal., 195
  (2010), pp.~375--407.

\bibitem{GerardVaretHillairet2012}
\leavevmode\vrule height 2pt depth -1.6pt width 23pt, {\em Computation of the
  drag force on a sphere close to a wall: the roughness issue}, ESAIM:
  Mathematical Modelling and Numerical Analysis, 46 (2012), pp.~1201--1224.

\bibitem{GerardVaretHillairet2014}
\leavevmode\vrule height 2pt depth -1.6pt width 23pt, {\em Existence of weak
  solutions up to collision for viscous fluid-solid systems with slip}, Comm.
  Pure Appl. Math., 67 (2014), pp.~2022--2075.

\bibitem{GVHW2015}
{\sc D.~G\'{e}rard-Varet, M.~Hillairet, and C.~Wang}, {\em The influence of
  boundary conditions on the contact problem in a 3{D} {N}avier-{S}tokes flow},
  J. Math. Pures Appl. (9), 103 (2015), pp.~1--38.

\bibitem{GrandmontHillairet2016}
{\sc C.~Grandmont and M.~Hillairet}, {\em Existence of global strong solutions
  to a beam--fluid interaction system}, Archive for Rational Mechanics and
  Analysis, 220 (2016), pp.~1283--1333.

\bibitem{HMTT2019}
{\sc B.~H. Haak, D.~Maity, T.~Takahashi, and M.~Tucsnak}, {\em Mathematical
  analysis of the motion of a rigid body in a compressible
  navier--stokes--fourier fluid}, Mathematische Nachrichten, 292 (2019),
  pp.~1972--2017.

\bibitem{Henrici1988}
{\sc P.~Henrici}, {\em Applied and {C}omputational {C}omplex {A}nalysis,
  {V}olume 1 - {P}ower {S}eries, {I}ntegration, {C}onformal {M}apping,
  {L}ocation of {Z}eros}, John Wiley \& Sons, New York London Sydney Toronto,
  1988.

\bibitem{Hillairet2007navier}
{\sc M.~Hillairet}, {\em Do {N}avier-{S}tokes {E}quations {E}nable to {P}redict
  {C}ontact {B}etween {I}mmersed {S}olid {P}articles?}, in Analysis and
  Simulation of Fluid Dynamics, Springer, 2007, pp.~109--127.

\bibitem{Hillairet2007}
\leavevmode\vrule height 2pt depth -1.6pt width 23pt, {\em Lack of collision
  between solid bodies in a 2{D} incompressible viscous flow}, Comm. Partial
  Differential Equations, 32 (2007), pp.~1345--1371.

\bibitem{HillairetTakahashi2009}
{\sc M.~Hillairet and T.~Takahashi}, {\em Collisions in three-dimensional fluid
  structure interaction problems}, SIAM journal on mathematical analysis, 40
  (2009), pp.~2451--2477.

\bibitem{HillairetTakahashi2021}
\leavevmode\vrule height 2pt depth -1.6pt width 23pt, {\em Existence of
  contacts for the motion of a rigid body into a viscous incompressible fluid
  with the {T}resca boundary conditions}, Tunisian Journal of Mathematics, 3
  (2021), pp.~447--468.

\bibitem{HouotMunnier2008}
{\sc J.~Houot and A.~Munnier}, {\em On the motion and collisions of rigid
  bodies in an ideal fluid}, Asymptotic Analysis, 56 (2008), pp.~125--158.

\bibitem{JNOR2022}
{\sc B.~J. Jin, {\v{S}}.~Ne{\v{c}}asov{\'a}, F.~Oschmann, and A.~Roy}, {\em
  Collision/{N}o-collision results of a solid body with its container in a 3{D}
  compressible viscous fluid}, arXiv preprint arXiv:2210.04698,  (2022).

\bibitem{KunertHartingVinogradova2010}
{\sc C.~Kunert, J.~Harting, and O.~I. Vinogradova}, {\em Random-roughness
  hydrodynamic boundary conditions}, Physical review letters, 105 (2010),
  p.~016001.

\bibitem{Ladyzhenskaya1969}
{\sc O.~A. Ladyzhenskaya}, {\em The mathematical theory of viscous
  incompressible flow}, Gordon \& Breach,  (1969).

\bibitem{Ladyzhenskaya1975}
\leavevmode\vrule height 2pt depth -1.6pt width 23pt, {\em Mathematical
  analysis of {N}avier-{S}tokes equations for incompressible liquids}, Annual
  Review of Fluid Mechanics, 7 (1975), pp.~249--272.

\bibitem{LACSF2004}
{\sc N.~Lecoq, R.~Anthore, B.~Cichocki, P.~Szymczak, and F.~Feuillebois}, {\em
  Drag force on a sphere moving towards a corrugated wall}, Journal of Fluid
  Mechanics, 513 (2004), pp.~247--264.

\bibitem{Necasova2009}
{\sc {\v{S}}.~Ne{\v{c}}asov{\'a}}, {\em On the motion of several rigid bodies
  in an incompressible non-{N}ewtonian and heat-conducting fluid}, ANNALI
  DELL'UNIVERSITA'DI FERRARA, 55 (2009), pp.~325--352.

\bibitem{NecasovaOschmann2024}
{\sc {\v{S}}.~Ne{\v{c}}asov{\'a} and F.~Oschmann}, {\em A collision result for
  both non-{N}ewtonian and heat conducting {N}ewtonian compressible fluids},
  Proceedings of the Royal Society of Edinburgh Section A: Mathematics,
  (2023), pp.~1--15.

\bibitem{Roubivcek2009}
{\sc T.~Roub{\'\i}{\v{c}}ek}, {\em On non-{N}ewtonian fluids with energy
  transfer}, Journal of Mathematical Fluid Mechanics, 11 (2009), pp.~110--125.

\bibitem{RoyTakahashi2021}
{\sc A.~Roy and T.~Takahashi}, {\em Stabilization of a rigid body moving in a
  compressible viscous fluid}, Journal of Evolution Equations, 21 (2021),
  pp.~167--200.

\bibitem{Sabbagh2018}
{\sc L.~M.~K. Sabbagh}, {\em Study of rigid solids movement in a viscous
  fluid}, PhD thesis, Universit{\'e} Montpellier; Universit{\'e} Libanaise,
  2018.

\bibitem{Starovoitov2003}
{\sc V.~Starovoitov}, {\em Behavior of a rigid body in an incompressible
  viscous fluid near a boundary}, in Free Boundary Problems: Theory and
  Applications, Springer, 2003, pp.~313--327.

\bibitem{Starovoitov2005}
\leavevmode\vrule height 2pt depth -1.6pt width 23pt, {\em Nonuniqueness of a
  solution to the problem on motion of a rigid body in a viscous incompressible
  fluid}, Journal of Mathematical Sciences, 130 (2005), pp.~4893--4898.

\bibitem{Takahashi2003}
{\sc T.~Takahashi}, {\em Existence of strong solutions for the problem of a
  rigid-fluid system}, Comptes rendus. Math{\'e}matique, 336 (2003),
  pp.~453--458.

\bibitem{TrethewayMeinhart2002}
{\sc D.~C. Tretheway and C.~D. Meinhart}, {\em Apparent fluid slip at
  hydrophobic microchannel walls}, Physics of fluids, 14 (2002), pp.~L9--L12.

\bibitem{Triebel1983}
{\sc H.~Triebel}, {\em Theory of function spaces}, vol. 78 of Monographs in
  mathematics,  (1983).

\bibitem{VazquezZuazua2006}
{\sc J.~L. V{\'a}zquez and E.~Zuazua}, {\em Lack of collision in a simplified
  1{D} model for fluid--solid interaction}, Mathematical Models and Methods in
  Applied Sciences, 16 (2006), pp.~637--678.

\bibitem{VinogradovaYakubov2006}
{\sc O.~I. Vinogradova and G.~E. Yakubov}, {\em Surface roughness and
  hydrodynamic boundary conditions}, Physical Review E, 73 (2006), p.~045302.

\bibitem{Wolf2007}
{\sc J.~Wolf}, {\em Existence of weak solutions to the equations of
  non-stationary motion of non-{N}ewtonian fluids with shear rate dependent
  viscosity}, Journal of Mathematical Fluid Mechanics, 9 (2007), pp.~104--138.

\end{thebibliography}

\bibliographystyle{siam}

\end{document}